\newcommand{\bbN}{{\mathbb N}}
\newcommand{\bbR}{{\mathbb R}}
\newcommand{\bbZ}{{\mathbb Z}}
\newcommand{\bbC}{{\mathbb C}}
\newcommand{\bbE}{\mathbb{E}}
\newcommand{\calA}{\mathcal{A}}
\newcommand{\calB}{\mathcal{B}}
\newcommand{\calC}{\mathcal{C}}
\newcommand{\calF}{\mathcal{F}}
\newcommand{\calV}{\mathcal{V}}
\newcommand{\calX}{\mathcal{X}}
\newcommand{\calY}{\mathcal{Y}}
\newcommand{\calZ}{\mathcal{Z}}
\newcommand{\forae}{\forall^{\textrm{ae}}}
\newcommand{\id}{\operatorname{id}}
\newcommand{\pr}{\operatorname{pr}}
\newcommand{\supp}{\operatorname{supp}}
\newcommand{\Isom}{\operatorname{Isom}}
\newcommand{\Ad}{\operatorname{Ad}}
\newcommand{\Prob}{\operatorname{Prob}}
\newcommand{\half}{\frac{1}{2}}
\newcommand{\diam}{\operatorname{diam}}
\newcommand{\bary}{\operatorname{bar}}
\newcommand{\Aff}{\operatorname{Aff}}
\newcommand{\Hom}{\operatorname{Hom}}
\newcommand{\Var}{\operatorname{Var}}
\newcommand{\Homeo}{\operatorname{Homeo}}
\newcommand{\Stab}{\operatorname{Stab}}
\newcommand{\wlong}{\operatorname{w}_{\operatorname{long}}}
\newcommand{\SL}{\operatorname{SL}}
\newcommand{\GL}{\operatorname{GL}}
\newcommand{\PGL}{\operatorname{PGL}}
\newcommand{\dd }{\,{\rm d}}
\newcommand{\normal}{\triangleleft}
\newcommand{\acts}{\curvearrowright}
\newcommand{\overto}[1]{\,{\buildrel{#1}\over\longrightarrow}\,}
\newcommand{\setdef}[2]{ \left\{\left. {#1}\ \right|\ {#2} \right\} }
\newcommand{\wt}[1]{\widetilde{#1}}
\newcommand{\wh}[1]{\widehat{#1}}
\newcommand{\ol}[1]{\overline{#1}}
\newcommand{\ev}{\operatorname{ev}}
\newcommand{\maxev}{\operatorname{max-ev}}
\newcommand{\maxpart}{\operatorname{max-part}}
\newcommand{\ext}{\operatorname{ext}}
\newcommand{\Map}{\operatorname{Map}}
\newcommand{\Stat}{\operatorname{Map}^{\operatorname{stat}}}
\newtheorem{mthm}{Theorem}
\newtheorem{theorem}{Theorem}[section]
\newtheorem{lemma}[theorem]{Lemma}
\newtheorem*{claim}{Claim}
\newtheorem{cor}[theorem]{Corollary}
\newtheorem{prop}[theorem]{Proposition}
\theoremstyle{definition}
\newtheorem{defn}[theorem]{Definition}
\newtheorem{example}[theorem]{Example}
\newtheorem{remark}[theorem]{Remark}
\numberwithin{equation}{section}
\newcommand{\newcomment}[4]{%
\newcounter{#2counter}
\expandafter\newcommand\csname #1\endcsname[1]{%
\refstepcounter{#2counter}%
{\color{#4}(#3\arabic{#2counter})}\marginpar{\scriptsize\raggedright\textbf{\color{#4}(#2 \arabic{#2counter}):} ##1}%
}}
\definecolor{darkgreen}{rgb}{0,0.6,0}
\title[Simplicity of the Spectrum]{Lyapunov spectrum via boundary theory I - framework}
\author{Uri Bader}
\address{Weizmann Institute}
\email{bader@weizmann.ac.il}
\author{Alex Furman}
\address{University of Illinois at Chicago}
\email{furman@uic.edu}
\subjclass[2000]{Primary 37D40; Secondary 22E47}
\keywords{Lyapunov exponents, simple spectrum, boundary theory, amenability}
\begin{document}
%%%%%%%%%%%%%%%%%% beginning of content %%%%%%%
\date{\today}

\begin{abstract}
    This paper is concerned with the Lyapunov spectrum for measurable cocycles
    over an ergodic pmp system taking values in semi-simple real Lie groups. 
    We prove simplicity of the Lyapunov spectrum and its continuity
    under certain perturbations for a class systems that includes many 
    familiar examples. 
    
    Our framework uses some soft qualitative assumptions, and does not
    rely on symbolic dynamics.
    We use ideas from boundary theory
    that appear in the study of super-rigidity to deduce our results.
    This gives a new perspective even on the most studied case of random 
    matrix products.

    The current paper introduces the general framework and contains the proofs
    of the main results and some basic examples. 
    In a follow up paper we discuss further examples.
\end{abstract}

\maketitle

\tableofcontents

\section{Introduction}\label{sec:intro}

Let $(X,\calX,m)$ be a standard probability space, $T:X\to X$ 
an ergodic measure preserving transformation, 
and $F:X\to \GL_d(\bbR)$ 
a measurable map satisfying the integrability condition
\[
    \int_X \log^+\|F(x)\|\dd m(x)<+\infty,\qquad 
    \int_X \log^+\|F(x)^{-1}\|\dd m(x)<+\infty,
\]
where $\log^+(t)=\max(\log(t),0)$
and $\|\cdot\|$ denotes the operator norm.
Let $F_n:X\to \GL_d(\bbR)$ be the cocycle generated by $F$, namely 
\[
    F_n(x)=F(T^{n-1}x)\cdots F(Tx)F(x),
\] 
and define the corresponding \textbf{Lyapunov exponents} 
$\lambda_1\ge \lambda_2\ge \dots\ge \lambda_d$ by 
\[
	\begin{split}
			&\lambda_1=\inf_{n\ge 1}\ \frac{1}{n}\int_X \log\|F_n(x)\|\dd m(x),\\
			&\lambda_1+\dots+\lambda_k=\inf_{n\ge 1}\ \frac{1}{n}\int_X \log\|\wedge^k F_n(x)\|\dd m(x)
			\qquad (1\le k\le d).
	\end{split}
\]
These Lyapunov exponents can be recorded as an element $\Lambda_F$ in the positive Weyl chamber $\mathfrak{a}^+$
of the Cartan sub-algebra $\mathfrak{a}$, consisting of diagonal elements
in the Lie algebra $\mathfrak{gl}_d$ of the reductive group $G=\GL_d(\bbR)$:
\[
	\Lambda_F=\operatorname{diag}(\lambda_1,\dots,\lambda_d)\ \in \mathfrak{a}^+.
\] 
Oseledts' Multiplicative Ergodic Theorem describes asymptotic behavior of $F_n(x)$
and its action on $\bbR^d$ in terms of $\exp(n\Lambda_F+o_x(n))$.

More generally, given a reductive real Lie group $G$, one considers the asymptotic 
behavior of the cocycle $\{F_n(-)\}$ generated by a measurable map $F:X\to G$.
Since the effect of the center can be easily analyzed separately, we will hereafter focus
on the case were
$G$ is a {\em connected semisimple real Lie group with a finite center}.
Fix a real representation with finite kernel $\tau:G\to\GL(V)$
and an inner product on $V$ to obtain an operator norm, 
which we denote $\|\cdot\|$, on $\GL(V)$.
We define a (pseudo-)length function on $G$ by
\[ 
    |\cdot|: G\to [0,\infty), \quad |g|=\log(\max\{\|\tau(g)\|,\|\tau(g^{-1})\|\}). 
\]
The integrability condition can now be stated as: 
\begin{equation}\label{e:FL1}
	\int_X |F(x)|\dd m(x)<+\infty.
\end{equation}
This condition is independent of the choices made for the representation 
$\tau$ and the inner product on $V$,
see Lemma~\ref{l:repnorm} below.

Fix a Cartan subalgebra $\mathfrak{a}<\mathfrak{g}$ in the Lie algebra of $G$, 
a Weyl chamber $\mathfrak{a}^+\subset \mathfrak{a}$, 
and a maximal compact subgroup $K<G$. We let 
$\kappa:G\to \mathfrak{a}^+$ be the corresponding Cartan projection; so $g\in K\exp(\kappa(g)) K$ for every $g\in G$.
The theorem of Oseledets describes the 
asymptotic behavior of $F_n$; in particular (see Corollary \ref{C:Oseledets}) it asserts
an almost everywhere and an $L^1$-convergence of the normalized Cartan projections 
\begin{equation}\label{e:spectrum}
	\Lambda_F:=\lim_{n\to\infty} \frac{1}{n} \kappa(F_n(x)) \in \mathfrak{a}^+.
\end{equation}
The element $\Lambda_F \in \mathfrak{a}^+$ is said to be the \textbf{Lyapunov spectrum}
associated with $F$.
If $\Lambda_F=0$ we say that the Lyapunov spectrum is \textbf{trivial}; 
this case corresponds
to sub-linear growth:  $|F_n(x)|=o_x(n)$.
Otherwise, if $\Lambda_F\ne 0$, we say that the Lyapunov spectrum is \textbf{non-trivial} or that $F$ has a \textbf{positive drift}.
We are particularly interested in the situations where the spectrum is not merely non-trivial, but is  
simple in the following sense.
\begin{defn}
We say that $F$ has a \textbf{simple Lyapunov spectrum} if $\Lambda_F$ given by
(\ref{e:spectrum}) is in the interior $\mathfrak{a}^{++}$ 
of the positive Weyl chamber $\mathfrak{a}^+\subset \mathfrak{a}$.  
\end{defn}
If $G$ is a simple real Lie group of rank one then $\dim\mathfrak{a}=1$, and the simplicity of the spectrum is equivalent to positivity of the drift. 
However, if $G$ is of higher rank the simplicity of the spectrum is a much more subtle condition than positivity of the the drift.
For example:
\begin{itemize}
	\item 
	In case of $G=\SL_d(\bbR)$ we have $\Lambda_F=\operatorname{diag}(\lambda_1,\dots,\lambda_d)$ 
	where $\lambda_1\ge \dots\ge \lambda_d$ and $\sum \lambda_i=0$. 
	The condition $\Lambda_F\ne 0$ is equivalent to $\lambda_1>0$,
	while simplicity of $\Lambda_F$ refers to the strict inequalities $\lambda_1>\lambda_2>\dots>\lambda_d$.
	\item 
	In case of $G=\operatorname{Sp}_{2d}(\bbR)$ realized as a subgroup of $\SL_{2d}(\bbR)$, 
	the Lyapunov spectrum has the form 
	$\Lambda_F=\operatorname{diag}(\lambda_1,\dots,\lambda_d,-\lambda_d,\dots,-\lambda_1)$
	with $\lambda_1\ge \lambda_2\ge \dots\ge \lambda_{d}\ge 0$.
	The condition $\Lambda_F\ne 0$ is still equivalent to $\lambda_1>0$,
	while simplicity of $\Lambda_F$ refers to the strict inequalities
	$\lambda_1>\lambda_2>\dots>\lambda_d>0$.
\end{itemize}
%If the spectrum $\Lambda_F$ of $F$ is simple, then Oseledets theorem provides a measurable
%equivariant map $V:X\to G/P$ into the maximal flag variety $G/P$ (here $P<G$ is the minimal parabolic subgroup).

% \medskip
% Lyapunov spectrum was extensively studied in the context of random walks on a (semi)-simple Lie group $G$, 
% where we fix a probability measure $\mu$ on $G$ with $\int \log\|g\|\dd \mu(g)<+\infty$,
% and consider the Bernoulli system $(X,m)=(G^\bbZ, \mu^\bbZ)$ with the shift
% $(Tx)_i=x_{i+1}$ and $F(x)=x_1$.
% In this case $F_n(x)=x_{n}\cdots x_2x_1$
% is the product of i.i.d. random variables with law $\mu$. 
% Furstenberg \cite{Furstenberg-Poisson} proved that if $\supp(\mu)$ is not contained in a 
% compact extension of a solvable subgroup then the drift is positive.
% Guivarc'h and Raugi \cite{GR, GR2} developed a sufficient condition for
% the simplicity of the Lyapunov spectrum;
% Gol'dsheid and Margulis \cite{GM} proved that this condition is satisfied
% if $\supp(\mu)$ generates a Zariski dense subgroup in $G$.
% 
% More recently, Avila and Viana \cite{AV, AV2, AV3} established simplicity of the Lyapunov spectrum in several
% other situations, notably for the Kontsevich-Zorich cocycle, where this property, conjectured by Kontsevich and Zorich,
% has important dynamical consequences (see \cite{Forni}).

The purpose of this paper is to describe a rather general 
setting in which the Lyapunov spectrum can
be proved to be simple, and to depend continuously on certain deformations of $F$.
Our main theorem is Theorem~\ref{T:main}.
In order to state it, we need to introduce our general framework
which includes an auxiliary group $\Gamma$.

\subsection{Statement of the main results}

\begin{defn} \label{defn:Greg}\hfill{}\\
By a \textbf{Group Random Element Generator}, or \textbf{Greg} for short,
we mean a tuple $(X,\calX,m,T,w,\Gamma)$,
where $(X,\calX,m)$ is a standard probability space, 
$T:X\to X$ is an invertible measure preserving transformation,
$\Gamma$ is a locally compact second countable group 
and $w:X\to \Gamma$ is a measurable map. 
\end{defn}

A $\mu$-random sequence $\{w(T^nx)\}_{n\in\bbZ}$ can be viewed as a 
(bi-infinite) sequence of random, not necessarily independent, 
$\Gamma$-valued elements.
Taking consecutive products of these sequences, $w_n(x)=w(T^{n-1}x)\cdots w(Tx)w(x)$, 
we obtain a cocycle $w:\bbZ\times X \to\Gamma$ defined by
\[ 
    w_1(x)=w(x),\qquad w_{n+k}(x)=w_k(T^nx)w_n(x)\qquad (n,k\in\bbZ).
\] 
(see Equation~\eqref{e:f-cocycle}).
Consider the map
\[
	j:X\times \Gamma\overto{}  \Gamma^\bbZ,\qquad j(x,g)=(w_n(x)g^{-1})_{n\in\bbZ}
\]
and let $\wt{m}$ denote the measure on $\Gamma^\bbZ$ obtained 
by the push-forward of $m\times m_\Gamma$, 
$\wt{m}=j_*(m\times m_\Gamma)$, where $m_\Gamma$ is 
a Haar measure on $\Gamma$. It will be more convenient to work
with a probability measure $\wt{m}^1=j_*(m\times m_\Gamma^1)$
where $m_\Gamma^1$ is some choice of a probability measure
in the Haar measure class on $\Gamma$.
The measure $\wt{m}^1$ provides a notion of a random sequence of group elements in $\Gamma$.
We think of $j$ as an $(X,m,T)$-\emph{driven random walk} 
with $j(x,g)$ passing through $g^{-1}$ at time $n=0$.
We denote by $\wt{m}^1_+$ and $\wt{m}^1_-$
the marginal distributions of $\wt{m}^1$ on the space of futures $\Gamma^{\bbZ_+}$ 
and the space of pasts $\Gamma^{\bbZ_-}$, respectively.
Taking the ergodic components for the natural non-singular actions 
of the semi-groups $\bbZ_+$ and $\bbZ_-$ on these spaces
we obtain the spaces $(E_+,\eta_+)$ and $(E_-,\eta_-)$
of \textit{ideal futures} and \textit{ideal pasts} as measurable quotients
\[
	\beta_+:(\Gamma^\bbZ,\wt{m})\ \to \ (E_+,\eta_+),\qquad
	\beta_-:(\Gamma^\bbZ,\wt{m})\ \to \ (E_-,\eta_-).
\]
\begin{defn}\label{D:Apafi}\hfill{}\\
    The Greg system $(X,\calX,m,T,\Gamma,w)$ is said to
    satisfy the \textbf{asymptotic past and future independence condition}  
    if on the product space $E_+\times E_-$, 
    the pushed measure $(\beta_+\times \beta_-)_*\wt{m}^1$ is equivalent 
    to the product measure $\eta_+\times\eta_-$.
    In this case we will say that the Greg satisfies the \textbf{Apafi condition}, 
    or that it is an \textbf{Apafic Greg}.
\end{defn}

Hereafter we will assume that $(X,\calX,m,T,w,\Gamma)$ is an Apafic Greg.
This assumption has several consequences, including ergodicity of the 
system $(X,\calX,m,T)$.

Consider a representation $\rho:\Gamma\overto{} G$ into a connected
semi-simple real Lie group $G$ with finite center, and form the $G$-valued
measurable map 
\[
    F=\rho\circ w\ :X\overto{w} \Gamma\overto{\rho} G.
\]
Assuming $F=\rho\circ w$ satisfies integrability condition (\ref{e:FL1}),
we can associate the Lyapunov spectrum $\Lambda_{\rho\circ w}\in\mathfrak{a}^+$ 
to $\rho\in \Hom(\Gamma,G)$. 

Connected semi-simple real Lie group $G$ with finite center can be equipped 
with Zariski topology, when viewed as real points of a real algebraic group.
A representation $\rho\in \Hom(\Gamma,G)$ is called \textbf{Zariski dense} 
if $\rho(\Gamma)$ is Zariski dense in $G$,
and it is called \textbf{integrable} if $F=\rho\circ w$ satisfies (\ref{e:FL1}).
We endow the space $\Hom(\Gamma,G)$ with the compact-open topology.
A subset $\Sigma\subset \Hom(\Gamma,G)$ is said to be \textbf{uniformly integrable} 
if for every $\epsilon>0$ there exists $\delta>0$ such that for every $\rho\in \Sigma$ and measurable $E\subset X$:
\[ 	
	m(E)<\delta
	\qquad\Longrightarrow\qquad
	\int_{E} |\rho\circ w(x)|\dd m(x)<\epsilon.
\]
For example, for any $h\in L^1(X,\calX,m)$ the set of representations
\begin{equation} \label{eq:hunif}
	\Sigma_h=\setdef{\rho\in\Hom(\Gamma,G)}{|\rho\circ w(x)|\le h(x)}
\end{equation}
is uniformly integrable.

\begin{mthm}\label{T:main}\hfill{}\\
        Let $(X,\calX,m,T,\Gamma,w)$ be an Apafic Greg as in 
        Definitions~\ref{defn:Greg} and \ref{D:Apafi}.
	Let $G$ be a connected semisimple real Lie group with a finite center.
	Then
	\begin{enumerate}
		\item 
		For every integrable  Zariski dense representation $\rho:\Gamma\to G$, 
		the map $F=\rho\circ w:X\to  G$ has a simple spectrum, 
        i.e. $\Lambda_F\in \mathfrak{a}^{++}$.
        % Moreover, there exists a measurable map $\theta:X\to G\cdot\Lambda_F\subset \mathfrak{g}$
        % satisfying for a.e $x\in X$, $\theta(Tx)=w(x)\theta(x)$ and
        % $|F_n(x)-\exp(n\theta(x))|=o_x(n)$.
		\item 
		For every uniformly integrable subset $\Sigma\subset\Hom(\Gamma,G)$,
		the Lyapunov spectrum map $\Lambda:\Sigma\to  \mathfrak{a}^+$ is continuous 
		at every Zariski dense representation $\rho\in \Sigma$.
		\item 
		For every integrable representation $\rho\in \Hom(\Gamma,G)$
		such that $\rho(\Gamma)$ is not contained in a compact extension 
        of a solvable subgroup, 
		the map $F=\rho\circ w:X\to  G$ has positive drift: $\Lambda_F\neq 0$.
	\end{enumerate}
\end{mthm}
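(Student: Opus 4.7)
The overall strategy is boundary theory in the spirit of Margulis super-rigidity: the spaces $(E_\pm,\eta_\pm)$ of ideal futures and pasts serve as measurable boundaries for the cocycle, and the Apafi hypothesis supplies the product structure that makes Furstenberg-style arguments run. I address the three parts of the theorem in turn.

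For (1), fix an integrable Zariski dense $\rho$ and set $F=\rho\circ w$. Oseledets' theorem decomposes $V$ into equivariant subbundles with distinct Lyapunov exponents; the associated flag at $+\infty$ gives a measurable $T$-equivariant map $X\to G/P$, where $P\supseteq P_0$ is the parabolic determined by the distinct exponents (with $P=P_0$ iff the spectrum is simple). This map depends only on the future of the cocycle and so factors through $\beta_+$ to give $\bar\phi_+\colon E_+\to G/P$; the dual construction using $T^{-1}$ produces $\bar\phi_-\colon E_-\to G/P^{\mathrm{opp}}$. The Apafi hypothesis implies that under $\eta_+\times\eta_-$ the pair $(\bar\phi_+,\bar\phi_-)$ is absolutely continuous with respect to its pushforward from $\widetilde m^1$; since by construction the Oseledets flags at $+\infty$ and $-\infty$ are in transverse position whenever defined, almost every such pair lies in the open $G$-orbit of transverse pairs in $G/P\times G/P^{\mathrm{opp}}$. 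The cocycle identity makes $\bar\phi_\pm$ into $\rho$-equivariant boundary maps, and Zariski density of $\rho(\Gamma)$ implies that their essential images cannot be contained in any proper Zariski closed $\rho(\Gamma)$-invariant subvariety. The key remaining step, which I expect to be the main obstacle, is to show that Zariski genericity combined with almost sure transversality forces $P=P_0$: if $P$ were strictly larger than $P_0$, one could refine $\bar\phi_+$ by using the transverse $\bar\phi_-$ to algebraically select a further flag inside the fibre $P/P_0$, and Zariski density would then propagate this refinement to a globally defined equivariant map into a smaller flag variety, contradicting the maximality of $P$.

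For (2), on a uniformly integrable $\Sigma$ the functional $\rho\mapsto \frac{1}{n}\int_X\log\|\wedge^k(\rho\circ w)_n(x)\|\dd m(x)$ is continuous in the compact-open topology for each fixed $n$ and $k$: uniform integrability lets one pass continuity of $\rho\mapsto (\rho\circ w)_n(x)$ through the integral via a standard Vitali-type argument. Consequently each partial sum $S_k(\rho)=\lambda_1(\rho)+\dots+\lambda_k(\rho)$, being the infimum over $n\ge 1$ of these continuous functionals, is upper semi-continuous on $\Sigma$. For a matching lower bound at a Zariski dense $\rho_0$, I would use that part (1) provides boundary maps $\bar\phi_\pm$ realising the full Oseledets decomposition of $\rho_0$ and that these maps can be transported to nearby $\rho$ via the continuous $G$-action; a Fatou-type argument applied to a formula expressing $\kappa(F_n(x))$ via matrix coefficients of $\rho(F_n(x))$ paired against $\bar\phi_\pm$ then yields matching liminf bounds for each $S_k$.

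For (3), assume for contradiction that $\Lambda_F=0$, so $|F_n(x)|=o_x(n)$ almost surely. Under Apafi, this sublinear growth yields a $\rho(\Gamma)$-invariant probability measure on the Furstenberg boundary $G/P_0$, for example by extracting weak-$*$ limits of Cesaro averages of the pushforwards $\delta_{F_k(x)\cdot o}$ along a generic orbit and pushing through $\beta_+$ to produce a $\rho(\Gamma)$-fixed measure. This forces $\rho(\Gamma)$ to be amenable; by Moore's theorem every closed amenable subgroup of a connected semisimple real Lie group with finite center is a compact extension of a solvable subgroup, contradicting the hypothesis on $\rho$.
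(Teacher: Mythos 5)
Your overall strategy of boundary theory is correct in spirit, but the implementation of part (1) has a genuine gap that is not a technicality but the heart of the matter. You begin from the Oseledets flag maps $X\to G/P$, where $P$ is the (possibly non-minimal) parabolic determined by the Lyapunov spectrum, and then try to argue that transversality plus Zariski density forces $P$ to be minimal by ``refining'' $\bar\phi_+$ using $\bar\phi_-$. That refinement step is not carried out, and it is unclear how it would go: given an equivariant map to $G/P$ with $P\supsetneq P_0$, there is no canonical way to use an opposite flag to select an equivariant map into a smaller $G/Q'$ — you would need a measurable $\Gamma$-equivariant section of $G/Q'\to G/P$ over the essential range, and producing such a section is exactly what simplicity of the spectrum would give, so the argument as sketched is circular. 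The paper reverses the arrow: it does not start from Oseledets at all. It uses Zimmer amenability of $(E_\pm,\eta_\pm)$ together with the AREA machinery (Proposition~\ref{P:ininitial} and Theorem~\ref{T:boundary}) to produce, independently of the Lyapunov spectrum, equivariant maps $f_\pm:E_\pm\to G/P$ and $h:E\to G/Z$ with $P$ already the \emph{minimal} parabolic. Simplicity is then extracted by a quantitative contraction argument: the past-oriented stationary measures $\psi\in\Stat_-(X_+,\Prob(G/P))$ are shown to be generic (Theorem~\ref{t:generic}), the cocycle is conjugated into $Z$ via $h$ and the Cocycle Reduction Lemma, the weak-$*$ convergence $D_{-n}(x)^{-1}{}_*\eta(T^{-n}x)\to\delta_{eP}$ is localized on an induced system $X'\subset X$, and Lemmas~\ref{L:UV} and~\ref{L:Kesten} convert that qualitative contraction into $\alpha(\Lambda_F)>0$ for every simple root $\alpha$.

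Parts (2) and (3) also leave gaps. For (2), upper semi-continuity of the partial sums as infima of continuous functionals is standard, but your lower bound via a ``Fatou-type argument'' against transported boundary maps is not made precise; the paper instead proves the exact identity $\Lambda_F=\int_X\sigma(F(x),\phi(x))\dd m(x)$ via the Iwasawa cocycle (Proposition~\ref{P:Iwasawa}) and obtains continuity from weak-$*$ convergence $\phi_{\rho_i}\to\phi_\rho$ (forced by uniqueness of the boundary map) combined with uniform integrability — no semicontinuity squeeze is needed. For (3), your claim that $\Lambda_F=0$ yields a $\rho(\Gamma)$-invariant measure on $G/P$ via Cesaro averages of $\delta_{F_k(x)\cdot o}$ is not established: those averages live on (a compactification of) the symmetric space $G/K$, not on $G/P$, and converting a drift-free sequence into a genuinely $\rho(\Gamma)$-invariant (as opposed to merely stationary) boundary measure is precisely the kind of rigidity one cannot assume. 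The paper argues instead via the Zariski closure $H$ of $\rho(\Gamma)$: under the hypothesis $H$ surjects onto a non-compact simple $G_0$, the pushed representation is Zariski dense in $G_0$, part (1) gives simple hence non-trivial spectrum there, and the inequality $|h|\ge c_0|\pi(h)|_0$ transfers positivity back; a finite-index cocycle device handles disconnected $H$.
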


We will describe a number of examples of Apafic Greg systems in a companion paper
\cite{BF:examples} where we treat, among other examples, Kontsevich--Zorich cocycles. 
Here let us just point out a few more basic cases.

\bigskip

\subsection{Random walks with i.i.d. increments}\hfill{}\\
Let $\mu$ be a \textit{spread-out, aperiodic} measure on a lcsc group $\Gamma$
(i.e. assume that some convolution power $\mu^{*n}$ being absolutely continuous with respect 
to the Haar measure on $\Gamma$ and that $\supp(\mu)$ is not contained 
in a coset of a proper closed subgroup of $\Gamma$).
For example, consider the case of a countable discrete group $\Gamma$
with a probability measure $\mu$ that is not supported on a coset of a proper subgroup.
Then the Bernoulli system $X=\Gamma^\bbZ$ with $m=\mu^\bbZ$ and $w:X\to\Gamma$ given by $w(x)=x_1$
is an example of an Apafic Gregs with 
$E_+$ and $E_-$ being the Furstenberg-Poisson boundaries associated and $\mu$ and 
$\check\mu$ - the reversed random walk (namely $\dd\check\mu(\gamma)=\dd\mu(\gamma^{-1})$).

In this setting Theorem~\ref{T:main} allows to recover (most cases of) by now classical 
results of Furstenberg \cite{Furstenberg-Poisson} about positivity of the drift, 
Guivarc'h--Raugi \cites{GR, GR2} and Gol'dsheid--Margulis \cite{GM} about simplicity
of the Lyapunov spectrum, and of LePage \cite{LePage} 
about continuity of the Lyapunov spectrum 
for random walks given by the probability measure $\rho_*\mu$ on $G$.

\bigskip

\subsection{Geodesic flow in negative curvature}\label{subsec:ncflow}\hfill{}\\
Let $(M,g)$ be a closed Riemannian manifold of strictly negative curvature, 
$\phi^\bbR$ the geodesic flow on the unit tangent bundle $T^1M$.
Among the $\phi^\bbR$-invariant probability measures on $T^1M$,
one notes Bowen--Margulis measure $\mu^{\operatorname{BM}}$ 
-- the unique measure of maximal entropy,
and Lebesgue--Liouville measure $\mu^{\operatorname{LL}}$
obtained from the Riemannian volume on
$M$ with a uniform measure on the tangent spheres. 
These are examples of so called Gibbs measures (see \cite{}).

Recall that there is a canonical cohomology class of measurable cocycles
\[
    c:\phi^\bbR\times T^1M \overto{} \pi_1(M)
\]
obtained by choosing a measurable fundamental domain $\mathscr{F}$ 
for the action of $\pi_1(M)$ on $T^1\wt{M}$, 
and setting $c(t,x)=\gamma$ if $\wt{\phi}^t(x)\in\gamma\mathscr{F}$, 
where $\wt{M}$ is the universal cover of $M$ and
$\wt{\phi}^\bbR$ is the geodesic flow on $T^1\wt{M}$.
%We shall use a cocycle associated with a choice of a \textit{bounded} 
%$\pi_1(M)$-fundamental domain $\mathscr{F}$, which is possible since $M$
%is compact. Different choices lead to cohomologous cocycles via 
%conjugation by a \emph{bounded} measurable function, and therefore
%does not impact the Lyapunov spectrum defined below.

\begin{theorem}\label{T:ncm-flow}\hfill{}\\
    Let $(M,g)$ be a negatively curved compact manifold,
    $\phi^\bbR$ the geodesic flow, $\mathfrak{m}$ a Gibbs measure on $T^1M$
    (e.g. $\mu^{\operatorname{BM}}$ or $\mu^{\operatorname{LL}}$).
    Let $\mathscr{F}\subset T^1\wt{M}$ be a fundamental domain for the $\pi_1(M)$-action, and assume it is bounded and with a non-empty interior. Let $c$ be the associated cocycle as defined above.
    Let $G$ be a semi-simple real Lie group and
    \[
        \rho:\pi_1(M)\overto{} G
    \]
    a representation with Zariski-dense image.
    Then the $\bbR$-cocycle $F_t:X\overto{} G$, given by
    $F_t(x)=\rho\circ c(t,x)$,
    has a simple Lyapunov spectrum 
    $\Lambda_{\rho\circ c}\in\mathfrak{a}^{++}$,
    and the latter depends continuously 
    on $\rho$ in $\Hom(\pi_1(M),G)$.
\end{theorem}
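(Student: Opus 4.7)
The plan is to apply Theorem~\ref{T:main} to a Greg system built from the geodesic flow by discretizing time. Take $X = T^1M$ with measure $\mathfrak{m}$, let $T = \phi^1$ be the time-one map, set $\Gamma = \pi_1(M)$, and define $w:X \to \Gamma$ by $w(x) = c(1, x)$; the cocycle property of $c$ gives $w_n(x) = c(n, x)$. Since $\wt{\phi}^1$ displaces every vector in $T^1\wt M$ by at most one unit of Riemannian distance and $\mathscr{F}$ is bounded with non-empty interior, $w$ takes values in the finite set
\[
    S = \setdef{\gamma \in \pi_1(M)}{\gamma\mathscr{F} \text{ meets the $1$-neighborhood of } \mathscr{F}}.
\]
Hence $|\rho \circ w|$ is uniformly bounded on $X$, giving the integrability condition~\eqref{e:FL1}; for any compact neighborhood $U$ of $\rho$ in $\Hom(\pi_1(M), G)$, the bound $\sup_{\rho' \in U,\,s \in S}|\rho'(s)|$ is finite, placing $U$ inside a set of the form~\eqref{eq:hunif}.

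The main step is verifying the Apafi condition. Using the Hopf parametrization $T^1\wt M \cong (\partial\wt M \times \partial\wt M)_\Delta \times \bbR$, a defining feature of Gibbs measures is that the lift of $\mathfrak{m}$ takes a product form
\[
    d\wt{\mathfrak{m}} = f(\eta,\xi,t)\, d\mu^-(\eta)\,d\mu^+(\xi)\,dt
\]
with a strictly positive density $f$, where $\mu^\pm$ are the associated Patterson--Sullivan-type conformal measures on the ideal boundary. The plan is to identify the Greg's ideal past and future spaces $E_\pm$ with $\partial\wt M$ carrying $\mu^\pm$ (modulo the natural $\pi_1(M)$-identification), by showing that the tail $\sigma$-algebra of the forward process $(w_n(x) g^{-1})_{n \ge 0}$ coincides with the forward endpoint map, and symmetrically for the past. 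This is a Kaimanovich-style identification of a Poisson boundary with the geometric boundary. Granted this, positivity of $f$ directly yields that $(\beta_+ \times \beta_-)_* \wt m^1$ is equivalent to $\eta_+ \otimes \eta_-$, which is exactly the Apafi condition.

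With Apafi in hand, Theorem~\ref{T:main}(1) gives simplicity of $\Lambda_{F_1}$ for the time-one cocycle $F_1 = \rho \circ c(1, \cdot)$ whenever $\rho$ is Zariski dense, and Theorem~\ref{T:main}(2) gives continuity in $\rho$ over the compact neighborhood $U$. To recover the statement for the full $\bbR$-cocycle $F_t$, observe that $\Lambda_{F_t} = t \cdot \Lambda_{F_1}$ for every $t > 0$, which follows from Kingman's subadditive ergodic theorem by comparing $F_{nt}$ at integer multiples of $t$ with $F_n$; thus simplicity and continuity transfer from $F_1$ to every $F_t$.

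The principal obstacle is the identification of $E_\pm$ with $\partial\wt M$ equipped with the correct Patterson--Sullivan-type measures. Both pieces---showing that the future and past tail $\sigma$-algebras are generated by the endpoint maps, and computing the resulting joint distribution as the Gibbs marginal on $(\partial\wt M)^2_\Delta$---rely on the asymptotic geometry of $\wt M$ and the Hopf decomposition, and require some bookkeeping to reconcile the $\pi_1(M)$-equivariance in the universal cover with the abstract construction of $E_\pm$ from the $\Gamma$-valued driven random walk.
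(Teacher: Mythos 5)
Your plan correctly identifies the key idea — the Gibbs local product structure should furnish the Apafi condition — but the route you propose (time-one discretization $T = \phi^1$, $w(x)=c(1,x)$) does not establish the crucial identification of $E_\pm$ with $(\partial\wt M, \mu^\pm)$, and the paper explicitly does \emph{not} proceed this way. The identification requires showing that two starting points $x,y$ with the same forward endpoint $\wh{x}_+ = g\wh{y}_+$ eventually yield the same deck-transformation sequence up to a finite shift. For this, one must argue that after the strongly stable contraction has brought the two orbits within $\delta_n$ of each other, they fall into the \emph{same} translate of the fundamental cell at the $n$-th discrete time. For the time-one map, this means the orbit of $x$ under $\phi^n$ must stay at least $\delta_n \sim Ce^{-c_0 n}$ away from $\Gamma\cdot\partial\mathscr{F}$ for all large $n$ — and with only the hypothesis that $\mathscr{F}$ is bounded with non-empty interior, you have no control on the Gibbs measure of shrinking neighborhoods of $\Gamma\cdot\partial\mathscr{F}$; the Borel--Cantelli estimate can fail. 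This is precisely why the paper introduces a real Greg and associates a discrete Greg to it via a Borel cross section (Ambrose--Kakutani, \S\ref{sec:realgregs}), rather than a fixed-time discretization, and then chooses the cross section $X$ as a ball of radius $\alpha$ around an interior point of $\mathscr{F}$ with $\alpha$ picked by Lemma~\ref{L:good_radius} so that the $\epsilon$-annulus has measure $<\sqrt\epsilon$. That quantitative boundary estimate feeds the Borel--Cantelli argument of Lemma~\ref{lem:BC}, which in turn powers the Lipschitz recalibration in Lemma~\ref{lem:Lip} and the shift-equivalence in Lemma~\ref{lem:katlasst}. Remark~\ref{rem:X'} flags exactly this as the subtle point — it cannot be dismissed as bookkeeping.

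Two further gaps in what you wrote: first, you would also need to show that the measure class on $E_\pm$ under your identification coincides with $\mu^\pm$; the paper accomplishes this by convolving $\wt m^1$ along short flow intervals and comparing with $\wt{\mu}^1$, and you do not address this. Second, you claim $\Lambda_{F_t} = t\Lambda_{F_1}$ as if this transferred simplicity from the time-one to the continuous cocycle; what is actually needed (and what Lemma~\ref{lem:tdiscretization} and Lemma~\ref{L:Lya4flow} provide) is the relation between the flow's Lyapunov spectrum and the Lyapunov spectrum of the discrete Greg built on the cross section, with the proportionality constant $\int_X r\dd m$. That identification is not immediate from Kingman alone; it requires the induced-system Lemma~\ref{L:induced-Lya} and the return-time bookkeeping in the proof of Lemma~\ref{L:Lya4flow}.
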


Here we used the $\bbR$-flow version of the Oseledets theorem 
to define $\Lambda_F$ via $\mathfrak{m}$-a.e. and $L^1$ convergence
\[
    \Lambda=\lim_{t\to\infty} \frac{1}{t}\kappa(F_t(x)).
\]
Since in this case the flow is mixing, the spectrum could also be defined
via a.e. convergence of the time-one map $F_1(x)=\rho\circ c_\mathscr{F}(1,x)$.

\bigskip

\subsection{Some Background and Context}\hfill{}\\ 
The study of Lyapunov spectrum has a long history and has been most
successful in the context of random walks (products of i.i.d. elements) 
on semi-simple real Lie groups.
The pioneering work of Furstenberg \cite{Furstenberg-Poisson} 
gave, among other things, the condition for positivity of the drift.
The concept of stationary measure on the flag variety $G/P$
played an important role in this analysis. 
Guivarc'h--Raugi \cites{GR, GR2} developed a sufficient condition
for simplicity of the Lyapunov spectrum
using, what they called, \textit{contracting property} 
for the closed \textit{semi-group} generated by $\supp(\mu)$.  
Further work of Gol'dsheid--Margulis \cite{GM} showed that
the contraction property  holds if $\supp(\mu)$ generates 
a \textit{Zariski dense subgroup} in $G$.
Hence, if $\supp(\mu)$ generates a Zariski dense subgroup in $G$,
and $\mu$ is integrable, then the Lyapunov spectrum is simple.
In a different direction, LePage \cite{LePage} studied the regularity of
the Lyapunov spectrum under perturbations of the law $\mu$ 
(see also Furstenberg--Kifer \cite{Fur-Kif},
and the recent Avila--Eskin--Viana \cite{AEV}).  
For more background and references see \cite{BL}, \cite{F:RT}, 
\cite{BQ:book}, \cite{Viana}. 

The breakthrough work of Avila--Viana \cite{AV2} (see also \cites{AV3, AV}),
gave a sufficient condition for simplicity of the Lyapunov spectrum
for matrix cocycles over dynamical systems under certain conditions.
This criterion in particular allowed to prove the conjectured simplicity
of the Lyapunov spectrum for Kontsevich--Zorich cocycle (\cite{AV3}). 

Our proof follows a different approach and gives a different
framework while covering some of the same examples.
It should be pointed out that our approach passes through
some ideas of Boundary Theory, that were inspired (in part) 
by the study of random walks. 
While our approach allows to recover Furstenberg, Guivarc'h--Raugi,
Gol'dsheid--Margulis results for random walks with
Zariski-dense distribution, we follow a different path
while using similar objects and constructions
such as stationary measures.  

\bigskip

\subsection{Organization of the paper}\hfill{}\\ 
Section \S\ref{sec:preliminaries} starts by recalling some ergodic-theoretic 
notions, notations, and basic facts that are used in the paper.
We also introduces such notions as metric ergodicity and 
relative metric ergodicity, and prove some new facts, such as 
Proposition~\ref{P:natext-relmeterg}.

Section \S\ref{sec:gregs} develops the framework of Gregs and some associated basic constructions.
In particular, 
the general notion of \textit{stationary maps} is introduced.
In this general context stationary maps take values 
in a compact convex affine $\Gamma$-space, which we will
later take to be $\Prob(G/P)$.

Section \S\ref{sec:ApaficGregs} focuses on Apafic Gregs. 
One of its goals is to analyze the situation
where stationary map takes extreme point of the affine space.
In a later application to $\Prob(G/P)$ this will be used to 
show that the stationary measures are \textit{proper}, i.e.
have no atoms and give zero weight to proper subspaces. 

Section \S\ref{sec:semisimple} discusses semi-simple Lie groups.
After setting the notations and some preliminaries, 
we discuss the notion of AREA (Algebraic Representations of Ergodic Actions)
and relate it to \textit{boundary systems} by proving the key statement -- Corollary~\ref{C:boundary}. 

Section \S\ref{sec:proofs} contains the proofs of the main result, 
Theorem~\ref{T:main}.

Section \S\ref{sec:realgregs} discusses the notion of a \emph{real Greg}, an analogue of a Greg with a real time parameter.

Section \S\ref{sec:geo-example} discusses negatively curved dynamics and contains the 
proof of Theorem~\ref{T:ncm-flow}.

\bigskip

\subsection*{Acknowledgments}\hfill{}\\ 
The authors would like to acknowledge grant support.
U.B. and A.F. were supported in part by the BSF grant 2008267.
U.B was supported in part by the ISF grant 704/08.
A.F. was supported in part by the NSF grants DMS 2005493.

\newpage
\section{Preliminaries}\label{sec:preliminaries}

In this section we set our notations, and present some facts and constructions 
in a form which is convenient for our later developments.
Most of the results mentioned here are known, but some, e.g
Proposition~\ref{P:natext-relmeterg}, are new.

\subsection{Lebesgue spaces and non-singular actions}
\label{sub:Lebesgue}\hfill{}\\ 
A topological space is said to be Polish if it admits a countable dense subspace and the topology is generated by a complete metric.
A Borel space (or a measurable space) is a set endowed with a $\sigma$-algebra.
A standard Borel space is a Borel space which is isomorphic as such to a Polish space.
A Lebesgue space is a standard Borel space which is endowed with a measure class.
%All measure spaces are assumed to be Lebesgue, and probability spaces to be standard probability spaces.

Let $(X,\calX,m)$ be a Lebesgue space, and $(V,\calV)$ be a standard Borel space. 
%or another Lebesgue space. 
We use the term \textbf{map} for an equivalence class, up to null sets, 
of measurable functions $f:X\to V$, and
denote by $\Map(X,Y)$ the space of such maps:
\[
	\Map(X,V)=\setdef{f:X\overto{} V}{\forall\ E\in \calV,\ f^{-1}(E)\in\calX}/ \sim
\]
where $f_1\sim f_2$ if $m(\setdef{x\in X}{f_1(x)\ne f_2(x)})=0$.

If $V$ is a Polish space, then $\Map(X,V)$ is endowed with a natural Polish structure as well. 
For Lebesgue spaces $X,Y$ we have, by Fubini argument, the standard identification of Polish spaces
\begin{equation} \label{eq:fubini}
	\Map(X\times Y,V)\simeq \Map\left(X,\Map(Y,V)\right).
\end{equation}

Let $p:(X,\calX,m)\to (Y,\calY,n)$ be a map of probability spaces with $p_*m=n$.
The \textbf{disintegration} of $m$ with respect to $n$, is the unique map 
$Y\to\Prob(X,\calX)$, $y\mapsto \mu_y$,
so that 
\[
	m=\int_Y \mu_y\dd n(y)\qquad\textrm{and}\qquad\forae\, y\in Y:\quad \mu_y\left(p^{-1}(\{y\})\right)=1.
\]

Let $\Gamma$ be a locally compact secondly countable group and $(X,\calX,m)$ be a Lebesgue space. 
A \textbf{non-singular action} $\Gamma\acts (X,\calX,m)$ is a measurable map $a:\Gamma\times X\to X$ satisfying a.e 
$a(1,x)=x$ and $a(\gamma_1\cdot\gamma_2,x)=a(\gamma_1,a(\gamma_2,x))$ and such that for every $\gamma\in\Gamma$
the measures $\gamma_*m$ and $m$ are equivalent, i.e. have the same $\sigma$-ideal of null sets.
A non-singular action $\Gamma\acts (X,\calX, m)$ is \textbf{ergodic} if $E\in\calX$ with $m(\gamma E\triangle E)=0$
for all $\gamma\in\Gamma$, satisfies $m(E)=0$ or $m(X\setminus E)=0$.

An action $\Gamma\acts (X,\calX, m)$ is \textbf{measure-preserving} if $\gamma_*m=m$ for all $\gamma\in\Gamma$.
We say that the action $\Gamma\acts (X,\calX,m)$ is \textbf{probability measure preserving} 
(abbreviated \textbf{p.m.p}) if in addition $m(X)=1$.

A p.m.p action of $\mathbf{Z}$ is given by an invertible measure preserving map $T$ of a probability space $(X,\calX,m)$,
with $T$ corresponding to the generator $1\in \mathbf{Z}$.
We shall sometimes denote the system $(X,\calX,m,T)$ by $\mathbf{X}=(X,\calX,m,T)$.
We shall also consider not necessarily invertible measure-preserving transformation $S$ of a probability space
$(Y,\calY,n)$, meaning $n(S^{-1}E)=n(E)$ for all $E\in \calY$.
It can be viewed as the action of the semi-group $\mathbf{N}$.

Let $T$ be a probability measure-preserving transformation of a Lebesgue space $\mathbf{X}=(X,\calX,m)$ 
and $S$ of $\mathbf{Y}=(Y,\calY,n)$.
An \textbf{equivariant quotient} $p:\mathbf{X}\to \mathbf{Y}$, is a measurable map $p:X\to Y$ with $p_*m=n$ and $p\circ T=S\circ p$ a.e.
In this case the disintegration $y\mapsto \mu_y$ is equivariant, i.e. 
\[
	\forae y\in Y:\qquad \mu_{Sy}=T_*\mu_y.
\]
Let $\calC\subset \calX$ be an $m$-complete sub-$\sigma$-algebra.
There is a (unique up to isomorphism) Lebesgue space $(Y,\calY, n)$ and a map $p:(X,\calX,m)\to (Y,\calY, n)$ for which $p^{-1}(\calY)=\calC$.
Let $T$ be a measure-preserving transformation of $(X,\calX,m)$.
Its action on the space of $m$-complete sub-$\sigma$-algebras is defined by 
\[
	T\calC:=\setdef{T^{-1}(E)}{E\in\calC}.
\]
We say that $\calC$ is $T$-\textbf{invariant} if $T\calC\subset \calC$, i.e.
$E\in\calC$ implies $T^{-1}(E)\in\calC$.
Given a $T$-invariant $m$-complete sub-$\sigma$-algebra $\calC$ there exists a measure-preserving transformation $S$ on the corresponding 
Lebesgue space $(Y,\calY,n)$ such that $p\circ T=S\circ p$.
Note that in this case $\calC\subset T^{-1}\calC\subset \dots$ and $S$ on $(Y,\calY,n)$ is invertible
iff $\calC$ is $T^{-1}$-invariant, i.e. $\calC= T^{-1}\calC$.
If $\calC$ is $T$-invariant and $\calC\subset T^{-1}\calC\subset \dots\to \calX$,
then the invertible p.m.p system $\mathbf{X}=(X,\calX,m,T)$ is the \textbf{natural extension} of
the system $\mathbf{Y}=(Y,\calY,n,S)$,
a concept that will be discussed in \S\ref{sub:naturalextension}.

%Let $\mathbf{X}=(X,\calX,m,T)$ be an invertible p.m.p system, and let $\calF\subset \calX$ be some $m$-complete sub-$\sigma$-algebra.
%Let 
%\[
%	\calF_0^\infty:=\bigvee_{n=0}^\infty T^n\calF=\setdef{E\in\calX}{\exists n\ge 0,\ T^{-n}E\in\calF}
%\] 
%Then $\calF_0^\infty$ is $T$-invariant and defines a non-invertible quotient, that we will denote 
%$p:\mathbf{X}\to  \mathbf{X}_+$.
%If $\calF$ is generating, i.e. $\calF_{-\infty}^\infty=\bigvee_{k\in\bbZ} T^{k}\calF=\calX$, then 
%the original $\mathbf{X}$ is the natural extension of $\mathbf{X}_+$.
%We shall return to the notion of natural extension in \S\ref{sub:naturalextension} below in the context of
%relative metric ergodicity.

% \subsection{Joinings} % (fold)
% \label{sub:joinings}
% 
% One can now define the \textbf{relatively independent self-joining} measure $m\times_n m$ 
% on $(X\times X,\calX\otimes\calX)$ by the formula
% \[
% 	m\times_nm=\int_Y \mu_y\otimes \mu_y\dd n(y).
% \]
% Note that this measure projects to $m$ under both $\pr_i:X\times X\to X$, $\pr_i(x_1,x_2)=x_i$, for $i=1,2$,
% is supported on the measurable subset
% \[
% 	X\times_Y X:=\setdef{(x,x')\in X\times X}{p(x)=p(x')}
% \]
% and is invariant under the map $(x,x')\mapsto (Tx,Tx')$.
% We shall use the boldface notation $\mathbf{X}$ for the system $(X,\calX,m,T)$,
% $p:\mathbf{X}\to \mathbf{Y}$ for the quotient map,
% and $\mathbf{X}\times_\mathbf{Y} \mathbf{X}$ for the independent 
% self-joining relative to $p:\mathbf{X}\to \mathbf{Y}$ as above.

% subsection notations (end)

\subsection{Ergodic components}% (fold) 
\label{sub:components}\hfill{}\\
Given a Lebesgue space $(X,\calX,m)$ endowed with a non-singular action of a
semigroup $S$, we may form the $\sigma$-algebra of $S$-invariants $\calX^S$
and the corresponding factor, which we call the $S$-ergodic components of $X$ and denote by $\pi:X\to X/\!\!/S$.
If $\Gamma$ is an lcsc group which acts non-singularly on $X$, normalizing $S$ we get a non-singular action of $\Gamma$ on $X/\!\!/S$ such that $\pi$ is $\Gamma$-equivariant.
This construction is functorial: for an $S$-equivariant map 
$\phi:(X,\calX,m) \to (Y,\calY,n)$ we get a corresponding map $\phi_*:X/\!\!/S \to Y/\!\!/S$
making the following diagram commutative:
\[
	\begin{tikzcd} X \ar[r, "\phi"] \ar[d] & Y \ar[d] \\
	           X/\!\!/S \ar[r, "\phi_*"] & Y/\!\!/S
	\end{tikzcd}
\]
 If $\phi$ was $\Gamma$ equivariant then so is $\phi_*$.

\subsection{Measurable cocycles}% (fold) 
\label{sub:cocycles}\hfill{}\\
Let $\Gamma$ be an lcsc group, $(\Omega,\omega)$ a $\Gamma$-Lebesgue space
and $G$ a polish group.
A \textbf{measurable cocycle} is a measurable map $\rho:\Gamma\times \Omega \to G$
which satisfies for every $\gamma,\gamma'\in\Gamma$ and $\omega$-a.e $z\in \Omega$ the equation 
\begin{equation} \label{eq:cocycle} 
	\rho(\gamma\gamma',z)=\rho(\gamma,\gamma'.z)\rho(\gamma',z)
\end{equation}

For a $G$-space $M$, a map $f:\Omega\to M$ is said to be {\bf $\rho$-equivariant}
or a {\bf $\rho$-map} if for every $\gamma\in \Gamma$ and for $\omega$-a.e $z\in \Omega$ we have
\[ f(\gamma z)=\rho(\gamma,z)f(z). \]

The space of all such measurable cocycles $\rho:\Gamma\times \Omega\to G$ is denoted $Z^1(\Gamma\times\Omega,G)$.
Two cocycles $\rho, \rho'\in Z^1(\Gamma\times\Omega, G)$ are \textbf{cohomologous} if there exists
a measurable map $s:\Omega\to G$ so that 
\begin{equation} \label{eq:cohom}
\rho'(\gamma,z)=s(\gamma.z)\rho(\gamma,z)s(z)^{-1}. 
\end{equation}
Note that when $\Omega=\{*\}$ is a singleton, $Z^1(\Gamma\times\Omega,G)$ is identified with $\Hom(\Gamma,G)$,
and being cohomologous is the same as being conjugate.

\begin{lemma}[Cocycle reduction lemma] \label{l:reduction}
    Assume $\rho\in Z^1(\Gamma\times\Omega, G)$ is a measurable cocycle, 
    $H<G$ a closed subgroup and $f:\Omega\to G/H$ is a $\rho$-map.
    
    Then there exists 
    measurable map $s:\Omega\to G$ such that for $\omega$-a.e $z\in \Omega$, 
    $s(z)f(z)=eH\in G/H$ and the cocycle
    $\rho'$ defined in \eqref{eq:cohom} satisfies for 
    every $\gamma\in \Gamma$ and $\omega$-a.e $z\in \Omega$,
    $\rho'(\gamma,z)\in H$, thus it may be viewed as an element 
    of $Z^1(\Gamma\times\Omega,H)$.
\end{lemma}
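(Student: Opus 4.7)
The strategy is to build $s$ directly from $f$ using a Borel section of the quotient map $\pi: G \to G/H$. The first step is to invoke the standard descriptive set theory fact that, since $G$ is Polish and $H < G$ is a closed subgroup, the projection $\pi:G\to G/H$ admits a Borel measurable section $\sigma: G/H\to G$, i.e.\ $\pi\circ\sigma=\id_{G/H}$. This can be taken from the general theory of Polish groups acting on Polish spaces (Kuratowski--Ryll-Nardzewski, or Mackey's selector theorem); note that $G/H$ is itself a standard Borel space.

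Once $\sigma$ is fixed, the plan is to define
\[
    s:\Omega\to G,\qquad s(z):=\sigma(f(z))^{-1}.
\]
This $s$ is measurable as a composition of measurable maps, and by construction $\sigma(f(z))$ is a representative of the coset $f(z)\in G/H$, so $s(z)f(z) = \sigma(f(z))^{-1}\cdot f(z) = eH$, which is the first requirement. Equivalently, $f(z) = s(z)^{-1}\cdot eH$ as an element of $G/H$.

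It remains to verify that the cohomologous cocycle $\rho'(\gamma,z):=s(\gamma z)\rho(\gamma,z)s(z)^{-1}$ takes values in $H=\Stab_G(eH)$. Applying $\rho'(\gamma,z)$ to the base point $eH$ and using the $\rho$-equivariance of $f$,
\[
    \rho'(\gamma,z)\cdot eH
    = s(\gamma z)\rho(\gamma,z)\bigl(s(z)^{-1}\cdot eH\bigr)
    = s(\gamma z)\rho(\gamma,z) f(z)
    = s(\gamma z) f(\gamma z)
    = eH,
\]
for $\omega$-a.e.\ $z\in\Omega$ and every $\gamma\in\Gamma$. Hence $\rho'(\gamma,z)\in H$ a.e., and $\rho'$ may be regarded as an element of $Z^1(\Gamma\times\Omega,H)$.

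The only non-routine ingredient is the existence of the Borel section $\sigma$; everything else is a direct cocycle computation from the definition of $\rho$-equivariance of $f$. The $\rho$-map condition ensures that the ambiguity between the coset-level equation $s(z)f(z)=eH$ and the $G$-level equation for $\rho'$ collapses cleanly, so no compatibility issues arise across the null sets on which the cocycle identity \eqref{eq:cocycle} and the $\rho$-equivariance of $f$ each fail.
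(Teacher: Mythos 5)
Your proposal is correct and takes essentially the same approach as the paper: both define $s = \iota\circ\sigma\circ f$ for a Borel section $\sigma:G/H\to G$ and inversion $\iota$, and then note that $\rho'$ fixes the base coset. You simply spell out the verification that the paper leaves implicit.
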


\begin{proof}
Fix a Borel section $\sigma:G/H\to G$ and set $s=\iota\circ \sigma\circ f:\Omega\to G$ 
where $\iota:G\to G$ is the inversion map.
\end{proof}

We define also a measurable cocycles for a semi-group actions by Equation~\eqref{eq:cocycle}.
In particular, we will consider in this paper actions of $\mathbb{N}$. 
Such an action is generated by a single transformation $T:\Omega\to \Omega$
and a cocycle $\rho:\mathbb{N}\times\Omega \to G$ is determined by its value on the generator, 
that is by the function $\rho(1,\cdot):\Omega\to G$.
Moreover, for every measurable function $w:\Omega\to G$ one can define a unique cocycle
$\rho:\mathbb{N}\times\Omega \to G$ satisfying  $\rho(1,\cdot)=w$
using the formula
\[
    \forall n \in \mathbb{N}, \quad 	\rho(n,z)=w_n(z):=w(T^{n-1}z)\cdots w(Tz)\cdot w(z).
\]
In case $T$ is invertible, the $\mathbb{N}$-action extends to a $\mathbb{Z}$-action
and the above cocycle extends as well to a cocycle $\rho:\mathbb{Z}\times \Omega\to G$ by the formula
\begin{equation}\label{e:f-cocycle}
	\rho(n,z)=w_n(z):=\left\{\begin{array}{lll}
	w(T^{n-1}z)\cdots w(Tz)w(z) & \textrm{if} & n>0,\\
	1 & \textrm{if} & n=0,\\
	w(T^{-n}z)^{-1}\cdots w(T^{-1}z)^{-1} & \textrm{if} & n<0.
	\end{array}\right.
\end{equation}

%
%In particular, measurable cocycles over a p.m.p action $\mathbf{X}=(X,\calX,m,T)$ 
%of $\bbZ$ (if $T$ is invertible), or the semi-group $\bbN$ (if $T$ is non-invertible).
%Let $H$ be a Polish group and $w\in \Map(X,H)$ a measurable map.
%We view $w$ as an element $f=f_1$ of a sequence $\{w_n\in \Map(X,H)\}$ defined by
%\[
%	w_n(x):=w(T^{n-1}x)\cdots w(Tx)\cdot w(x)\qquad (n\in \mathbf{N})
%\]
%that satisfy the cocycle relation $w_{n+k}(x)=w_k(T^nx)\cdot w_n(x)$
%for all $n,k\in \mathbf{N}$.
%We say that $\{w_n\}$ is the measurable $\mathbf{N}\times X\to H$ \textbf{cocycle}, 
%\textbf{generated by} $w\in \Map(X,H)$.
%
%If $T$ is invertible, the sequence extends to a $\mathbf{Z}\times X\to H$ cocycle by 
%\begin{equation}\label{e:f-cocycle}
%	w_n(x):=\left\{\begin{array}{lll}
%	w(T^{n-1}x)\cdots w(Tx)w(x) & \textrm{if} & n\ge 1,\\
%	1 & \textrm{if} & n=0,\\
%	w(T^nx)^{-1}\cdots w(T^{-1}x)^{-1} & \textrm{if} & n<0.
%	\end{array}\right.
%\end{equation}
%so that $w_{n+k}(x)=w_k(T^nx)\cdot w_n(x)$ holds for all $n,k\in\bbZ$.
%
%% subsection cocycles (end)
%
%
%\medskip

% subsection joinings (end)

\subsection{Compact convex spaces} % (fold)
\label{sub:compact_convex_spaces}\hfill{}\\
Hereafter we use the term \textbf{compact convex space} as a shorthand for non-empty, convex, compact space 
that has enough continuous affine functionals to separate its points.
Such a space can be viewed as a non-empty closed convex subset $Q\subset [0,1]^A$, 
where  $A$ is a subset of the space $Q^*$ of affine functionals 
\[
	Q^*:=\setdef{\lambda:Q\to[0,1]}{\lambda\ \textrm{is\ continuous\ and\ affine}}
\]
and $A$ separates points of $Q$.
The class of convex compact spaces is closed under products, and under taking closed convex subsets.
A version of Hahn-Banach theorem shows that continuous affine functionals on a closed convex subset can be extended to an ambient space,
and that disjoint closed convex subsets can be separated by continuous affine functionals.

Denote by $\Aff(Q)$ the group of all continuous affine bijections $T:Q\to Q$.
It is a complete topological group with respect to the compact-open topology, i.e. the topology defined by
$g\mapsto \langle \lambda,g.q\rangle$, $\lambda\in Q^*$, $q\in Q$.

In what follows we will focus on metrizable convex compact spaces, i.e. spaces which are isomorphic to non-empty closed convex subsets of $[0,1]^\bbN$.
For such a space $Q$, the group $\Aff(Q)$ is a Polish group, and a homomorphism 
$\Gamma\to\Aff(Q)$ from a lcsc group $\Gamma$ is continuous iff the action map
$\Gamma\times Q\to Q$, $(\gamma,q)\mapsto \gamma.q$, is continuous. 

\begin{example}\label{Ex:convexcompact}
	Let $V$ be a separable Banach space, and $Q\subset V^*$ be a non-empty closed convex 
        subset of a ball in $V^*$.
	Then $Q$ is a convex compact space with respect to the weak* topology.
        In all of our examples $V$ is separable, hence $Q$ embeds in $[0,1]^\bbN$ 
        and is therefore metrizable.  
	There is a continuous homomorphism of Polish groups
	\[
		\setdef{ g\in \GL(V)}{g(Q)=Q}\ \overto{} \ \Aff(Q)
	\] 
	where $\GL(V)$ denotes the group of all bounded invertible linear automorphisms 
        of the Banach space $V$,
	equipped with the weak topology.
	The following is a particular example of this general setting, 
        that we shall use below.
	% \begin{itemize}
	% 	\item[(i)]
		Let $M$ be a compact metrizable space and $Q=\Prob(M)$ the space of all Borel probability 
		measures on $M$ equipped with the weak* topology,
		i.e. $\mu_n\to\mu$ iff $\int f\dd \mu_n\to\int f\dd \mu$ for every $f\in C(M,\bbR)$.
		Then every $h\in \Homeo(M)$ defines an affine map $\alpha(h)\in \Aff(\Prob(M))$, and 
		$h\mapsto \alpha(h)$ is a continuous homomorphism (in fact isomorphism) of Polish groups.
	% 	\item[(ii)] \ub{do we use it?}
	% 	Let $\Gamma$ be an lcsc group, $\Gamma\acts (\Omega,\omega)$ a p.m.p action on a standard probability space,
	% 	$M$ a compact metrizable space, and $\rho:\Gamma\times \Omega\to \Homeo(M)$ a measurable cocycle.
	% 	Then 
	% 	\[
	% 		Q=\operatorname{Map}(\Omega, \Prob(M))\subset L^\infty(\Omega,\Prob(M))=L^1(\Omega,C(M,\bbR))^*
	% 	\]
	% 	is a convex compact space with respect to the weak* topology.
	% 	This topology is the same as convergence in measure on $(\Omega,\omega)$
	% 	for measurable maps $\Omega\to \Prob(M)$.
	% 	The homomorphism
	% 	\[
	% 		\Gamma\to  \Aff(Q),\qquad 
	% 		\gamma:\ [z\mapsto \mu_z]\ \mapsto\ [z\mapsto \rho(\gamma,\gamma^{-1}z)_*\mu_{\gamma^{-1}z}]
	% 	\]
	% 	is continuous.
	% \end{itemize}
\end{example}

Let $Q$ be a metrizable convex compact space. The \textbf{barycenter map}    
\[
	\bary:\Prob(Q)\to Q
\] 
is defined using integration of affine functionals as follows: for $\mu\in\Prob(Q)$ and $\lambda\in Q^*$ define
\[
	\langle \lambda,\mu\rangle :=\int_Q \langle \lambda,q\rangle\dd \mu(q)
\]
and let $\bary(\mu)\in Q$ be the unique point satisfying
\begin{equation}\label{e:def-bar-mu}
	\langle \lambda, \bary(\mu)\rangle = \langle \lambda,\mu\rangle\qquad (\lambda\in Q^*).
\end{equation}
It follows from the definitions that the affine continuous map
\[	
	\bary:\Prob(Q)\overto{} Q
\]
is $\Aff(Q)$-equivariant.
If $(X,\calX,\mu)$ is a probability space, $\psi\in\Map(X,Q)$ defines a push-forward measure
$\psi_*\mu\in\Prob(Q)$. 
One can denote the barycenter $\bary(\psi_*\mu)\in Q$ of this measure by $\int_X \psi(x)\dd \mu(x)$
meaning 
\[
	\langle \lambda,\int_X \psi(x)\dd \mu(x)\rangle
	=\int_X \langle\lambda,\psi(x)\rangle\dd \mu(x).
\]

% \medskip

% \subsection{Spaces of maps to compact convex spaces} %\hfill{}\\% (fold)
% \label{sub:mapstoconv}\hfill{}\\
% %
% Assuming $Q$ is metrizable, we obtain a map
% \[ 
% 	\Map(X,Q) \to  Q, \quad \phi\mapsto  \bary(\phi_*(m)) 
% \]
% which is easily seen to be continuous and affine.
% We think of this map as an integration and may use the notation
% \[ 
% 	\int_X \phi \dd m := \bary(\phi_*(m)). 
% \]
%\textcolor{red}{Good place to define amenability and recall Kakutani.}

\medskip

\subsection{Conditional Expectation and Martingales} \hfill{}\\% (fold)
\label{sub:martingales}
Let $(X,\calX,m)$ be a standard probability space, 
$\eta:X\to Q$ and $\calF\subset \calX$ a complete sub-$\sigma$-algebra. 
Then there exists a unique, modulo null sets, 
$\calF$-measurable map 
\[
	\bbE(\nu \mid \calF):X\ \overto{} \ Q,
\]
called \textbf{conditional expectation} of $\nu$ relative to $\calF$, characterized by the property that for all $E\in\calF$
\[
	\int_E \langle \lambda, \nu(x)\rangle\dd m(x)= \int_E \langle \lambda, \bbE(\nu \mid \calF)\rangle\dd m
	\qquad(\lambda\in Q^*).
\]
Uniqueness, up to a modification on a $m$-null set, and existence of $\bbE(\nu \mid \calF)$ can be deduced from
the usual real valued conditional expectation by considering functions $x\mapsto \langle \lambda_i, \nu(x)\rangle$ for a countable 
separating family of functionals $\{\lambda_i\}_{i\in I}\subset Q^*$. 
Equivalently, one can use the barycenter map and disintegration of measures to construct $\bbE(\nu \mid \calF)$ as follows.
\begin{equation}\label{e:cond-via-bary}
	\begin{split}
		(X,\calF,m|_\calF)&\to  \Prob(X,\calX) \overto{\nu_*} \Prob(Q) \overto{\bary} Q,\\
		\bbE(\nu \mid \calF)(x)&=\bary(\nu_*(\mu_x))
	\end{split}
\end{equation}
where the $\calF$-measurable map $x\mapsto \mu_x\in \Prob(X,\calX)$ is the disintegration of 
the map $(X,\calX,m)\to (X,\calF,m|_\calF)$.

% \medskip
% 
% \begin{lemma}\label{L:condexp-of-ext}
% 	Let $(X,\calX,m)$ be a standard measure space, $\calF\subset \calX$ a sub-$\sigma$-algebra,
% 	$Q$ a metrizable convex compact space, $\mu:X\to Q$ a measurable map and $\nu=\bbE(\mu\mid\calF)$
% 	the conditional expectation. 
% 	Assume that for an extreme point $q_0\in\ext(Q)$ the set $E_0=\setdef{x\in X}{\nu_x=q_0}\in\calF$ 
% 	has positive measure. Then $\mu_x=q_0$ for $m$-a.e $x\in E_0$.
% \end{lemma}
% \begin{proof}
% 	Using formula (\ref{e:cond-via-bary}) this follows from Lemma~\ref{L:bary-ext}.
% \end{proof}

\medskip

The following version of the Martingale Convergence Theorem for functions taking values in convex compact spaces
follows from the classical one 
applied to the $L^\infty(X,m)$ functions $\{x\mapsto\langle \lambda_i, \eta_n(x)\rangle\}_{i\in I}$, 
where  $\{\lambda_i\}_{i\in I}\subset Q^*$ is a countable separating family.

\begin{theorem}[Martingale Convergence]\label{T:MCT}\hfill{}\\
	Let $\calF_1\subset \calF_2\subset \dots\subset \calF_n \nearrow \calF_\infty\subset \calX$ be an increasing sequence 
	of complete sub-$\sigma$-algebras of a standard probability space $(X,\calX,m)$, 
	$Q$ be a non-empty, convex, compact, metrizable space, 
	and $\{\eta_n:X\to Q \mid n\ge 1\}$ a sequence of maps satisfying 
	\[
		\eta_n=\bbE(\eta_{n+1} \mid \calF_n)\qquad (n\ge 1).
	\]
	Then there exists a unique (modulo null sets) $\calF_\infty$-measurable map $\eta:X\to Q$,  
	so that 
	\[
		\eta(x)=\lim_{n\to\infty} \eta_n(x)
	\]	
	for $m$-a.e $x\in X$.
\end{theorem}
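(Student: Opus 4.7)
The plan is to reduce the statement to the classical scalar Martingale Convergence Theorem by coordinatizing $Q$ through a countable separating family of continuous affine functionals. Since $Q$ is compact convex metrizable, it embeds as a closed convex subset of $[0,1]^\bbN$, so its topology is induced by a countable separating family $\{\lambda_i\}_{i\in\bbN}\subset Q^*$ taking values in $[0,1]$. For each $i$, set $f_n^{(i)}(x):=\langle \lambda_i,\eta_n(x)\rangle\in[0,1]$. The defining property of conditional expectation for $Q$-valued maps, applied to $\eta_{n+1}$ with the functional $\lambda_i$, yields for every $E\in\calF_n$
\[
\int_E f_n^{(i)}\dd m = \int_E \langle \lambda_i,\bbE(\eta_{n+1}\mid\calF_n)\rangle\dd m = \int_E f_{n+1}^{(i)}\dd m,
\]
so $\{f_n^{(i)}\}_n$ is a bounded $(\calF_n)_n$-martingale.

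By classical Doob convergence, for each $i$ there is an $\calF_\infty$-measurable $f_\infty^{(i)}\in L^\infty(X,m)$ and an $m$-null set $N_i$ such that $f_n^{(i)}(x)\to f_\infty^{(i)}(x)$ for $x\notin N_i$. Let $N=\bigcup_i N_i$; it is still null. For $x\notin N$, every coordinate $\langle\lambda_i,\eta_n(x)\rangle$ converges, and since the topology of $Q$ is induced by $\{\lambda_i\}$, any two subsequential limits of $(\eta_n(x))_n$ in the compact space $Q$ must coincide on each $\lambda_i$, hence coincide by separation. Compactness then forces the full sequence $\eta_n(x)$ to converge in $Q$ to a unique point, which I call $\eta(x)$. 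After defining $\eta$ arbitrarily on $N$, one has a map $X\to Q$; it is $\calF_\infty$-measurable because, under the Borel embedding $\iota:Q\to[0,1]^\bbN$ determined by the $\lambda_i$, one has $\iota\circ\eta = (f_\infty^{(i)})_i$, whose components are all $\calF_\infty$-measurable, and $\iota$ is a homeomorphism onto its image (a closed subset of $[0,1]^\bbN$).

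Uniqueness is then immediate: if $\eta'$ is another $\calF_\infty$-measurable map that is an a.e.\ limit of $\eta_n$, then $\langle \lambda_i,\eta'\rangle = f_\infty^{(i)} = \langle\lambda_i,\eta\rangle$ a.e.\ for every $i$, and separation of $Q$ by $\{\lambda_i\}$ forces $\eta=\eta'$ modulo null sets. I do not anticipate any serious obstacle; the only delicate point is the passage from ``pointwise convergence on every $\lambda_i$'' to ``convergence in $Q$,'' which is resolved at once by the compactness of $Q$ together with the fact that $\{\lambda_i\}$ is both countable and separating.
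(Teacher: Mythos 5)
Your proof is correct and follows exactly the route the paper indicates: coordinatize $Q$ through a countable separating family of affine functionals, apply the classical scalar martingale convergence theorem to each coordinate process $\langle\lambda_i,\eta_n(\cdot)\rangle$, and use compactness plus separation to upgrade coordinatewise convergence to convergence in $Q$. The paper states this reduction without detail, so your write-up is simply a fleshed-out version of the intended argument.
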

%

% subsection martingales (end)

\subsection{Metric Ergodicity and Relative Metric Ergodicity} % (fold)
\label{sub:metric_ergodicity}\hfill{}\\
Let $\Gamma\acts (A,\alpha)$ be a measure-class-preserving action of a lcsc group on a standard probability space.
We say that this action is \textbf{metrically ergodic} if given any continuous homomorphism $\Gamma\to \Isom(W,d)$
into isometry group of a Polish metric space $(W,d)$ (we use compact open topology on $\Isom$)
the only $\Gamma$-equivariant maps $A\to W$ are constant ones:
\[
	\Map_\Gamma(A,W)=W^\Gamma.
\]
Here, as usual, $\Map(A,W)$ stands for the space of classes of measurable maps $A\to W$ where two maps that
agree $\alpha$-a.e are identified. 
The subset $\Map_\Gamma(A,W)$ consists of (classes of) such maps $\phi:A\to W$ 
that satisfy $\phi(\gamma.a)=\gamma.\phi(a)$ for a.e $a\in A$ and a.e $\gamma\in\Gamma$.
Note that by replacing metric $d$ by an equivalent bounded metric (e.g. $\min(d,1)$ or $d/(1+d)$)
we may assume $(W,d)$ to have diameter $\le 1$.

If $\Gamma\acts (A,\alpha)$ is metrically ergodic then it is ergodic (consider a two point space
$W=\{0,1\}$ with the trivial $\Gamma$-action) and weakly mixing in the following sense.

\begin{prop}[Weak Mixing]\hfill{}\\
	Let $\Gamma\acts (A,\alpha)$ be a metrically ergodic measure-class-preserving action,
	and let $\Gamma\acts (\Omega,\omega)$ be an ergodic p.m.p action.
	Then the diagonal $\Gamma$-action on $(A\times\Omega,\alpha\times\omega)$ is ergodic.
\end{prop}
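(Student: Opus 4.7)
The plan is to reduce ergodicity of the diagonal action to the defining property of metric ergodicity by passing to the Koopman representation on $L^2(\Omega,\omega)$. Suppose that $f\in L^\infty(A\times\Omega,\alpha\times\omega)$ is a bounded $\Gamma$-invariant function; I want to conclude that $f$ is essentially constant.

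First, using the Fubini identification of Map spaces (cf.\ \eqref{eq:fubini}), view $f$ as a measurable map
\[
\phi:A\overto{} L^2(\Omega,\omega),\qquad \phi(a)=f(a,\cdot),
\]
noting that $\|\phi(a)\|_2\le \|f\|_\infty\cdot\omega(\Omega)^{1/2}$, so $\phi$ actually takes values in the closed ball $B_R\subset L^2(\Omega,\omega)$ of some radius $R$. Equip $B_R$ with the norm metric; it is a Polish metric space. The Koopman representation $\pi:\Gamma\to U(L^2(\Omega,\omega))$, defined by $\pi(\gamma)h(\omega)=h(\gamma^{-1}\omega)$, is strongly continuous since the $\Gamma$-action on $(\Omega,\omega)$ is p.m.p.\ and measurable; this gives a continuous homomorphism $\Gamma\to\Isom(B_R,\|\cdot\|_2)$, because the compact-open topology on the isometry group coincides with pointwise convergence.

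Next, I want to translate $\Gamma$-invariance of $f$ into $\Gamma$-equivariance of $\phi$. Using the invariance $f(\gamma a,\gamma\omega)=f(a,\omega)$ for a.e.\ $(a,\omega)$ and every $\gamma$, a direct computation gives
\[
\phi(\gamma a)(\omega)=f(\gamma a,\omega)=f(a,\gamma^{-1}\omega)=\pi(\gamma)\phi(a)(\omega),
\]
so $\phi\in\Map_\Gamma(A,B_R)$. By metric ergodicity of $\Gamma\acts(A,\alpha)$, $\phi$ must be essentially constant, say $\phi(a)=v$ for $\alpha$-a.e.\ $a$, with $v\in L^2(\Omega,\omega)^\Gamma$. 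Then $f(a,\omega)=v(\omega)$ depends only on $\omega$, and ergodicity of $\Gamma\acts(\Omega,\omega)$ forces $v$, hence $f$, to be constant a.e. Since $\Gamma$-invariant measurable sets produce $\Gamma$-invariant bounded functions (their indicators), this proves ergodicity of the diagonal action.

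The main subtlety to get right is the choice of target space: one must ensure that $\phi$ lands in a genuinely Polish metric space on which $\Gamma$ acts by isometries through a continuous homomorphism, which is why I truncate to a norm ball and invoke strong continuity of the Koopman representation. Everything else — the Fubini identification, the equivariance computation, and the reduction of ergodicity to constancy of invariant $L^\infty$-functions — is routine.
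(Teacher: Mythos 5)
Your proof is correct and uses essentially the same mechanism as the paper: translate the $\Gamma$-invariant object on $A\times\Omega$ into a $\Gamma$-equivariant map from $A$ into an $L^2(\Omega)$-valued metric space and invoke metric ergodicity. The paper does this via the unit sphere of $L^2_0(\Omega,\omega)$ (which has no $\Gamma$-fixed points, yielding a contradiction for non-trivial $E$), whereas you work with a norm ball of the full $L^2(\Omega,\omega)$ and finish by observing that the resulting constant lies in $L^2(\Omega)^\Gamma=\bbR\cdot\mathbf 1$ by ergodicity of $\Omega$; these are cosmetically different but amount to the same argument.
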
 
\begin{proof}
	Since $\Gamma\acts (\Omega,\omega)$ is an ergodic p.m.p actio, 
	the unitary $\Gamma$-representation $\pi$ on $L^2_0(\Omega,\omega)=L^2(\Omega,\omega)\ominus \bbC$
	has no non-zero invariant vectors. 
	Let $W$ to be the unit sphere of $L^2_0(\Omega,\omega)$.
	Let $E\subset A\times \Omega$ be a $\Gamma$-invariant subset.
	For $a\in A$ denote $E_a:=\setdef{z\in\Omega}{(a,z)\in E}$
	and note that the measurable function $A\to [0,1]$, defined by  
	$a\mapsto \omega(E_a)$, is $\Gamma$-invariant.
	By ergodicity there is a constant $c$ so that a.e $\omega(E_a)=c$.
	If $c\ne 0,1$ then $F(a)=(1_{E_a}-c)/\sqrt{c\cdot (1-c)}$ is an equivariant 
	map to $W$, which is impossible. 
	Hence $c=0$ or $c=1$, and so $E$ is null or conull in $A\times \Omega$.
\end{proof}

\medskip

A \textbf{separable metric extension} is a Borel map between standard Borel spaces
$q:W\to V$ together with a Borel map $d:W\times_V W\to [0,+\infty]$ where for each $v\in V$, 
the restriction $d_v$ of $d$ to the fiber $W_v\times W_v$ gives a separable metric on $W_v=q^{-1}(\{v\})$.
A separable metric extension is said to be $\Gamma$-invariant if $\Gamma$ acts measurably on $W$ and $V$, 
the extension $q$ is $\Gamma$-equivariant, and the function $d$ is $\Gamma$-invariant. 
Alternatively, we say in this case that $\Gamma$ acts \textbf{isometrically} on $W$ \textbf{relatively} to $V$.
Note that for each $v\in V$ and $\gamma\in\Gamma$ the map  
\[
	\gamma:(W_v,d_v)\ \to \ (W_{\gamma.v},d_{\gamma.v}) 
\]
is an actual isometry.

\begin{example} \label{ex:invmet}
    Let $G$ be a Polish topological group and $L<H<G$ closed subgroups such that the coset space $H/L$ admits an $H$-invariant separable Borel metric $d$.
Consider $d$ as a map
\[ L\backslash H /L \simeq (H/L\times H/L)/H \to [0,\infty). \]
Using the identification 
\[ (G/L \times_{G/H} G/L)/G \simeq L\backslash H /L, \]
$d$ gives rise to a well defined $G$-invariant Borel separable metric associated with the map $\pi:G/L\to G/H$, making the later a separable metric extension.
\end{example}

\begin{defn}\label{D:field-of-polish}\hfill{}\\
Given $\Gamma$-Lebesgue spaces $(A,\calA,\alpha)$ and $(B,\calB,\beta)$,
a $\Gamma$-equivariant map $p\in \Map_\Gamma(A,B)$ is \textbf{relatively metrically ergodic} if, 
given any $\Gamma$-equivariant separable metric extension $q:W\to V$, and any 
measurable $\Gamma$-maps $F\in \Map_\Gamma(A,W)$ and $f\in \Map_\Gamma(B,V)$ satisfying $q\circ F=f\circ p$,
there exists a $\Gamma$-map $\phi\in \Map_\Gamma(B,W)$ so that $F=\phi\circ p$ as in the diagram
\[
		\begin{tikzcd} (A,\alpha) \ar[rr, "F"] \ar[d, "p"] & & W \ar[d, "q"] \\
		           (B,\beta) \ar[rr, "f"] \ar[urr, dotted, "\phi"] & & V.
		\end{tikzcd}
\]
\end{defn}

\begin{remark}
In Definition~\ref{D:field-of-polish}, upon replacing $d_v$ with $\min\{d_v,1\}$ for $v\in V$,
one may assume without loss of generality that $d$ is bounded by $1$.
\end{remark}

\begin{remark}
The role of the group $\Gamma$ In Definition~\ref{D:field-of-polish} coulde be replaced
by a semi-group. In particular, we may consider the semi-group $\mathbb{N}$,
generated by a single transformation.
This will be the context in Proposition~\ref{P:natext-relmeterg}.
\end{remark}

\begin{prop} \label{p:RME}
\begin{itemize}
\item The property of relative metric ergodicity is closed under composition of $\Gamma$-maps.
\item If $A\to B$ and $B\to C$ are $\Gamma$-maps, and $A\to C$ is relatively metrically ergodic,
then so is $B\to C$, but $A\to B$ need not be relatively metrically ergodic.
\item For Lebesgue $\Gamma$-spaces $A$ and $B$, if the projection $A\times B\to B$ is relatively
metrically ergodic then $A$ is (absolutely) metrically ergodic. This is an
``if and only if" in case $B$ is a p.m.p action, but not in general.
\end{itemize}
\end{prop}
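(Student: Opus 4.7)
The plan is to handle the three bullets in turn, with the positive implications reducing to diagram-chases against the definition and the failure assertions requiring separate constructions. For the first bullet, suppose $p_1 \colon A \to B$ and $p_2 \colon B \to C$ are both relatively metrically ergodic and fix a $\Gamma$-equivariant separable metric extension $q \colon W \to V$ together with $\Gamma$-maps $F \colon A \to W$ and $f \colon C \to V$ satisfying $q \circ F = f \circ p_2 \circ p_1$. I would first apply relative metric ergodicity of $p_1$, using the auxiliary base map $f \circ p_2 \colon B \to V$, to extract $\psi \colon B \to W$ with $F = \psi \circ p_1$ and $q \circ \psi = f \circ p_2$. A second application of relative metric ergodicity, this time of $p_2$ relative to $(\psi, f)$, then produces $\phi \colon C \to W$ with $\psi = \phi \circ p_2$, so that $F = \phi \circ p_2 \circ p_1$ as required.

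For the second bullet, assume $p_2 \circ p_1 \colon A \to C$ is relatively metrically ergodic. Given $F \colon B \to W$ and $f \colon C \to V$ compatible over $p_2$, the pulled-back map $F \circ p_1 \colon A \to W$ is compatible over $p_2 \circ p_1$, and the hypothesis yields $\phi \colon C \to W$ with $F \circ p_1 = \phi \circ p_2 \circ p_1$. Since $p_1$ is measure-class preserving, pullback along $p_1$ is injective on equivalence classes of measurable maps, so this forces $F = \phi \circ p_2$ almost everywhere. For the failure of $p_1$ to inherit relative metric ergodicity, I would exhibit a product example $A = B \times A'$ with projection onto $B$, choosing $A'$ to be non-metrically-ergodic while keeping $A$ metrically ergodic over $C = \{*\}$; the third bullet then certifies the failure of relative metric ergodicity of the projection.

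For the third bullet, the forward implication uses the trivial extension $W \to \{*\}$: any $\Gamma$-equivariant $F_0 \colon A \to W$ yields $F(a,b) := F_0(a)$ compatible with the projection $A \times B \to B$, and the forced factorization through $B$ forces $F_0$ to be essentially constant. For the converse in the p.m.p.\ case, I would equip the space $\Map_f(B, W)$ of measurable sections of $W \to V$ over $f$ with the bounded Polish metric
\[
    D(\psi_1, \psi_2) = \int_B \min(d(\psi_1(b), \psi_2(b)), 1)\, \dd \beta(b),
\]
verify that $\Gamma$ acts on it by isometries (using fibrewise isometry of the extension together with $\Gamma$-invariance of $\beta$), and apply metric ergodicity of $A$ to the $\Gamma$-equivariant map $a \mapsto F(a, \cdot)$, whose constancy yields the required $\phi$.

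The main obstacle is that the isometry property of the $\Gamma$-action on $\Map_f(B,W)$ uses $\gamma_* \beta = \beta$ in an essential way, and this is precisely what breaks down when $\beta$ is only quasi-invariant, which points to where the converse fails. I would build the corresponding counterexample from a boundary-type $\Gamma$-action --- for instance, a lattice $\Gamma$ in a semisimple Lie group $G$ acting on $B = G/P$ --- combined with a suitably metrically ergodic $A$.
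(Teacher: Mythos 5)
The positive directions are fine: closure under composition via the two-step application of relative metric ergodicity, the positive half of bullet~2 via precomposition with $p_1$ and injectivity of $p_1^*$, the forward implication of bullet~3 using the trivial extension $W\to\{*\}$, and the p.m.p.\ converse via the isometric $\Gamma$-action on the section space $\Map_f(B,W)$ with the truncated $L^1$ metric. These are precisely the ``easy verifications'' the paper leaves to the reader.

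The proposed counterexample for the failure assertion in bullet~2, however, is incoherent. You set $A=B\times A'$ with the coordinate projection $A\to B$ and ask for $A'$ to be non-metrically-ergodic while $A$ is metrically ergodic. But $A'$ is itself a $\Gamma$-equivariant, measure-preserving factor of $A$ via the \emph{other} coordinate projection $A=B\times A'\to A'$. Composing any $\Gamma$-equivariant map $A'\to W$ into a Polish isometric $\Gamma$-space with that projection and invoking metric ergodicity of $A$ forces the original map to be essentially constant, so $A'$ is metrically ergodic whenever $A$ is. The two requirements you impose on $A'$ are therefore mutually exclusive, and the construction collapses. A working counterexample must be of a different shape; for instance, take $C=\{*\}$, $B$ a metrically ergodic p.m.p.\ $\Gamma$-space, and $A\to B$ a nontrivial ergodic compact group extension that remains metrically ergodic: then $A\to C$ and $B\to C$ are RME, while $A\to B$ fails RME because the extension $A\to B$ is itself a separable metric extension with no equivariant section, so taking $q=p_1$, $F=\id_A$, $f=\id_B$ already defeats the factorization.

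A smaller remark: for the ``not in general'' part of bullet~3 you only gesture at a boundary construction. That is defensible since the paper omits it too, but note that for a lattice $\Gamma<G$ the projection $G/P\times G/P\to G/P$ \emph{is} relatively metrically ergodic (this is exactly the boundary-theory input the paper relies on), so the naive boundary candidate does not obviously provide the required failure and some care is needed there.
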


\begin{proof}
    The verification is easy and left to the reader.
\end{proof}

We record the following for a later use, in \S\ref{sub:proof_of_theorem_ref_t_boundary}.

\begin{lemma} \label{L:rme-toGH}
Let $G$ be a Polish topological group and $L<H<G$ closed subgroups such that the coset space $H/L$ admits an $H$-invariant separable Borel metric $d$.
Let $p:(A,\alpha)\to  (B,\beta)$ be relatively metrically ergodic map,
	and 
	assume that there exist $F\in\Map_\Gamma(A, G/L)$ and $f\in\Map_\Gamma(B, G/H)$ 
	so that $f\circ p=\pi\circ F$. 
	
	Then there exists $\phi\in\Map_\Gamma(B,G/L)$ 
	which makes the following diagram commute:
	\[
		\begin{tikzcd}
			A \ar[rr, "F"] \ar[d, "p"] & & G/L \ar[d, "\pi"] \\
			B \ar[rr, "f"] \ar[urr, dotted, "\phi"] & & G/H.
		\end{tikzcd}
	\]
\end{lemma}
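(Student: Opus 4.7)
The plan is to directly apply Definition~\ref{D:field-of-polish} (relative metric ergodicity of $p$) to the setting $W = G/L$, $V = G/H$, $q = \pi$. To do this, the essential task is to equip the projection $\pi : G/L \to G/H$ with the structure of a $\Gamma$-invariant separable metric extension, and this is exactly what Example~\ref{ex:invmet} provides.

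First I would invoke Example~\ref{ex:invmet}: the $H$-invariant separable Borel metric $d$ on $H/L$, viewed as a function on $L\backslash H/L \simeq (G/L \times_{G/H} G/L)/G$, transports to a $G$-invariant separable Borel metric $\tilde{d}$ on the fibers of $\pi:G/L\to G/H$, making $\pi$ a separable metric extension in the sense preceding Definition~\ref{D:field-of-polish}. Since the $\Gamma$-actions on $G/L$ and $G/H$ are induced from the $G$-action, $G$-invariance of $\tilde{d}$ upgrades to $\Gamma$-invariance, and $\pi$ acquires the structure of a $\Gamma$-equivariant separable metric extension. (If desired, one replaces $\tilde{d}$ by $\min\{\tilde{d},1\}$ to get a bounded metric, as noted in the remark after Definition~\ref{D:field-of-polish}.)

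Next, with this metric extension structure in place, the hypothesis $f\circ p = \pi\circ F$ is exactly the compatibility condition $q\circ F = f\circ p$ appearing in Definition~\ref{D:field-of-polish}. Applying the relative metric ergodicity of $p$ to $F\in\Map_\Gamma(A,G/L)$ and $f\in\Map_\Gamma(B,G/H)$ yields a $\Gamma$-map $\phi\in\Map_\Gamma(B,G/L)$ with $F = \phi\circ p$.

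Finally, I would verify the unstated but desired equality $\pi\circ\phi = f$ (at the level of $\Map_\Gamma(B,G/H)$). Postcomposing $F=\phi\circ p$ with $\pi$ gives $\pi\circ\phi\circ p = \pi\circ F = f\circ p$, and since $p_*\alpha = \beta$ this forces $\pi\circ\phi = f$ $\beta$-a.e., completing the diagram. There is no real obstacle here: the entire argument is a packaging lemma whose content resides in Example~\ref{ex:invmet} plus the definition of relative metric ergodicity. The only mild subtlety is ensuring that the fiberwise metric supplied by Example~\ref{ex:invmet} is genuinely separable and Borel as a map on $G/L\times_{G/H} G/L$, but this is built into the hypothesis that $H/L$ carries such a metric.
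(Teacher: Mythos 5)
Your proposal is correct and follows exactly the route the paper intends: the paper's proof is the single line ``Immediate from Example~\ref{ex:invmet},'' and you have simply unpacked that — using Example~\ref{ex:invmet} to make $\pi:G/L\to G/H$ a $\Gamma$-equivariant separable metric extension, then invoking Definition~\ref{D:field-of-polish}. The final check that $\pi\circ\phi=f$ is a sensible addition but raises no new issues.
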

%\begin{proof}
%Let $d$ be the $H$-invariant Borel separable metric on $H/L$
%and consider it as a map
%\[ L\backslash H /L \simeq (H/L\times H/L)/H \to [0,\infty). \]
%Using the identification 
%\[ (G/L \times_{G/H} G/L)/G \simeq L\backslash H /L, \]
%$d$ gives rise to a well defined $G$-invariant Borel separable metric on $G/L$ relative to $\pi:G/L\to G/H$.
%The existence of the map $\Phi$ now follows from metric ergodicity. 
%\end{proof}

\begin{proof}
   Immediate from Example~\ref {ex:invmet}.
\end{proof}

\medskip

% subsection relative_metric_ergodicity (end)

\subsection{Boundary systems}\hfill{}\\
This section deals with ``boundary theory".
Throughout the section we fix a locally compact second countable group $\Gamma$.
We also fix a triple $B,B_+,B_-$ of $\Gamma$-Lebesgue spaces together with $\Gamma$-maps
$p_\pm:B\to B_\pm$.
We assume throughout that they form a boundary system:

\begin{defn} \label{def:BS}
The touple $(B,B_+,B_-,\pi_+,\pi_-)$ forms a \textbf{boundary system} for $\Gamma$ if a 
the $\Gamma$-actions on $B_+,B_-$ and $B$ are Zimmer amenable, the $\Gamma$-maps $\pi_\pm$ are relatively metrically ergodic 
and the map $\pi_+\times\pi_-$
is measure class preserving, when endowing $B_+\times B_-$ with the product measure class.
\end{defn}

\begin{remark}
For the boundary system given by $(B,B_+,B_-,\pi_+,\pi_-)$, the pair $(B_+,B_-)$ is a boundary pair a la \cite[Definition~2.3]{BF:icm}.
Conversely, for every boundary pair $(B_+,B_-)$, the system $(B_+\times B_-,B_+,B_-,\pi_+,\pi_-)$,
where $\pi_\pm$ are the coordinate projections, forms a boundary system.
\end{remark}

Applying Proposition~\ref{p:RME} we get the following.

\begin{lemma} \label{L:BSME}
Each of the $\Gamma$-Lebesgue spaces $B_+,B_-$ and $B$ is metrically ergodic.
\end{lemma}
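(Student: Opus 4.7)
The plan is to reduce each assertion to Proposition~\ref{p:RME}. The key observation is that metric ergodicity of a $\Gamma$-Lebesgue space $A$ is equivalent to relative metric ergodicity of the map $A\to\{*\}$ to a one-point $\Gamma$-space, since a $\Gamma$-invariant separable metric extension over $\{*\}$ is simply a separable Polish $\Gamma$-space with isometric action, and conversely.

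First I will prove metric ergodicity of $B_+$. Factor the given relatively metrically ergodic $\Gamma$-map $\pi_-$ through the product as
\[
    B\ \overto{\pi_+\times\pi_-}\ B_+\times B_-\ \overto{\mathrm{pr}_{B_-}}\ B_-,
\]
which is legitimate because $\pi_+\times\pi_-$ is a measure-class-preserving $\Gamma$-map. The second bullet of Proposition~\ref{p:RME}, applied with the outer composite equal to the relatively metrically ergodic map $\pi_-$, forces the projection $\mathrm{pr}_{B_-}\colon B_+\times B_-\to B_-$ to be relatively metrically ergodic. The third bullet of Proposition~\ref{p:RME} (with $A=B_+$ and $B=B_-$) then implies that $B_+$ is metrically ergodic. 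Swapping the roles of $+$ and $-$ yields metric ergodicity of $B_-$.

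For $B$ itself, the equivalence noted above recasts metric ergodicity of $B_+$ as relative metric ergodicity of $B_+\to\{*\}$. Composing with the relatively metrically ergodic map $\pi_+\colon B\to B_+$ and invoking the first bullet of Proposition~\ref{p:RME} (closure under composition), we conclude that $B\to\{*\}$ is relatively metrically ergodic, i.e., $B$ is metrically ergodic. The only substantive ingredient is the third bullet of Proposition~\ref{p:RME}; the remaining steps are purely functorial bookkeeping, so I do not expect any serious obstacle.
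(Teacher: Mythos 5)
Your argument is correct and is exactly the kind of application the paper has in mind: the paper's own proof of this lemma is just the one line ``Applying Proposition~\ref{p:RME} we get the following,'' and you have supplied the missing bookkeeping. Two small remarks: the measure-class-preserving hypothesis on $\pi_+\times\pi_-$ from Definition~\ref{def:BS} is exactly what you need both to factor $\pi_-$ through $B_+\times B_-$ as a $\Gamma$-map and to ensure that $B_+\times B_-$ carries the product measure class required in the third bullet of Proposition~\ref{p:RME}, so it is worth flagging that this is where the Apafi-type condition enters; and fibers of a separable metric extension are only required to be separable metric (not Polish), but your stated equivalence is precisely the $B=\{*\}$ case of the ``if and only if'' clause of that same third bullet, which the paper already asserts, so no gap.
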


The following result is of a great importance.
Its proof, which was sketched in \cite{BF:icm}, will be given in full details in \S\ref{sub:proof_of_theorem_ref_t_boundary} below.

\begin{theorem}[{\cite[Theorem~3.4]{BF:icm}}]\label{T:boundary} \hfill{}\\
Let $G$ be a connected semisimple real Lie group with finite center
and $\rho:\Gamma\to G$ a continuous homomorphism with a Zariski dense image.
Let $P<G$ be a minimal parabolic.
	Then there exist $\Gamma$-maps $f_\pm:B_\pm\to G/P$
and, considering the map $\delta:G/P\to \Prob(G/P)$ that takes a point to the corresponding Diark measure,
we have 
\[
    \begin{split}
        &\Map_\Gamma(B_-,\Prob(G/P))=\{\delta\circ f_-\},\\
        & \Map_\Gamma(B_+,\Prob(G/P))=\{\delta\circ f_+\}.    
    \end{split}		
\]
Letting $Z<P$ be the centralizer of a maximal split torus $A<P$
and considering the standard map $\pr_+:G/Z\to G/P$
and the associated map $\pr_-=\pr_+\circ \wlong:G/Z\to G/P$
where $\wlong$ is the longest element in the Weyl group associated with $A$,
there exists a $\Gamma$-map $h:B\to G/Z\ $ such that 
\[
    f_-\circ \pi_-=\pr_-\circ h,
    \qquad
    f_+\circ \pi_+=\pr_+\circ h.
\]
%that is, for a.e $b\in B$ and $z\in \Omega$ we have
%	\[
%		f_+(\pi_+(b),z)=\pr_+\circ h(b,z),
%		\qquad
%		f_-(\pi_-(b),z)=\pr_-\circ h(b,z).
%		% \begin{split}
%		% 	&\phi_+(b_+,z)=\pr\circ \theta(b_-,b_+,z),\\
%		% 	&\phi_-(b_-,z)=\check\pr\circ \theta(b_-,b_+,z).
%		% \end{split}
%	\]
	% \[
	% \begin{tikzcd}  & & B_+\times B_-\times \Omega \ar[dl]\ar[dr] \ar[ddrr, bend left]
	% 	\ar[ddll, bend right] \ar[dd, dotted, "\theta"]& & \\
	% 	& B_+\times \Omega \ar[dl, "\psi_+"] & & B_-\times \Omega \ar[dr, "\psi_-"] & \\
	% 	\Prob(G/P) & \ar[l, "\delta"] G/P & G/Z \ar[l, "\check\pr"] \ar[r, "\pr"] & G/P \ar[r, "\delta"] & \Prob(G/P)
	% \end{tikzcd}
	% \]
\end{theorem}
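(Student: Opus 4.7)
The plan is to establish in order: existence of the $\Gamma$-maps $f_\pm:B_\pm\to G/P$, the uniqueness characterization $\Map_\Gamma(B_\pm,\Prob(G/P))=\{\delta\circ f_\pm\}$, and the construction of $h:B\to G/Z$ with its two compatibility identities.

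\textbf{Construction and characterization of $f_\pm$.} Since $B_\pm$ is Zimmer amenable (as part of the boundary system definition) and $\Prob(G/P)$ is a compact convex metrizable $\Gamma$-space under the $\rho$-twisted action, amenability supplies a $\Gamma$-equivariant map $\phi_+:B_+\to\Prob(G/P)$, and analogously $\phi_-$. The central point is that any such $\phi_\pm$ is Dirac-valued, i.e.\ of the form $\delta\circ f_\pm$ for some measurable $f_\pm:B_\pm\to G/P$. For this, invoke the classical algebraicity of stabilizers: the $G$-stabilizer of a non-Dirac $\mu\in\Prob(G/P)$ is a proper algebraic subgroup of $G$. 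Consider the $\Gamma$-map $b\mapsto H(b)$, where $H(b)$ is the algebraic hull of the $G$-stabilizer of $\phi_+(b)$, viewed as a map into the space of algebraic subgroups of $G$. Using that the $G$-orbits on this target admit $G$-invariant Polish metrics, metric ergodicity of $B_+$ (Lemma~\ref{L:BSME}) shows $H$ is essentially constant modulo $G$-conjugation; the constant subgroup is then normalized by $\rho(\Gamma)$ and, by Zariski density, must equal $G$ itself, forcing $\phi_+(b)$ to be Dirac a.e. Uniqueness follows by averaging: if $\phi_+,\phi'_+$ are two $\Gamma$-maps then $\tfrac{1}{2}(\phi_++\phi'_+)$ is also a $\Gamma$-map, hence Dirac-valued, which forces $\phi_+=\phi'_+$.

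\textbf{Construction of $h$.} With $f_\pm$ in hand, consider the joint $\Gamma$-map
\[
    \Phi:B\overto{}G/P\times G/P,\qquad \Phi(b)=\bigl(f_+(\pi_+(b)),\,f_-(\pi_-(b))\bigr).
\]
By the Bruhat decomposition, $G/P\times G/P$ is a finite union of $G$-orbits indexed by the Weyl group, with a unique open orbit consisting of pairs $(gP,g\wlong P)$; the stabilizer of $(P,\wlong P)$ equals $P\cap \wlong P\wlong^{-1}=Z$, and the open orbit is $G$-equivariantly identified with $G/Z$ via $gZ\mapsto(gP,g\wlong P)$. It thus suffices to show that $\Phi$ takes values in this open orbit almost everywhere: the resulting lift is the desired $h:B\to G/Z$, and the identities $f_\pm\circ\pi_\pm=\pr_\pm\circ h$ hold tautologically by the parametrization. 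To verify that $\Phi$ lands a.e.\ in the open orbit, use the defining axiom of the boundary system: the pushforward of the measure on $B$ under $(\pi_+,\pi_-)$ is equivalent to the product measure on $B_+\times B_-$. Hence $\Phi_*m$ is equivalent to $(f_+)_*\eta_+\otimes(f_-)_*\eta_-$. By the previous step together with Zariski density, each marginal $(f_\pm)_*\eta_\pm$ is \emph{proper}, in the sense of giving zero mass to every proper algebraic subvariety of $G/P$. The complement of the open orbit is a proper Zariski-closed subset of $G/P\times G/P$, so its mass for this product measure vanishes.

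\textbf{Main obstacle.} The technical heart is the first step: translating the classical Furstenberg--Margulis theorem about Zariski-dense stationary measures on $G/P$ into the purely abstract boundary-system framework, where no explicit $\mu$-stationarity equation is available. The only tools at hand are metric ergodicity (Lemma~\ref{L:BSME}) and the algebraicity of stabilizers of probability measures on $G/P$, and these must be combined via the auxiliary hull map $b\mapsto H(b)$ to rule out non-Dirac values. Once Dirac-valuedness and the ensuing properness of the pushforward measures are secured, the geometric picture in $G/P\times G/P$ together with the product-measure axiom of the boundary system essentially forces the remainder of the statement.
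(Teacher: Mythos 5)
The proposal's first step has a genuine gap that the rest of the argument cannot repair. You claim that, after reducing to the case where the algebraic hull $H(b)$ of the stabilizer is ``essentially constant,'' Zariski density forces this constant subgroup to equal $G$, hence $\phi_+$ is Dirac-valued. This is not a valid dichotomy: a Dirac measure $\delta_{gP}$ also has a proper stabilizer, namely $gPg^{-1}$, so establishing that the stabilizer is $G$ would be absurd (there are no $G$-invariant probability measures on $G/P$). What must actually be shown is that the stabilizer is a \emph{parabolic} subgroup, not all of $G$, and nothing in your argument distinguishes the parabolic case from, say, a normalizer of a torus or a finite subgroup. Moreover, the step ``metric ergodicity of $B_+$ shows $H(b)$ is constant, not merely constant modulo conjugation'' requires a $G$-invariant Polish metric on the orbit $G\cdot H_0 \cong G/N_G(H_0)$, and such a metric does not exist for a general proper algebraic subgroup $H_0<G$; ordinary ergodicity already localizes to a single conjugacy orbit via local closedness of orbits, but the passage from ``constant conjugacy class'' to ``constant subgroup'' is precisely what fails.

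More fundamentally, the metric ergodicity and amenability of $B_+$ alone are insufficient for the conclusion. The paper's proof must, and does, use the \emph{relative} metric ergodicity of $\pi_\pm : B\to B_\pm$ and the full boundary-system structure. The actual mechanism is the AREA machinery: Proposition~\ref{P:ininitial} produces an initial algebraic quotient $G/H$ for $B$ and $G/H_\pm$ for $B_\pm$; Proposition~\ref{P:ininitial-amenable} (amenability of $B_\pm$) shows $H_\pm$ is amenable; Proposition~\ref{prop:L=H_0} (relative metric ergodicity of $\pi_\pm$) shows $H\cdot U_\pm = H_\pm$; the measure-class condition on $\pi_+\times\pi_-$ forces $H_+\cdot H_-$ to be Zariski dense in $G$; and then the algebraic Lemma~\ref{lem:alggrplem} shows $H_\pm$ are parabolic, hence minimal parabolic by amenability. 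None of these steps are visible in your outline, and the argument that $\phi_+$ must be Dirac-valued cannot be run on $B_+$ in isolation. By contrast, your second step (constructing $h$ by showing the joint map $\Phi:B\to G/P\times G/P$ lands a.e.\ in the big Bruhat cell, using the product-measure condition together with properness of the marginals) is a plausible and genuinely different route from the paper, which instead derives $H = Z^g$ by a dimension count and extracts $h$ from the AREA diagram; but your route, too, leans on properness of $(f_\pm)_*\eta_\pm$ which you assert without proof.
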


% \section{The category of Gregs}\label{sec:Gregcategory}\hfill{}\\ % (fold)
% %

% In this section we consider the category of Gregs.
% We recall from Definition~\ref{defn:Greg} that a Greg is a 
% tuple $(X,\calX,m,T,w,\Gamma)$,
% where $(X,\calX,m)$ is a standard probability space, $T:X\to X$ is a measure preserving transformation,
% $\Gamma$ is a locally compact second countable group 
% and $w:X\to \Gamma$ is a measurable map. 

% A morphism from the Greg $(X,\calX,m,T,w,\Gamma)$ to the Greg $(X',\calX',m',T',w',\Gamma')$
% is a triple $(\phi,c,f)$ where $\phi:X\to X'$ is a measure preserving Lebesgue map,
% $c:X\to \mathbb{N}$ is a Lebesgue map.

% \af{Not convinced that we need this}

\subsection{The natural extension}% (fold) 
\label{sub:naturalextension}\hfill{}\\
Let $(Y,\calY,\eta)$ be a Lebesgue space and $S:Y \to Y$ a measure preserving transformation, which is not necessarily invertible.
Denote $Y_n=Y$ for every $n$ and consider the backward sequence 
\[ 
	\cdots \overto{S} Y_2 \overto{S} Y_1 \overto{S} Y_0.
\]
We let $(X,\calX,\mu)=\varprojlim Y_n$ and record the map $p:X\to Y_0 =Y$.
The inverse limit of the sequence of maps $S:Y_n\to Y_n$ gives rise to a map that we will denote $T:X\to X$.
The latter map is invertible, as its inverse is the inverse limit of
the sequence of identity maps $Y_n\to Y_{n+1}$. 
Thus $(X,\calX,\mu,T)$ is an invertible measure preserving system.
Clearly, $p$ is $(T,S)$-equivariant.
In case $S$ is invertible, the map $p$ establishes an isomorphism between $X$ and $Y$, otherwise $Y$ is a proper factor.

Note that $p:X\to Y$ satisfy a universal property:
for every invertible system $(Z,\calZ,\zeta,R)$ and $R,S$-equivariant map $q:Z\to Y$,
the limit of the sequence of maps $q\circ R^{-n}:Z\to Y_n$ gives an $R,T$-equivariant map $q':Z\to X$ such that $q=p\circ q'$.
Therefore the system $(X,T)$ is called \textbf{the natural invertible extension} of $(Y,S)$.

\begin{prop}\label{P:natext-relmeterg}\hfill{}\\
	The natural extension $X\to Y$ is relatively metrically ergodic.\\
	In particular, $X$ is metrically ergodic iff $Y$ is metrically ergodic.
\end{prop}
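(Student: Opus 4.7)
The plan is to exploit the increasing filtration $\calC_n := (p \circ T^{-n})^{-1}(\calY)$ on $X$. Since $(X,T)$ is the natural extension, these satisfy $T\calC_n=\calC_{n+1}$ and $\bigvee_n\calC_n=\calX$, and the $\calC_n$-atom of $z\in X$ equals $T^n(X_{p(T^{-n}z)})$. Let $\mu_z^n$ denote the associated regular conditional probability. Given the data of Definition~\ref{D:field-of-polish} in its $\mathbb{N}$-version --- an equivariant separable metric extension $q:W\to V$ with fiber metric $d\le 1$, and equivariant $F:X\to W$, $f:Y\to V$ with $q\circ F=f\circ p$ --- write $R:W\to W$ for the $\mathbb{N}$-action on $W$, so that $R$ restricts to an isometry $W_v\to W_{Sv}$.

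Consider the conditional distribution $\Theta_n:X\to \Prob(W)$, $\Theta_n(z)=F_*\mu_z^n$, which is supported on the single fiber $W_{f(p(z))}$ since $q\circ F$ is $\calC_0\subseteq\calC_n$-measurable. The tower property of conditional measures gives $\bbE[\Theta_{n+1}\mid\calC_n]=\Theta_n$, so Theorem~\ref{T:MCT} (applied in $\Prob(\hat{W})$ for some Borel compactification $\hat{W}$) yields a pointwise a.e.\ limit $\Theta_\infty$; testing against bounded continuous functions and using $\bigvee\calC_n=\calX$ identifies $\Theta_\infty(z)=\delta_{F(z)}$. Define the diameter-type function
\[
    \psi_n(z)\defq \iint d_{f(p(z))}(w,w')\,d\Theta_n(z)(w)\,d\Theta_n(z)(w')\in[0,1].
\]
Because $d$ is bounded continuous on each fiber $W_v$ and $\Theta_n(z)$ lives entirely in the fixed fiber $W_{f(p(z))}$, weak-$*$ convergence within that fiber gives $\psi_n\to 0$ a.e., and then $\int\psi_n\,d\mu\to 0$ by dominated convergence.

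The crucial identity is $\psi_n\circ T=\psi_{n-1}$. Indeed, $T$ maps the $\calC_{n-1}$-atom of $z$ bijectively onto the $\calC_n$-atom of $Tz$, and uniqueness of disintegration together with $T_*\mu=\mu$ gives $\mu_{Tz}^n=T_*\mu_z^{n-1}$; combined with $F\circ T=R\circ F$ this yields $\Theta_n(Tz)=R_*\Theta_{n-1}(z)$, and the isometry property of $R$ between $W_{f(p(z))}$ and $W_{f(Sp(z))}$ absorbs the change of variables in $\psi_n(Tz)$. Iterating, $\psi_0=\psi_n\circ T^n$, and $T$-invariance of $\mu$ yields $\int\psi_0\,d\mu=\int\psi_n\,d\mu\to 0$, forcing $\psi_0=0$ a.e. Disintegrating, $F|_{X_y}$ is $\mu_y$-essentially constant for $\eta$-a.e.\ $y\in Y$; setting $\phi(y)$ equal to this constant produces $\phi:Y\to W$ with $F=\phi\circ p$, and $\phi$ is automatically equivariant. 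The main delicate point is the bookkeeping in this identity: the $T$-shift in the filtration index must match the $S$-shift of the fibers of $q$, and it is exactly the fiberwise isometry of $R$ that reconciles them. The ``in particular'' statement then follows from Proposition~\ref{p:RME}: since $p:X\to Y$ is RME, the composition and cancellation rules applied to $X\to Y\to\{*\}$ give that $X\to\{*\}$ is RME iff $Y\to\{*\}$ is, i.e.\ $X$ is metrically ergodic iff $Y$ is.
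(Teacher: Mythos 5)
Your overall strategy tracks the paper's closely: you build the natural-extension filtration $\calC_n$, introduce a diameter-type functional $\psi_n$, observe the key identity $\psi_n\circ T=\psi_{n-1}$ (this is the same mechanism as the paper's $T$-invariance of $D(x,x')=d_{f(y)}(F(x),F(x'))$ on $X\times_Y X$), and use $T$-invariance of $\mu$ to reduce everything to showing $\int\psi_n\,d\mu\to 0$. That bookkeeping, including the observation that the $T$-shift of the filtration index is compensated by the fiberwise isometry $R$, is correct, and the ``in particular'' deduction via Proposition~\ref{p:RME} is fine.

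The gap is in the step ``weak-$*$ convergence within that fiber gives $\psi_n\to 0$ a.e.'' Theorem~\ref{T:MCT} applied in $\Prob(\hat W)$ yields $\Theta_n(z)\to\delta_{F(z)}$ for the weak-$*$ topology coming from $C(\hat W)$. But the functional you integrate, $d_{f(p(z))}$, lives only on the fiber $W_{f(p(z))}\times W_{f(p(z))}$ and is \emph{not} assumed to be continuous with respect to the topology $W_{f(p(z))}$ inherits from $\hat W$ (nor from $W$). In the paper's framework $d$ is merely a Borel map, and the metric topology that $d_v$ induces on the fiber $W_v$ can be quite different from the subspace topology: the $d_v$-balls need not be $\hat W$-open. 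Weak-$*$ convergence in $\Prob(\hat W)$ therefore gives no control on integrals of $d_v$, and the conclusion $\psi_n(z)\to 0$ does not follow. (Trying to fix this by writing $\psi_n(z)\le 2\int d_v(\cdot,F(z))\,d\Theta_n(z)$ and applying the scalar martingale theorem runs into the usual diagonal problem: the exceptional null set depends on $z$, which also appears as the evaluation point.) The paper sidesteps this precisely by not invoking weak-$*$ topology at all: Lemma~\ref{L:vanishing-vars} is a direct covering argument with a countable partition of $W$ by sets of small $d$-diameter, which needs only Borel measurability of $F$ and of $d$, and it yields exactly the $L^1$-convergence $\int_{X\times_{Y_n}X}D\,d(m\times_{Y_n}m)\to 0$ that your $\int\psi_n\,d\mu\to 0$ is equivalent to. Replacing your pointwise claim with that lemma (applied fiberwise over $Y$ and integrated by dominated convergence, as the paper does) closes the gap.
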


Let us introduce some notations and preliminary lemmas before proving this proposition.
% Note also that it follows that $X\to Y$ is an ergodic extension, 
% thus $X$ is ergodic iff $Y$ is ergodic.

\medskip

Let $(X,\calX,m)$ be a standard probability space, $\calF\subset \calX$ a complete sub-$\sigma$-algebra.
It corresponds to a measurable quotient $p:(X,\calX,m)\to (Y,\calY,n)$ so that $\calF=p^{-1}(\calY)$,
and let $m=\int_Y \mu_y\dd n(y)$ be the disintegration of measures. 
We shall denote by $(X\times_Y X,m\times_Y m)$ the fiber product probability space
\[
	\begin{split}
		X\times_Y X&:=\setdef{(x,x')\in X\times X}{p(x)=p(x')},\\
		m\times_Y m&:=\int_Y \mu_y\times\mu_y\dd n(y).
	\end{split}
\] 
\begin{lemma}\label{L:vanishing-vars}
	Let $(X,\calX,m)$ be a standard probability space, $\calF_1\subset \calF_2\subset \dots \subset\calX$
	a sequence of complete $\sigma$-algebras with $\calF_n\nearrow \calX$,
	Let $X\to \dots \to Y_2\to Y_1$ be the corresponding quotients, 
	and let $F:X\to W$ be a measurable map into a Polish metric space $(W,d)$ with $\diam(W,d)\le 1$.
	Then 
	\[
		\lim_{n\to\infty}\ \int_{X\times_{Y_n} X}d(F(x),F(x'))\dd (m\times_{Y_n} m)(x,x') = 0.
	\]
\end{lemma}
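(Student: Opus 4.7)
The plan is to express the integral as the $m$-expectation of a nonnegative functional of the conditional distribution of $F$ given $\calF_n$, to control that functional by a quantity to which classical martingale convergence applies, and then to close by dominated convergence. Let $p_n\colon X\to Y_n$ be the quotient corresponding to $\calF_n$ with disintegration $y\mapsto\mu_y^{(n)}$, and set $\nu_n(x):=F_*\mu_{p_n(x)}^{(n)}\in\Prob(W)$ and $\Phi(\nu):=\int\!\int d(w,w')\dd\nu(w)\dd\nu(w')$. Unwinding the definition of $m\times_{Y_n}m$ identifies the integral in the lemma with $\int_X \Phi(\nu_n(x))\dd m(x)$. The triangle inequality $d(w,w')\le d(w,F(x))+d(F(x),w')$ integrated against $\nu_n(x)\otimes\nu_n(x)$ yields the pointwise bound
\[
    \Phi(\nu_n(x))\le 2\int_W d(w,F(x))\dd \nu_n(x)(w)=:2D_n(x).
\]

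Next I would show that $D_n(x)\to 0$ for $m$-a.e.\ $x$. Fix a countable dense set $\{w_j\}\subset W$ and let $\phi_j(w):=d(w,w_j)\in[0,1]$. Each $\phi_j\circ F$ lies in $L^\infty(X,m)\subset L^1(X,m)$, and by the classical L\'evy martingale convergence theorem along the filtration $\calF_n\nearrow\calX$,
\[
    \bbE(\phi_j\circ F\mid \calF_n)(x)\longrightarrow \phi_j(F(x))\qquad \text{for $m$-a.e.\ $x$.}
\]
The disintegration formula identifies the left hand side with $\int_W \phi_j\dd\nu_n(x)$, so intersecting countably many conull sets yields that $\int d(\cdot,w_j)\dd\nu_n(x)\to d(F(x),w_j)$ for every $j$ simultaneously on a single conull set. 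For such $x$ and any $\epsilon>0$, pick $w_j$ with $d(F(x),w_j)<\epsilon$; then the triangle inequality and the convergence above give $\limsup_n D_n(x)\le 2\epsilon$, hence $D_n(x)\to 0$.

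Combining the two steps, $\Phi(\nu_n(x))\to 0$ for $m$-a.e.\ $x$, and since $\Phi\le 1$, dominated convergence concludes. The key step is the almost-sure convergence $D_n(x)\to 0$, obtained from real-valued L\'evy martingale convergence applied to the countable family of distance functions $\phi_j=d(\cdot,w_j)$; the only property of $(W,d)$ used beyond the diameter bound is its separability, which allows the separating family to be chosen countable so that the $m$-null exceptional sets may be unioned without losing full measure.
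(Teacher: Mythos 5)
Your proof is correct, and it takes a genuinely different route from the paper's. The paper argues by hand: for given $\epsilon>0$ it produces a finite partition $A_1,\dots,A_k$ of $X$ (pulled back from a disjointification of $W$ into $2\epsilon$-diameter pieces) covering all but $\epsilon$ of the measure, approximates each $A_i$ by an $\calF_N$-measurable set $E_i$ up to $\delta=(\epsilon/2k)^2$, and then estimates $\Var_n(y)=\int\!\int d(F,F)\dd\mu_{n,y}\dd\mu_{n,y}$ via a Chebyshev argument on the sets $\{y:\mu_{n,y}(E_i\setminus A_i)<\sqrt{\delta}\}$. You instead factor the fibered integral through $\Phi(\nu_n(x))$ where $\nu_n(x)=F_*\mu^{(n)}_{p_n(x)}$, bound $\Phi(\nu_n(x))\le 2D_n(x)=2\int d(\cdot,F(x))\dd\nu_n(x)$ by the triangle inequality, and kill $D_n$ almost surely by running classical L\'evy martingale convergence along the countable family $\phi_j=d(\cdot,w_j)$ with $\{w_j\}$ dense, closing with dominated convergence. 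Both arguments hinge on separability of $(W,d)$; yours replaces the paper's explicit quantitative approximation by two invocations of standard limit theorems, which is shorter and more conceptual, while the paper's version is self-contained at the level of measure-theoretic manipulations and essentially re-derives the portion of martingale convergence it needs. One small stylistic remark: your pointwise bound $\Phi(\nu_n(x))\le 2D_n(x)$ together with $D_n\to 0$ a.e.\ already forces $\Phi(\nu_n(x))\to 0$ a.e., so the role of the diameter bound $\diam(W,d)\le 1$ in your argument is purely to supply the integrable dominating function for the final dominated convergence step -- exactly the same role it plays in the paper.
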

\begin{proof}
	Take an arbitrarily small $\epsilon>0$. 
	Since $(W,d)$ is separable, we can cover $W$ by a countable union of $\epsilon$-balls
	$W=\bigcup_{i=1}^\infty B(w_i,\epsilon)$ for some $\setdef{w_i}{i\in\bbN}$ in $W$.
	Let $W_1=B(w_1,\epsilon)$ and 
	\[
		W_{j+1}=B(w_{j+1},\epsilon)\setminus\ \bigcup_{i=1}^j B(w_i,\epsilon)\qquad (j\in\bbN).
	\]
	Then $W=\bigsqcup_{i=1}^\infty W_i$ where $\diam(W_i)\le 2\epsilon$. 
	The sets $A_i=F^{-1}(W_i)$ form a countable 
	measurable partition of $X$, and so there exists $k\in\bbN$ so that 
	\[
		m(A_1\sqcup \dots \sqcup A_k)>1-\epsilon.
	\]
	Let $\delta=(\epsilon/2k)^2$. 
	Since $\calF_n\nearrow \calX$, there is $N$ and $\calF_N$-measurable sets $E_1,\dots, E_k$
	so that $m(E_i\triangle A_i)<\delta$. 
	Denoting $E_0=X\setminus \bigcup_{i=1}^k E_i$ we note that $m(E_0)<\epsilon+k\delta<2\epsilon$.
	
	For $n\in\bbN$ we denote $p_n:(X,\calX,m)\to (Y_n,\calY_n,m_n)$ the quotient of measure spaces
	so that $\calF_n=p_n^{-1}(\calY_n)$, and let 
	\[
		m=\int_{Y_n} \mu_{n,y}\dd m_n(y)
	\]
	the corresponding disintegration. Fix any $n\ge N$. Since $\calF_N\subset\calF_n$
	the sets $E_0,E_1,\dots,E_k$ are $\calF_n$-measurable, 
	and we denote by $\bar{E}_0,\bar{E}_1,\dots,\bar{E}_k\in\calY_n$ 
	the corresponding sets in $Y_n$. 
Note that for $y\in \bar{E}_i$, we have $\mu_{n,y}(X\setminus E_i)=0$. 
    
	Define $\Var_n:Y_n\to [0,1]$ by
	\[
		\Var_n(y):=\int_X\int_X d(F(x),F(x'))\dd\mu_{n,y}(x)\dd\mu_{n,y}(x').
	\]
	Then we have
	\[
		\begin{split}
			\int_{X\times_{Y_n} X} &d(F(x),F(x'))\dd m\times_{Y_n} m(x,x')=\int_{Y_n} \Var_n(y)\dd m_n(y)\\
				&\le \sum_{i=1}^k \int_{\bar{E}_i} \Var_n\dd m_n+\int_{\bar{E}_0} \Var_n\dd m_n\\
				&\le \sum_{i=1}^k \int_{\bar{E}_i} \Var_n\dd m_n+m_n(\bar{E}_0).
		\end{split}
	\]
	Let $i\in \{1,\dots,k\}$. Since 
	\[
		\int_{\bar{E}_i} \mu_{n,y}(E_i\setminus A_i)\dd m_n(y)=m(E_i\setminus A_i)<\delta
	\] 
	the set $\bar{E}_i^*:=\setdef{y\in \bar{E}_i}{\mu_{n,y}(E_i\setminus A_i)<\sqrt{\delta}}$
	satisfies $m_n(\bar{E}_i\setminus \bar{E}_i^*)<\sqrt{\delta}$ by Chebyshev's inequality.
    Since for $y\in \bar{E}_i$, we have $\mu_{n,y}(X\setminus E_i)=0$,
    we get that for $y\in \bar{E}_i^*$, $\mu_{n,y}(X\setminus A_i)<\sqrt{\delta}$.
	For $y\in \bar{E}_i^*$ we have $d(F(x),F(x'))\le 2\epsilon$ for those pairs $(x,x')\in X\times_{\calF_n}X$
	and $x,x'\in A_i$, therefore
	\[
		\begin{split}
			\Var_n(y)&  \le 2\int_{X\setminus A_i}\int_X d(F(x),F(x'))\dd\mu_{n,y}(x)\dd\mu_{n,y}(x') \\
            & +\int_{A_i}\int_{A_i} d(F(x),F(x'))\dd \mu_{n,y}(x)\dd \mu_{n,y}(x') \\ 
            & \leq 2\sqrt{\delta}  +2\epsilon\mu_{n,y}(A_i)^2 \\ 
			&\le 2\sqrt{\delta}+2\epsilon\cdot \mu_{n,y}(A_i).
		\end{split}
	\]
	We thus get
	\[
		\begin{split}
			\sum_{i=1}^k \int_{\bar{E}_i} \Var_n\dd m_n &\le 2k\sqrt{\delta}+2\epsilon\cdot \sum_{i=1}^k\int_{Y_n} \mu_{n,y}(A_i)\dd m_n(y)\\
			&=2k\sqrt{\delta}+2\epsilon\cdot \sum_{i=1}^k m(A_i)\le 2k\sqrt{\delta}+2\epsilon<4\epsilon.
		\end{split}
	\]
	We just proved that given $\epsilon>0$ there is $N$ so that for $n\ge N$
	\[
		\begin{split}
			\int_{X\times_{Y_n} X} &d(F(x),F(x'))\dd (m\times_{Y_n} m)(x,x')=\int_{Y_n} \Var_n\dd m_n\\
				&<\sum_{i=1}^k \int_{\bar{E}_i} \Var_n\dd m_n+m_n(\bar{E}_0)<6\epsilon.
		\end{split}
	\]
	This proves the lemma.
\end{proof}

\begin{proof}[Proof of Proposition~\ref{P:natext-relmeterg}]
	Let $p:(X,\calX,m,T)\to (Y,\calY,\nu,S)$ be a natural extension of a non-invertible p.m.p system,  
	let $q:W\to V$ be a separable metric extension, $F:X\to W$ and $f:Y\to V$ measurable equivariant maps with 
	$q\circ F=f\circ p$.
	Then the measurable function
	\[
		D:X\times_Y X\to [0,1],\qquad D(x,x'):=d_{f(y)}(F(x),F(x'))
	\]
	(where $p(x)=p(x')=y$) is invariant: $D(Tx,Tx')=D(x,x')$.
	We claim that
	\begin{equation}\label{e:intD}
		\int_{X\times_Y X} D\dd (m\times_Y m)=0.
	\end{equation}
	This would imply that for a.e $y\in Y$ the values of $F(x)$ are a.e constant $\phi(y)\in W_{f(y)}$ 
	over the fiber $X_y=p^{-1}(\{y\})$;
	the map $\phi:Y\to W$ is the required measurable descend of $F$ to $Y$.
	
	The $\sigma$-algebra $\calF:=p^{-1}(\calY)\subset\calX$ is $T$-invariant.
 %	in the sense that $\setdef{T^{-1}E}{E\in\calF}\subset\calF$.
	Consider the increasing sequence of $\sigma$-algebras $\calF_0\subset\calF_1\subset\dots$ defined by
	\[
		\calF_n:=\setdef{T^nE\in\calF}{E\in\calF}\qquad (n\in\bbN).
	\]
	The fact that $p:X\to Y$ is a natural extension implies that $\calF_n\nearrow \calX$.
	We can view $X$ as the inverse limit of the quotient maps $Y_n\to\dots\to Y_2\to Y_1\to Y$.
	We have quotient maps
	\[
		X\times_{Y_n} X\to  Y_n\to Y
	\]
	that allow us to disintegrate these fibered products with respect to $Y$. 
	For a.e $y\in Y$ the fiber of $X\times_{Y_n} X\to Y$ can be viewed as
	the fibered product
	\[
		\left(X_y\times_{Y_{n,y}} X_y,m_y\times_{Y_{n,y}} m_y\right)
	\] 
	where $(X_y,m_y)$ is the fiber of $X\to Y$ at $y\in Y$, and the quotient $X_y\to Y_{n,y}$ 
	corresponds to the sub-$\sigma$-algebra $\calF_n|_{X_y}\subset \calX|_{X_y}$ on $(X_y,m_y)$.
	
	For a.e $y\in Y$ the measurable map $F:(X_y,m_y)\to W_{f(y)}$ takes values in a Polish space.
	Thus Lemma~\ref{L:vanishing-vars} implies that for a.e $y\in Y$ 
	\[
		\lim_{n\to\infty}\int_{X_y\times_{Y_{n,y}}X_y} D\dd (\mu_y\times_{Y_{n,y}}\mu_y) = 0.
	\] 
	Integrating over $y\in Y$ (since $D$ is bounded the dominated convergence theorem applies), we deduce that 
	\[
		\lim_{n\to\infty}\int_{X\times_{Y_{n}}X} D\dd (m\times_{Y_n}m) = 0.
	\]
	On the other hand, the fact that $D(T^nx,T^nx')=D(x,x')$ on $X\times_Y X$ for $n\in\bbN$ yields  
	\[
		\int_{X\times_{Y_{n}}X} D\dd (m\times_{Y_n} m)=\int_{X\times_{Y}X} D\dd (m\times_Y m).
	\]	
	This proves the claim about vanishing of integral on the right hand side, and thereby completes the proof of the proposition.
\end{proof}

\subsection{A useful Lemma}\hfill{}\\
Let $p:(X,m,T)\to (Y,n,S)$ be an equivariant map between ergodic p.m.p systems, where $T$ is invertible and $S$ is not necessarily.
Let 
\[
	y\mapsto \Prob(X),\qquad m=\int_Y\mu_y\dd n(y)
\]
be the corresponding disintegration of measures. 
\begin{lemma}\label{L:Markov}\hfill{}\\
	Let $f\in L^2(Y,n)$ be such that  
	$
		\ f(y)\le \int f(p(T^{-1}x))\dd \mu_{y}(x)\ 
	$
	for $n$-a.e $y\in Y$. \\ Then $f$ is a.e constant.
\end{lemma}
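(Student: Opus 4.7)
The plan is to recast the hypothesis as a conditional-expectation equation on $X$ and then exploit the $L^2$-isometry of $T$, together with ergodicity of $(X,m,T)$, to conclude that $f\circ p$ is a $T$-invariant function, hence constant.

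Set $g:=f\circ p\in L^2(X,m)$ and $\calF:=p^{-1}(\calY)\subset \calX$. Since the disintegration formula gives
\[
    \bbE(g\circ T^{-1}\mid \calF)(x)=\int g\circ T^{-1}\dd\mu_{p(x)}=\int f(p(T^{-1}x'))\dd\mu_{p(x)}(x'),
\]
the hypothesis becomes $g\le \bbE(g\circ T^{-1}\mid \calF)$ $m$-a.e. The first step is to upgrade this inequality to an equality. Integrating against $m$ and using that $T$ preserves $m$ gives
\[
    \int_X g\dd m\le \int_X \bbE(g\circ T^{-1}\mid \calF)\dd m=\int_X g\circ T^{-1}\dd m=\int_X g\dd m,
\]
so we must have $g=\bbE(g\circ T^{-1}\mid \calF)$ $m$-a.e.

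The second step is the $L^2$ computation. Because $g$ is $\calF$-measurable and $T$-invariance of $m$ gives $\|g\circ T^{-1}\|_2=\|g\|_2$, we compute
\[
    \int_X g\cdot (g\circ T^{-1})\dd m=\int_X g\cdot \bbE(g\circ T^{-1}\mid \calF)\dd m=\int_X g^2\dd m,
\]
and therefore
\[
    \int_X (g\circ T^{-1}-g)^2\dd m = \|g\circ T^{-1}\|_2^2-2\int_X g\cdot(g\circ T^{-1})\dd m+\|g\|_2^2=0.
\]
Hence $g\circ T^{-1}=g$ $m$-a.e., i.e.\ $g$ is $T$-invariant.

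Finally, ergodicity of $(X,\calX,m,T)$ forces $g$ to be $m$-a.e.\ constant, and since $g=f\circ p$ with $p_*m=n$, it follows that $f$ is $n$-a.e.\ constant. There is no serious obstacle: the only points that require a bit of care are the justification that the integrated inequality is in fact an equality (which uses only $T$-invariance of $m$), and the interchange $\int g\cdot(g\circ T^{-1})\dd m=\int g\cdot\bbE(g\circ T^{-1}\mid\calF)\dd m$, which is the defining property of conditional expectation applied to the $\calF$-measurable $L^2$ function $g$. The underlying picture is that $Pf(y):=\int f(p(T^{-1}x))\dd\mu_y(x)$ is a Markov operator on $L^2(Y,n)$, and any $L^2$ super-harmonic function of a measure-preserving Markov operator whose associated extension $(X,T)$ is ergodic must be constant.
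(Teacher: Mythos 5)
Your proof is correct, and it takes a genuinely different route from the paper's. The paper works entirely on $L^2(Y,n)$: it introduces the Markov operator $U=V_S^*$ (the adjoint of the Koopman isometry of $S$), observes that $f\le Uf$ plus positivity of $U$ yields $f\le U^kf$ for all $k$, applies the mean ergodic theorem to conclude $\tfrac1k\sum_{j<k}U^jf\to\int f$ in $L^2$, and finishes with a Chebyshev estimate to force $f\le\int f$ a.e. You instead pull everything back to $L^2(X,m)$ via $g=f\circ p$, rewrite the hypothesis as $g\le\bbE(g\circ T^{-1}\mid\calF)$, and exploit two elementary observations: integrating the inequality against $m$ (using $T$-invariance and that conditional expectation preserves the integral) forces equality $g=\bbE(g\circ T^{-1}\mid\calF)$; and then $\|g\circ T^{-1}-g\|_2^2=0$ by the Pythagoras/adjointness identity, so $g$ is $T$-invariant and hence constant. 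Your argument is shorter and more elementary — it avoids the mean ergodic theorem and the iteration $f\le U^kf$ entirely — and it uses ergodicity of $(X,m,T)$ where the paper uses ergodicity of $(Y,n,S)$; both are available in the setup. The decisive simplification is your Step~1 (integrate to upgrade the inequality to an equality), which the paper does not exploit; in fact the same observation would let one shorten the paper's argument as well: $\int Uf\dd n=\int f\dd n$, so $f\le Uf$ forces $Uf=f$ a.e., and then $V_Sf=f$ (fixed points of a Hilbert-space contraction coincide with those of its adjoint) and ergodicity finish it.
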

\begin{proof}
	The quotient map $p$ defines an isometric embedding $p:L^2(Y,n)\to L^2(X,m)$;
	the dual operator $p^*:L^2(X,m)\to L^2(Y,n)$
	is an orthogonal projection, explicitly defined by
	\[
		p^*F(y)=\int_X F(x)\dd \mu_y(x)\qquad (F\in L^2(X,m)).
	\]  
	Let $V_S$ on $L^2(Y,n)$ and $V_T$ on $L^2(X,m)$ be the isometries defined by $S$ and $T$.
	Then $V_S=p^*V_Tp$ and therefore the dual operator $U=V_S^*$ is given by
	$pV_{T^{-1}}p^*$, meaning
	\[
		Uf(y)=\int f(p(T^{-1}x))\dd \mu_{y}(x)\qquad (f\in L^2(Y,n)).
	\]
	The assumption of a.e inequality $f\le Uf$ implies that for all $k\in \bbN$
	\[
		f\le Uf\le U^2\le \dots\le U^kf.
	\]
	Ergodicity of $(Y,n,S)$ implies by von Neumann's mean ergodic theorem an $L^2$-convergence
	\[
		\frac{1}{k}(f+Uf+\dots U^{k-1}f)\ \overto{L^2}\ \int_Y f.
	\]
	Since $f$ is a.e dominated by the averages on the LHS for any $\epsilon>0$ one has 
	\[
		n\setdef{y\in Y}{ f(y)\ge \int f\, +\epsilon}
		\le \frac{1}{\epsilon^2}\cdot \left\|\frac{1}{k}(f+Uf+\dots U^{k-1}f)-\int f\right\|_2^2\to 0.
	\] 
	Therefore we get a.e inequality
	\[
		f(y) \le \int_Y f
	\]
	which implies that $f$ is a.e constant, as claimed.
\end{proof}

\subsection{Some lemmas regarding spaces of measures} \label{subsection:measures}\hfill{}\\
Let $M$ be a compact metrizable space
and denote by $Q$ the compact convex space of positive measures of total mass in $[0,1]$ on $M$;
it is a cone on $\Prob(M)$.
We define the map
\[ \ev:Q\times M\overto{} [0,1], \quad (\mu,\xi)\mapsto \nu(\{\xi\}). \]

\begin{lemma} \label{l:ev}
The map $\ev$ is upper semi-continuous (usc).
\end{lemma}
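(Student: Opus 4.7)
The plan is to verify upper semi-continuity sequentially: given $(\mu_n, \xi_n) \to (\mu, \xi)$ in $Q \times M$, show that $\limsup_n \mu_n(\{\xi_n\}) \leq \mu(\{\xi\})$. The idea is to sandwich the point masses $\mu_n(\{\xi_n\})$ between integrals of continuous cut-off functions concentrated near $\xi$, and then pass to the limit using weak$^*$ convergence of $\mu_n$ to $\mu$.

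Fix a compatible metric on $M$, and for each $r > 0$ choose a continuous function $f_r : M \to [0,1]$ which equals $1$ on an open neighborhood $U_r$ of $\xi$ and is supported in the closed ball $\bar{B}(\xi, r)$. Since $\xi_n \to \xi$, for $n$ sufficiently large we have $\xi_n \in U_r$, hence
\[
\mu_n(\{\xi_n\}) \leq f_r(\xi_n)\,\mu_n(\{\xi_n\}) \leq \int_M f_r \dd \mu_n.
\]
By definition of the weak$^*$ topology on $Q$, $\int f_r \dd \mu_n \to \int f_r \dd \mu$, so
\[
\limsup_{n\to\infty} \mu_n(\{\xi_n\}) \leq \int_M f_r \dd \mu \leq \mu\bigl(\bar{B}(\xi, r)\bigr).
\]

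Finally, since $\{\xi\} = \bigcap_{r > 0} \bar{B}(\xi, r)$ and $\mu$ is a finite Borel measure, continuity from above gives $\mu(\bar{B}(\xi, r)) \to \mu(\{\xi\})$ as $r \to 0$. Letting $r \to 0$ in the inequality above yields $\limsup_n \mu_n(\{\xi_n\}) \leq \mu(\{\xi\})$, which is exactly the desired upper semi-continuity.

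There is no real obstacle here; the only thing to be slightly careful about is that $Q$ is the cone of sub-probability measures (total mass in $[0,1]$) rather than $\Prob(M)$, but the weak$^*$ topology and the integration-against-continuous-functions characterization work identically. Existence of the cut-off functions $f_r$ is standard in a compact metrizable space (e.g.\ via $f_r(x) = \max(0, 1 - \tfrac{2}{r}d(x,\xi))$, or by Urysohn's lemma).
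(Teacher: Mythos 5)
Your proof is correct and follows essentially the same route as the paper's: both arguments trap the point mass $\mu(\{\xi\})$ from above by the integral of a continuous cutoff that equals $1$ near $\xi$, and then invoke weak$^*$ convergence of the measures. The paper phrases it via the topological definition of usc (find an open neighborhood where $\ev < t$), while you phrase it sequentially; since $Q\times M$ is metrizable these are equivalent, and the underlying ideas are identical. One small glitch: the explicit example $f_r(x) = \max(0, 1 - \tfrac{2}{r}d(x,\xi))$ equals $1$ only at $x=\xi$, not on an open neighborhood $U_r$, so it does not satisfy the condition you stated; you would need something like $f_r(x)=\min\bigl(1,\max(0,\,2-\tfrac{2}{r}d(x,\xi))\bigr)$, or simply rely on Urysohn's lemma applied to $\bar B(\xi,r/2)$ and $M\setminus B(\xi,r)$ as you also suggest. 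This does not affect the validity of the argument.
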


\begin{proof}
We fix $(\nu,\xi)\in Q\times M$.
Next we fix $t\in \mathbb{R}$ such that 
$\ev(\nu,\xi)<t$.
we find an open neighborhood $m\in U\in M$ such that $\nu(U)<t$
and then find a continuous function $f:M\to [0,1]$ such that $f|_U=1$ and $\nu(f)<t$.
The integration of $f$ is a continuous function on $Q$,
we thus find a neighborhood of $\nu\in V\in Q$ consisting of measures $\nu'$ satisfying $\nu'(f)<t$.
Then for every $(\nu',\xi')\in V\times U$, $\ev(\nu',\xi')<t$.
As $t$ is arbitrary, this shows that $\ev(-,-)$ is usc at $(\nu,\xi)$.
As $(\nu,\xi)$ is arbitrary, this show that $\ev$ is usc.
\end{proof}

We recall the following standard result.

\begin{lemma} \label{l:usc}
Let $X,Y$ be topological spaces and $f:X\times Y\to \mathbb{R}$ be an upper semi-continuous function.
Assume $Y$ is compact and let $\bar{f}:X\to \mathbb{R}$ 
be defined by $\bar{f}(x)=\max\{f(x,y)\mid y\in Y\}$.
Then $\bar{f}$ is upper semi-continuous.
\end{lemma}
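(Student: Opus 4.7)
The plan is to use a standard tube-lemma-style argument. First I would note that $\bar{f}$ is well-defined: for each fixed $x \in X$, the function $y \mapsto f(x,y)$ is the restriction of an upper semi-continuous function to the compact space $Y$, and such a function attains its supremum, so the maximum in the definition of $\bar{f}(x)$ exists. To show that $\bar{f}$ is upper semi-continuous at an arbitrary $x_0 \in X$, it suffices to take any $t > \bar{f}(x_0)$ and produce an open neighborhood $U$ of $x_0$ on which $\bar{f} \le t$.

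Fix such a $t$. For every $y \in Y$ we have $f(x_0, y) \le \bar{f}(x_0) < t$, so by upper semi-continuity of $f$ at the point $(x_0, y)$ there exist open neighborhoods $x_0 \in U_y \subseteq X$ and $y \in V_y \subseteq Y$ such that $f(x', y') < t$ for all $(x', y') \in U_y \times V_y$. The collection $\{V_y\}_{y \in Y}$ is an open cover of the compact space $Y$, so one may extract a finite subcover $V_{y_1}, \dots, V_{y_n}$. Set
\[
    U := \bigcap_{i=1}^n U_{y_i},
\]
which is an open neighborhood of $x_0$.

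Now for any $x \in U$ and any $y \in Y$, pick an index $i$ with $y \in V_{y_i}$; then $(x,y) \in U_{y_i} \times V_{y_i}$, so $f(x,y) < t$. Taking the maximum over $y \in Y$ gives $\bar{f}(x) \le t$ for every $x \in U$. Since $t > \bar{f}(x_0)$ was arbitrary, this proves upper semi-continuity of $\bar{f}$ at $x_0$, and since $x_0$ was arbitrary the lemma follows.

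Honestly, there is no substantial obstacle here; the only non-automatic ingredient is the passage from pointwise local control (near each $(x_0,y)$) to uniform control over all of $Y$, which is exactly what compactness of $Y$ supplies through the finite subcover. This is the step that would fail without compactness, so it is worth isolating as the crux of the argument even though the execution is routine.
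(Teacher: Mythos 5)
Your proof is correct and follows essentially the same tube-lemma argument as the paper: fix $t > \bar f(x_0)$, cover $Y$ by neighborhoods $V_y$ over which $f < t$ on a product $U_y \times V_y$, extract a finite subcover by compactness, and intersect the corresponding $U_{y_i}$. You additionally justify that the maximum is attained (well-definedness of $\bar f$), which the paper leaves implicit, and you correct a small indexing slip in the paper's writeup (the paper writes $U = \bigcap U_{x_i}$ where it means $\bigcap U_{y_i}$); otherwise the two proofs coincide.
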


\begin{proof}
We fix $x\in X$. Next we fix $t\in \mathbb{R}$ such that 
$\bar{f}(x)<t$.
For every $y\in Y$ we have $f(x,y)<t$, thus we can find an open neighborhood $(x,y)\times U_x\times V_y\subset X\times Y$
such that $f(U_x\times V_y)\subset (-\infty,t)$.
Considering the open cover $\{V_y\mid y\in Y\}$ of $Y$,
we find a finite subcover $\{V_{y_i}\}$.
Defining $U=\bigcap U_{x_i}$ we get that $f(U\times Y)\subset (-\infty,t)$,
thus $\bar{f}(U\times Y)\subset (-\infty,t)$.
As $t$ is arbitrary, this show that $\bar{f}$ is usc at $x$.
As $x$ is arbitrary, this show that $\bar{f}$ is usc.
\end{proof}

We define the map
\[ \maxev:Q\overto{} [0,1], \quad \maxev(\nu)=\max\setdef{\nu(\{\xi\})}{ \xi\in M}. \]

\begin{lemma} \label{l:maxev}
The map $\maxev(-)$ is convex and upper semi-continuous.
\end{lemma}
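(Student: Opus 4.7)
The plan is to derive both properties from the two lemmas immediately preceding the statement, together with a direct pointwise argument for convexity.

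For convexity, I would take $\nu_1, \nu_2 \in Q$ and $t \in [0,1]$ and observe that for every $\xi \in M$,
\[
    (t\nu_1+(1-t)\nu_2)(\{\xi\}) = t\nu_1(\{\xi\})+(1-t)\nu_2(\{\xi\})\le t\maxev(\nu_1)+(1-t)\maxev(\nu_2).
\]
Taking the maximum over $\xi\in M$ on the left gives $\maxev(t\nu_1+(1-t)\nu_2)\le t\maxev(\nu_1)+(1-t)\maxev(\nu_2)$, which is convexity. This step is completely routine and uses only the definition of $\maxev$ together with linearity of the measure on singletons.

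For upper semi-continuity, the key observation is that $\maxev$ is precisely the function $\bar f$ produced by Lemma~\ref{l:usc} when we take $X=Q$, $Y=M$, and $f=\ev$. Lemma~\ref{l:ev} tells us that $\ev:Q\times M\to[0,1]$ is upper semi-continuous, and $M$ is compact by hypothesis, so Lemma~\ref{l:usc} applies and yields that $\maxev(\nu)=\max\{\ev(\nu,\xi)\mid\xi\in M\}$ is upper semi-continuous on $Q$.

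I do not anticipate any real obstacle: the convexity is elementary and the upper semi-continuity is a direct invocation of the two lemmas already proved, which were manifestly set up for exactly this application. The only minor care point is to verify that the maximum defining $\maxev(\nu)$ is actually attained (so that we may write $\max$ rather than $\sup$), but this is immediate because $M$ is compact and $\ev(\nu,-)$ is upper semi-continuous on $M$, so it attains its supremum.
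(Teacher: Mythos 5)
Your proof is correct and follows essentially the same route as the paper's: convexity by a direct pointwise bound on singleton masses (the paper phrases it as an equality at a maximizing atom, but it is the same computation), and upper semi-continuity by applying Lemma~\ref{l:usc} to the upper semi-continuous $\ev$ from Lemma~\ref{l:ev}. Your remark that the supremum is attained because $M$ is compact and $\ev(\nu,-)$ is upper semi-continuous is a correct, if minor, extra justification that the paper leaves implicit.
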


\begin{proof}
For $\nu_1,\nu_2\in Q$ and $t\in [0,1]$
we pick an atom $\xi\in M$ which is of maximal weight for the measure $t\nu_1+(1-t)\nu_2$
and observe that
\begin{equation} \label{eq:convex}
	\begin{split}
		\maxev(t\nu_1+(1-t)\nu_2) &=t\cdot\nu_1(\{\xi\})+(1-t)\cdot\nu_2(\{\xi\}) \\
		&\le t\cdot\maxev(\nu_1)+(1-t)\cdot\maxev(\nu_2).
	\end{split}
\end{equation}
This shows that $\maxev(-)$ is convex.
Semi-continuity follows from Lemma~\ref{l:ev} and Lemma~\ref{l:usc}.
\end{proof}

Next we fix $t\in (0,1]$ and consider the subset $Q_t=\maxev^{-1}([0,t])\subset Q$.

\begin{lemma} \label{l:Qalpha}
The subset $Q_t$ is a convex $G_\delta$-subset of $Q$.
\end{lemma}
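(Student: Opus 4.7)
The statement follows almost immediately from the previous lemma. The plan is to verify the two properties separately, each by a direct one-line invocation of Lemma~\ref{l:maxev}.

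For convexity of $Q_t$: given $\nu_1,\nu_2\in Q_t$ and $s\in[0,1]$, convexity of $\maxev$ from Lemma~\ref{l:maxev} gives
\[
\maxev(s\nu_1+(1-s)\nu_2)\le s\cdot\maxev(\nu_1)+(1-s)\cdot\maxev(\nu_2)\le st+(1-s)t=t,
\]
so $s\nu_1+(1-s)\nu_2\in Q_t$.

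For the $G_\delta$ property: upper semi-continuity of $\maxev$ (also from Lemma~\ref{l:maxev}) means that the sub-level sets $\maxev^{-1}([0,t+1/n))=\{\nu\in Q:\maxev(\nu)<t+1/n\}$ are open in $Q$. Writing
\[
Q_t=\maxev^{-1}([0,t])=\bigcap_{n\ge 1}\maxev^{-1}\bigl([0,t+\tfrac{1}{n})\bigr),
\]
exhibits $Q_t$ as a countable intersection of open subsets of $Q$, which is the definition of a $G_\delta$-set.

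There is no real obstacle here; this lemma is a routine packaging statement whose content is entirely absorbed into Lemma~\ref{l:maxev}. The only tiny thing to double-check is the direction of the semi-continuity convention, namely that for an upper semi-continuous function the sets $\{f<c\}$ are open (equivalently $\{f\ge c\}$ are closed), which is exactly what is needed to express $\{f\le t\}$ as a countable intersection of open sets.
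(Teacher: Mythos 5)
Your proof is correct and follows essentially the same route as the paper's: convexity is read off from the convexity of $\maxev$, and the $G_\delta$ property from its upper semi-continuity via $Q_t=\bigcap_n\maxev^{-1}([0,t+1/n))$. Nothing to add.
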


\begin{proof}
The convexity follows from the convexity of $\maxev(-)$
and the $G_\delta$ property from the usc:
indeed, for every $s\in (0,1]$ the set $Q^o_s=\maxev^{-1}([0,s))$
is open and $Q_t=\bigcap_n Q^o_{t+1/n}$.
\end{proof}

%
%We define the map
%\[ \maxnumber:Q_\alpha\to \{0,1,\ldots,[\alpha^{-1}]\},~~ \maxnumber(\nu)=|\{m\in M\mid \nu(\{m\})=\alpha\}|\]
%
%\begin{lemma}
%The map $\maxnumber$ is convex and upper semi-continuous.
%\end{lemma}
%
%\begin{proof}
%Equation~\eqref{eq:convex}
%shows the set equation
%\[ \{m\in M\mid (t\nu_1+(1-t)\nu_2)(\{m\})=\alpha\} 
%=  \]
%\[\{m\in M\mid \nu_1(\{m\})=\alpha\}
%\cap
%\{m\in M\mid \nu_2(\{m\})=\alpha\}\]
%and the convexity of $\maxnumber$ follows.
%Next we show that it is upper semi-continuous.
%Assume by contradiction that there exists a sequence $\nu_i$ in $Q_\alpha$
%with $\nu=\lim \nu_i\in Q_\alpha$ but $\limsup \maxnumber(\nu_i)>\maxnumber(\nu)$.
%Passing to a subsequence we assume that $\maxnumber(\nu_i)$ is a constant $n$
%and $n>k=\maxnumber(\nu)$.
%For each $i$ we let $m^i_1,\ldots,m^i_n\in M$ be the $n$ points of weight $\alpha$.
%Passing to a subsequence, we assume as we may that for every $j=1,\ldots,n$
%the sequence $(m^i_j)_j$ converges and we denote its limit by $m_j$.
%By Lemma~\ref{l:ev} we have that $\nu(m_j)\geq \nu_i(m^i_j)=\alpha$,
%thus $\nu(m_j)=\alpha$, as $\nu\in Q_\alpha$.
%Since $n>k$, using the good old pigeonhole principle,  we must have some $j_1\neq j_2$ such that $m_{j_1}=m_{j_2}$.
%We denote this common value by $m$ and observe that $\nu(\{m\})\geq 2\alpha$,
%by the same argument used in the proof of Lemma~\ref{l:ev}.
%But $2\alpha>\alpha$, as $\alpha>0$, thus this contradicts $\nu\in Q_\alpha$.
%This finishes the proof.
%\end{proof}

We note that the set $\ext(Q_t)$ of extreme points of $Q_t$ consists of exactly 
those measures in $Q_t$ which are purely atomic
with weight $t$ at each point of their (finite) support.
We define the map
\[ 
    \maxpart:Q_t\overto{} \ext(Q_t) 
\]
taking a measure $\nu\in Q_t$ to the maximal measure in $\ext(Q_t)$ which is dominated by $\nu$.

\begin{lemma} \label{l:maxpart}
The map $\maxpart(-)$ is Borel measurable.
\end{lemma}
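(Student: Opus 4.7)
The plan is to realize $\maxpart$ as a pointwise (weak-$*$) limit of manifestly Borel maps $\Phi_n:Q_t\to Q$. The key structural observation is that an extreme point of $Q_t$ has the form $t\sum_{j=1}^\ell \delta_{\xi_j}$ with $\ell\le \lfloor 1/t\rfloor$, and that for any $\nu\in Q_t$ the set $A(\nu):=\{\xi\in M:\nu(\{\xi\})\ge t\}$ coincides with $\{\xi:\nu(\{\xi\})=t\}$, is finite of cardinality at most $\lfloor 1/t\rfloor$, and satisfies $\maxpart(\nu)=t\sum_{\xi\in A(\nu)}\delta_\xi$. One should therefore detect $\maxpart(\nu)$ by looking through finer and finer partitions of $M$.

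Concretely, I would fix a compatible metric on $M$ and, for each $n\ge 1$, a finite Borel partition $\calP_n=\{B_{n,1},\dots,B_{n,k_n}\}$ of $M$ of mesh at most $1/n$ together with marked points $p_{n,i}\in B_{n,i}$, and set
\[
    \Phi_n(\nu):=t\sum_{i=1}^{k_n} \mathbf{1}_{\{\maxev(\nu|_{B_{n,i}})\ge t\}}\cdot \delta_{p_{n,i}}.
\]
Each $\Phi_n$ is Borel: for any Borel $A\subset M$ the evaluation $\nu\mapsto \nu(A)$ on $Q$ is Borel (the class of such $A$'s is a $\sigma$-algebra containing every open set, by Portmanteau), whence $\nu\mapsto \nu|_{B_{n,i}}$ is Borel as a map $Q\to Q$; composing with the usc, hence Borel, function $\maxev$ of Lemma~\ref{l:maxev} and then with the indicator of $[t,1]$ produces Borel scalar coefficients in the finite sum. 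For the pointwise convergence $\Phi_n(\nu)\to \maxpart(\nu)$, fix $\nu\in Q_t$ with $A(\nu)=\{\xi_1,\dots,\xi_k\}$, choose $\epsilon>0$ smaller than half the minimal pairwise distance in $A(\nu)$ (vacuous if $k\le 1$), and take $n$ with $1/n<\epsilon$: then each $\xi_j$ sits in a unique block $B_{n,i_j}$ and these blocks are distinct; every other block $B_{n,i}$ contains no atom of $\nu$ of weight $t$, whence (since $\maxev$ is attained as a genuine maximum on $M$ thanks to the usc of $\ev$, Lemma~\ref{l:ev}) $\maxev(\nu|_{B_{n,i}})<t$ strictly, so its indicator vanishes. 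Therefore $\Phi_n(\nu)=t\sum_j\delta_{p_{n,i_j}}$ with $d(p_{n,i_j},\xi_j)<1/n$, and $\Phi_n(\nu)\to t\sum_j\delta_{\xi_j}=\maxpart(\nu)$ in the weak-$*$ topology of $Q$. Since a pointwise limit of Borel maps into the metrizable compact space $Q$ is Borel (test against a countable separating family $\{f_k\}\subset C(M)$), we conclude that $\maxpart$ is Borel.

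The main subtlety will be excluding ``ghost'' contributions from partition blocks that contain no atom of $\nu$ of weight exactly $t$: what one really needs is the \emph{strict} inequality $\maxev(\nu|_{B_{n,i}})<t$ on such blocks. This is where the attainment of the maximum, promoted from a supremum by usc of $\ev$ on the compact space $M$, is essential, together with the elementary observation that a finite measure has only finitely many atoms of weight $\ge s$ for each $s>0$. Everything else is routine bookkeeping.
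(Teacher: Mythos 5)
Your proof is correct, but it takes a genuinely different route from the paper's. The paper decomposes every $\nu\in Q_t$ uniquely as $\nu_1+\nu_2$ with $\nu_1\in\ext(Q_t)$ and $\nu_2\in Q^o_t=\maxev^{-1}([0,t))$, observes that the addition map $\ext(Q_t)\times Q^o_t\to Q_t$ is a continuous bijection between Polish spaces, and then appeals to the Lusin--Souslin theorem (a Borel bijection between standard Borel spaces has Borel inverse) to get measurability of the inverse, of which $\maxpart$ is the first coordinate. You instead build $\maxpart$ as an explicit pointwise limit of Borel maps $\Phi_n$ obtained from finite partitions of $M$ of shrinking mesh. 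Your route avoids the descriptive-set-theoretic machinery at the cost of more bookkeeping: you must verify that $\nu\mapsto\nu|_B$ is Borel for Borel $B$ (which ultimately rests on the functional monotone class theorem -- your Portmanteau remark is the right opening move for the open sets), that the sup defining $\maxev$ on each block is genuinely attained (finiteness of atoms of weight $\ge t$, not compactness of the block, is what saves you there), and, a small point you leave implicit, that $\Phi_n$ actually lands in $Q$ -- this holds because the indicator is nonzero on at most $\lfloor 1/t\rfloor$ blocks, namely those containing an atom of weight $t$. With those checks supplied, the argument is sound. Your version is more elementary and yields an explicit approximating scheme, while the paper's is shorter and leans on a single black box; both rely on the same structural inputs, namely upper semicontinuity of $\maxev$ and finiteness of the set of weight-$t$ atoms.
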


\begin{proof}
By Lemma~\ref{l:Qalpha}, $Q_t$ is a $G_\delta$-set in $Q$,
thus $Q_t$ forms a Polish space.
It is easy to see that $\ext(Q_t)$ a closed subset of $Q_t$, hence it is also Polish.
The set $Q^o_t=\maxev^{-1}([0,t))$ is an open subset of of $Q$
by the usc of $\maxev(-)$; hence it is also Polish.
We obsereve that every measure in $Q_t$ could be decomposed uniquely as a sum of 
a measure in $\ext(Q_t)$ and a measure in $Q^o_t$
and get a continuous bijection
\[ 
    \ext(Q_t)\times Q^o_t\overto{} Q_t, \qquad (\nu_1,\nu_2)\mapsto \nu_1+\nu_2. 
\]
Since this is a Borel bijection between Polish spaces, we get that its inverse is also Borel,
and the result follows by composing this inverse with the projection on the first factor:
\[
    \ext(Q_t)\times Q^o_t \overto{} \ext(Q_t).
\]
This pproves the Lemma.
\end{proof}

% section preliminaries (end)

\section{Basic constructions associated with Gregs}\label{sec:gregs}

Throughout this section we let 
$(X,\calX,m,T,w,\Gamma)$ be a Greg as in Definition~\ref{defn:Greg}.

\subsection{The Past and Future Factors associated with a Greg}\label{sub:basic_construction}\hfill{}\\% (fold)
We consider the cocycle $\bbZ\times X\to  \Gamma$ generated by $w$
and denote its value at $(n,x)$ by $w_n(x)$ as in (\ref{e:f-cocycle}) from \S\ref{sub:cocycles}.
It is characterized by the property $w_1(x)=w(x)$ and
\[
	w_{n+k}(x)=w_k(T^n x)\cdot w_n(x)\qquad(n,k\in\bbZ,\ \forae x\in X).
\]
Let $\calF\subset \calX$ denote the smallest $m$-complete $\sigma$-algebra of $\calX$ for which $w$ is measurable.
Define for $-\infty\leq m \leq n \leq \infty$, 
\[	\calF_m^n:=\bigvee_{k=m}^m T^k\calF. \]
%\[
%	\calF_n^\infty:=\bigvee_{k=n}^\infty T^k\calF,
%	\qquad 
%	\calF_{-\infty}^n:=\bigvee_{-\infty}^n T^k\calF,
%	\qquad 
%	\calF_{-\infty}^\infty:=\bigvee_{-\infty}^\infty T^k\calF.
%\]
Note that for $n\geq 0$, $w_n \in \calF_0^n$ and for $n\leq 0$, $w_n \in \calF_n^0$.
Since we are interested in invariants that depend only on $\{w_n, n\in\bbZ\}$, we may assume without loss of generality in what follows that $\calF_{-\infty}^\infty=\calX$.
We make this assumption. Equivalently,
we assume that the map
\begin{equation} \label{eq:assumptionX}
	i:X \to \Gamma^\bbZ,\quad x \mapsto (w_n(x))_{n\in \bbZ}, \quad 
	\mbox{is injective.}
\end{equation}

We let $p_+:(X,m)\to (X_+,m_+)$ be the factor of $(X,m)$ corresponding to the $\sigma$-algebra $\calF_0^\infty$. Note that $T\calF_0^\infty=\calF_1^\infty \subset \calF_0^\infty$, thus $T$ gives rise to a (typically non-invertible) 
transformation $T_+:X_+\to X_+$ such that 
\begin{equation} \label{eq:p_+eq}
\mbox{for $m$-a.e $x\in X$,} \quad p_+(Tx)=T_+(p_+(x)). 
\end{equation}
$T_+$ is invertible iff $p_+$ is an isomorphism, in fact, 
\[
	p_+:(X,m,T) \to  (X_+,m_+,T_+)
\] 
is the natural extension of the system $(X_+,m_+,T_+)$ (see \S\ref{sub:naturalextension}).
Since $w_n:X\to\Gamma$ with $n\ge 0$ are $\calF_0^\infty$-measurable they
(and in particular $w=w_0$) descend to $X_+\to \Gamma$. 

Similarly, we define the factor map $p_-:(X,m)\to (X_-,m_-)$ corresponding to the $\sigma$-algebra $\calF_{-\infty}^{-1}$
and obtain the system $(X_-,m_-,T_-)$ satisfying for $m$-a.e $x\in X$,
$p_-(T^{-1}x)=T_-\left(p_-(x)\right)$.
We note that $w_n:X\to\Gamma$ with $n\le 0$ (and in particular $w=w_0$)
descend to $X_-\to \Gamma$.

\subsection{The space of Random Walks} \label{subse:RW}\hfill{}\\% (fold)
We consider the map 
\begin{equation}\label{e:GammaZ-map}
	j:X\times \Gamma\to  \Gamma^\bbZ\qquad
	\textrm{given\ by}\qquad (x,g)\mapsto (w_n(x)g^{-1})_{n\in\bbZ}
\end{equation}
and note that it is injective,
as follows clearly from \eqref{eq:assumptionX} and the fact that $w_0(x)=1$ for all $x\in X$.
We fix once and for all a probability measure $m^1_\Gamma$ on $\Gamma$ 
which is in the Haar class.
In particular, $m^1_\Gamma$ is fully supported 
and absolutely continuous with respect to both left and right Haar measure/s.
We let $\wt{m}^1=j_*(m\times m^1_\Gamma)$ be the measure on $\Gamma^\bbZ$ obtained by 
the push forward of $m\times m^1_\Gamma$.
We conclude that 
\begin{equation}\label{e:GammaZ-map-Leb}
    j:(X\times \Gamma,m\times m_\Gamma^1) \overto{}  (\Gamma^\bbZ, \wt{m}^1) 
    \quad\mbox{is a Lebesgue isomorphism.}
\end{equation}
Note that $(\Gamma^\bbZ,\wt{m}^1)$ is a Lebesgue space with a measure class preserving 
action of $\Gamma$ and a commuting measure class preserving action of $\wt{T}^\bbZ$ given by
\[
	g:(\gamma_i)\mapsto (\gamma_ig^{-1}),
	\qquad
	\wt{T}^n:(\gamma_i)\mapsto (\gamma_{i+n}).
\]
We denote $j_g(x)=j(g,x)$
and get that for every $g\in\Gamma$, 
$j_g=g\circ i$ and in particular, $j_1=i$.
We set $m_g=(j_g)_*(m)$ and note that $m_g=g_*(m_1)$.
By (\ref{eq:assumptionX}), $i:(X,m) \simeq (\Gamma^\bbZ,m_1)$ as Lebesgue spaces,
thus for every $g\in \Gamma$, $j_g:(X,m) \simeq (\Gamma^\bbZ,m_g)$ as Lebesgue spaces.

The measures $m_g$ are probability measures on $\Gamma^\bbZ$ 
which are supported on the set 
$\setdef{(\gamma_n)_{n\in\bbZ}}{\gamma_0=g^{-1}}$
and 
\[ 
	\wt{m}^1=\int_\Gamma m_g \dd m_\Gamma^1(g). 
\]

The action of $\Gamma$ on $\Gamma^\bbZ$ is free
and the set $\setdef{(\gamma_n)_{n\in\bbZ}}{\gamma_0=1}$
is a fundamental domain for this action.
In case $\Gamma$ is discrete, this set has a positive measure and the restriction 
of $\wt{m}$ to this sets is $m_1$.
The original system $X$ may be viewed as the quotient map $\theta$ defined as follows:
\[
	(\Gamma^\bbZ,\wt{m},\wt{T})\to(\Gamma^\bbZ,\wt{m},\wt{T})/\Gamma\ \overto{\cong}\ (X,m,T).
\]
This in particular means that $T$ is the map induced by $\wt{T}$.
The $\bbZ$-cocycle $\{w_n:X\to \Gamma\}$ is defined by the relation 
\begin{equation} \label{eq:compcocycle}
	j_g(T^nx)gw_n(x)g^{-1}=\wt{T}^nj_g(x).
\end{equation}
that holds for $m_\Gamma$-a.e $g\in \Gamma$, $m$-a.e $x\in X$ and every $n\in \bbZ$:

\subsection{Future and Past Random Walks}\hfill{}\\% (fold)
We now consider the map 
\[ 
	j^+:X_+\times \Gamma\overto{}  \Gamma^{\bbZ_+}, \qquad
	j^+(x,g)=(w_n(x)g^{-1})_{n=0}^\infty 
\]
and
\[ 
	j^-:X_-\times \Gamma\overto{}  \Gamma^{\bbZ_-}, \qquad
	j^-(x,g)=(w_n(x)g^{-1})_{n=-\infty}^{-1}, 
\]
as well as $j^+_g=j^+(\cdot,g):X_+\to \Gamma^{\bbZ_+}$
and $j^-_g=j^-(\cdot,g):X_-\to \Gamma^{\bbZ_-}$.
Analogously to (\ref{eq:compcocycle}) we have for 
$m_\Gamma$-a.e $g\in \Gamma$, $m_+$-a.e $x\in X_+$ and every $n\in \bbZ_+$:
\begin{equation} \label{eq:compcocycle+}
    \begin{split}
        &j^+_g(T_+^nx)gw_n(x)g^{-1}=\wt{T}_+^nj^+_g(x), \\
        &j^-_g(T_-^nx)gw_n(x)g^{-1}=\wt{T}_-^nj^-_g(x),
    \end{split}
\end{equation}
where $\wt{T}_+$ and $\wt{T}_-$ denote the corresponding (non-invertible) 
shift operators on $\Gamma^{\bbZ_+}$ and $\Gamma^{\bbZ_-}$.
As in (\ref{e:GammaZ-map-Leb}), we get that
\begin{equation} \label{e:GammaZ-map-Leb+}
    \begin{split}
        j^+:(X_+\times \Gamma,m_+\times m^1_\Gamma) \overto{}  (\Gamma^\bbZ_+, \wt{m}^1_+),\\
        j^-:(X_-\times \Gamma,m_-\times m^1_\Gamma) \overto{}  (\Gamma^\bbZ_-, \wt{m}^1_-)
    \end{split}
\end{equation}
are Lebesgue isomorphisms, where $\wt{m}^1_\pm=(j^\pm)_*(m_\pm\times m^1_\Gamma)$.
By construction, the projections 
$(\Gamma^\bbZ,\wt{m}^1)\overto{} (\Gamma^{\bbZ_+},\wt{m}^1_+)$ and 
$(\Gamma^\bbZ,\wt{m}^1)\overto{} (\Gamma^{\bbZ_-},\wt{m}^1_-)$ are measure preserving
and $\Gamma$-equivariant.
They are also equivariant with respect to the respective actions of 
$\wt{T}$, $\wt{T}_+$ and $\wt{T}_-$.

\subsection{The spaces of Ideal Futures and Ideal Pasts}
\label{subs:idealpastfuture}\hfill{}\\
In view of the construction of the space of ergodic components given in \S\ref{sub:components}
we define 
\[ 
	(E,\eta)=(\Gamma^{\bbZ},\wt{m}^1)/\!\!/\wt{T}^\bbZ
\]
and 
\[
	(E_+,\eta_+)=(\Gamma^{\bbZ_+},\wt{m}^1_+)/\!\!/\wt{T}^{\bbZ_+},
        \qquad
        (E_-,\eta_-)=(\Gamma^{\bbZ_-},\wt{m}^1_-)/\!\!/\wt{T}^{\bbZ_-}.
\]
Note that the measures $\eta,\eta_+$ and $\eta_-$ depend on the choice of the 
probability measure $m^1_\Gamma$ on $\Gamma$, 
but the corresponding measure classes do not.
We denote the corresponding factor maps by $\beta$, $\beta_+$, $\beta_-$,
respectively.
We obtain the following commutative diagram.
\begin{equation} \label{eq:timetable}
	\begin{tikzcd} 
		(E_-,\eta_-) & (E,\eta) \ar[l, "\pi_-"] \ar[r, "\pi_+"] & (E_+,\eta_+) \\
		(\Gamma^{\bbZ_-},\wt{m}^1_-)  \ar[u, "\beta_-"] & 
			(\Gamma^{\bbZ},\wt{m}^1) \ar[l] \ar[r]  \ar[u, "\beta"] & (\Gamma^{\bbZ_+},\wt{m}^1_+) \ar[u, "\beta_+"] \\
            		(\Gamma^{\bbZ_-},\wt{m}_-) \ar[d, "\theta_-"] \ar[u, "\sim"] & 
			(\Gamma^{\bbZ},\wt{m}) \ar[l] \ar[r] \ar[d, "\theta"] \ar[u, "\sim"] & (\Gamma^{\bbZ_+},\wt{m}_+) \ar[d, "\theta_+"] \ar[u, "\sim"]\\
		(X_-,m_-) & (X,m) \ar[l, "p_-"] \ar[r, "p_+"] & (X_+,m_+)
	\end{tikzcd}
\end{equation}

\begin{remark} \label{r:Mackey}
Note that $(E,\eta)$ and $(E_\pm,\eta_\pm)$ could be identified with the Mackey ranges of the 
cocycle $w$ on $(X,m)$ and $(X_\pm,m_\pm)$ correspondingly.
In particular, $X$ is $T$-ergodic iff $E$ is $\Gamma$-ergodic.
\end{remark}

\subsection{Correspondences between spaces of maps}\hfill{}\\% (fold)
Let $V$ be a Polish space and $\Gamma\times V\to V$, $\gamma:v\mapsto \gamma.v$, a Borel measurable action.
We define  
\[
	\begin{split}
		&\Map_\Gamma(E,V):=\setdef{\phi\in\Map(E,V)}{\forall \gamma\in\Gamma,\ \phi(\gamma.y)=\gamma.\phi(y)},\\
&\Map_\Gamma(E_+,V):=\setdef{\phi\in\Map(E_+,V)}{\forall \gamma\in\Gamma,\ \phi(\gamma.y)=\gamma.\phi(y)},\\
		&\Map_\Gamma(E_-,V):=\setdef{\phi\in\Map(E_-,V)}{\forall \gamma\in\Gamma,\ \phi(\gamma.y)=\gamma.\phi(y)},\\
		&\Map_{w}(X,V):=\setdef{f\in\Map(X,V)}{f(Tx)=w(x)f(x)},\\
		&\Map_{w}(X_-,V):=\setdef{f\in\Map(X_-,V)}{w(x).f(T_-x)=f(x)},\\
       &\Map_w(X_+,V):=\setdef{f\in\Map(X_+,V)}{f(T_+x)=w(x).f(x)}.
	\end{split}
\]
\begin{lemma}\label{L:equi-inv}\hfill{}\\
	There are bijections $\Map_\Gamma(E,V)\cong \Map_w(X,V)$
and $\Map_\Gamma(E_\pm,V)\cong \Map_w(X_\pm,V)$
which fit into the following two commutative diagrams 
	\[ 	
		\begin{tikzcd}
			\Map_\Gamma(E_\pm,V) \ar[d, hook, "\pi_\pm^*"]\ar[r, "\cong"] & \Map_w(X_\pm,V) \ar[d, hook, "p_\pm^*"] \\
			\Map_\Gamma(E,V) \ar[r, "\cong"] & \Map_w(X,V).
		\end{tikzcd}
	\]
and these diagrams are natural with respect to $V$,
that is a $\Gamma$-map $V\to U$ gives rise to cube diagrams in the obvious way.
\end{lemma}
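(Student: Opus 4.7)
The key idea is to exploit the Lebesgue isomorphism $j:(X\times\Gamma,m\times m^1_\Gamma)\to(\Gamma^\bbZ,\wt m^1)$ from \S\ref{subse:RW}, together with its natural section $i:X\to\Gamma^\bbZ$, $i(x)=j(x,1)=(w_n(x))_{n\in\bbZ}$, whose image $\{\gamma\in\Gamma^\bbZ:\gamma_0=1\}$ is a fundamental domain for the $\Gamma$-action $h\cdot(\gamma_i)=(\gamma_i h^{-1})$. I will use $i$ to transfer $\Gamma$-equivariant maps out of $E$ to $w$-equivariant maps out of $X$, and use $j$ itself for the inverse direction. The past/future cases follow the same recipe with $j^\pm$ and $i^\pm$ in place of $j$ and $i$.

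\textbf{From $E$ to $X$.} Given $\phi\in\Map_\Gamma(E,V)$, put $\wt\phi:=\phi\circ\beta:\Gamma^\bbZ\to V$; this is $\Gamma$-equivariant and $\wt T$-invariant. Define $f(x):=\wt\phi(i(x))$. The crucial identity is
\[
i(Tx)=w(x)\cdot \wt T\,i(x),
\]
which follows from the cocycle relation $w_{n+1}(x)=w_n(Tx)w(x)$ coupled with the fact that the $\Gamma$-action multiplies every coordinate by $w(x)^{-1}$ on the right. Combining this with $\wt T$-invariance and $\Gamma$-equivariance of $\wt\phi$ gives $f(Tx)=w(x)\cdot f(x)$, so $f\in\Map_w(X,V)$.

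\textbf{From $X$ to $E$.} Given $f\in\Map_w(X,V)$, use the Lebesgue isomorphism $j$ to define $\wt f:\Gamma^\bbZ\to V$ by $\wt f(j(x,g)):=g\cdot f(x)$. The formulas $h\cdot j(x,g)=j(x,hg)$ and $\wt T\,j(x,g)=j(Tx,gw(x)^{-1})$, combined with the cocycle identity $f(Tx)=w(x)f(x)$, give respectively $\Gamma$-equivariance and $\wt T$-invariance of $\wt f$. Hence $\wt f$ descends through $\beta$ to a $\Gamma$-equivariant map $\phi:E\to V$. A direct computation shows the two assignments $\phi\leftrightarrow f$ are mutually inverse.

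\textbf{Past/Future and compatibility.} The bijections for $E_\pm$ and $X_\pm$ are built identically using $j^\pm$ and the sections $i^\pm(x)=(w_n(x))_{n\in\bbZ_\pm}$; the sign flip in $\Map_w(X_-,V)$ reflects that $T_-$ descends from $T^{-1}$ via $p_-(T^{-1}x)=T_-p_-(x)$. Commutativity of the two displayed squares reduces to the fact that $i$ projects to $i^\pm$ under $\Gamma^\bbZ\to\Gamma^{\bbZ_\pm}$ and that $\pi_\pm\circ\beta=\beta_\pm\circ(\text{projection})$, as recorded in diagram~(\ref{eq:timetable}). Naturality in $V$ is immediate from the formulas $f=\wt\phi\circ i$ and $\wt f\circ j=\mathrm{id}\otimes f$.

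\textbf{Expected difficulty.} No deep obstacle: the proof is a careful unwinding of definitions. The only delicate point is keeping the $\Gamma$-action conventions straight (left vs.\ right; multiplication by $g$ vs.\ $g^{-1}$), which dictates whether one defines $\wt f(j(x,g))$ to be $g\cdot f(x)$ or $g^{-1}\cdot f(x)$. Once the correct sign is fixed so that $\Gamma$-equivariance matches on both sides of $j$, every verification is a one-line computation.
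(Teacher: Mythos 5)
The proposal is essentially the same strategy as the paper's: transport $\Gamma$-equivariant $\wt T$-invariant maps on $\Gamma^{\bbZ}$ across the Lebesgue isomorphism $j:X\times\Gamma\to\Gamma^\bbZ$ and its partial inverses. Your ``from $X$ to $E$'' direction (defining $\wt f(j(x,g))=g\cdot f(x)$) is exactly the paper's converse construction, and your equivariance/shift-invariance checks are correct. The naturality and square-compatibility discussion is fine.

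However, there is a genuine gap in the ``from $E$ to $X$'' direction. You set $f(x):=\wt\phi(i(x))$ where $i(x)=j(x,1)$, but $\wt\phi=\phi\circ\beta$ is only a $\wt m^1$-a.e.\ defined measurable map on $\Gamma^\bbZ$. The image $i(X)=j(X\times\{1\})$ has $\wt m^1$-measure zero whenever $m^1_\Gamma(\{1\})=0$, i.e.\ whenever $\Gamma$ is non-discrete. (Even for discrete infinite $\Gamma$ one must be a little careful, though there the set is non-null.) So the composition $\wt\phi\circ i$ is not a well-defined element of $\Map(X,V)$ in general. The paper avoids this by a Fubini argument: consider the map $(x,g)\mapsto g^{-1}\wt\phi(j(x,g))$ on $X\times\Gamma$. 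By $\Gamma$-equivariance of $\wt\phi$ this expression is (a.e.) independent of $g$, and by Fubini for $m^1_\Gamma$-a.e.\ $g$ the slice $x\mapsto g^{-1}\wt\phi(j(x,g))$ is a well-defined measurable map; one checks it lies in $\Map_w(X,V)$ and is independent (modulo null sets) of the chosen $g$ and of the chosen representative $\wt\phi$. Replacing your evaluation at $g=1$ with this Fubini slice closes the gap and yields the bijection as stated.
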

\begin{proof}
We will prove the existing of the bijection 
$\Map_\Gamma(E_+,V)\cong \Map_w(X_+,V)$,
the others being similar.
Note first that $\Map_\Gamma(E_+,V)$ is isomorphic to the space of $\wt{T}$-invariant, $\Gamma$-equivariant maps from $\Gamma^{\bbZ_+}$ to $V$, $\Map_\Gamma(\Gamma^{\bbZ_+},V)^{\wt{T}}$, as $E_+=\Gamma^{\bbZ_+}/\!\!/{\wt{T}}$.
We will show $\Map_\Gamma(\Gamma^{\bbZ_+},V)^{\wt{T}}\simeq \Map_w(X_+,V)$.

%	Given $\phi\in\Map_\Gamma(E_+,V)$ choose a representative $\phi_0:E_+\to %V$ and let $\Phi_0$ be the composition
%	\[
%		\Gamma^{\bbZ_+}\overto{\pi_+} E_+\overto{\phi_0} V.
%	\]
Fix $\Phi_0$, a representative of a map in $\Map_\Gamma(\Gamma^{\bbZ_+},V)^{\wt{T}}$.
	For $m_+$-a.e $y\in X_+$ for $m_\Gamma$-a.e $g\in G$
	\[
		\begin{split}
			\Phi_0(j^+_g(y))&=\Phi_0(\wt{T}_+j^+_g(y))=
				\Phi_0(j^+_g(T_+y)gw(y)g^{-1})\\
				&=gw(y)^{-1}g^{-1}\Phi_0(j^+_g(T_+y))).
				%g^{-1}w(y)g.\Phi_0(s_g(y)).
		\end{split}
	\]
	Using Fubini theorem, for $m_\Gamma$-a.e $g\in\Gamma$ the function $f_0:X_+\to V$ 
	defined by $f_0(y)=g^{-1}\Phi_0(j^+_g(y))$  
	is measurable and satisfies
	\[
		f_0(T_+y)=w(y)f_0(y)
	\]
	for $m_+$-a.e $y\in X_+$. It follows from the definitions that changing $\phi_0$ on an $\eta_+$-null 
	set results
	in a change of $\Phi_0$ on an $\wt{m}_+$-null set, and a change in $f_0$ on an $m_+$-null set.
	Therefore we get a well-defined map 
        \[
            \Map_\Gamma(E_+,V)\overto{} \Map_w(X_+,V).
        \]
	For the converse, given $f\in \Map_w(X_+,V)$ choose a representative $f_1:X_+\to V$.
        By (\ref{e:GammaZ-map-Leb+}) there exists maps $\alpha:\Gamma^{\bbZ_+}\to X_+$ 
        and $\beta:\Gamma^{\bbZ_+}\to \Gamma$
        such that for $\wt{m}_+$-a.e $t \in \Gamma^{\bbZ_+}$,
        $j^+(\alpha(t),\beta(t))=t$.
        We define $\Phi_1:\Gamma^{\bbZ_+}\to V$ by $\Phi_1(t)=\beta(t)f_1(\alpha(t))$.
        Note that $\beta(t)=t_0^{-1}$ and observe that $\Phi_1$ 
        is a.e $\Gamma$-equivariant:
        \[ 
	\Phi_1(t\gamma^{-1})=\gamma t^{-1} f_1(\alpha(t))=\gamma\Phi_1(t). 
        \]
	Since $m_+$-a.e $w_{n+1}(y)=w_n(T_+y)\cdot w(y)$ and $f_1(T_+y)=w(y).f_1(y)$ it follows that 
	\[
		\begin{split}
			\Phi_1(w_{n+1}(y)\gamma_0)&=\Phi_1(w_n(T_+y)\cdot w(y)\gamma_0)
			=(w(y)\gamma_0)^{-1}.f_1(T_+y)\\
			&=\gamma_0^{-1}w(y)^{-1}w(y)f_1(y)=\Phi_1(w_n(y)\gamma_0).
		\end{split}
	\]
	Thus we have an $\wt{m}_+$-a.e equality $\Phi_1\circ \wt{T}=\Phi_1$.
	Thus $\Phi_1$ defines a map $\Phi\in\Map_\Gamma(\Gamma^{\bbZ_+},V)$ that is $\wt{T}$-invariant.
\end{proof}

\subsection{Ergodic properties of the spaces of Ideal Futures and Ideal Pasts} \hfill{}\\
In this subsection we prove certain ergodic properties that will be used later to show
that for \textit{Apafic} Gregs the tripple $(E,E_+,E_-)$ forms a boundary system.

\begin{theorem}\label{T:rME}\hfill{}\\
	The maps $\pi_\pm:(E,\eta)\overto{}  (E_\pm,\eta_+)$ 
	are relatively metrically ergodic maps.
\end{theorem}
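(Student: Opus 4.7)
The plan is to establish relative metric ergodicity of $\pi_+$; the argument for $\pi_-$ is entirely symmetric, substituting $\wt{T}^{-1}$ for $\wt{T}$ throughout. The strategy is to pull any candidate equivariant lifting problem back from the ergodic-component quotients $\beta\colon\Gamma^\bbZ\to E$ and $\beta_+\colon\Gamma^{\bbZ_+}\to E_+$ to the shift systems themselves, and then to recognize the coordinate projection $\mathrm{proj}\colon \Gamma^\bbZ\to\Gamma^{\bbZ_+}$ as the natural extension of $(\Gamma^{\bbZ_+},\wt{T}_+)$ by the invertible $\wt{T}$, so as to invoke Proposition~\ref{P:natext-relmeterg}.

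Concretely, given a $\Gamma$-equivariant separable metric extension $q\colon W\to V$ together with $\Gamma$-equivariant maps $F\in\Map_\Gamma(E,W)$ and $f\in\Map_\Gamma(E_+,V)$ satisfying $q\circ F=f\circ\pi_+$, set $\wt F:=F\circ\beta$ and $\wt f:=f\circ\beta_+$. Then $\wt F\colon \Gamma^\bbZ\to W$ is $\Gamma$-equivariant and $\wt{T}$-invariant, $\wt f\colon \Gamma^{\bbZ_+}\to V$ is $\Gamma$-equivariant and $\wt{T}_+$-invariant, and diagram~\eqref{eq:timetable} provides the compatibility $q\circ\wt F=\wt f\circ\mathrm{proj}$.

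The next step is to verify that $\mathrm{proj}$ realizes $(\Gamma^\bbZ,\wt{T})$ as the natural invertible extension of $(\Gamma^{\bbZ_+},\wt{T}_+)$: the sub-$\sigma$-algebra generated by cylinders on nonnegative indices is $\wt{T}$-invariant in the sense of \S\ref{sub:Lebesgue}, and its $\wt{T}^{-n}$-preimages exhaust the Borel $\sigma$-algebra on $\Gamma^\bbZ$. Equipping $W$ and $V$ with \emph{trivial} $\wt{T}$-actions (so that $\wt{T}$-equivariance of $\wt F$ is just $\wt{T}$-invariance, which already holds), Proposition~\ref{P:natext-relmeterg} produces a measurable $\wt\phi\colon\Gamma^{\bbZ_+}\to W$ with $\wt F=\wt\phi\circ\mathrm{proj}$. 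Since $\mathrm{proj}$ intertwines both the $\wt{T}/\wt{T}_+$ dynamics and the commuting $\Gamma$-action, the a.e.\ uniqueness of $\wt\phi$ forces it to be simultaneously $\wt{T}_+$-invariant and $\Gamma$-equivariant. Descending through $\beta_+$ then yields the desired $\phi\in\Map_\Gamma(E_+,W)$ with $F=\phi\circ\pi_+$.

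The main point requiring care is that Proposition~\ref{P:natext-relmeterg} is stated for measure-preserving transformations, whereas on $(\Gamma^\bbZ,\wt{m}^1)$ the map $\wt{T}$ is only measure-class-preserving. The expected resolution is that both the natural extension property and relative metric ergodicity are invariants of the underlying Lebesgue-space structure, and the conditional-expectation and martingale arguments of Lemma~\ref{L:vanishing-vars} and Proposition~\ref{P:natext-relmeterg} carry over \emph{verbatim}; alternatively, one transfers the question along the Lebesgue isomorphism $j\colon (X\times\Gamma,m\times m_\Gamma^1)\cong(\Gamma^\bbZ,\wt{m}^1)$ of~\eqref{e:GammaZ-map-Leb} to expose the genuinely $T$-invariant structure on the $X$-factor, where Proposition~\ref{P:natext-relmeterg} applies as stated.
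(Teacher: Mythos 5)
Your plan correctly identifies the central obstacle -- that $\wt{T}$ is only measure-class-preserving on $(\Gamma^\bbZ,\wt{m}^1)$, while Proposition~\ref{P:natext-relmeterg} is formulated and proved for p.m.p.\ natural extensions -- but neither of your two proposed resolutions actually closes the gap.

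Resolution (1) fails: the proof of Proposition~\ref{P:natext-relmeterg} does \emph{not} carry over to non-singular actions, because its final step is the identity
\[
\int_{X\times_{Y_{n}}X} D\dd (m\times_{Y_n} m)=\int_{X\times_{Y}X} D\dd (m\times_Y m),
\]
deduced from $D(T^nx,T^nx')=D(x,x')$ together with $T$-invariance of $m$. If $m$ is merely quasi-invariant this equality is lost, and with it the whole argument. So ``verbatim'' is too optimistic; a genuinely non-singular version of Proposition~\ref{P:natext-relmeterg} would require a new argument (and some care about what ``natural extension'' even means in the non-singular setting).

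Resolution (2) is the right direction but is not carried out, and it hides the key step. Transferring $\wt{T}$ along $j$ gives the skew-product $\hat T(x,g)=(Tx,\,gw(x)^{-1})$ on $(X\times\Gamma,\,m\times m^1_\Gamma)$, which \emph{still} does not preserve $m\times m^1_\Gamma$; the $\Gamma$-factor does not just go away. What actually makes the idea work -- and what the paper does -- is to use Lemma~\ref{L:equi-inv} to convert a $\Gamma$-equivariant map $E_\pm\to V$ into a $w$-equivariant map $X_\pm\to V$ (and likewise for $E\to W$). This packages the $\Gamma$-coordinate into the target, so that the base dynamics becomes the honest p.m.p.\ natural extension $p_+:(X,m,T)\to(X_+,m_+,T_+)$. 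One then forms the $\mathbb N$-equivariant separable metric extension $\id\times q:X_+\times W\to X_+\times V$ with the skew-product $\mathbb N$-action $(x,v)\mapsto(Tx,w(x).v)$, applies Proposition~\ref{P:natext-relmeterg} to the diagram over $X\to X_+$, obtains a factorization $\id\times\phi'$, and translates back through Lemma~\ref{L:equi-inv}. Without spelling out this translation and the associated skew-product metric extension, the plan as written does not reach a complete argument -- the portion you gloss as ``exposing the $T$-invariant structure'' is essentially the whole content of the paper's proof.
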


\begin{proof} We prove that $\pi_+:E\to E_+$ is relatively metrically ergodic, the proof for $\pi_-:E\to E_-$
	is completely analogous.

	Let $q:W\to V$ be $\Gamma$-equivariant separable metric extension, and $F\in\Map_\Gamma(E,W)$ and $f\in\Map_\Gamma(E_+,V)$
	be compatible, i.e. $f=q\circ F$. 
	Lemma~\ref{L:equi-inv} provides corresponding maps $F'\in \Map_w(X,W)$ and $f'\in\Map_w(X_+,V)$ 
	so that $f'=q\circ F'$.
	Note that the space $X_+\times V$ is endowed with the $\mathbb{N}$-actions generated by 
$(x,v) \mapsto  (Tx,w(x).v)$
and similarly for 
$X_+\times W$.
Moreover, the map $\id\times q:X_+\times W\to X_+\times V$ is clearly an $\mathbb{N}$-equivariant isometric extension.
Using Proposition~\ref{P:natext-relmeterg}
we obtain the diagonal map in 
	\[
		\begin{tikzcd}
			X   \ar[rr, "p_+\times F'"] \ar[d, "p_+"] & \quad &  X_+\times W \ar[d, "q"] \\
			X_+  \ar[rr, "\id\times f'"] \ar[urr, dashed] & \quad &  X_+\times V.
		\end{tikzcd}
	\]
which is clearly of the form $\id\times\phi'$ for some $\phi'\in\Map_w(X_+,W)$.
By Lemma~\ref{L:equi-inv}, $\phi'$ corresponds to some $\phi\in\Map_w(E_+,W)$,
and one checks easily that the following diagram commutes:
	\[
		\begin{tikzcd}
			E   \ar[rr, "F"] \ar[d, "\pi_+"] & \quad &  W \ar[d, "q"] \\
			E_+  \ar[rr, "f"] \ar[urr, dashed, "\phi"] & \quad &  V.
		\end{tikzcd}
	\]
	This proves that $\pi_+:E\to  E_+$ is relatively metrically ergodic. 
\end{proof}

\begin{lemma}\label{L:amen}\hfill{}\\
The $\Gamma$-Lebesgue spaces $E_+,E_-$ and $E$ are amenable in the sense of Zimmer
that is for any non-empty metrizable compact convex space $Q$ and continuous homomorphism $\Gamma\to \Aff(Q)$, 
convex, compact spaces $\Map_\Gamma(E_+,Q)$, $\Map_\Gamma(E_-,Q)$ and $\Map_\Gamma(E,Q)$
are non-empty.
\end{lemma}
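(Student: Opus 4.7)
The plan is to reduce the amenability statement via Lemma~\ref{L:equi-inv} to a fixed-point question for an affine self-map on a compact convex space, and then invoke the Markov--Kakutani theorem.

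By Lemma~\ref{L:equi-inv}, there are natural bijections $\Map_\Gamma(E,Q)\cong\Map_w(X,Q)$ and $\Map_\Gamma(E_\pm,Q)\cong\Map_w(X_\pm,Q)$, so it suffices to exhibit elements of the three right-hand sides. I will treat the case of $X_+$; the cases of $X_-$ and $X$ are strictly analogous. Realize $\Map(X_+,Q)$ as a non-empty, convex, compact subset of a locally convex topological vector space by embedding $Q\hookrightarrow[0,1]^{\bbN}$ as a closed convex subset and viewing $\Map(X_+,Q)\subset L^\infty(X_+,m_+)^{\bbN}$ with the product of the weak-$*$ topologies; non-emptiness is clear since constant maps $x\mapsto q_0$ belong to it.

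Next, define the affine self-map
\[
	U:\Map(X_+,Q)\overto{}\Map(X_+,Q),\qquad Uf(x):=w(x)^{-1}.f(T_+x).
\]
Then $\{f : Uf=f\}=\Map_w(X_+,Q)$, since the fixed-point equation rearranges to $f(T_+x)=w(x).f(x)$. The operator $U$ is affine, and is weak-$*$ continuous: it is the composition of the pull-back $f\mapsto f\circ T_+$ (dual to push-forward by the measure-preserving $T_+$ on $L^1$) with the pointwise measurable $\Gamma$-action $x\mapsto w(x)^{-1}$ on $Q$. Markov--Kakutani (or Schauder--Tychonoff) then furnishes a fixed point $f\in\Map_w(X_+,Q)$. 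The cases $X_-$ and $X$ are handled with the obvious analogues $U_-f(x):=w(x).f(T_-x)$ on $\Map(X_-,Q)$ and $Uf(x):=w(x)^{-1}.f(Tx)$ on $\Map(X,Q)$.

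The main obstacle, and the only slightly delicate point, is to verify weak-$*$ continuity of $U$: for each $\lambda\in Q^*$ and $h\in L^1(X_+)$ one must show that $f\mapsto \int_{X_+}\langle\lambda, w(x)^{-1}.f(T_+x)\rangle h(x)\,\dd m_+(x) = \int_{X_+}\langle w(x).\lambda, f(T_+x)\rangle h(x)\,\dd m_+(x)$ is weak-$*$ continuous. Since $x\mapsto w(x).\lambda$ is weak-$*$-measurable and bounded, and we are integrating against the $L^1$-function $h$, this can be reduced by approximating $x\mapsto w(x).\lambda\cdot h(x)$ by simple functions to the weak-$*$ continuity of pull-back along the measure-preserving map $T_+$, which is standard.
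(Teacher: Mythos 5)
Your proof is correct and follows essentially the same route as the paper: reduce via Lemma~\ref{L:equi-inv} to producing an element of $\Map_w(X_+,Q)$, exhibit this set as the fixed-point set of an affine self-map of the compact convex space $\Map(X_+,Q)$, and invoke a fixed-point theorem for affine maps (the paper cites Kakutani via amenability of $\mathbb{N}$; Markov--Kakutani or Schauder--Tychonoff does the same job). One small remark in your favor: your operator $Uf(x)=w(x)^{-1}.f(T_+x)$ has fixed-point set exactly $\{f : f(T_+x)=w(x).f(x)\}=\Map_w(X_+,Q)$, whereas the paper's displayed formula $(\tau f)(y)=w(y).f(T_+y)$ is missing an inverse and, as written, has fixed points satisfying $f(T_+y)=w(y)^{-1}.f(y)$ rather than the defining relation of $\Map_w(X_+,Q)$; so you have in fact corrected a typo in the paper's proof.
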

\begin{proof}
	We prove $\Map_\Gamma(E_+,Q)\neq \emptyset$, the argument for $\Map_\Gamma(E_-,Q)\neq \emptyset$
being completely analogous and $\Map_\Gamma(E,Q)\neq \emptyset$ follows immediately.
By Lemma~\ref{L:equi-inv} it is enough to show that $\Map_w(X_+,Q)\neq \emptyset$.
Since $Q$ is non-empty, the space $\Map(X_+,Q)$ is non-empty as well
and it is a convex compact space in view of existence of barycenters (\ref{e:def-bar-mu}).
	We consider the continuous affine self-map $\tau$ of the compact convex space $\Map(X_+,Q)$
defined by
	\[
		(\tau f)(y)=w(y). f(T_+ y),\qquad (f\in \Map(X_+,Q)).
	\]
and observe that it has a fixed point, by Kakutani's Fixed Point Theorem, that is by the amenability of the semigroup $\mathbb{N}$.
Such a fixed point is indeed an element of $\Map_w(X_+,Q)$,
which is thus proven to be non-empty.
%	Then $\tau^n f(y)=w_n(y).f(T^n_+y)$ for $n\in\bbN$.
%	Fix an arbitrary $f_0\in \Map(X_+,Q)$ and define the sequence of points $f_n\in \Map(X_+,Q)$ 
%	\[
%		f_n=\frac{1}{n}(f_0+\tau f_0+\dots+\tau^{n-1}f_0).
%	\]
%	Since $\Map(X_+,Q)$ is a metrizable convex compact, there is a limit point $f$ for some subsequence: 
%	$f_{n_i}\to f$.
%	Any such limit point $f$ is $\tau$-invariant: for any $\lambda\in Q^*$ for a.e $y\in X_+$
%	\[
%		\langle \lambda, f(y)-\tau f(y)\rangle=\lim_{i\to\infty} 
%		\frac{1}{n_i} \langle \lambda, w_{n_i}(y).f_0(T^{n_i}y)\rangle
%		- \lim_{i\to\infty}\frac{1}{n_i} \langle \lambda,f_0(y)\rangle=0-0.
%	\]
%	Therefore $\Map_w(X_+,Q)$ is non-empty. 
\end{proof}

\subsection{Stationary Maps}\label{sub:stat}\hfill{}\\% (fold)
In this subsection we define the notion of stationary maps and prove their basic properties.
We consider the disintegration of the measure $m$ under the map $p_+:\mathbf{X}\to \mathbf{X}_+$,
giving rise to the measures $\mu_y$ on $X$, defined for a.e $y\in X_+$:
\begin{equation} \label{eq:disq_+}
m=\int \mu_{y}\dd m_+(y)
\end{equation}

\begin{defn}[Stationary maps]\label{D:stat}\hfill{}\\
%	Let $(X,m,T)$ and $w:X\to \Gamma$ be as in \S\ref{sub:basic_construction}.
	Let $Q$ be a metrizable convex compact space and $\Gamma\to\Aff(Q)$ a continuous homomorphism.
	A map $\psi\in\Map(X_+,Q)$ is called \textbf{stationary} 
	if it satisfies
	\[
		\psi(y)=\int_{X} w(T^{-1}x).\psi(p_+(T^{-1}x))\dd \mu_{y}(x)
	\]
	where the integration over $(X,\mu_y)$ is understood in the sense of (\ref{e:def-bar-mu}).
	We denote by $\Stat(X_+,Q)$ the subspace of stationary maps.
\end{defn}

The space $\Stat(X_+,Q)$ of stationary maps is clearly convex and closed subspace of $\Map(X,Q)$.
Therefore it is convex compact. 
The following Theorem identifies it with a familiar object. 

\begin{theorem}\label{T:structure-of-stat}\hfill{}\\
	Let $Q$ be a metrizable convex compact space and $\Gamma\to\Aff(Q)$ a continuous homomorphism.
	Then there is a continuous affine bijection
	\[
		\Map_w(X,Q)\cong \Stat(X_+,Q),\qquad \phi\ \longleftrightarrow\ \psi
	\]
	that in one direction is given by the formula
	\[
		\psi(y)=\int_X \phi(x)\dd \mu_{y}(x)\qquad (y\in X_+),
	\]
%the conditional expectation:	$\psi=\bbE(\phi \mid \calF_0^\infty)$, 
	and in the opposite direction by a.e weak-* convergence:
	\[
		\begin{split}
			\phi(x)&=\lim_{n\to\infty}\ w_{-n}(x)^{-1}.\psi(p_+(T^{-n}x))\\
			&=\lim_{n\to\infty}\ w(T^{-1}x)w(T^{-2}x)\cdots w(T^{-n}x).\psi(p_+(T^{-n}x)).
		\end{split}
	\]
\end{theorem}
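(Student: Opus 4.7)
The plan is to realize the forward map $\phi\mapsto\psi$ as a conditional expectation with respect to the future $\sigma$-algebra $\calF_0^\infty$, and the reverse map as a martingale limit along the filtration $\calG_n:=\calF_{-n}^\infty\nearrow\calX$. The main tools are Theorem~\ref{T:MCT}, the tower and pull-out properties for conditional expectations of $Q$-valued maps (extended from the real-valued case via a countable separating family in $Q^*$), and the elementary identity $\bbE(\phi\circ T^{-1}\mid\calF_{-1}^\infty)(x)=\bbE(\phi\mid\calF_0^\infty)(T^{-1}x)$, which holds since $T$ is measure-preserving and $\calF_{-1}^\infty$ corresponds to $\calF_0^\infty$ shifted by $T^{-1}$.

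\textbf{Forward map and stationarity.} Given $\phi\in\Map_w(X,Q)$, define $\psi(y):=\bary(\phi_*\mu_y)$, so that $\psi\circ p_+=\bbE(\phi\mid\calF_0^\infty)$ by the formula \eqref{e:cond-via-bary}. Rewriting $w$-equivariance as $\phi(x)=w(T^{-1}x).\phi(T^{-1}x)$ and applying the tower property along $\calF_0^\infty\subset\calF_{-1}^\infty$ together with the pull-out of the $\calF_{-1}^\infty$-measurable factor $w(T^{-1}\cdot)$, one obtains
\[
\bbE(\phi\mid\calF_0^\infty)=\bbE\bigl(w(T^{-1}\cdot).\bbE(\phi\circ T^{-1}\mid\calF_{-1}^\infty)\bigm|\calF_0^\infty\bigr).
\]
The change-of-variables identity above rewrites the inner conditional expectation as $\psi(p_+(T^{-1}\cdot))$, and unpacking the outer expectation via $\mu_y$ yields precisely the stationarity of $\psi$.

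\textbf{Reverse map via martingale.} For $\psi\in\Stat(X_+,Q)$, set $\phi_n(x):=w_{-n}(x)^{-1}.\psi(p_+(T^{-n}x))$, a $\calG_n$-measurable map. The cocycle relation gives $w_{-n-1}(x)^{-1}=w_{-n}(x)^{-1}\cdot w(T^{-n-1}x)$; pulling the $\calG_n$-measurable factor $w_{-n}(x)^{-1}$ out of $\bbE(\cdot\mid\calG_n)$ and applying the change-of-variables identity at the shift $T^{-n}$ reduces $\bbE(\phi_{n+1}\mid\calG_n)(x)$ to $w_{-n}(x)^{-1}$ times $\bbE(w(T^{-1}\cdot).\psi(p_+(T^{-1}\cdot))\mid\calF_0^\infty)(T^{-n}x)$, which by stationarity of $\psi$ equals $w_{-n}(x)^{-1}.\psi(p_+(T^{-n}x))=\phi_n(x)$. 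Hence $\{\phi_n\}$ is a $Q$-valued martingale; Theorem~\ref{T:MCT} produces the a.e.\ limit $\phi:=\lim_n\phi_n$. The cocycle identity $w_{-n}(Tx)^{-1}=w(x)\cdot w_{-(n-1)}(x)^{-1}$ yields $\phi_n(Tx)=w(x).\phi_{n-1}(x)$, and passing to the limit gives $\phi\in\Map_w(X,Q)$.

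\textbf{Inversion and continuity.} Starting from $\phi\in\Map_w(X,Q)$, forming $\psi$, and then the martingale $\phi_n$: using $w_{-n}(x)^{-1}.\phi(T^{-n}x)=\phi(x)$ from $w$-equivariance, the pull-out of the $\calG_n$-measurable factor from $\bbE(\phi\mid\calG_n)$ shows $\phi_n=\bbE(\phi\mid\calG_n)$, so $\phi_n\to\phi$ by Theorem~\ref{T:MCT}. Conversely, starting from $\psi$, the martingale identity $\bbE(\phi_n\mid\calG_0)=\phi_0=\psi\circ p_+$ together with dominated convergence (valid because $Q$ is compact) yields $\bbE(\phi\mid\calG_0)=\psi\circ p_+$, i.e.\ $\psi(y)=\int_X\phi\dd\mu_y$. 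Continuity of the forward map is immediate from dominated convergence applied to evaluations against continuous affine functionals; since $\Map_w(X,Q)$ and $\Stat(X_+,Q)$ are closed convex subsets of compact convex spaces, the inverse is automatically continuous by compactness. I expect the main obstacle to be the careful bookkeeping of $\sigma$-algebras, $T$-shifts, and cocycle signs in the identity $\bbE(\phi_{n+1}\mid\calG_n)=\phi_n$; once this is set up correctly, the remaining verifications follow routinely from the pull-out and tower properties together with Theorem~\ref{T:MCT}.
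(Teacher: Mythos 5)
Your proof is correct and follows the same conditional-expectation/martingale strategy as the paper: stationarity via the tower and pull-out properties for the forward map, and Theorem~\ref{T:MCT} for the reverse. One small point in your favor: you define the martingale directly as $\phi_n(x)=w_{-n}(x)^{-1}.\psi(p_+(T^{-n}x))$ and verify the martingale property from stationarity, which is cleaner than the paper's phrasing $\psi_n:=\bbE(\psi\mid\calF_{-n}^\infty)$ (since $\psi$ is $\calF_0^\infty$-measurable, hence $\calF_{-n}^\infty$-measurable, that expression is literally constant in $n$ and must be read as shorthand for what you wrote).
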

\begin{proof}%[Proof of Theorem~\ref{T:structure-of-stat}]\hfill{}\\
	Given $\phi\in \Map_w(X,Q)$ we define $\psi\in \Map(X_+,Q)$ using the formula
	\[
		\psi(y)=\int_X \phi(x)\dd \mu_{y}(x)\qquad (y\in X_+).
	\]
	Maps $X_+\to Q$ can be identified with $\calF_0^\infty$-measurable maps $X\to Q$, and in this proof 
	we write $\psi(x)$ instead of $\psi(p_+(x))$ to simplify notations.
	Thus $\psi$ is given by the conditional expectation: $\psi=\bbE(\phi \mid \calF_0^\infty)$.
	Using equivariance of $\phi$ in the form $\phi=w_{-1}^{-1}.\phi\circ T^{-1}$ and the fact 
	that $w_{-1}^{-1}$ is $\calF_{-1}^\infty$-measurable we have the relation
	\[
		\begin{split}
			\psi&=\bbE(\phi \mid \calF_0^\infty)=\bbE(\bbE(\phi \mid \calF_{-1}^\infty) \mid \calF_0^\infty)
			=\bbE(\bbE(w_{-1}^{-1}.\phi\circ T^{-1} \mid \calF_{-1}^\infty) \mid \calF_0^\infty)\\
			&=\bbE(w_{-1}^{-1}.\bbE(\phi \mid \calF_{0}^\infty)\circ T^{-1} \mid \calF_0^\infty)
			=\bbE(w_{-1}^{-1}.\psi\circ T^{-1} \mid \calF_0^\infty)
		\end{split}
	\]
	that can be rewritten as
	\[
		\psi(y)=\int_X w(T^{-1}x).\psi(p_+(T^{-1}x))\dd \mu_{y}(x)\qquad (y\in X_+).
	\]
	In other words, $\psi$ is stationary: $\psi\in \Stat(X_+,Q)$. 
	
	Conversely, we consider $\psi\in\Stat(X_+,Q)$, viewed as a $\calF_0^\infty$-measurable map $X\to Q$.
	The fact that it is stationary is expressed by the relation
	\[
		\psi=\bbE(w_{-1}^{-1}.\psi\circ T^{-1} \mid \calF_0^\infty).
	\]
	Consider the sequence of maps $\psi_{n}:=\bbE(\psi \mid \calF_{-n}^{\infty})$ with $n\ge 0$. Then 
	\[
		\bbE(\psi_{n+1} \mid \calF_{-n}^{\infty})=\bbE(\bbE(\psi \mid \calF_{-n-1}^{\infty}) 
			\mid \calF_{-n}^{\infty})
		=\bbE(\psi \mid \calF_{-n}^{\infty})=\psi_n.
	\]
	Thus $\{\psi_n\}$ form a martingale, and by Theorem~\ref{T:MCT} for $m$-a.e $x\in X$ there is a weak-* convergence
	\[
		\phi:=\lim_{n\to\infty} \psi_n(x).
	\]
	Note that since $w_{-n}$ and $\psi\circ T^{-n}$ are $\calF_{-n}^\infty$-measurable  
	\[
		\psi_n=\bbE(\psi \mid \calF_{-n}^{\infty})=\bbE(w_{-n}^{-1}.\psi\circ T^{-n} 
		\mid \calF_{-n}^{\infty})=w_{-n}^{-1}.\psi\circ T^{-n}
	\]
	so we have $m$-a.e weak-* convergence as $n\to\infty$
	\[
		%w(T^{-1}x)w(T^{-2}x)\cdots w(T^{-n}x).\nu(T^{-n}x) = 
		w_{-n}(x)^{-1}.\psi(T^{-n}x)=\psi_n(x)\ \to \ \phi(x).
	\]
	Since $\psi_{n+1}(x)=w_{-1}(x)^{-1}.\psi_n(T^{-1}x)$ it follows that $\phi(x)=w_{-1}(x)^{-1}.\phi(T^{-1}x)$.
	Hence $\phi$ is $w$-equivariant, i.e. $\phi\in\Map_w(X,Q)$.
	
	Finally, the two maps between $\Map_w(X,Q)$ and $\Stat(X_+,Q)$ described above are affine 
	and inverses of each other.
	They are also continuous, since pointwise convergence of uniformly bounded functions 
	implies convergence in measure.
\end{proof}

The following lemma will be useful later on.

\begin{lemma} \label{lem:convex}
Assume the Greg $(X,\calX,m,T,w,\Gamma)$ is ergodic.
Let $Q$ be a non-empty metrizable convex compact space and $\Gamma\to\Aff(Q)$ a continuous homomorphism.
Let $h:Q\overto{} \mathbb{R}$ be a $\Gamma$-invariant bounded measurable convex map.
Then for every map $\psi\in \Stat(X_+,Q)$, $h\circ\psi$ is essentially constant.

More generally, Let $Q_0\subset Q$ be a measurable $\Gamma$-invariant convex subset and 
$h_0:Q_0\to \mathbb{R}$ be a $\Gamma$-invariant bounded measurable convex map.
If $\psi\in \Stat(X_+,Q)$ is such that $m_+(\psi^{-1}(Q_0))=1$ then $h_0\circ\psi$ is essentially constant.
\end{lemma}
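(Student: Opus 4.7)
The plan is to apply Lemma~\ref{L:Markov} to the equivariant quotient $p_+:(X,m,T)\to(X_+,m_+,T_+)$, which is a map of ergodic p.m.p.\ systems ($(X_+,T_+)$ inherits ergodicity from $(X,T)$), with the bounded $L^2$ function $f:=h\circ\psi$. It suffices to verify the hypothesis of that lemma, namely
\[
    f(y)\le \int_X f(p_+(T^{-1}x))\dd\mu_y(x)\qquad(m_+\text{-a.e.\ }y\in X_+),
\]
after which the conclusion that $f$ is essentially constant is immediate.

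To produce this inequality I would apply $h$ to the stationarity identity
\[
    \psi(y)=\int_X w(T^{-1}x).\psi(p_+(T^{-1}x))\dd\mu_y(x)
\]
and invoke Jensen's inequality for the bounded measurable convex functional $h$ (applied to the $\mu_y$-pushforward on $Q$ along $x\mapsto w(T^{-1}x).\psi(p_+(T^{-1}x))$), obtaining
\[
    h(\psi(y))\le \int_X h\bigl(w(T^{-1}x).\psi(p_+(T^{-1}x))\bigr)\dd\mu_y(x).
\]
The $\Gamma$-invariance of $h$ then collapses the integrand to $h(\psi(p_+(T^{-1}x)))=f(p_+(T^{-1}x))$, yielding exactly the required inequality, and hence by Lemma~\ref{L:Markov} the essential constancy of $f=h\circ\psi$.

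For the more general statement with $Q_0\subset Q$ and $h_0:Q_0\to\mathbb{R}$, the same argument runs verbatim inside $Q_0$: by hypothesis $\psi(y)\in Q_0$ a.e., and by $\Gamma$-invariance of $Q_0$ the translate $w(T^{-1}x).\psi(p_+(T^{-1}x))$ also lies in $Q_0$ for $\mu_y$-a.e.\ $x$; the barycenter of this $Q_0$-valued family is $\psi(y)\in Q_0$, so Jensen for $h_0$ is still available. The one delicate point to watch is justifying Jensen's inequality for a merely bounded measurable (rather than continuous) convex functional; in the compact metrizable convex setting this is standard, for instance by expressing $h$ as the pointwise supremum of a countable family of continuous affine minorants on $Q$, or equivalently by reducing to a countable family of affine functionals separating $Q$. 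Everything else is routine bookkeeping.
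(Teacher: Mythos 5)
Your argument is exactly the paper's: apply Lemma~\ref{L:Markov} with $Y=X_+$ and $f=h_0\circ\psi$, deriving the required submartingale-type inequality from the stationarity identity together with Jensen's inequality and $\Gamma$-invariance of $h_0$. One small caveat on a side remark: writing $h$ as a supremum of continuous affine minorants justifies Jensen only for \emph{lower} semicontinuous convex $h$, whereas the paper's key applications (e.g.\ $\maxev$, $\maxev_{d,k}$) use \emph{upper} semicontinuous convex $h$; for those one should instead approximate the pushed measure by finitely supported measures sharing the same barycenter (possible on a compact metrizable convex $Q$) and invoke the upper-semicontinuous half of the portmanteau theorem.
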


\begin{proof}
This is an immediate application of Lemma~\ref{L:Markov}
taken with $Y=X_+$ and $f=h_0\circ \psi$.
%We let $\phi\in \Map_w(X,Q)$ be the map 
%corresponding to $\phi$ by Theorem~{T:structure-of-stat}
%and consider the map $h\circ \phi \Map_w(X,\mathbb{R})$.
%By Lemma~\ref{L:equi-inv} we have a bijection $\Map_w(X,\mathbb{R})\simeq \Map_\Gamma(E,\mathbb{R})$,
%where $\mathbb{R}$ is considered here as a trivial $\Gamma$-space.
%However by Remark~\ref{r:Mackey}, $E$ is $\Gamma$-ergodic, thus $\Map_\Gamma(E,\mathbb{R})$
%consists only of essentially constant maps.
%It follows that $h\circ \phi$ is essentially constant and we 
%denote its essential value by $\alpha\in \mathbb{R}$.
%We claim that $\alpha$ is the essential value of $h\circ \psi\circ p_+$ too.
%
%
%Applying Theorem~\ref{T:structure-of-stat}, we have for a.e $x\in X$,
% 	\[
%		h\circ \psi\circ p_+(x)=h\left(\int_X \phi(x)\dd \mu_{p_+(x)}(x)\right) \leq \int_X h\circ \phi(x)\dd \mu_{p_+(x)}(x)=\alpha
%	\]
%by the convexity of $h$ and 
%	\[ \alpha=	h\circ \phi(x)=h\left(\lim_{n\to\infty}\ w_{-n}(x)^{-1}.\psi(p_+(T^{-n}x))\right)
%\leq \liminf_{n\to\infty}\ h\circ \psi \circ p_+(T^{-n}x) \]
%by the lower semicontinuity of $h$.
%Therefore, for any $\epsilon>0$ we can find $n$ such that
%\[ m(\{x\in X\mid h\circ \psi\circ p_+(T^{-n}x)\in(\alpha-\epsilon,\alpha]\})>1-\epsilon \]
%and, by the $T$-invariance of $m$, 
%\[ m(\{x\in X\mid h\circ \psi\circ p_+(x)\in(\alpha-\epsilon,\alpha]\})>1-\epsilon. \]
%We conclude that indeed, $\alpha$ is the essential value of $h\circ \psi\circ p_+$.
%Thus $h\circ \psi\circ p_+$ is essentially constant, and so is $h\circ \psi$.
%This finishes the proof.
\end{proof}

\subsection{Deterministic stationary maps}\hfill{}\\
A stationary map $\psi\in \Stat(X_+,Q)$
is said to be determinstic if it is $w$-equivariant, that is $\psi\in \Map_w(X_+,Q)$.

\begin{example}
Consider a rotation action of $\mathbb{Z}$ on $X=\mathbb{R}/\mathbb{Z}$, 
$Tx=x+\alpha$ for some irrational $\alpha\in \mathbb{R}$.
Let $\Gamma=\mathbb{R}/\mathbb{Z}$ and $w:X\to \Gamma$ be the identity map.
Check that $X,X_\pm,E$ and $E_\pm$ are all isomorphic to the action of $\Gamma$ on itself and the maps $p_\pm$ and $\pi_\pm$ are isomorphisms.
Consider $\psi:X_+\to \Prob(\mathbb{R}/\mathbb{Z})$ to be the map taking a point to the corresponding Dirac measure.
This is a determinstic stationary map.
\end{example}

We will see in Lemma~\ref{lem:constcrit} that under the Apafi condition, every deterministic stationary map is trivial.
Below we give a criterion for a stationary map to be deterministic.

\begin{lemma} \label{l:deter}
Let $Q$ be a non-empty metrizable convex compact space and $\Gamma\to\Aff(Q)$ a continuous homomorphism.
Fix a stationary map $\psi\in \Stat(X_+,Q)$.
If the image of $\psi$ is a.e an extreme point of $Q$ then $\psi$ is determinstic,
that is $\psi\in \Map_w(X_+,Q)$.

More generally, 
let $Q_0\subset Q$ be a measurable $\Gamma$-invariant convex subset
such that image of $\psi$ is a.e in $Q_0$. 
If the image of $\psi$ is a.e an extreme point of $Q_0$ then $\psi$ is determinstic.
Assume, moreover, that the Greg $(X,\calX,m,T,\Gamma,w)$ is ergodic.
If $m_+(\psi^{-1}(\ext(Q_0)))>0$ then $\psi$ is determinstic.
\end{lemma}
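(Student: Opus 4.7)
The plan is to translate, via the bijection of Theorem~\ref{T:structure-of-stat}, the stationary map $\psi\in\Stat(X_+,Q)$ into its $w$-equivariant counterpart $\phi\in\Map_w(X,Q)$, which satisfies $\psi(y)=\bbE(\phi\mid\calF_0^\infty)(y)=\bary(\phi_*\mu_y)$, where $\{\mu_y\}$ is the disintegration of $m$ over $p_+$. Under this bijection, $\psi$ is deterministic exactly when $\phi$ is $\calF_0^\infty$-measurable, equivalently when $\phi_*\mu_y=\delta_{\psi(y)}$ for $m_+$-a.e.\ $y$; in that case $\phi=\psi\circ p_+$ and the $w$-equivariance of $\phi$ on $X$ descends through $p_+$ to yield $\psi\in\Map_w(X_+,Q)$. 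So the proof reduces to producing this Dirac conclusion under the extremality hypotheses.

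The key auxiliary input is the following sub-claim: for a convex $R\subset Q$, if $v\in\ext(R)$ and $\nu\in\Prob(R)$ has $\bary(\nu)=v$, then $\nu=\delta_v$. I would fix a countable separating family $\{\lambda_i\}\subset Q^*$ of continuous affine functionals, reduce to the case $\nu(\{v\})=0$ by peeling off the atom at $v$, and split $\nu$ along a Borel partition $E\sqcup E^c$ of $\supp(\nu)$ of positive mass on each side, obtaining $v=\nu(E)\bary(\nu_E)+\nu(E^c)\bary(\nu_{E^c})$. If the conditional barycenters equal $v$ for \emph{every} such partition then $\lambda_i=\lambda_i(v)$ holds $\nu$-a.e.\ for each $i$, which by separation forces $\nu=\delta_v$ and contradicts $\nu(\{v\})=0$; otherwise some partition yields two distinct points of $R$ exhibiting $v$ as a nontrivial convex combination inside $R$, contradicting extremality.

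Parts (1) and (2) follow immediately by applying the sub-claim with $\nu=\phi_*\mu_y$ to $R=Q$ and $R=Q_0$ respectively. In case (2) one uses the limit formula $\phi(x)=\lim_n w_{-n}(x)^{-1}.\psi(p_+(T^{-n}x))$ provided by Theorem~\ref{T:structure-of-stat}, together with the $\Gamma$-invariance of $Q_0$, to see that $\phi$ takes values in $\overline{Q_0}$ almost surely. The main technical subtlety I anticipate is verifying the sub-claim when $Q_0$ is only measurably convex: barycenters of conditional measures on $\overline{Q_0}$ could a priori lie in $\overline{Q_0}\setminus Q_0$, and one has to run the separating-functional step with care to ensure that the contradiction to extremality really takes place inside $Q_0$.

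For (3) I would bootstrap the positive-measure hypothesis to full measure using Lemma~\ref{L:Markov}. Set $A:=\psi^{-1}(\ext(Q_0))$ and $f:=\chi_A\in L^2(X_+,m_+)$, with $\int f\dd m_+>0$ by assumption. When $\psi(y)\in\ext(Q_0)$, stationarity displays $\psi(y)$ as the $\mu_y$-barycenter of $x\mapsto w(T^{-1}x).\psi(p_+(T^{-1}x))$, a map with values in $Q_0$; the sub-claim collapses this distribution to $\delta_{\psi(y)}$, so $w(T^{-1}x).\psi(p_+(T^{-1}x))=\psi(y)\in\ext(Q_0)$ for $\mu_y$-a.e.\ $x$, and $\Gamma$-invariance of $\ext(Q_0)$ yields $\psi(p_+(T^{-1}x))\in\ext(Q_0)$ for $\mu_y$-a.e.\ $x$. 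This is precisely the hypothesis $f(y)\le\int f(p_+(T^{-1}x))\dd\mu_y(x)$ of Lemma~\ref{L:Markov}, applied to the factor $p_+:(X,T)\to(X_+,T_+)$, ergodic since the Greg is; it follows that $f$ is essentially constant, which together with $\int f>0$ forces $f\equiv 1$ a.e., returning us to the situation of case (2).
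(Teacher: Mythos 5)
Your plan correctly identifies the two ingredients—the barycenter--extreme-point principle and Lemma~\ref{L:Markov}—and your treatment of part (3) matches the paper's in substance: the paper packages the ergodicity step as Lemma~\ref{lem:convex} applied to $h=\chi_{\ext(Q_0)}$ (noting this indicator is a $\Gamma$-invariant convex function), while you unroll it by hand via Lemma~\ref{L:Markov} with $f=\chi_{\psi^{-1}(\ext(Q_0))}$; these are the same argument.

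The genuine gap is in your parts (1) and (2). You route through the limit map $\phi\in\Map_w(X,Q)$ of Theorem~\ref{T:structure-of-stat} and want to apply the sub-claim to $\nu=\phi_*\mu_y$ with $R=Q_0$. But, as you yourself note, the limit formula only yields $\phi(x)\in\overline{Q_0}$, so $\nu$ is supported on $\overline{Q_0}$ rather than $Q_0$; and a point $v\in\ext(Q_0)$ may fail to be extreme in $\overline{Q_0}$ (take $Q=[-1,1]^2$, $Q_0=\{(x,y):|y|\le 1-|x|,\ x>0\}\cup\{(0,0)\}$: the origin is extreme in $Q_0$ but is the midpoint of $(0,\pm 1)\in\overline{Q_0}$). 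So the sub-claim as stated simply does not apply there, and "run the separating-functional step with care" will not patch it—the extremality you need would have to be in $\overline{Q_0}$. The paper sidesteps this by never passing to $\phi$: it reads the stationarity equation $\psi(y)=\int_X w(T^{-1}x).\psi(p_+(T^{-1}x))\,\dd\mu_y(x)$ directly, where the integrand does take values in $Q_0$ (by $\Gamma$-invariance of $Q_0$), so the barycenter argument is applied to a measure genuinely supported on $Q_0$. That is exactly what you do inside your part-(3) argument, so the fix is already in your own write-up: drop the detour through $\phi$ for (1)--(2) and apply the sub-claim to the law of $x\mapsto w(T^{-1}x).\psi(p_+(T^{-1}x))$ under $\mu_y$ instead. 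A related remark: at the end of (3), after $f\equiv 1$, there is no need to "return to case (2)"—your own chain of equalities $w(T^{-1}x).\psi(p_+(T^{-1}x))=\psi(y)=\psi(p_+(x))$ on a full-measure set already says that $\psi\circ p_+$ is a $w$-map, which is the conclusion.

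One further shared caveat, not specific to your proposal: both you and the paper implicitly assume $\ext(Q_0)$ is measurable (needed for $\chi_{\ext(Q_0)}$ to be a measurable convex function, and for $\psi^{-1}(\ext(Q_0))$ to be measurable). This is fine in the paper's concrete applications ($G_\delta$ level sets of $\maxev$), but you should flag it as an assumption rather than fold it silently into the "sub-claim" for general measurable convex $Q_0$.
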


\begin{proof}
We prove the ``more general" and ``moreover" case.
We assume as we may that $\psi(X_+)\subset Q_0$
and conclude from the 
statoinarity equation for $y\in X_+$
	\[
		\psi(y)=\int_{X} w(T^{-1}x).\psi(p_+(T^{-1}x))\dd \mu_{y}(x)
	\]
that if $\psi(y)\in \ext(Q_0)$ then for $\mu_y$-a.e $x\in X$, $\psi\circ p_+(x)=w(T^{-1}x).\psi\circ p_+(T^{-1}x)$.
Applying Lemma~\ref{lem:convex} to the characteristic function of $\ext(Q_0)\subset Q_0$, $h=\chi_{\ext(Q_0)}:Q_0\to [0,1]$, 
we get that $m_+(\psi^{-1}(\ext(Q_0)))=1$.
Thus $m((\psi\circ p_+)^{-1}(\ext(Q_0)))=1$ and we cocnlude that $\psi\circ p_+$ is in fact a $w$-map.
It follows that indeed, $\psi$ is a $w$-map.
\end{proof}

\subsection{Stationary measures}\label{sub:examples-of-stat}\hfill{}\\
The following example illustrates the concept of stationarity and will be used later 
to determine the Lyapunov spectra in certain situations.

\begin{example}[Stationary measures] \label{ex:sm} \hfill{}\\
	Let $M$ be a compact metrizable space, $\rho:\Gamma\to \Homeo(M)$ a continuous homomorphism and consider
	the space $X\times M$ with the transformation 
	\[
		S:X\times M\overto{} X\times M, \qquad S:(x,\xi)\mapsto (Tx,\rho\circ w(x).\xi).
	\]
	Let $\Prob_m(X\times M)$ denote all probability measures on $X\times M$ 
	that project to $m$.
	Using disintegration, such measures are in an affine bijection with $\Map(X,\Prob(M))$,
	where the closed convex subspace $\Prob_m(X\times M)^S$ of $S$-invariant measures corresponds to the
	closed convex subspace $\Map_w(X,\Prob(M))$ of equivariant maps.
	Let 
	\[
		\Map_w(X,\Prob(M))\ni\phi\longleftrightarrow \psi\in \Stat(X_+,\Prob(M))
	\] 
	be as in Theorem~\ref{T:structure-of-stat}, and define
	\[
		\bar{m}:=\int_X \delta_x\times \psi(x)\dd m(x),\qquad \bar{m}_+:=\int_{X_+} \delta_y\times \psi(y)\dd m_+(y).
	\]
	Then the following diagram commutes:
	\[
		\begin{tikzcd} 
			(X\times M,\bar{m},S) \ar[d] \ar[r]  &  (X_+\times M,\bar{m}_+,S_+) \ar[d]\\
	               (X,m,T) \ar[r, "p_+"] & (X_+,m_+,T_+).
		\end{tikzcd}
	\]
	The horizontal rows describe natural extensions of the non-invertible systems in the RHS. 
	In this case $\{\psi(y)\in\Prob(M)\}_{y\in X_+}$ can be referred to as a system
	of \textbf{stationary measures} (cf. Ledrappier \cite{Ledrappier}).
	Ergodicity of the top systems can be proved to be equivalent to $\phi$ and $\psi$ being extremal points of
	the convex compact spaces $\Map_w(X,\Prob(M))$ and $\Stat(X_+,\Prob(M))$, respectively.
	% 
	% (2) Let $\Gamma\acts (Z,\zeta)$ be an ergodic p.m.p action for which the skew-product 
	% $\hat{T}:(x,z)\mapsto (Tx,w(x).z)$ of $(X\times Z,m\times\zeta)$ 
	% is ergodic (as in Proposition~\ref{P:wm}).
	% Let $M$ be a compact metrizable space, $\rho:\Gamma\times Z\to \Homeo(M)$ a measurable cocycle and
	% consider the transformation $\hat{S}$ of $X\times Z\times M$
	% \[
	% 	\hat{S}:(x,z,\xi)\mapsto (Tx,w(x).z,\rho(w(x),z).\xi).
	% \]
	% Denoting $Q=\Map(Z,\Prob(M))$,  and $\psi\in \Map_w(X,Q)$ corresponding to $\eta\in \Stat(X,Q)$ as in Theorem~\ref{T:structure-of-stat},
	% we define
	% \[
	% 	\bar{m}=\int_X \delta_x\times \phi(x,z)\dd m(x),\qquad \bar{m}_+=\int_{X_+} \delta_x\times \eta(x,z)\dd m_+(x)
	% \]
	% to obtain commutative diagram in which the horizontal lines are natural extensions:
	% \[
	% 	\begin{tikzcd} 
	% 		(X\times Z\times M,\bar{m},\hat{S}) \ar[d] \ar[r]  &  (X_+\times Z\times M,\bar{m}_+,\hat{S}_+) \ar[d]\\
	%                (X\times Z,m\times \zeta,\hat{T}) \ar[r, "p_+"] & (X_+\times Z,m_+\times \zeta,\hat{T}_+).
	% 	\end{tikzcd}
	% \]
	% Ergodicity of the top systems is equivalent to $\phi$ and $\eta$ being extremal points of the corresponding 
	% convex compact spaces.
\end{example}

\subsection{Past oriented stationary maps}\label{subs:past-oriented}\hfill{}\\
We conclude this section by introducing a key definition and 
listing some of the correspondences between the various spaces we had discussed.

\begin{defn}\label{D:good-stat}\hfill{}\\
Denote by $\Stat_-(X_+,Q)\subset \Stat(X_+,Q)$ the subset that corresponds to the non-empty 
convex compact subset $\Map_w(X_-,Q)\subset \Map_w(X,Q)$.
The elements of this subset are called \textit{past oriented stationary maps}.
%, namely:
%	\[
%		\Stat_-(X_+,Q)=\setdef{y\mapsto \int_X \psi\circ p_-(x)\dd \mu_{y}(x)}
%{\psi\in \Map_w(X_-,Q)}.
%	\]
\end{defn}

\begin{theorem}\label{T:equi-inv}\hfill{}\\
Let $Q$ be a non-empty metrizable convex compact space and $\Gamma\to\Aff(Q)$ a continuous homomorphism.
Then the following is a commutative diagram of non-empty metrizable compact convex spaces
and continuous affine maps:
\[
    \begin{tikzcd}
			\Map_\Gamma(E_-,Q) \ar[d, hook]\ar[r, "\cong"] & \Map_w(X_-,Q) \ar[r, "\cong"]\ar[d, hook] & \Stat_-(X_+,Q) \ar[d, hook]\\
			\Map_\Gamma(E,Q) \ar[r, "\cong"] & \Map_w(X,Q) \ar[r, "\cong"] & \Stat(X_+,Q).
    \end{tikzcd}
\]
The vertical maps are the obvious inclusions stemming from the maps $E\to E_-$, $X\to X_-$ and $X\to X_+$,
while the horizontal maps are the ones given in Lemma~\ref{L:equi-inv} and Theorem~\ref{T:structure-of-stat}.
These diagrams are natural with respect to $Q$,
that is a $\Gamma$-affine map $Q\to Q'$ gives rise to a map of diagrams in the obvious way.
\end{theorem}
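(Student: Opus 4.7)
The statement is essentially a packaging of the previously proven isomorphisms together with the definitional setup. My plan is to verify four claims (non-emptiness, the compact convex metrizable structure, commutativity of both squares, and naturality in $Q$) in sequence, each reducing to a result already established.

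First, I would handle non-emptiness and structure. By Lemma~\ref{L:amen}, the spaces $\Map_\Gamma(E,Q)$ and $\Map_\Gamma(E_-,Q)$ are non-empty; they are closed convex subsets of $\Map(E,Q)$ and $\Map(E_-,Q)$, which are themselves compact convex metrizable by the existence of barycenters \eqref{e:def-bar-mu} and the metrizability of $Q$. The horizontal bijections of Lemma~\ref{L:equi-inv} and Theorem~\ref{T:structure-of-stat} then propagate non-emptiness and the convex compact structure to $\Map_w(X_-,Q)$, $\Map_w(X,Q)$ and $\Stat(X_+,Q)$; non-emptiness of $\Stat_-(X_+,Q)$ is then immediate from Definition~\ref{D:good-stat}. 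Each horizontal map is manifestly affine (integration against the disintegration $\mu_y$, push-forward, and conditional expectation are all affine operations) and continuous (by dominated convergence for the forward direction in Theorem~\ref{T:structure-of-stat}, and by Theorem~\ref{T:MCT} plus convergence in measure for the inverse direction).

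Second, I would verify commutativity. The left square is literally the commutative square already supplied by Lemma~\ref{L:equi-inv}, specialized to $V=Q$. The right square is commutative by \emph{definition}: in Definition~\ref{D:good-stat}, $\Stat_-(X_+,Q)$ is declared to be exactly the subset corresponding under the Theorem~\ref{T:structure-of-stat} isomorphism to $\Map_w(X_-,Q)\subset \Map_w(X,Q)$ (the inclusion being the precomposition with $p_-\colon X\to X_-$). Hence no further computation is needed for the right square. Naturality in $Q$ reduces to checking that a continuous $\Gamma$-affine map $Q\to Q'$ commutes with each of the three horizontal bijections; for the Lemma~\ref{L:equi-inv} bijection this was already noted there, and for the Theorem~\ref{T:structure-of-stat} bijection it follows from the fact that $\Gamma$-affine continuous maps preserve barycenters (for the formula $\psi(y)=\int_X\phi(x)\,\dd\mu_y(x)$) and commute with weak-$*$ pointwise limits (for the inverse formula).

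The main, and really only, point requiring a moment's thought is checking that $\Stat_-(X_+,Q)$ is genuinely a \emph{closed} convex subspace of $\Stat(X_+,Q)$, so that the right-hand vertical map in the diagram is an inclusion of compact convex spaces rather than merely a set-theoretic injection. This is immediate once one notes that $\Map_w(X_-,Q)$ sits inside $\Map_w(X,Q)$ as the fixed set of the projection $\phi\mapsto \bbE(\phi\mid p_-^{-1}\calX_-)$ (equivalently, the kernel of $\phi\mapsto \phi-\phi\circ p_-\circ p_-$), hence is closed and convex; the Theorem~\ref{T:structure-of-stat} bijection, being a continuous affine isomorphism of compact convex spaces, is a homeomorphism and thus transports this closed convex subspace to a closed convex subspace on the $\Stat$ side. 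Once this is recorded, assembling the displayed diagram is purely formal.
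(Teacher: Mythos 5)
Your proposal is correct and follows essentially the same approach as the paper: defer commutativity to Lemma~\ref{L:equi-inv} and Theorem~\ref{T:structure-of-stat}, non-emptiness to Lemma~\ref{L:amen}, and check that the remaining topological and naturality statements are routine. You supply more detail than the paper's terse proof (notably the observation that $\Map_w(X_-,Q)$ is the fixed-point set of conditional expectation onto $\calF_{-\infty}^{-1}$, hence closed and convex), but that is precisely the content the paper declares ``immediate and left to the reader''; the only blemish is the garbled expression $\phi - \phi\circ p_-\circ p_-$, which should just be the requirement $\bbE(\phi\mid p_-^{-1}\calX_-)=\phi$.
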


\begin{proof}
As indicated in the formulation of the theorem,
the commutativity of the diagram follows from Lemma~\ref{L:equi-inv} and Theorem~\ref{T:structure-of-stat}.
The spaces are all non-empty, as $\Map_\Gamma(E_-,Q)\neq\emptyset$ by Lemma~\ref{L:amen}.
The facts that all the arrows represent continuous affine maps as well as the naturality statement are immediate and left to the reader.
\end{proof}

\section{Apafic Gregs}\label{sec:ApaficGregs}

In this section we work with a fixed Greg $(X,\calX,m,T,w,\Gamma)$
as in Definition~\ref{defn:Greg} and retain the notations introduced 
in the previous section.
In particular we have the past and future factors $X_-,X_+$, 
the spaces of ideal past and futures $E_-,E_+$, 
and the associated space $E$.
Throughout this section we assume further that 
the fixed Greg satisfies the asymptotic 
past and future independence condition
given in Definition~\ref{D:Apafi},
that is that our \textit{Greg is Apafic}.

\subsection{From Gregs to boundary systems} \label{subsec:gregtobs}\hfill{}\\
The main significance of the Apafi assumption is the following.

\begin{theorem} \label{T:BSfromGreg}
The triple of $\Gamma$-Lebesgue spaces $E,E_+,E_-$, together with the maps $p_\pm:E\to E_\pm$ forms a boundary system,
see Definition~\ref{def:BS}.
\end{theorem}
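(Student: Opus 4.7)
The plan is to verify the three clauses of Definition~\ref{def:BS} one by one, each drawn essentially from a result already established in the excerpt. Almost no new work is needed: the theorem is really an assembly statement that highlights why the Apafi hypothesis is exactly what is required.

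First, Zimmer amenability of the three $\Gamma$-Lebesgue spaces $E_+, E_-$ and $E$ is precisely the content of Lemma~\ref{L:amen}, so this clause is immediate. Second, relative metric ergodicity of both projections $\pi_\pm : E \to E_\pm$ is exactly Theorem~\ref{T:rME}. Neither of these two clauses uses the Apafi hypothesis.

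For the third clause, I would use the diagram (\ref{eq:timetable}) to transport the Apafi condition from $\Gamma^\bbZ$ down to $E$. Namely, since $\pi_\pm \circ \beta = \beta_\pm$ and $\beta_* \wt{m}^1 = \eta$ by the definition of the ergodic-component quotient, pushing $\wt{m}^1$ forward along $\beta_+ \times \beta_-$ coincides with pushing $\eta$ forward along $\pi_+ \times \pi_-$:
\[
  (\pi_+ \times \pi_-)_*\eta \;=\; (\beta_+\times\beta_-)_*\wt{m}^1.
\]
By Definition~\ref{D:Apafi} the right-hand side is equivalent to the product measure $\eta_+\times \eta_-$. Hence $\pi_+\times \pi_- : (E,\eta) \to (E_+\times E_-,\eta_+\times\eta_-)$ is measure-class preserving, which is the remaining requirement of Definition~\ref{def:BS}.

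No step looks genuinely hard: the only thing to be careful about is that the Apafi condition is stated for $\wt{m}^1$ on $\Gamma^\bbZ$, whereas the boundary-system condition is about $\eta$ on $E$; but this is merely the observation, visible from the commutative diagram (\ref{eq:timetable}), that $\pi_\pm$ factors $\beta_\pm$ through $\beta$, so the two push-forward measures agree. Assembling the three ingredients then finishes the proof.
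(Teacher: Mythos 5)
Your proposal is correct and matches the paper's proof exactly: amenability comes from Lemma~\ref{L:amen}, relative metric ergodicity from Theorem~\ref{T:rME}, and the measure-class-preserving property from the Apafi condition. The paper states the third clause as immediate from the Apafi condition, while you helpfully spell out the small diagram chase $(\pi_+\times\pi_-)_*\eta = (\beta_+\times\beta_-)_*\wt{m}^1$ that transports the condition from $\Gamma^\bbZ$ to $E$, which is a reasonable expansion of the same argument.
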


\begin{proof}
The amenability of the $\Gamma$-actions on $E$ and $E_\pm$ is given in Lemma~\ref{L:amen}
and the relative metric ergodicity of $\pi_\pm$ is given in Theorem~\ref{T:rME}.
By the Apafi condition we have that, indeed, $\pi_-\times \pi_+$ is measure class preserving.
\end{proof}

Some ergodicity results are implied automatically.

\begin{lemma} \label{L:EIPFME}
The $\Gamma$ spaces $E_+,E_-$ and $E$ are all metrically ergodic.
If $(V,d)$ is a separable metric space on which $\Gamma$ acts isometrically 
then every $f\in \Map_w(X,V)$
is essentially constant.
In particular, $X$, hence also $X_-$ and $X_+$, are ergodic systems.
\end{lemma}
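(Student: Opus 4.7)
The plan is to derive all three assertions from results already assembled in the section. The first assertion is essentially free: Theorem~\ref{T:BSfromGreg} establishes that, under the Apafi hypothesis, the triple $(E, E_+, E_-)$ equipped with the factor maps $\pi_\pm$ forms a boundary system in the sense of Definition~\ref{def:BS}. Lemma~\ref{L:BSME} then immediately yields that each of $E_+$, $E_-$, and $E$ is metrically ergodic as a $\Gamma$-Lebesgue space.

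For the second assertion, fix a separable metric space $(V,d)$ with an isometric $\Gamma$-action. Replacing $V$ by its metric completion (to which the action extends by isometries) we may assume $(V,d)$ is Polish; replacing $d$ by $\min(d,1)$ if desired, it is bounded. Given $f\in \Map_w(X,V)$, the correspondence of Lemma~\ref{L:equi-inv} produces a $\Gamma$-equivariant map $\tilde f\in \Map_\Gamma(E,V)$. By the metric ergodicity of $E$ just established, $\tilde f$ is essentially constant with value some $v_0\in V^\Gamma$. Unwinding the bijection of Lemma~\ref{L:equi-inv} shows that $f$ itself is essentially constant, equal to $v_0$.

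The third assertion is the specialization of the second to $V=[0,1]$ with the trivial $\Gamma$-action. Any $T$-invariant measurable function $f:X\to [0,1]$ trivially satisfies $f(Tx)=w(x).f(x)$ under this action, so lies in $\Map_w(X,[0,1])$ and is therefore essentially constant, proving that $(X,\calX,m,T)$ is ergodic. Ergodicity of $(X_\pm, m_\pm, T_\pm)$ then follows by the standard factor argument: the maps $p_\pm:X\to X_\pm$ are measure-preserving and equivariant, so any $T_\pm$-invariant measurable subset $A\subset X_\pm$ pulls back to a $T^{\mp 1}$-invariant (hence, using invertibility of $T$, fully $T$-invariant) subset $p_\pm^{-1}(A)\subset X$, which is null or conull by the just-established ergodicity of $X$; the same dichotomy thus holds for $A$.

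No serious obstacle is expected, since the lemma is a synthesis of the boundary-system structure from Theorem~\ref{T:BSfromGreg} with the equivalence between $w$-equivariant maps on $X$ and $\Gamma$-equivariant maps on $E$ from Lemma~\ref{L:equi-inv}. The only minor delicacy is the reduction to the Polish case in the second assertion, which is a routine completion argument.
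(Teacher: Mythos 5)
Your proof is correct and follows the same overall path as the paper for the first two assertions: metric ergodicity of $E,E_\pm$ via Theorem~\ref{T:BSfromGreg} and Lemma~\ref{L:BSME}, then the correspondence of Lemma~\ref{L:equi-inv} combined with metric ergodicity of $E$. The completion-to-Polish remark is a small point the paper leaves implicit, and you are right to flag it since Lemma~\ref{L:equi-inv} is stated for Polish $V$.

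For the third assertion you diverge slightly from the paper. The paper cites Remark~\ref{r:Mackey}, which identifies $E$ with the Mackey range of the cocycle $w$ over $X$ and asserts that $X$ is $T$-ergodic iff $E$ is $\Gamma$-ergodic; combined with the (already established) ergodicity of $E$, this gives ergodicity of $X$, and ergodicity of the factors $X_\pm$ is then implicit. You instead specialize your own second assertion to $V=[0,1]$ with trivial $\Gamma$-action, which is a self-contained and slightly more elementary route that avoids invoking the Mackey-range identification, and you spell out the factor argument for $X_\pm$ carefully, including the small twist that $p_-$ intertwines $T^{-1}$ with $T_-$, so one gets a $T^{-1}$-invariant set and must use invertibility of $T$ to conclude $T$-invariance. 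Both approaches are valid and rest on essentially the same underlying facts; yours is a bit more explicit and self-contained, the paper's a bit more conceptual.
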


\begin{proof}
The first claim follows from Lemma~\ref{L:BSME}
and the second now follows by Lemma~\ref{L:equi-inv}.
The last statement follows by Remark~\ref{r:Mackey}.
\end{proof}

\subsection{The atomic part of stationary measures}\hfill{}\\
We keep the assumption that our Greg
$(X,\calX,m,T,w,\Gamma)$ is Apafic.

\begin{lemma} \label{lem:constcrit}
Let $Q$ be a non-empty metrizable convex compact space
and let $\Gamma\to\Aff(Q)$ a continuous homomorphism.
The following are criteria for triviality of $Q$-valued maps.
\begin{enumerate}
\item
A map $f\in \Map_w(E,Q)$ is essentially constant with a $\Gamma$-invariant essential image if and only if
there exist $f_\pm\in \Map_w(E_\pm,Q)$ such that $f=f_\pm\circ \pi_\pm$.
\item
A map $\phi\in \Map_w(X,Q)$ is essentially constant with a $\Gamma$-invariant 
essential image if and only if
there exist $\phi_\pm\in \Map_w(X_\pm,Q)$ such that $\phi=\phi_\pm\circ p_\pm$.
\item
A map $\phi_+\in \Map_w(X_+,Q)$ is essentially constant with a $\Gamma$-invariant 
essential image if and only if
$\phi_+\in \Stat_-(X_+,Q)$.
\item
A map $\psi\in \Stat_-(X_+,Q)$ is essentially constant with a $\Gamma$-invariant
essential image if and only if
there exists a measurable $\Gamma$-invariant convex subset $Q_0\subset Q$ such that
\[
    m_+(\psi^{-1}(Q_0))=1,\qquad m_+(\psi^{-1}(\ext(Q_0)))>0.
\]
%\[ m_+(\psi^{-1}(Q_0))=1 \quad \mbox{and} \quad  m_+(\psi^{-1}(\ext(Q_0)))>0. \]
In particular, this is the case if $m_+(\psi^{-1}(\ext(Q)))>0$.
\end{enumerate}
\end{lemma}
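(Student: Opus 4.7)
All four statements are linked through the natural bijections of Lemma~\ref{L:equi-inv} and Theorem~\ref{T:equi-inv}, which match $\Map_\Gamma(E_\pm,Q)$, $\Map_w(X_\pm,Q)$, $\Map_w(X,Q)$, and $\Stat(X_+,Q)$ (with its past-oriented subspace $\Stat_-(X_+,Q)$) in a mutually compatible way. My plan is to prove Part (1) directly, since this is the only statement in which the Apafi assumption enters in an essential manner, and to derive (2), (3), (4) by tracing through the correspondences.

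For Part (1), one direction is immediate: a constant map with $\Gamma$-fixed value factors through any quotient. For the non-trivial direction, the hypothesis $f = f_+\circ\pi_+ = f_-\circ\pi_-$ yields the equality $f_+(e_+) = f_-(e_-)$ on a set of full $(\pi_+\times\pi_-)_*\eta$-measure in $E_+\times E_-$. The Apafi condition states that this pushforward is equivalent to the product $\eta_+\times\eta_-$, so the equality holds $\eta_+\times\eta_-$-a.e. Fubini then forces $f_+$ and $f_-$ to be essentially constant, equal to a common value $q\in Q$; $\Gamma$-equivariance of $f$ forces $q$ to be $\Gamma$-fixed.

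Part (2) follows by transporting Part (1) through the bijection $\Map_\Gamma(E_\pm,Q)\cong\Map_w(X_\pm,Q)$, which intertwines the inclusions $\pi_\pm^*$ and $p_\pm^*$. For Part (3), Theorem~\ref{T:equi-inv} identifies $\Stat_-(X_+,Q)$ with the image of $\Map_w(X_-,Q)\subset\Map_w(X,Q)$; a map $\phi_+\in\Map_w(X_+,Q)$ gives the element $\phi_+\circ p_+\in\Map_w(X,Q)$, and $\phi_+\in\Stat_-(X_+,Q)$ exactly when this element also factors through $p_-$, so Part (2) concludes. For Part (4), the ``only if'' direction is immediate with $Q_0=\{q\}$. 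For the ``if'' direction, Lemma~\ref{l:deter} (whose ``moreover'' clause applies since the Greg is ergodic by Lemma~\ref{L:EIPFME}) yields that $\psi$ is deterministic, i.e.\ $\psi\in\Map_w(X_+,Q)$; combined with $\psi\in\Stat_-(X_+,Q)$, Part (3) finishes.

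\textbf{Main obstacle.} The one substantive step is Part (1): everything else is definitional chasing through the established dictionary, or is handled by Lemma~\ref{l:deter}. In Part (1), the key move is recognizing that Apafi together with the factorization hypothesis forces the pair $(f_+,f_-)$ to be valued on the diagonal of $Q\times Q$ for $\eta_+\times\eta_-$-a.e.\ $(e_+,e_-)$, after which a straightforward Fubini argument yields constancy.
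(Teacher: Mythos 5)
Your proposal is correct and follows essentially the same route as the paper: Part (1) is the substantive step, where the Apafi condition (via $(\pi_+\times\pi_-)_*\eta\sim\eta_+\times\eta_-$) is used to show that a map factoring through both $E_+$ and $E_-$ is constant, and Parts (2)--(4) are obtained by transporting through the correspondences of Lemma~\ref{L:equi-inv}, Theorem~\ref{T:structure-of-stat}, Theorem~\ref{T:equi-inv}, and Lemma~\ref{l:deter}. The only cosmetic difference is that you spell out the Fubini step that the paper compresses into ``$\bar f=f_\pm\circ\pr_\pm$ implies $\bar f$ is essentially constant,'' and you bypass the explicit limit formula the paper invokes in Part (3) by reasoning directly through the identification of $\phi_+\in\Map_w(X_+,Q)\subset\Stat(X_+,Q)$ with $\phi_+\circ p_+\in\Map_w(X,Q)$; both are minor presentational variants of the same argument.
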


\begin{proof}
In all statements the ``only if" part is trivial. We argue below to show the ``if" parts.

In the first statement, $f=f_-\circ \pi_-=f_+\circ\pi_+$,
thus $f$ factors via $E_+\times E_-$, by the Apafi assumption,
that is $f=\bar{f}\circ (\pi_+\times \pi_-)$ 
for some $\bar{f}\in \Map_\Gamma(E_+\times E_-,Q)$.
We get that $\bar{f}=f_\pm\circ \pr_\pm$, where $\pr_\pm:E_+\times E_-\to E_\pm$ are the coordinate projections,
which implies that $\bar{f}$ is essentially constant with a $\Gamma$-invariant essential image, thus so is $f$ as well.

In the second statement, we consider the corresponding maps $f\in \Map_w(E,Q)$ and $f_\pm\in \Map_w(E_\pm,Q)$
given by Lemma~\ref{L:equi-inv} and observe that $f=f_\pm\circ \pi_\pm$.
We apply the first statement to $f$ and conclude that the result holds also for $\phi$.

In the third statement we define $\phi=\phi_+\circ p_+\in \Map_w(X,Q)$ and let 
$\phi_-\in \Map_w(X_-,Q)$ be the map given in Theorem~\ref{T:structure-of-stat},
thus for a.e $x\in X$,
\[ \phi_-(p_-(x))=\lim_{n\to\infty}\ w_{-n}(x)^{-1}.\phi(T^{-n}x)
=\lim_{n\to\infty}\ \phi(x)=\phi(x). \]
We apply the second statement to $\phi$ and conclude that the result holds also for $\phi_+$.

The fourth statement follows from the third by Lemma~\ref{l:deter}.
%Next, 
%we observe that the statoinarity equation 
%	\[
%		\psi\circ p_+(x)=\int_{X} w(T^{-1}x).\psi(p_+(T^{-1}x))\dd \mu_{p_+(x)}(x)
%	\]
%
%we let $\phi\in \Map_w(X,Q)$ be the map
%corrsponding to $\psi$ by Theorem~\ref{T:structure-of-stat}.
%Thus 
%\[
%		\psi(y)=\int_X \phi(x)\dd \mu_{y}(x)\qquad (y\in X_+),
%\]
%and we deduce, by extremelity, that $\phi$ is constant on the fibers of $p_+$,
%thus $\phi=\psi\circ p_+$.
\end{proof}

Specializing to $Q=\Prob(M)$ where $M$ is a compact $\Gamma$-space,
we get from the special case discussed in Lemma~\ref{lem:constcrit}(4) 
that if the values of $\psi\in \Stat_-(X_+,\Prob(M))$
are Dirac measures on a non-null subset of $X_+$ then $\psi$ is essentially constant
and its image is the Dirac measure at a $\Gamma$-invariant point of $M$. 
In fact, more is true as we will see in first in 
Remark~\ref{r:uniform} and then in Proposition~\ref{p:non-atomic} below.

\begin{defn}
For a measure space $\Omega$ and $n\in\bbN$, a map $f\in \Map(\Omega,\Prob(M))$ 
is said to be {\bf $n$-uniform} if for a.e $z\in \Omega$, $f(z)$ is a uniform measure on $n$-points.
It is said to be {\bf finite-uniform} if it is $n$-uniform for some $n$.
\end{defn}

\begin{example} \label{ex:uniform}
If $M$ is a $\Gamma$-space and $\Omega$ is an ergodic $\Gamma$-space 
then every map in $\Map_\Gamma(\Omega,\Prob(M))$ which values have a non-tirvial atomic part on a non-null subset of $\Omega$ is
finite-uniform.
indeed, this follows easily from the measurability of $\maxev$ and $\maxpart$,
see Lemma~\ref{l:maxev} and Lemma~\ref{l:maxpart}.
\end{example}

\begin{remark} \label{r:uniform}
If $\psi\in \Stat_-(X_+,\Prob(M))$ is finite-uniform
then it is essentially constant
and its image is a finite $\Gamma$-invariant subset of $M$. 
This follows from Lemma~\ref{lem:constcrit}(4),
as the $n$-uniform measures on $M$ are the extreme points of $Q_0=\maxev^{-1}([0,1/n])$,
see Lemma~\ref{l:maxev} and the discussion before.
\end{remark}

The following result is a generalization of this remark.

\begin{prop} \label{p:non-atomic}
Let $M$ be a compact metrizable $\Gamma$-space
which admits no finite invariant subset
and fix a map $\psi\in \Stat_-(X_+,\Prob(M))$.
Then for a.e $x\in X_+$, the measure $\psi(x)$
has no atomic part.
\end{prop}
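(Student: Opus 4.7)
The plan is a proof by contradiction. Suppose the set $\{x \in X_+ : \psi(x) \text{ has nontrivial atomic part}\}$ has positive $m_+$-measure; I will extract from $\psi$ a fully deterministic past-oriented stationary map and then apply Lemma~\ref{lem:constcrit}(4) to produce a finite $\Gamma$-invariant subset of $M$, contradicting the hypothesis.

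First I would reduce to the purely atomic case. The total atomic-mass function $\amass : \Prob(M) \to [0,1]$, $\amass(\mu) = \mu_a(M)$, is affine (hence convex), $\Gamma$-invariant, bounded, and Borel measurable. By Lemma~\ref{lem:convex}, $\amass \circ \psi \equiv a$ is essentially constant, with $a > 0$ by assumption. Since the atomic-part operation $\mu \mapsto \mu_a$ is affine and $\Gamma$-equivariant, it commutes with conditional expectation and with the affine bijections in Theorem~\ref{T:equi-inv}; consequently $\tilde\psi := \psi_a / a$ lies in $\Stat_-(X_+, \Prob(M))$ and takes purely atomic probability values a.e.

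Next I would pin down the atom-weight spectrum of $\tilde\psi$. For each $k \ge 1$ the top-$k$ mass function
\[
    s_k(\mu) := \sup\{\mu(F) : F \subseteq M,\ |F| \le k\}
\]
is a supremum of linear functionals (hence convex), $\Gamma$-invariant, bounded, and Borel measurable (via measurability of the ranked atomic part of a measure). Applying Lemma~\ref{lem:convex} to each $s_k$ yields $s_k \circ \tilde\psi \equiv s_k^*$ essentially constant; setting $a_k := s_k^* - s_{k-1}^*$ (with $s_0^* := 0$) produces a deterministic non-increasing sequence $(a_k)_{k \ge 1}$ with $\sum_k a_k = 1$ and $a_1 > 0$, which a.s.\ coincides with the ranked atom weights of $\tilde\psi(y)$.

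Finally I would apply Lemma~\ref{lem:constcrit}(4) to $\tilde\psi$ with
\[
    Q_0 := \{\mu \in \Prob(M) : s_k(\mu) \le s_k^* \text{ for all } k \ge 1,\ \amass(\mu) = 1\},
\]
which is convex, $\Gamma$-invariant, and Borel measurable, and which $\tilde\psi(y)$ enters a.s.\ ``tightly'' (i.e.\ with equality in every $s_k$-constraint). The key claim, and the main obstacle, is the assertion that every tight $\mu \in Q_0$ is an extreme point of $Q_0$: if $\mu = \tfrac12(\mu_1 + \mu_2)$ with $\mu_i \in Q_0$, Jensen-equality for each convex $s_k$ forces $s_k(\mu_i) = s_k^*$ and, via the sup-of-affine representation of $s_k$, forces $\mu_1$ and $\mu_2$ to share the same top-$k$ atom set as $\mu$ for every $k$; iterating over all $k$ pins down the common atom set and ranked weight sequence $(a_k)$ for both $\mu_i$, after which the linear constraint $\mu_1 + \mu_2 = 2\mu$ forces $\mu_1 = \mu_2 = \mu$. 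Granting this, Lemma~\ref{lem:constcrit}(4) yields that $\tilde\psi$ is essentially constant with $\Gamma$-invariant essential image $\mu_* \in \Prob(M)$; the $\Gamma$-invariance of $\mu_*$ means $\Gamma$ permutes its atoms preserving weights, so the level set $\{p \in M : \mu_*(\{p\}) = a_1\}$ is a nonempty, finite (of cardinality at most $1/a_1$), $\Gamma$-invariant subset of $M$, contradicting the hypothesis on $M$.
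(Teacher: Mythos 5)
There is a real gap at the pivotal step, namely the claim that $\tilde\psi := \psi_a/a$ lies in $\Stat_-(X_+,\Prob(M))$. The atomic-part map $R: \mu\mapsto\mu_a$ is affine, $\Gamma$-equivariant, and (after the constancy of $\amass\circ\psi$ established via Lemma~\ref{lem:convex}) compatible with the barycenter integrals that express stationarity --- so $\tilde\psi\in\Stat(X_+,\Prob(M))$ is fine. But $R$ is \emph{not} weak*-continuous, and membership in $\Stat_-$ is defined through the other direction of Theorem~\ref{T:structure-of-stat}, namely the a.e.\ weak*-martingale limit $\phi(x)=\lim_n w_{-n}(x)^{-1}.\psi(p_+(T^{-n}x))$ and the requirement that $\phi$ factor through $X_-$. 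Taking atomic parts does not commute with this limit: a weak* limit of purely atomic measures can fail to be atomic, so the $\Map_w(X,Q)$ partner of $\tilde\psi$ need not equal $R\circ\phi$ and need not descend to $X_-$. Equivalently, $a=\amass\circ\psi$ and $a'=\amass\circ\phi$ are generally unequal ($a\le a'$ by Jensen, with strict inequality possible when atoms of $\phi$ ``diffuse'' under conditioning), and when $a<a'$ the natural candidate $\tilde\phi_-=(\phi_-)_a/a'\in\Map_w(X_-,\Prob(M))$ does \emph{not} correspond to $\tilde\psi$ under Theorem~\ref{T:structure-of-stat}. So ``it commutes with the affine bijections in Theorem~\ref{T:equi-inv}'' is precisely what is not true here; that naturality statement is for $\Gamma$-equivariant \emph{continuous} affine maps of compact convex spaces, which $R$ is not.

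This is not a cosmetic issue: the paper's proof explicitly flags the same obstruction (``if we knew that $\psi$ is extremal also in $\Stat(X_+,\Prob(M))$ then we would be done...however, this lemma gives only that $\psi=t\psi_0+(1-t)\psi_1$...'') and the route around it --- reducing to an extreme point of $\Stat_-$, transporting to maps on $E_-$ where ergodicity forces finite-uniformity, then lifting through the symmetric power $M_n=M^n/S_n$ to produce an extreme point of the \emph{larger} space $\Stat(X_+,\Prob(M_n))$ --- is the substantive content of the argument. Your ideas for the first two stages are sound and arguably cleaner than the paper's (the $\amass$ normalization and the majorization argument showing tight points of $Q_0$ are extremal are correct once the measurability of $\mu\mapsto\mu_a$ and of $s_k$ is pinned down via the usc arguments of \S\ref{subsection:measures}), but without establishing $\tilde\psi\in\Stat_-$ --- or supplying an alternative device like the $M_n$ lift --- Lemma~\ref{lem:constcrit}(4) simply does not apply, and the proof does not close.
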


For the proof of Proposition~\ref{p:non-atomic}
we will need the following.

\begin{lemma} \label{l:atomic}
Let $M$ be a compact metrizable $\Gamma$-space
and fix a stationary map $\psi\in \Stat(X_+,\Prob(M))$.
If the values of $\psi$
have non-trivial atomic part on a non-null subset of $X_+$ then
$\psi$ could be represented as a convex combination $\psi=t\psi_0+(1-t)\psi_1$,
where $\psi_0\in \Stat(X_+,\Prob(M))$ is a deterministic finite-uniform map and $t\in(0,1]$.

In particular, if $\psi$ is extremal in $\Stat(X_+,\Prob(M))$, then $\psi$ is 
a deterministic finite-uniform map. 
Furthermore, if $\psi\in \Stat_-(X_+,\Prob(M))$,
then $\psi$ is essentially constant,
and its image is a finite $\Gamma$-invariant subset of $M$. 
\end{lemma}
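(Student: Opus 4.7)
The plan is to isolate a deterministic stationary $n$-uniform map $\psi_0$ supported on the top atoms of $\psi$, and then obtain the decomposition by subtracting it off.

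\textbf{Invariants of the atomic part.} Since $\maxev:\Prob(M)\to[0,1]$ is convex (Lemma~\ref{l:maxev}), bounded, and $\Gamma$-invariant (as $\Gamma$ acts by homeomorphisms on $M$), Lemma~\ref{lem:convex} produces a constant $t\in[0,1]$ with $\maxev(\psi(y))=t$ for a.e.~$y$; the non-triviality hypothesis forces $t>0$, so $\psi$ takes values a.e.\ in the $\Gamma$-invariant convex subset $Q_t\subset Q$. Let $m(\nu)$ denote the number of atoms of weight exactly $t$ in $\nu\in Q_t$. The function $h_0(\nu):=\|\maxpart(\nu)\|=t\cdot m(\nu)$ is Borel (Lemma~\ref{l:maxpart}), $\Gamma$-invariant, and bounded on $Q_t$; convexity follows from the observation that a point is a top atom of $\frac{1}{2}(\nu_1+\nu_2)\in Q_t$ iff it is a top atom of both $\nu_1$ and $\nu_2$, whence $m\bigl(\frac{1}{2}(\nu_1+\nu_2)\bigr)\le\min_i m(\nu_i)\le\frac{1}{2}\bigl(m(\nu_1)+m(\nu_2)\bigr)$. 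Applying the ``more generally'' form of Lemma~\ref{lem:convex} with $Q_0=Q_t$ yields a constant $n\in\bbN$ with $m(\psi(y))=n$ a.e.

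\textbf{Determinism of $\psi_0$.} Set $\psi_0(y):=(tn)^{-1}\maxpart(\psi(y))$, a measurable map taking values in $n$-uniform probability measures on $M$. For $\xi$ a top atom of $\psi(y)$, the stationarity identity evaluated at $\xi$ reads
\[
    t=\int_X \bigl(w(T^{-1}x)\psi(p_+(T^{-1}x))\bigr)(\{\xi\})\dd\mu_y(x),
\]
where each integrand is bounded above by $\maxev(\psi(p_+(T^{-1}x)))=t$. The equality case forces each integrand to equal $t$ for $\mu_y$-a.e.~$x$; i.e., each top atom of $\psi(y)$ is also a top atom of $w(T^{-1}x)\psi(p_+(T^{-1}x))$. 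Since both measures carry exactly $n$ top atoms (by the a.e.\ constancy of $n$ and the $\Gamma$-invariance of $m$), their top-atom sets coincide $\mu_y$-a.e., yielding $\psi_0(y)=w(T^{-1}x)\psi_0(p_+(T^{-1}x))$ for $\mu_y$-a.e.~$x$. Since $w$ factors through $X_+$, setting $z:=p_+(T^{-1}x)$ this rewrites as the $w$-equivariance $\psi_0(T_+z)=w(z)\psi_0(z)$, so $\psi_0\in\Map_w(X_+,\Prob(M))$ is deterministic (and also stationary, by integrating the pointwise identity in $x$).

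\textbf{Decomposition and conclusions.} Put $t':=tn\in(0,1]$. The measure $\psi-t'\psi_0\ge 0$ has total mass $1-t'$, so for $t'<1$ the normalization $\psi_1:=(1-t')^{-1}(\psi-t'\psi_0)$ is a probability-valued stationary map (by linearity of stationarity), while for $t'=1$ set $\psi_1:=\psi_0$; either way $\psi=t'\psi_0+(1-t')\psi_1$ is the required decomposition. If $\psi$ is extremal in $\Stat(X_+,\Prob(M))$, extremality forces $t'=1$ or $\psi_0=\psi_1=\psi$, and in both cases $\psi=\psi_0$ is deterministic finite-uniform; the ``furthermore'' statement then follows at once from Remark~\ref{r:uniform}. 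The main obstacle is the determinism argument in the previous paragraph: it crucially requires both the weight $t$ and the multiplicity $n$ to be a.e.\ constant, since without constancy of $n$ one could only identify each individual top atom of $\psi(y)$ as a top atom of the integrand, not preclude the integrand from carrying extra top atoms, and thereby force the full top-atom sets to coincide.
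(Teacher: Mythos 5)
Your proof is correct and follows the same top-atom decomposition strategy as the paper, but with a reorganization that is worth noting. The paper proceeds by first claiming that $\maxpart\circ\psi$ is stationary (``the stationarity equation for $\psi$, together with an obvious extremality argument''), then invokes Lemma~\ref{l:deter} to get determinism, and only afterward applies Lemma~\ref{lem:convex} to the total mass to extract the constant. The subtlety you flag at the end of your write-up is precisely what makes that ``obvious extremality argument'' less obvious than it looks: the extremality argument alone yields only a \emph{sub}-stationarity inequality $\maxpart(\psi(y))\le\int w(T^{-1}x)\maxpart(\psi(p_+(T^{-1}x)))\dd\mu_y(x)$, because a priori the integrand could carry \emph{extra} top atoms not present in $\psi(y)$; one needs the a.e.\ constancy of the number of top atoms (equivalently of $\|\maxpart\circ\psi\|$) to upgrade this to an equality. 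Your version resolves this cleanly by first establishing constancy of $n$ via the convexity of $h_0(\nu)=\|\maxpart(\nu)\|=t\cdot m(\nu)$ (a correct observation: $\xi$ is a top atom of a convex combination in $Q_t$ iff it is a top atom of both summands, so $m$ is convex on $Q_t$), and then reading off determinism of $\psi_0$ directly from the stationarity identity via the equality case of the bound $\ev(\cdot,\xi)\le t$. This avoids the appeal to Lemma~\ref{l:deter} and makes the extremality step airtight. The decomposition, the extremal case, and the final reduction to Remark~\ref{r:uniform} are all handled as in the paper. One minor wrinkle: you verify the convexity of $m$ only for midpoints, but the same ``both-or-neither'' observation works verbatim for any strict convex combination, so no harm is done.
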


\begin{proof}
We will use the notation and results of \S\ref{subsection:measures}.
In particular, we let $Q$ be the compact convex $\Gamma$-space 
consisting of positive measures on $M$
with total mass in $[0,1]$ and view $\Prob(M)$ as a $\Gamma$-invariant 
convex compact subset of $Q$,
thus $\psi\in \Stat(X_+,Q)$.

By Lemma~\ref{l:maxev}, the map
\[ 
    \maxev:Q\to [0,1], \quad \maxev(\nu)=\max\setdef{\nu(\{\xi\})}{ \xi\in M} 
\]
is convex and measurable, thus by Lemma~\ref{lem:convex} it is essentially constant.
We denote this constant by $s$
and note that by the assumption on $\psi$, $s>0$.
We denote $Q_s=\maxev^{-1}([0,s])$.
By Lemma~\ref{l:Qalpha} it is a $\Gamma$-invariant measurable convex subset of $Q$
such that $m_+(\psi^{-1}(Q_s))=1$.

We consider the map 
\[ 
    \maxpart:Q_s\overto{} \ext(Q_s) 
\]
taking a measure $\nu\in Q_s$ to the maximal measure in $\ext(Q_s)$ 
which is dominated by $\nu$.
This map is measurable by Lemma~\ref{l:maxpart}.
Precomposing this map with $\psi$ and extending its codomain to $Q$ we get the map
\[
    \maxpart\circ\psi:X_+\overto{} Q.
\]
The stationarity equation for $\psi$, together with an obvious extremality argument,
gives that this map is stationary.
By Lemma~\ref{l:deter}, this map is deterministic.
Applying Lemma~\ref{lem:convex} to $\maxpart\circ\psi$,
taking $h$ to be the total mass, we get that the total 
mass of the values of $\maxpart\circ\psi$
are essentially a non-zero constant which we denote by $t$.
If $t=1$ then $\psi=\maxpart\circ\psi$ is a deterministic finite-uniform map 
and there is nothing to prove.
Otherwise, we denote  
\[ 
    \psi_0=\frac{1}{t}\cdot\maxpart\circ\psi, \qquad 
    \psi_1=\frac{1}{1-t}\cdot (\psi-\maxpart\circ\psi)
\]
and get the decomposition $\psi=t\psi_0+(1-t)\psi_1$.
We note that $\psi_0$ is deterministic, as $\maxpart\circ\psi$ is,
and it is $n$-uniform for $n=t/s$.
This proves the first part of the lemma.
The last part of the lemma follows from Remark~\ref{r:uniform}.
%
%%%
%
%
%measurable map
%\[ h:Q_\alpha\to \{0,1,2,\ldots\}, \quad h(\nu)=|\{a\in M\mid \nu(\{a\})=\alpha\}|\]
%and observe that $h$ is also $\Gamma$-invaraint, convex and lower semi-continuous.
%By Lemma~\ref{lem:convex}, $h\circ \psi$ is essentially constant
%and we denote this constant by $n$.
%By the negation assumption on $\psi$, we get $n>0$.
%We denote $Q_{\alpha,n}=h^{-1}(\{0,\ldots,n\})$ and note that it is a $\Gamma$-invariant measurable convex subset of $Q$
%such that $m_+(\psi^{-1}(Q_{\alpha,n}))=1$.
%We let $Q_0\subset Q_{\alpha,n}$ be the subset consisting of measure of total mass $n\alpha$.
%This is a $\Gamma$-invariant measurable convex subset.
%
%Next, we define a new map $\psi_0:X_+\to Q_0$, taking $x\in X_+$ to the atomic part of weight $\alpha$ of the probability measure $\psi(x)$.
%We observe that $\psi_0\in \Stat_-(X_+,Q)$ and $m_+(\psi_0^{-1}(\ext(Q_0)))=1$.
\end{proof}

\begin{proof}[Proof of Proposition~\ref{p:non-atomic}]
We will prove the contrapositive: we will assume that the values of $\psi$
have a non-trivial atomic parts on a non-null subset of $X_+$, 
and argue to show $M$ admits a finite invariant subset.

As the evaluation maps $\ev_m$ are convex (in fact, affine),
there must be also an extreme point of $\Stat_-(X_+,\Prob(M))$
such that its values
have non-trivial atomic parts on a non-null subset of $X_+$.
We thus assume as we may that $\psi$ is extremal in $\Stat_-(X_+,\Prob(M))$.
We note that if we knew that $\psi$ is extremal also in $\Stat(X_+,\Prob(M))$
then we would be done by the last part of Lemma~\ref{l:atomic},
as the essential image of $\psi$ is a finite $\Gamma$-invariant subset of $M$. 
However, this lemma gives only that $\psi=t\psi_0+(1-t)\psi_1$
for some $t>0$, where $\psi_0,\psi_1\in \Stat(X_+,\Prob(M))$ and $\psi_0$ is
a deterministic finite-uniform map.
We will show that this is enough for our proof.

Using Theorem~\ref{T:equi-inv}, we find maps corresponding to $\psi$ and $\psi_i$,
\[ 
    \begin{split}
        &\phi\in \ext(\Map_w(X_-,\Prob(M))) \quad \mbox{and} \quad 
            f\in \ext(\Map_\Gamma(E_-,\Prob(M))),\\
        &\phi_i\in \ext(\Map_w(X,\Prob(M))) \quad \mbox{and} \quad 
            f_i\in \ext(\Map_\Gamma(E,\Prob(M))).
    \end{split}
\]
which satisfy similar relations:
$\phi=t\phi_0+(1-t)\phi_1$ and $f=tf_0+(1-t)f_1$.
Since $\psi_0$ is determinstic we get $\phi_0=\psi_0$
and in particular $\phi_0$ is finite-uniform.
It follows from Lemma~\ref{L:equi-inv} that $f_0$ is finite-uniform as well.
We conclude in particular that $f$ has a non-trivial atomic part.
By the ergodicity of $E_-$, we get that also $f$ is $n$-uniform for some natural $n$, see Example~\ref{ex:uniform}.

If $n=1$ then $f$ takes values in the Dirac measures on $M$, that is $\ext(\Prob(M))$,
and the same goes to the map $f\circ \pi_-:E\to \Prob(M)$.
thus $f\circ \pi_-$ is extremal in the space $\Map(E,\Prob(M))$.
In particular $f\circ \pi_-$ is extremal in its subspace $\Map_\Gamma(E,\Prob(M))$ 
and by Theorem~\ref{T:equi-inv}, $\psi$ is extremal in the space 
$\Stat(X_+,\Prob(M))$.
As mentioned above, in this case we are done by the last part of Lemma~\ref{l:atomic}.
Below we will exaplain how to treat the general case, 
essentially by reducing it to the case $n=1$.

We consider the space of unordered $n$-tuples in $M$ (possibly with non-trivial multiplicities),
$M_n=M^n/S_n$ and the map $\delta_n:M_n\to \Prob(M)$ taking a tuple to the uniform measure it supports
(taking multiplicities into account, on the diagonals). Note that this map is an isomorphism on its (closed) image.
As the image of $f$ is in $\delta(M_n)$ a.e, we have $\delta^{-1}\circ f:E_-\to M_n$ and identifying  points of $M_n$ with Dirac 
measures on $M_n$, we get a map $f'\in \Map_\Gamma(E_-,\Prob(M_n))$.
Note that $f'\circ\pi_-$ is extremal in $\Map_\Gamma(E,\Prob(M_n))$, as it takes values in Dirac measures.
Using Theorem~\ref{T:equi-inv}, we find corresponding maps
\[ \phi'\in \Map_w(X_-,\Prob(M_n)) \quad \mbox{and} \quad 
\psi'\in \Stat_-(X_+,\Prob(M_n)) \]
and we note that $\psi'$ is extremal in $\Stat(X_+,\Prob(M_n))$, 
by Theorem~\ref{T:equi-inv}.
We extend the map $\delta_n$ to $(\delta_n)_*:\Prob(M_n)\to \Prob(M)$ 
by pushing measures and taking barycenters.
By construction we have $f=(\delta_n)_*\circ f'$, thus $\phi=(\delta_n)_*\circ \phi'$ 
and $\psi=(\delta_n)_*\circ\psi'$.
Since the values of $\psi$
have a non-trivial atomic parts on a non-null subset of $X_+$, we deduce that the same holds for $\psi'$.
We are done by the last part of Lemma~\ref{l:atomic},
as the essential image of $\psi'$ is a finite $\Gamma$-invariant subset of $M$. 
\end{proof}

\section{Semi-simple Lie groups}\label{sec:semisimple}

In this section we fix a connected semisimple real algebraic group $\mathbf{G}$ 
and consider its group of real points $G=\mathbf{G}(\mathbb{R})$.
We let $K<G$ be a fixed maximal compact subgroup and $P<G$ be a minimal parabolic subgroup.
We fix a maximal split torus $A<P$ and let $Z$ denote its centralizer in $G$.
Note that $Z<P$.

\subsection{The flag variety $G/P$ and the space $G/Z$.}\hfill{}\\
The minimal parabolic subgroup $P<G$ is solvable by compact, hence amenable,
and it is cocompact. In fact, $G/P$ is (the real points of) a projective variety,
called the {\bf flag variety} or the {\bf Furstenberg boundary} of $G$.
It will play an important role in our discussion.

The inclusion $Z<P$ gives rise to a natural map $\pr_+:G/Z\overto{} G/P$.
We let $W=N_G(A)/C_G(A)=N_G(A)/Z$ be the {\bf Weyl group} associated with $A$ 
and note that it acts on $G/Z$ by $G$-automorphisms:
$w \in W$ gives the $G$-map $G/Z\to G/Z$, $gZ\mapsto g\tilde{w}Z$,
where $\tilde{w}\in N_G(A)$ is a lift of $w$.
By a standard abuse of notation, we denote this automorphism of $G/Z$ by $w$ as well.

For every $w\in W$, we get a $G$-map
\[ 
    \Phi_w=(\pr_+,\pr_+\circ w):G/Z\overto{} G/P\times G/P. 
\]
We denote the corresponding image by $B_w\subset G/P\times G/P$.
These subsets are called the {\bf Bruhat cells}.
Note that $B_e$ is the diagonal in $G/P\times G/P$.
The following lemma is well known.

\begin{lemma}
There exists a unique element $w_0\in W$ such that $B_{w_0}$
is open and dense in $G/P\times G/P$.
\end{lemma}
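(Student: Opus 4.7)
The plan is to combine the relative Bruhat decomposition of $G$ with irreducibility of $G/P\times G/P$ and a short dimension count.

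First, I would identify $B_w$ with the full $G$-orbit of $(eP,wP)$ under the diagonal action of $G$ on $G/P\times G/P$: since $\Phi_w$ is $G$-equivariant and $G/Z$ is $G$-transitive, its image is a single orbit. The set of diagonal $G$-orbits on $G/P\times G/P$ is in natural bijection with $P\bs G/P$, and the relative Bruhat decomposition identifies the latter with $W=N_G(A)/Z$. Hence one obtains a finite decomposition $G/P\times G/P=\bigsqcup_{w\in W}B_w$ into pairwise disjoint, locally closed $G$-orbits, and uniqueness of the indexing is already built in.

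Next, I would bring in dimensions. Writing $P=ZU$ where $U$ is the unipotent radical of $P$, and using $wZw^{-1}=Z$, the stabilizer of $(eP,wP)$ in $G$ is $Z\cdot(U\cap wUw^{-1})$, which gives
\[
\dim B_w \;=\; 2\dim(G/P)-\dim(U\cap wUw^{-1}).
\]
Thus $B_w$ is open in $G/P\times G/P$ iff $U\cap wUw^{-1}$ is trivial. For existence of such an element, the longest element $\wlong$ of the relative Weyl group conjugates $U$ to the opposite unipotent $U^-$, so $U\cap \wlong U\wlong^{-1}=\{1\}$; equivalently, the $B_w$ of maximal dimension must be open, being the complement of the finite union of strictly lower-dimensional locally closed orbits.

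For uniqueness and density, I would use that $G/P$ is a smooth connected real algebraic variety, so $G/P\times G/P$ is irreducible; two disjoint non-empty open subsets cannot coexist in an irreducible space, which forces at most one open orbit, and makes that open orbit dense automatically. The only technical content is the dimension formula and the triviality of $U\cap \wlong U\wlong^{-1}$; both are standard consequences of the structure theory of real reductive groups, but both require a little care in the non-split setting, since here $W$ is the relative Weyl group attached to the maximal $\bbR$-split torus $A<P$ rather than to a Cartan of the complexification.
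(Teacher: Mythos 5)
The paper states this lemma without proof, calling it ``well known,'' so there is no argument in the text to compare against; your proof is the standard one and it is correct. You identify $B_w$ with a diagonal $G$-orbit on $G/P\times G/P$, use the relative Bruhat decomposition $P\backslash G/P\cong W$ to see that the $B_w$ give a finite partition of $G/P\times G/P$ into pairwise distinct locally closed orbits, compute dimensions via the stabilizer $P\cap wPw^{-1}$, and conclude by irreducibility (equivalently connectedness of the real flag manifold) that there is exactly one open orbit, which is then dense, and that it is $B_{\wlong}$.

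The only step that deserves an extra line is the identity $P\cap wPw^{-1}=Z\cdot(U\cap wUw^{-1})$. The inclusion $\supseteq$ is immediate since $\tilde w\in N_G(A)$ normalizes $Z=C_G(A)$. For $\subseteq$, writing $p=zu\in P\cap wPw^{-1}$ with $z\in Z$, $u\in U$, one conjugates by $w^{-1}$ to see that $w^{-1}uw$ is a unipotent element of $P$, and then needs that every unipotent element of $P=ZU$ already lies in $U$. This holds because $Z$ (for a \emph{minimal} parabolic) is compact modulo the central split torus $A$, hence contains no nontrivial unipotents; this is exactly the ``little care in the non-split setting'' you alluded to, and it is where the restriction to minimal $P$ enters. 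With that noted, the dimension formula $\dim B_w = 2\dim(G/P)-\dim(U\cap wUw^{-1})$ and the conclusion that $B_w$ is open iff $U\cap wUw^{-1}=\{1\}$, which singles out $w=\wlong$, are exactly right. Your alternative existence argument (a finite partition of an irreducible variety into locally closed pieces must contain one of full dimension, which is then open) is also valid and sidesteps invoking $\wlong U\wlong^{-1}=U^-$ explicitly.
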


This element $w_0$ is called the {\bf longest element} of $W$ and $B_{w_0}$ is usually refered to as 
the {\bf big Bruhat cell}.
In this case we denote 
\begin{equation}
\pr_-=\pr_+\circ w_0:G/Z\overto{} G/P.
\end{equation}

We let $A^o<A$ be the connected component of the identity, $\mathfrak{a}$ be its Lie algebra and
\[ 
    \log:A^o\overto{}\mathfrak{a}, \qquad \exp:\mathfrak{a}\overto{} A^o<A<Z 
\]
be the corresponding logarithm and exponent maps.
We set $M=Z\cap K$ and note that this is a compact normal subgroup of $Z$ such that $Z=MA^o$
and $M\cap A^o=\{e\}$. Therefore, we get a natural identification $Z/M\simeq A^o$.
Precomposing with $Z\to Z/M$ and postcomposing with the logarithm map, 
we get a continuous homomorphism
\begin{equation}\label{eq:q}
    q:Z\overto{} \mathfrak{a} \qquad \mbox{such that} \qquad
    q\circ \exp =\id:\mathfrak{a} \overto{} \mathfrak{a}.
\end{equation}
The map $q$ and its section $\exp$ give an isomorphism $Z\simeq M\times \mathfrak{a}$.

Recall that a bi-invariant metric on a group is a metric which is invariant under both left and right translation and
observe that both compact group and abelian groups admit such bi-invariant metrics.
We get the following.

\begin{lemma}
The group $Z$ admits a bi-invariant metric.
\end{lemma}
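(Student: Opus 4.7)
The plan is to exploit the direct product decomposition $Z \cong M \times A^o$ that is essentially already spelled out in the paragraph preceding the lemma, combined with the remark that both compact groups and abelian groups carry bi-invariant metrics.

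First I would verify that the internal decomposition $Z = MA^o$ is actually a direct product of topological groups, not merely a semidirect product. The key observation is that $A^o$ lies in the center of $Z$: by definition $Z = C_G(A)$ centralizes $A$ and hence also centralizes $A^o$. Combined with $M \cap A^o = \{e\}$ and the fact that both $M$ and $A^o$ are closed subgroups whose product is $Z$, we conclude that $Z \cong M \times A^o$ as a topological group. Under the isomorphism $Z \cong M \times \mathfrak{a}$ coming from $(m, X) \mapsto m \exp(X)$, the group law on the right is just coordinate-wise multiplication (with additive structure on $\mathfrak{a}$).

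Next I would produce a bi-invariant metric on each factor. On the compact group $M$, starting from any left-invariant Riemannian metric (or equivalently, any inner product on $\Lie(M)$) and averaging over $M$ with respect to the Haar measure under the adjoint action yields an $\Ad(M)$-invariant inner product; the resulting Riemannian distance is then bi-invariant. On $A^o \cong \mathfrak{a}$, any Euclidean metric on $\mathfrak{a}$ is automatically bi-invariant because the group is abelian. Write $d_M$ and $d_{A^o}$ for the two metrics, both bounded by $1$ after rescaling if desired.

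Finally, define the product metric on $Z \cong M \times A^o$ by, say, $d\bigl((m_1, a_1), (m_2, a_2)\bigr) = d_M(m_1, m_2) + d_{A^o}(a_1, a_2)$. Because the decomposition is a direct product and each factor's metric is bi-invariant, left and right translation by any $(m, a) \in M \times A^o$ acts on each coordinate independently as an isometry, so $d$ is bi-invariant on $Z$. There is no real obstacle here; the only non-automatic point is checking that $A^o$ is genuinely central in $Z$ so that the decomposition is a direct and not merely semidirect product, and this is immediate from the definition of the centralizer.
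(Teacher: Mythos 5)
Your proof is correct and follows essentially the same route the paper sketches: the paragraph preceding the lemma already records the group isomorphism $Z\simeq M\times\mathfrak{a}$ and observes that compact and abelian groups carry bi-invariant metrics, from which the lemma is immediate. Your added verification that $A^o$ is central in $Z$ (so the product is direct, not merely semidirect) is a useful piece of care but does not change the argument.
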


\subsection{Roots, weights and the order on $\mathfrak{a}$.}\hfill{}\\
We denote by $\mathfrak{g}$ the Lie algebra of $G$
and let $\Sigma< \mathfrak{a}^*$ by its root system corresponding to the choice of $A$.
That is $\chi\in \Sigma$ iff $\chi\neq 0$ and the weight space
\[ \mathfrak{g}_\chi=\{v\in \mathfrak{g} \mid \forall h\in \mathfrak{a},~[h,v]=\chi(h)v \} \]
is non-trivial.
We denote by $\Sigma^+$ the subset of $\Sigma$ consisting roots which are positive with respect to $P$,
that is $\chi\in \Sigma$ such that $\mathfrak{g}_\chi$ is in the Lie algebra of $P$.
We denote by 
\[
	\mathfrak{a}^+:=\setdef{x\in \mathfrak{a}}{\forall \alpha\in \Sigma^+,~ \alpha(x)\ge 0}
\]
the corresponding {\bf positive Weyl chamber}
and we let $\Pi\subset \Sigma^+$ be the set of simple roots,
that is roots $\alpha\in \Sigma^+$ such that $\ker\alpha\cap \mathfrak{a}^+$ is a face of $\mathfrak{a}^+$.
We let $\mathfrak{a}^{++}$ be the interior of $\mathfrak{a}^+$, that is
\[
	\mathfrak{a}^{++}:=\setdef{x\in \mathfrak{a}}{\forall \alpha\in \Sigma^+,~ \alpha(x)> 0}
=\setdef{x\in \mathfrak{a}}{\forall \alpha\in \Pi,~ \alpha(x)> 0}.
\]
We endow $\mathfrak{a}$ with the {\bf Killing form}
and let $\mathfrak{a}_+$ be the dual cone of $\mathfrak{a}^+$,
that is 
\[ \mathfrak{a}_+=\{v\in \mathfrak{a} \mid \forall u\in\mathfrak{a}^+,~\langle u,v \rangle \geq 0\}. \]
We define a partial order on $\mathfrak{a}$ by
\[ u\leq v \quad \Longleftrightarrow \quad v-u\in \mathfrak{a}_+. \]
The {\bf Weyl group} $W$ acts on $\mathfrak{a}$ and $\mathfrak{a}^+$ is a fundamental domain for this action.
We use the convention that $\bar{v}$ denotes the representative of $v\in \mathfrak{a}$ in $\mathfrak{a}^+$,
that is $Wv\cap \mathfrak{a}^+=\{\bar{v}\}$.
It is a standard fact that $\bar{v}$ is the largest element in the set $Wv$.

An element of $\mathfrak{a}^*$ is called a {\bf weight}.
Upon identifying $\mathfrak{a}$ with $\mathfrak{a}^*$ via the Killing form,
$\mathfrak{a}_+$ corresponds to the cone of {\bf positive weights} 
and $\mathfrak{a}^+$ corresponds to the cone of {\bf dominant weights} in $\mathfrak{a}^*$.
We thus have 
	\[ v\geq 0 \quad  \Longleftrightarrow \quad  \mbox{ for every dominant weight } \chi\in \mathfrak{a}^*,~\chi(v)\geq 0. \]
%The set $\Pi$ forms a basis for $\mathfrak{a}^*$ as a vector sapce and a basis for the cone of positive weights as a cone
%while the set of {\bf fundamental weights} forms another basis for $\mathfrak{a}^*$
%as a vector sapce and a basis for the cone of dominant weights as a cone.
%Note that for $v\in \mathfrak{a}$, 
%	\[
%		\begin{split}
%v\geq 0 \quad & \Longleftrightarrow \quad  \mbox{ for every dominant weight } \chi\in \mathfrak{a}^*,~\chi(v)\geq 0 \\
%  & \Longleftrightarrow   \quad \mbox{ for every fundamental weight } \chi\in \mathfrak{a}^*,~\chi(v)\geq 0.
%		\end{split}
%	\]
%
%
%\begin{lemma}
%Every compact subset of $\mathfrak{a}$ has a supremum.
%\end{lemma}
%
%\begin{proof}
%Note that the map $v\mapsto (\chi_1(v),\ldots,\chi_n(v))$, 
%where the $\chi_i$ are the fundamental weights, 
%is an order isomorphism between $\mathfrak{a}$ and $\mathbb{R}^n$,
%where the latter is endowed with the product order.
%The lemma follows, as it holds trivially for $\mathbb{R}^n$ - 
%just take the coordinate wise supremum.
%\end{proof}
The set $\Pi$ forms a basis for $\mathfrak{a}^*$ as a vector space and a basis for the cone of positive weights as a cone.
The elements of $\Sigma$ have integer coefficients when presented in the basis $\Pi$
and $\Sigma^+$ consists of the roots for which all the coefficients are non-negative in this presentation.
For $\alpha\in \Pi$, we define $\Sigma_\alpha$ to be the subset of $\Sigma$ consisting of roots for which the
coefficient of $\alpha$ in this presentation is non-negative.
We let $\mathfrak{z}$ be the Lie algebra of $Z$ and set 
\[ \mathfrak{q}_\alpha=\mathfrak{z} \oplus \bigoplus_{\chi\in \Sigma_\alpha} \mathfrak{g}_\chi. \]
This is a parabolic subalgebra of $\mathfrak{g}$.
Under the Lie correspondence, it corresponds to the parabolic subgroup $Q_\alpha<G$.

We consider the $G$-space $G/Q_\alpha$ and denote the base point $eQ_\alpha$ by $x_\alpha$.
This is an $A$-fixed point, as $A<Q_\alpha$, thus the tangent space of $G/Q_\alpha$ at $x_\alpha$ becomes an $A$-module
under the differential action.
We get the identification of $A$-modules,
\begin{equation} \label{eq:tangent}
T_{x_\alpha}(G/Q_\alpha) \simeq \bigoplus_{\chi\in \Sigma-\Sigma_\alpha} \mathfrak{g}_\chi.
\end{equation}
Note that $\Sigma-\Sigma_\alpha$ is the set of roots for which the
coefficient of $\alpha$ in is strictly negative.

\begin{lemma}\label{L:UVQ}\hfill{}\\
Fix $\alpha\in \Pi$.
	Let $C\subset G/Q_\alpha$ be a non-empty closed subset not containing $x_\alpha$.
	Then there exists a sequence of open neighborhoods $x_\alpha\in U_k \subset G/Q_\alpha$ 
such that 
%$\cap_k U_k=\{x_\alpha\}$ and 
for every $z\in Z$,
	\[
		(z.C)\cap U_k\ne \emptyset
		\qquad\Longrightarrow\qquad
		\alpha\circ q(z)>k,
	\]
where $q$ is the homomorphism defined in \eqref{eq:q}.
\end{lemma}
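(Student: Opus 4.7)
The plan is to pass to the $Z$-equivariant affine chart around $x_\alpha$ in $G/Q_\alpha$. Since $Z\subset Q_\alpha$ fixes $x_\alpha$, the exponential gives a $Z$-equivariant open embedding
\[
    \mathfrak{u}_\alpha^-=\bigoplus_{\chi\in \Sigma-\Sigma_\alpha}\mathfrak{g}_\chi \hookrightarrow G/Q_\alpha,\qquad u\mapsto \exp(u)x_\alpha,
\]
with respect to which $z\cdot \exp(u)x_\alpha=\exp(\Ad(z)u)x_\alpha$. Fix a $K$-invariant inner product on $\mathfrak{g}$: the weight spaces $\mathfrak{g}_\chi$ become pairwise orthogonal, $M=Z\cap K$ acts unitarily on each of them, and for $z=m\exp(X)\in Z$ one has $\|\Ad(z)v\|=e^{\chi(X)}\|v\|$ for every $v\in\mathfrak{g}_\chi$. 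By the identification \eqref{eq:tangent} this is exactly the infinitesimal action on the tangent space at $x_\alpha$.

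The first step is to reduce to the chart. The complement $G/Q_\alpha\setminus (U^-_\alpha\cdot x_\alpha)$ is a closed proper subvariety (union of lower Bruhat cells), it is $Z$-invariant, and does not contain $x_\alpha$; therefore it is contained in some open subset $V\subset G/Q_\alpha$ with $x_\alpha\notin V$, so by taking all $U_k$ to sit inside $G/Q_\alpha\setminus V$ the points of $Z\cdot C$ coming from $C\setminus (U^-_\alpha\cdot x_\alpha)$ never meet $U_k$. Hence I may assume $C\subset \mathfrak{u}_\alpha^-$, and then by compactness of $C$ and $0\notin C$ there exists $\epsilon>0$ with $\|u\|\geq \epsilon$ for every $u\in C$. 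Take $U_k=\{u\in\mathfrak{u}_\alpha^-:\|u\|<\delta_k\}$ for a sequence $\delta_k\searrow 0$ to be tuned to $k$.

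If $zC\cap U_k\neq\emptyset$ there is $u\in C$ with $\|\Ad(z)u\|<\delta_k$, i.e.
\[
    \sum_{\chi\in \Sigma-\Sigma_\alpha} e^{2\chi(q(z))}\|u_\chi\|^2 < \delta_k^2,\qquad \sum_\chi \|u_\chi\|^2 \geq \epsilon^2.
\]
A pigeonhole argument over the finite set $\Sigma-\Sigma_\alpha$ yields a coordinate $\chi_0$ with $\|u_{\chi_0}\|\geq \epsilon/\sqrt{|\Sigma-\Sigma_\alpha|}$, and comparing the two sums forces $-\chi_0(q(z))$ to exceed $\tfrac12\log\bigl(\epsilon^2/(|\Sigma-\Sigma_\alpha|\,\delta_k^2)\bigr)$, a quantity tending to $+\infty$ with $k$ under $\delta_k\searrow 0$. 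Since $\chi_0\in \Sigma-\Sigma_\alpha$, its expansion $-\chi_0=n_\alpha \alpha+\sum_{\beta\neq \alpha} n_\beta\,\beta$ satisfies $n_\alpha\geq 1$ and $n_\beta\geq 0$.

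The main obstacle is passing from a large lower bound on $-\chi_0(q(z))=n_\alpha\alpha(q(z))+\sum_{\beta\neq\alpha}n_\beta \beta(q(z))$ to the clean lower bound $\alpha(q(z))>k$: a priori the other $\beta(q(z))$ could contribute positively and absorb all of the estimate, leaving $\alpha(q(z))$ uncontrolled. To close this gap I would not take isotropic balls for $U_k$ but rather tubes adapted to the weight decomposition of $\mathfrak{u}_\alpha^-$, or equivalently work inside a representation $V=V_{c\omega_\alpha}$ (with highest weight supported only on $\alpha$) via the embedding $G/Q_\alpha\hookrightarrow \mathbb{P}(V)$ sending $x_\alpha\mapsto [v_\lambda]$, and use the contrast function $h([v])=|\langle v,v_\lambda\rangle|^2/\|v\|^2$ to define $U_k=\{h>1-\delta_k\}$. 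The key algebraic input — that $\langle c\omega_\alpha,\beta^\vee\rangle=0$ for every simple $\beta\neq\alpha$ implies $E_{-\beta}v_\lambda=0$, hence every non-highest weight $\mu$ of $V$ satisfies $n_\alpha(\lambda-\mu)\geq 1$ — combined with the compactness of $C$ and finite-dimensionality of $V$, yields uniform bounds on the non-$\alpha$ contributions and converts the previous estimate into $\alpha(q(z))>k$.
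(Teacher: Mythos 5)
Your chart setup, the pigeonhole estimate, and the identification of the obstruction are all correct, and you have located exactly the difficulty that the paper's proof glosses over with ``follows easily by~\eqref{eq:tangent}''. However, the proposed repair via $V_{c\omega_\alpha}$ does not close the gap. One gets $1-h(z.[v])\lesssim |c_\lambda|^{-2}\sum_{\mu\neq\lambda}|c_\mu|^2 e^{-2(\lambda-\mu)(q(z))}$, and while every $\lambda-\mu$ does have $n_\alpha\geq 1$ as you show, this sum can tend to $0$ with $\alpha(q(z))$ bounded: if $c_{\lambda-\alpha}([v])=0$ for some $[v]\in C$, the surviving weights have $\lambda-\mu=\alpha+\beta+\cdots$ with other simple roots $\beta$ present, and sending $\beta(q(z))\to+\infty$ alone kills the sum while leaving $\alpha(q(z))$ untouched. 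Compactness of $C$ bounds each $|c_\mu|$ from above but not away from zero, so ``uniform bounds on the non-$\alpha$ contributions'' is precisely what you cannot obtain.

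In fact no argument can succeed, because the lemma is false as stated. Take $G=\SL_3(\bbR)$, $\alpha=\alpha_1$, so $G/Q_{\alpha_1}=\bbP^2$ and $x_{\alpha_1}=[1:0:0]$; let $C=\{[1:0:1]\}$, which is closed and misses $x_{\alpha_1}$, and $z_t=\diag(e^{t/3},e^{t/3},e^{-2t/3})\in Z$. Then $\alpha_1(q(z_t))=0$ for all $t$, yet $z_t.[1:0:1]=[1:0:e^{-t}]\to x_{\alpha_1}$, so $z_t.C$ eventually enters every candidate neighborhood $U_k$ and the implication fails. The missing hypothesis is that $C$ avoid, besides $x_\alpha$, the proper Zariski-closed locus $\{c_{\lambda-\alpha}=0\}$ (in your chart, the set $u_{-\alpha}=0$ together with the complement of the big cell). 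With that extra assumption $\|u_{-\alpha}\|\geq\epsilon'$ on $C$, and the single $\mathfrak{g}_{-\alpha}$-term already gives $e^{-\alpha(q(z))}\epsilon'\leq\|\Ad(z)u\|<\delta_k$, hence $\alpha(q(z))>\log(\epsilon'/\delta_k)$, with no need for a highest-weight representation at all. In the paper's intended application this strengthening costs nothing, since $\psi$ is generic (Theorem~\ref{t:generic}) and the compact $C$ can be chosen to avoid any prescribed finite union of proper subvarieties; but Lemma~\ref{L:UVQ} as written, and hence both the paper's one-line proof and your more elaborate attempt, need the extra hypothesis.
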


\begin{proof}
Since $x_\alpha$ is $M$-invariant and $M$ is compact, we
can replace $C$ by $MC$ and assume it is $M$-invariant as well.
Since $Z=MA^o$ and $M$ is in the kernel of $q$, it is enough to show 
that there exists $U_k$ such that for every $a\in A^o=\exp(\mathfrak{a})$
\[
		(a.C)\cap U_k\ne \emptyset
		\qquad\Longrightarrow\qquad
		\alpha\circ \log(a)>k.
	\]
This follows easily by \eqref{eq:tangent}.
\end{proof}

Note that for every $\alpha\in \Pi$ we have $P<Q_\alpha$. Thus we have a $G$-map 
\[
    \pi_\alpha:G/P\overto{} G/Q_\alpha,\quad  gP\mapsto gQ_\alpha.
\]
Further, we have $P=\cap_{\alpha\in \Pi} Q_\alpha$. 
Thus the map $G/P\to \prod_{\alpha\in \Pi} G/Q_\alpha$ is injective.
We define the subvariety $L\subset G/P$ by
\begin{equation} \label{eq:defL}
    L=\bigcup_{\alpha\in \Pi} \pi_\alpha^{-1}(\{x_\alpha\}).
\end{equation}
The following lemma is an immediate corollary of Lemma~\ref{L:UVQ}.

\begin{lemma}\label{L:UV}\hfill{}\\
    Let $C\subset G/P$ be a non-empty closed set such that $C\cap L=\emptyset$.
    Then there exists a sequence of open neighborhoods $eP \in V_k \subset G/P$ 
    such that %$\cap_k V_k=\{eP\}$ and 
    for every $z\in Z$ 
	\[
		(z.C)\cap V_k\ne \emptyset
		\qquad\Longrightarrow\qquad
		\min_{\alpha\in\Pi}\ \alpha\circ q(z)>k,
	\]
    where $q$ is the homomorphism defined in \eqref{eq:q}.
\end{lemma}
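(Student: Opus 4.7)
The plan is to reduce the statement to Lemma~\ref{L:UVQ} applied to each simple root $\alpha \in \Pi$ separately, and then combine the resulting neighborhoods via the finite product of the maps $\pi_\alpha : G/P \to G/Q_\alpha$.

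First, for each $\alpha \in \Pi$, I would form $C_\alpha := \pi_\alpha(C) \subset G/Q_\alpha$. Since $G/P$ is compact and $\pi_\alpha$ is continuous, $C_\alpha$ is closed. The hypothesis $C \cap L = \emptyset$ specialized to the summand $\pi_\alpha^{-1}(\{x_\alpha\})$ of $L$ from \eqref{eq:defL} yields $\pi_\alpha^{-1}(\{x_\alpha\}) \cap C = \emptyset$, hence $x_\alpha \notin C_\alpha$. Applying Lemma~\ref{L:UVQ} to the closed set $C_\alpha \subset G/Q_\alpha$ produces, for every $k$, an open neighborhood $U_{k,\alpha}$ of $x_\alpha$ in $G/Q_\alpha$ such that for every $z \in Z$
\[
(z.C_\alpha) \cap U_{k,\alpha} \ne \emptyset \qquad \Longrightarrow \qquad \alpha \circ q(z) > k.
\]

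Next, I would define
\[
V_k := \bigcap_{\alpha \in \Pi} \pi_\alpha^{-1}(U_{k,\alpha}).
\]
Because $\Pi$ is finite, each $\pi_\alpha$ is continuous, and $\pi_\alpha(eP) = x_\alpha \in U_{k,\alpha}$, the set $V_k$ is an open neighborhood of $eP$ in $G/P$. If $(z.C) \cap V_k \ne \emptyset$, pick a point $y$ in the intersection; by $G$-equivariance of $\pi_\alpha$, the image $\pi_\alpha(y)$ lies in $z.C_\alpha$ and in $U_{k,\alpha}$ for every $\alpha \in \Pi$. Invoking the property of $U_{k,\alpha}$ established above, we obtain $\alpha \circ q(z) > k$ for each simple root $\alpha$, i.e. $\min_{\alpha \in \Pi} \alpha \circ q(z) > k$, as required.

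There is essentially no obstacle here beyond verifying the two bookkeeping items just used: that $\pi_\alpha$ is $G$-equivariant (immediate from the coset description of both sides) and that $\pi_\alpha(eP) = x_\alpha$ (immediate since $P < Q_\alpha$). The content of the lemma is entirely carried by Lemma~\ref{L:UVQ}; this proof merely packages its conclusions over the finitely many simple roots by intersecting the corresponding preimages.
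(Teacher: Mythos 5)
Your proof is correct and is essentially the argument the paper gives: the paper's one-line proof also defines $V_k$ as the intersection over $\alpha\in\Pi$ of the preimages $\pi_\alpha^{-1}(U_k)$ of the neighborhoods supplied by Lemma~\ref{L:UVQ}. You have simply written out the straightforward verification that the paper leaves implicit.
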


\begin{proof}
For $V_k$ we take the intersection of the various preimages of $U_k\subset G/Q_\alpha$,
running over $\alpha\in \Pi$.
\end{proof}

\subsection{The Iwasawa and Cartan decompositions} % (fold)
\label{sub:cartan}\hfill{}\\
We denote by $U<P$ the unipotent radical.
It is well known that $KA^oU=G$.
The product map 
\[ K\times A^o\times U \overto{} KA^oU = G \]
is called the {\bf Iwasawa decomposition} of $G$.
It provides the {\bf Iwasawa projection} $\iota:G\to A^o\overto{} \mathfrak{a}$.
The following important result is called {\bf Kostant's convexity theorem}.

\begin{theorem}[{\cite[Theorem~4.1]{Kostant}}] \label{t:kostant}
For $v\in \mathfrak{a}$, $\iota(\exp(v)K)$ coincides with the convex hull of the orbit $Wv$ in $\mathfrak{a}$.
\end{theorem}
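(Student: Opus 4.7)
I would prove the two inclusions $\iota(\exp(v)K) \subseteq \conv(Wv)$ and $\conv(Wv) \subseteq \iota(\exp(v)K)$ separately, following the broad strategy of Kostant's original argument.

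For the inclusion ``$\subseteq$'', the plan is to use finite-dimensional highest-weight representations to control $\iota$. For each dominant weight $\chi \in \mathfrak{a}^*$, take the irreducible representation $(\pi_\chi,V_\chi)$ of $G$ with highest weight $\chi$, a highest-weight vector $v_\chi$, and a $K$-invariant Hermitian inner product. Since $v_\chi$ is annihilated by $U$ and the norm is $K$-invariant, for the Iwasawa decomposition $g = k\exp(\iota(g))u$ one computes $\|\pi_\chi(g) v_\chi\| = e^{\chi(\iota(g))}\|v_\chi\|$. Applying this to $g=\exp(v)k$ and expanding $\pi_\chi(k)v_\chi = \sum_\mu a_\mu e_\mu$ in an orthonormal weight basis gives $\|\pi_\chi(\exp(v)k)v_\chi\|^2 = \sum_\mu |a_\mu|^2 e^{2\mu(v)}$; since every weight $\mu$ of $V_\chi$ lies in $\conv(W\chi)$, each $\mu(v)$ is bounded by $\max_{w\in W}\chi(wv) = \chi(\bar v)$, yielding $\chi(\iota(\exp(v)k)) \leq \chi(\bar v)$ for every dominant $\chi$.

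For the inclusion ``$\supseteq$'', I would first note that $Wv \subseteq \iota(\exp(v)K)$: given $w \in W$ with a lift $n \in N_K(A) \subset K$, the identity $\exp(v)n^{-1} = n^{-1}\exp(\Ad(n)v) = n^{-1}\exp(wv)$ is already a valid Iwasawa decomposition, so $\iota(\exp(v)n^{-1}) = wv$. The same manipulation with arbitrary $k\in K$ shows $\iota(\exp(v)K) = \iota(\exp(wv)K)$ for every $w\in W$. Granting that the image $\iota(\exp(v)K)$ is convex and $W$-invariant, the containment $\conv(Wv) \subseteq \iota(\exp(v)K)$ is then immediate; moreover $W$-invariance upgrades the dominant-weight bounds of the first step to $\chi(\overline{\iota(\exp(v)k)}) \leq \chi(\bar v)$ for all dominant $\chi$, and via the standard combinatorial fact $\conv(Wv) = \{u \in \mathfrak{a} : \bar u \leq \bar v\}$ this also closes the first inclusion.

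The main obstacle is establishing the convexity and accompanying $W$-invariance of $\iota(\exp(v)K)$, which is the deep content of the theorem and does not follow from the elementary steps above. The cleanest modern route is to identify the restriction $\iota|_{\exp(v)K}$ with the moment map of a maximal torus of $K$ acting on the coadjoint orbit of $v$ in $\mathfrak{g}^*$, and invoke the Atiyah--Guillemin--Sternberg convexity theorem for Hamiltonian torus actions on compact symplectic manifolds. Alternatively, one can follow Kostant's original analytic argument, reducing along each simple root $\alpha \in \Pi$ to rank-one computations inside the associated $\SU(2)$-triple and then inducting on the Weyl-group combinatorics to assemble convexity in the general case.
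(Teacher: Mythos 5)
The paper does not prove this statement: it is cited verbatim as \cite[Theorem~4.1]{Kostant} and used as a black box in the proof of Lemma~\ref{l:sigmabound}. So there is no internal proof to compare against, and the question is only whether your sketch is a sound outline of a proof.

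Your elementary half is correct. The computation $\|\pi_\chi(g)v_\chi\| = e^{\chi(\iota(g))}\|v_\chi\|$ for the Iwasawa decomposition $g=k\exp(\iota(g))u$ (with $v_\chi$ annihilated by the Lie algebra of $U$), the weight-space expansion of $\pi_\chi(\exp(v)k)v_\chi$, and the bound $\mu(v)\le \chi(\bar v)$ for every weight $\mu$ of $V_\chi$ together give $\chi(\iota(\exp(v)k))\le\chi(\bar v)$ for each integral dominant $\chi$ (which is a dense enough family). You also correctly note that by itself this gives only $\iota(\exp(v)k)\le \bar v$, not $\overline{\iota(\exp(v)k)}\le\bar v$, and that the $W$-invariance of the image is what upgrades this to membership in $\conv(Wv)=\setdef{u}{\bar u\le\bar v}$. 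The observation that $Wv\subset\iota(\exp(v)K)$ via lifts in $N_K(A)$ is likewise fine. So your reduction of the theorem to ``the image $\iota(\exp(v)K)$ is convex and $W$-invariant'' is a faithful reconstruction of the structure of Kostant's argument.

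The one point where your proposal is not quite right is the suggested symplectic closure of the gap. The moment-map realization you describe---the projection to $\mathfrak{t}^*$ on a coadjoint orbit of $v$ in $\mathfrak{g}^*$---is the statement of the \emph{linear} Kostant convexity theorem for compact groups, and there AGS applies directly because coadjoint orbits of compact groups are compact. For the \emph{nonlinear} theorem you are proving, $G$ is noncompact, its coadjoint orbits are noncompact, and AGS does not apply off the shelf. The symplectic proof of the nonlinear Iwasawa-projection convexity is due to Duistermaat and requires identifying $\iota$ with a moment map on a different (compact) Hamiltonian $T$-space and handling the resulting degeneracies; it is not simply ``AGS on the coadjoint orbit of $v$''. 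If you want a modern route, cite Duistermaat's argument by name; otherwise Kostant's original rank-one reduction and induction, which you also mention, is the safer self-contained path.
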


For $g\in G$ and $\xi\in G/P$ we find $k\in K$ 
such that $\xi=kP$ and define the map
\begin{equation}\label{e:Iwasawa}
	\sigma:\ G\times G/P\ \overto{} \ \mathfrak{a}, \quad \sigma(g,\xi)=\iota(gk)
\end{equation}
which is a well defined continuous cocycle by \cite{BQ:book}*{Lemma 6.29}.
We denote it the {\bf Iwasawa cocycle}.

We let $A^+=\exp(\mathfrak{a}^+)$. This is a sub-semigroup of $A^o$.
It is well known that $KA^+K=G$.
The product map 
\[ K\times A^+\times K \overto{} KA^+K = G \]
is called the {\bf Cartan decomposition} of $G$.
It provides the {\bf Cartan projection} 
\[
\kappa:G\overto{} A^+\overto{} \mathfrak{a}^+.
\]
\begin{remark} \label{rem:kappa=q}
Since $Z=MA^o$ and $M<K$, we get that for $z\in Z$, $\kappa(z)=\kappa(q(z))$.
In particular, if $q(z)\in A^+$ then $\kappa(z)=q(z)$.
\end{remark}

\bigskip

\subsection{Length functions}

\begin{defn}
Let $H$ be a compactly generated locally compact group.
A {\bf length function} on $H$ is a map $|\cdot|: H\to [0,\infty)$
which is bounded on compact subsets,
satisfying for every 
$h,h'\in G$, $|hh'|\leq |h|+|h'|$.
It is said to  be {\bf symmetric}
if for every 
$h\in G$, $|h|=|h^{-1}|$.

For two functions, $|\cdot|,|\cdot|':H\to [0,\infty)$,
we say that $|\cdot|$ dominates $|\cdot|'$
if there exist $\alpha\geq 1$ and $\beta\geq 0$ such that for every $h\in H$,
$|h|' \leq \alpha |h|+\beta$.
We say that $|\cdot|$ and $|\cdot|'$ are {\bf equivalent}
if both dominate each other.
\end{defn}

Note that by our convention, a length function 
need not be continuous and it might have $|e|>0$ and $|h|=0$ for $h\neq e$.
The word distance with respect to a non-open precompact generating set is an example of a discontinuous length function.
Clearly, the word distance dominates every other length function.
A length function is said to be in the {\bf word class} if it is equivalent to the word distance.

\begin{example}
Let $H$ be the additive group of a finite dimensional real vector space.
An {\bf asymmetric norm} on $H$
is a length function $\|\cdot \|$ which is homogeneous with respect to positive scalars
such that for $v\in H$, $\|v\|=0$ iff $v=0$.
Note that such an asymmetric norm is necessarily symmetric.
Every asymmetric norm on $H$ gives a word class length function.
In this case we use the term {\bf norm class} as a synonymous to word class.
\end{example}

We are now back to consider the semsimple group $G$.
A prominent example of a length function $|\cdot|$ on $G$
is given by $|g|=d(gx,x)$, where $d$ is the Reiamannian metric on the symmetric space $X=G/K$
and $x=eK$. Note that for $v\in \mathfrak{a}$, the function $v \mapsto |\exp(v)|$ gives the 
Hilbertian norm on $\mathfrak{a}$ associated with the restriction of the Killing form of $\mathfrak{g}$ to $\mathfrak{a}$.
In particular, this is a norm class length function on $\mathfrak{a}$.

\begin{lemma} \label{l:word}
A length function $|\cdot|$ on $G$ is in the word class iff
the length function $v \mapsto |\exp(v)|$ on $\mathfrak{a}$ is in the norm class.
\end{lemma}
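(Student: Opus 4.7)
The plan is to reduce both implications to comparison with a single reference length function on $G$ whose restriction to $\exp(\mathfrak{a})$ is a genuine norm. A natural choice is $|g|_0:=d(gK,eK)$, the displacement on the Riemannian symmetric space $G/K$ for the Killing form. Two standard facts about $|\cdot|_0$ form the backbone of the argument: (i) $|\cdot|_0$ is in the word class on $G$, since any proper, continuous, left-invariant pseudo-metric on a compactly generated locally compact group is quasi-isometric to the word metric of a compact generating set; and (ii) for $v\in\mathfrak{a}^+$ one has $|\exp(v)|_0=\|v\|_K$, where $\|\cdot\|_K$ denotes the Killing norm on $\mathfrak{a}$, and by $W$-invariance of $\|\cdot\|_K$ together with the existence of lifts of $W$ inside $K$, this identity extends to all of $\mathfrak{a}$ up to a uniformly bounded additive constant.

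The second ingredient is the Cartan decomposition $g=k_1\exp(\kappa(g))k_2$ with $k_i\in K$ and $\kappa(g)\in\mathfrak{a}^+$. For any length function $|\cdot|$ on $G$, compactness of $K$ together with sub-additivity yields
\[
\bigl\lvert |g|-|\exp(\kappa(g))|\bigr\rvert\ \le\ 2C_K,\qquad C_K:=\sup_{k\in K}\max(|k|,|k^{-1}|)<\infty,
\]
so that every length function on $G$ is determined, up to a bounded additive error, by its restriction to $\exp(\mathfrak{a}^+)$; the analogous bound holds for $|\cdot|_0$ with constant $C_K^{(0)}$.

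With these tools the two directions are nearly symmetric. For the forward direction, if $|\cdot|$ is in the word class then it is equivalent to $|\cdot|_0$ on $G$, so restricting to $\exp(\mathfrak{a})$ shows that $v\mapsto|\exp(v)|$ is equivalent to $v\mapsto|\exp(v)|_0=\|v\|_K$, which (being equivalent to a norm, hence to any norm on a finite-dimensional real vector space) lies in the norm class. For the reverse direction, if $v\mapsto|\exp(v)|$ is in the norm class then in particular it is equivalent to $\|v\|_K=|\exp(v)|_0$; chaining this equivalence with the Cartan-decomposition estimate above, applied to both $|\cdot|$ and $|\cdot|_0$, propagates the equivalence from $\exp(\mathfrak{a}^+)$ to all of $G$, giving that $|g|$ is equivalent to $|g|_0$ and hence that $|\cdot|$ lies in the word class.

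The main technical friction is bookkeeping: carrying the asymmetry allowed by our definition of a length function, converting freely between bounded-additive-error and the affine form $|h|'\le\alpha|h|+\beta$ of equivalence, and absorbing the discrepancies between $W$-orbit representatives in $\mathfrak{a}^+$ and arbitrary $v\in\mathfrak{a}$. Each of these is harmless because $K$ is compact and $K^{-1}=K$. The only conceptually substantive input is the pair of reference facts (i) and (ii) about $|\cdot|_0$, which are classical and may be cited rather than reproved.
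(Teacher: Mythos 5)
Your proof is correct and uses the same structural backbone as the paper's---the Cartan decomposition estimate $\bigl||g|-|\exp(\kappa(g))|\bigr|\le 2C_K$, which reduces the comparison of length functions on $G$ to the comparison of their restrictions to $\mathfrak{a}$, and the symmetric-space displacement function $|\cdot|_0$ whose restriction is the Killing norm. The substantive difference lies in how you establish that $|\cdot|_0$ is in the word class: you invoke an external theorem (a \v{S}varc--Milnor-type statement for compactly generated locally compact groups) to assert this directly, and then compare every length function to $|\cdot|_0$ via the Cartan estimate. The paper instead avoids this citation by a maximality argument: it observes that the Cartan correspondence $|\cdot|\leadsto\bigl(v\mapsto|\exp(v)|\bigr)$ is an order-preserving bijection on equivalence classes, that every length function on $\mathfrak{a}$ is dominated by the norm class, and that the norm class is actually attained (via $|\cdot|_0$). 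Since the word class is maximal on $G$ and the norm class is maximal among attainable classes on $\mathfrak{a}$, the correspondence must match them. The two routes thus diverge only in the last step: your argument is more concrete and arguably more transparent, but it leans on a nontrivial imported fact about proper left-invariant metrics; the paper's is more self-contained, obtaining the same conclusion as a free by-product of comparing maxima under the order-preserving correspondence. Both are valid. One small inaccuracy worth noting: the identity $|\exp(v)|_0=\|v\|_K$ actually holds exactly for all $v\in\mathfrak{a}$ (the flat $\exp(\mathfrak{a})K$ is totally geodesic and isometric to $(\mathfrak{a},\|\cdot\|_K)$, and one can lift each $w\in W$ to $K$ to see the restriction is $W$-invariant on the nose); the ``bounded additive constant'' hedge you include is unnecessary, though harmless.
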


\begin{proof}
By the Cartan decomposition, a length function $|\cdot|$ on $G$ is equivalent to the 
bi-$K$-invariant function $kak' \mapsto |a|$.
Thus, $|\cdot|$ is determined by its restriction to $A^o$, which could be identified with $\mathfrak{a}$ via $\exp$.
The corresponding length function on $\mathfrak{a}$ is dominated by the norm class.
The symmetric space example shows that it could actually be a norm,
so the class of length functions on $G$ which corresponds to the norm class on $\mathfrak{a}$ is not empty,
thus it must coinside with the word class on $G$, by maximality.
\end{proof}

The following is a fundamental fact.

\begin{prop} \label{p:Wnorm}
For every $W$-invariant asymmetric norm $\|\cdot \|$ on $\mathfrak{a}$,
the function $|g|=\|\kappa(g)\|$ is a word class continuous length function on $G$.
\end{prop}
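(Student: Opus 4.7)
The proof naturally splits into three independent verifications: continuity (which also yields boundedness on compact sets), subadditivity (the length function property), and equivalence with the word length class.

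\textbf{Continuity.} The Cartan projection $\kappa:G\to\mathfrak{a}^+$ is continuous (a standard fact about the Cartan decomposition), and every asymmetric norm on the finite-dimensional real vector space $\mathfrak{a}$ is continuous. Hence the composition $g\mapsto\|\kappa(g)\|$ is continuous, and so bounded on compact subsets of $G$.

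\textbf{Subadditivity.} The core of the argument is the auxiliary inequality
\[
	\|\sigma(g,\xi)\|\le\|\kappa(g)\|\qquad(g\in G,\ \xi\in G/P),
\]
where $\sigma$ is the Iwasawa cocycle of \eqref{e:Iwasawa}. To prove this, I will write $g=k_1\exp(\kappa(g))k_2$ via the Cartan decomposition and $\xi=kP$ with $k\in K$. By the left $K$-invariance of the Iwasawa projection $\iota$ one gets
\[
	\sigma(g,\xi)=\iota(gk)=\iota\bigl(\exp(\kappa(g))\,k_2k\bigr),
\]
and Kostant's convexity theorem (Theorem~\ref{t:kostant}) places this vector in $\conv(W\cdot\kappa(g))$. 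Since $\|\cdot\|$ is $W$-invariant and subadditive --- and any positively homogeneous subadditive function is convex --- it is bounded above by $\|\kappa(g)\|$ on this convex hull. Given this, subadditivity of $|\cdot|$ follows quickly: for $g,h\in G$, the Cartan decomposition $gh=k_1\exp(\kappa(gh))k_2$ yields $\xi=k_2^{-1}P$ with $\sigma(gh,\xi)=\kappa(gh)$, since $(gh)k_2^{-1}=k_1\exp(\kappa(gh))$ is already in Iwasawa form. The cocycle identity
\[
	\sigma(gh,\xi)=\sigma(g,h\xi)+\sigma(h,\xi),
\]
together with the triangle inequality for $\|\cdot\|$ and two applications of the auxiliary inequality, gives
\[
	\|\kappa(gh)\|\le\|\sigma(g,h\xi)\|+\|\sigma(h,\xi)\|\le\|\kappa(g)\|+\|\kappa(h)\|.
\]

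\textbf{Word class.} By Lemma~\ref{l:word} it is enough to check that $v\mapsto|\exp(v)|$ is a norm-class length function on $\mathfrak{a}$. For $v\in\mathfrak{a}$, picking a lift $\tilde{w}\in N_K(A)$ of the Weyl element carrying $v$ into $\mathfrak{a}^+$ gives $\exp(v)=\tilde{w}^{-1}\exp(\bar{v})\tilde{w}\in K\exp(\bar{v})K$, whence $\kappa(\exp(v))=\bar{v}$. $W$-invariance of $\|\cdot\|$ then yields $|\exp(v)|=\|\bar{v}\|=\|v\|$, which is itself an asymmetric norm on the additive group $\mathfrak{a}$ and hence in the norm class. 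The main obstacle in the whole proof is the auxiliary inequality $\|\sigma(g,\xi)\|\le\|\kappa(g)\|$: it is precisely the step where Kostant's convexity theorem meets the $W$-invariance and subadditivity of the norm. Everything else---continuity, the cocycle manipulation, and the word-class reduction---then follows by standard soft arguments.
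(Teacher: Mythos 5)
Your proof is correct, and it uses the same two essential ingredients as the paper (Kostant's convexity theorem and the Iwasawa cocycle), but it organizes them differently. The paper deduces the proposition from the more general Lemma~\ref{l:sigmabound}: for an \emph{arbitrary} asymmetric norm $\|\cdot\|$ it defines $|g|=\sup_{\xi\in G/P}\|\sigma(g,\xi)\|$, observes that this is a length function because it is the post-composition of the subadditive cocycle with a sup-norm, and then uses bi-$K$-invariance plus Kostant to identify $|g|=\max_{w\in W}\|w\kappa(g)\|$; specializing to $W$-invariant norms then collapses the max to $\|\kappa(g)\|$. You instead work with the $W$-invariant norm from the start, prove directly the inequality $\|\sigma(g,\xi)\|\le\|\kappa(g)\|$ via Kostant's theorem together with the convexity and $W$-invariance of $\|\cdot\|$, and then close the subadditivity argument by the observation that for $\xi=k_2^{-1}P$ (where $gh=k_1\exp(\kappa(gh))k_2$) the cocycle actually achieves $\sigma(gh,\xi)=\kappa(gh)$. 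This is a clean way to avoid manipulating the supremum altogether and also sidesteps the (minor) continuity-of-the-sup point. What your route loses is the slightly more general statement proved in Lemma~\ref{l:sigmabound} for asymmetric norms that need not be $W$-invariant, which the paper records for its own sake; what it gains is directness. Your handling of the word-class reduction via $\kappa(\exp v)=\bar v$ and $\|\bar v\|=\|v\|$ matches the last lines of the paper's argument.
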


This proposition follows immediately from the following lemma.

\begin{lemma} \label{l:sigmabound}
Fix an asymmetric norm $\|\cdot\|$ on $\mathfrak{a}$.
Then the function
\[ |\cdot|: G\to [0,\infty), \quad |g|=\sup_{\xi\in G/P} \|\sigma(g,\xi)\| \]
is a word class continuous length function on $G$ and we have 
\[ |\cdot |=\max_{w\in W} \|w\kappa(\cdot)\|. \]
\end{lemma}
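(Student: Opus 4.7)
The plan is to establish the explicit formula $|g| = \max_{w\in W}\|w\kappa(g)\|$ first, and to deduce all the required properties from it together with the cocycle identity of $\sigma$.

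For the formula, I would exploit that the Iwasawa projection $\iota$ is left-$K$-invariant (if $g = kau$ is the Iwasawa decomposition then $\iota(k'g) = \log(a) = \iota(g)$) and that $G/P \cong K/M$ via $kP\mapsto kM$. Writing the Cartan decomposition $g = k_1\exp(\kappa(g))k_2$ with $k_1,k_2\in K$, and noting that as $\xi = kP$ ranges over $G/P$ the element $k_2k$ ranges over all of $K$, left-$K$-invariance of $\iota$ collapses the factor $k_1$ and gives
\[
\sup_{\xi\in G/P}\|\sigma(g,\xi)\| \;=\; \sup_{k\in K}\|\iota(\exp(\kappa(g))k)\|.
\]
By Kostant's convexity theorem (Theorem~\ref{t:kostant}), the set $\iota(\exp(v)K)$ coincides with $\mathrm{conv}(Wv)$ in $\mathfrak{a}$. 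Since the asymmetric norm $\|\cdot\|$ is convex (being positively homogeneous and subadditive), its supremum over a convex polytope is attained at a vertex and therefore equals $\max_{w\in W}\|wv\|$. Setting $v = \kappa(g)$ proves the formula.

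To finish, subadditivity $|g_1g_2|\le |g_1|+|g_2|$ follows from the additive cocycle identity $\sigma(g_1g_2,\xi)=\sigma(g_1,g_2\xi)+\sigma(g_2,\xi)$ (which I would verify by chasing the Iwasawa decompositions of $g_2k$ and then $g_1k'$, using that $A$ normalizes $U$) applied pointwise in $\xi$ and then taking the supremum. Continuity of $|\cdot|$ follows from the joint continuity of $\sigma$ on $G\times G/P$ cited in the definition \eqref{e:Iwasawa}, together with compactness of $G/P$ and continuity of the asymmetric norm on the finite-dimensional space $\mathfrak{a}$; boundedness on compacts is then immediate. For the word-class property I would invoke Lemma~\ref{l:word}: the restriction of $|\cdot|$ to $A^o$ corresponds under $\exp$ to the function $v\mapsto \max_{w\in W}\|wv\|$, which is itself an asymmetric norm on $\mathfrak{a}$ (sub-additive, positively homogeneous, and vanishing only at $0$), hence in the norm class by the remarks preceding the lemma.

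The main obstacle is the bookkeeping in the reduction to Kostant's theorem, in particular making sure that the identification $G/P\cong K/M$ and the left-$K$-invariance of $\iota$ cleanly absorb the Cartan factors $k_1,k_2$; verification of the additive cocycle identity for $\sigma$ is also routine but error-prone. Once these are in place, everything else is formal.
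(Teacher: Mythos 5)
Your proposal is correct and follows essentially the same route as the paper: subadditivity from the cocycle identity, continuity from joint continuity of $\sigma$ and compactness of $G/P$, the explicit formula from bi-$K$-invariance plus Kostant's convexity theorem, and the word-class property via Lemma~\ref{l:word}. The only difference is cosmetic ordering (you establish the formula first) and a bit of extra detail, such as observing explicitly that the convexity of the asymmetric norm puts the maximum over $\operatorname{conv}(Wv)$ at a vertex — a step the paper leaves implicit.
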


\begin{proof}
The function $|\cdot|$ is a composition of the sup norm on $C(G/P,\mathfrak{a})$, given by  
$\|f\|_\infty=\max_{\xi\in G/P} \|f(\xi)\|$,
and the map $G\to C(G/P,\mathfrak{a})$, $g\mapsto \sigma(g,\cdot)$.
The cocylce property of $\sigma$ thus give for every $\xi\in G/P$,
\[ \|\sigma(gg',\xi)\| \leq \|\sigma(g,g'\xi)\|+\|\sigma(g',\xi)\| \leq
\|\sigma(g,\cdot)\|_\infty+\|\sigma(g',\cdot)\|_\infty, \]
thus also 
\[ \|\sigma(gg',\cdot)\|_\infty \leq \|\sigma(g,\cdot)\|_\infty+\|\sigma(g',\cdot)\|_\infty, \]
The continuity of $|\cdot|$ is clear and by continuity, we have boundedness on compact subsets of $G$. 
Therefore $|\cdot|$ is indeed a continuous length function on $G$.

One checks easily that $|\cdot|$ is bi-$K$-invariant, thus for $g\in G$, $|g |=|\exp\circ\kappa(g)|$.
Setting $v=\kappa(g)$ we get by Equation~\eqref{e:Iwasawa} and Theorem~\ref{t:kostant},
\[ |g|=|\exp(v)|=\max_{\xi\in G/P} \|\sigma(\exp(v),\xi)\| = 
\max_{k\in K} \|\iota(\exp(v)k)\|=
\]
\[
\max_{w\in W} \|wv\|=\max_{w\in W} \|w\kappa(g)\|. \]
Finally, we note that $v\mapsto |\exp(v)|=\max_{w\in W} \|wv\|$ is an asymmetric norm on $\mathfrak{a}$,
thus $|\cdot|$ is in a word class length function on $G$, by Lemma~\ref{l:word}.
\end{proof}

Concrete examples of asymmetric norms on $\mathfrak{a}$ are given by dominant weights.

\begin{lemma} \label{l:chipos}
For a dominant weight $\chi \in \mathfrak{a}^*$,
$v\mapsto \chi(\bar{v})$
is an asymmetric norm on $\mathfrak{a}$.
Accordingly, $g\mapsto \chi(\kappa(g))$ is a word class length function on $G$.
\end{lemma}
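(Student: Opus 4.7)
The plan is to show that $v\mapsto \chi(\bar v)$ is a $W$-invariant asymmetric norm on $\mathfrak{a}$ and then quote Proposition~\ref{p:Wnorm}. Since $\kappa(g)\in \mathfrak{a}^+$ is already its own Weyl-chamber representative, $\chi(\kappa(g))$ coincides with $\chi(\overline{\kappa(g)})$, so the second assertion of the lemma is immediate from the first together with that proposition.

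The whole verification is driven by the identity
\[
    \chi(\bar v)\;=\;\max_{w\in W}\chi(wv),\qquad v\in \mathfrak{a},
\]
which combines two ingredients recorded just before the lemma. First, $\bar v$ is the $\le$-maximum of the orbit $Wv$, so $\bar v-wv\in \mathfrak{a}_+$ for every $w\in W$. Second, under the Killing-form identification a dominant weight $\chi$ sits in $\mathfrak{a}^+$, which is the dual cone of $\mathfrak{a}_+$, hence $\chi(u)\ge 0$ for every $u\in \mathfrak{a}_+$. Applied to $u=\bar v-wv$, this gives $\chi(\bar v)\ge \chi(wv)$, with equality whenever $wv=\bar v$.

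Granted the identity, each axiom of an asymmetric norm is short. $W$-invariance is automatic from $\overline{wv}=\bar v$, and positive homogeneity follows from $\overline{\lambda v}=\lambda \bar v$ for $\lambda>0$ (since $\mathfrak{a}^+$ is a cone). Non-negativity uses that the only $W$-invariant element of $\mathfrak{a}$ is $0$, whence $\sum_{w\in W}wv=0$ and
\[
    \chi(\bar v)\;=\;\max_{w}\chi(wv)\;\ge\;\frac{1}{|W|}\sum_{w}\chi(wv)\;=\;0.
\]
The triangle inequality is the standard ``max of a sum is at most the sum of maxes'':
\[
    \chi(\overline{v_1+v_2})\;=\;\max_w [\chi(wv_1)+\chi(wv_2)]\;\le\;\max_w\chi(wv_1)+\max_w\chi(wv_2)\;=\;\chi(\bar v_1)+\chi(\bar v_2).
\]

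The delicate step---and the one I expect to be the main obstacle---is non-degeneracy. If $\chi(\bar v)=0$ the chain of inequalities above forces $\chi(wv)=0$ for every $w\in W$, i.e.\ $v$ is Killing-orthogonal to the whole orbit $W\chi$; in the semisimple setting $W$ acts on $\mathfrak{a}$ without non-zero invariants, so as soon as $\chi$ is regular (lies in $\mathfrak{a}^{++}$) the orbit $W\chi$ spans $\mathfrak{a}$, forcing $v=0$. This is where ``dominant weight'' must be read in its regular sense (on a wall one would at best get a seminorm). With non-degeneracy in hand, the first assertion is established and Proposition~\ref{p:Wnorm} yields the second.
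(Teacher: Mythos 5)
Your argument for subadditivity is essentially the paper's: both rest on the fact that $\bar v$ is the $\le$-maximum of the orbit $Wv$, from which $\chi(\overline{u+v})\le\chi(\bar u)+\chi(\bar v)$ follows by a ``max of a sum is at most the sum of maxes'' step (the paper phrases this using the partial order on $\mathfrak{a}$ and writes $\overline{u+v}\le\bar u+\bar v$; you pass through the equivalent scalar identity $\chi(\bar v)=\max_{w\in W}\chi(wv)$). You go further than the paper in spelling out $W$-invariance, positive homogeneity and non-negativity, all of which the paper leaves implicit, and you are right that the paper's one-line proof never addresses non-degeneracy, which is genuinely needed for the second assertion since Proposition~\ref{p:Wnorm} requires a norm, not a seminorm.

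The diagnosis in your last paragraph is off, however. From $\chi(\bar v)=0$ you correctly deduce $\chi(wv)=0$ for all $w\in W$, i.e.\ $v\perp W\chi$; the question is then whether $W\chi$ spans $\mathfrak{a}$. When $\mathfrak{g}$ is simple, the reflection representation of $W$ on $\mathfrak{a}$ is irreducible, so \emph{any} nonzero $\chi$ (including one lying on a wall) has $\operatorname{span}(W\chi)=\mathfrak{a}$ and yields an honest asymmetric norm; your statement that on a wall one gets only a seminorm is false in this case. Degeneracy arises precisely when $G$ is semisimple but not simple and $\chi$ vanishes on the $\mathfrak{a}$-component of one of the simple factors, for then $W\chi$ misses that component and vectors supported there are in the kernel. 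Requiring $\chi\in\mathfrak{a}^{++}$ is sufficient but far stronger than necessary; the right hypothesis is that $\chi$ has a nonzero component in each $W$-irreducible summand of $\mathfrak{a}$ (equivalently, $\chi$ is nontrivial on each simple factor). Your instinct that the lemma as worded---which admits $\chi=0$---needs a supplementary hypothesis was correct; you just located the threshold in the wrong place.
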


\begin{proof}
Since $\bar{u}+\bar{v}$ is the maximal element over $(w,w')\in W\times W$ of the collection $wu+w'v$,
while $\overline{u+v}$ is the maximal element over the diagonal subset $w=w'$,
we get $\overline{u+v}\leq \bar{u}+\bar{v}$,
thus $\chi(\overline{u+v}) \leq \chi(\bar{u}+\bar{v})=\chi(\bar{u})+\chi(\bar{v})$.
\end{proof}

We obtain another set of examples of continuous word class length functions on $G$
as follows.

\begin{lemma} \label{l:repnorm}
Fix a real representation with finite kernel $\tau:G\to\GL(V)$
and a norm on $V$
and denote by $\|\cdot \|$ the corresponding operator norms on $\GL(V)$ .
Then the functions
\[ g\mapsto \log^+(\|\tau(g)\|) \quad \mbox{and} \quad
g\mapsto \log(\max\{\|\tau(g)\|,\|\tau(g^{-1})\|\}) \] 
are continuous word class length functions on $G$.
\end{lemma}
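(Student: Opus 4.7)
The plan is to verify directly that both functions are continuous length functions on $G$, and then to invoke Lemma~\ref{l:word} to upgrade to the word class. Continuity is immediate from continuity of $\tau$ and of the operator norm. Subadditivity of the first map follows from the submultiplicativity $\|\tau(gh)\|\le \|\tau(g)\|\cdot\|\tau(h)\|$ combined with the elementary inequality $\log^+(xy)\le \log^+(x)+\log^+(y)$; the second map is handled similarly, using in addition $\|\tau((gh)^{-1})\|\le \|\tau(h^{-1})\|\cdot\|\tau(g^{-1})\|$ and noting that $\|\tau(g)\|\cdot\|\tau(g^{-1})\|\ge\|\tau(e)\|=1$, so that $\max\{\|\tau(g)\|,\|\tau(g^{-1})\|\}\ge 1$ and $\log^+$ agrees with $\log$ there. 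Boundedness on compact sets follows from continuity, so both are continuous length functions on $G$.

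To show they lie in the word class, Lemma~\ref{l:word} reduces matters to verifying that their pullbacks to $\mathfrak{a}$ along $\exp$ lie in the norm class on $\mathfrak{a}$. Any two norms on the finite-dimensional $V$ are equivalent, so I am free to replace the given norm by a more convenient one. I will fix a Cartan involution $\theta$ on $G$ whose fixed-point set is $K$ and, by a standard $K$-averaging argument, equip the complexification $V_{\mathbb{C}}$ with a Hermitian inner product for which $d\tau\circ\theta$ is the Hermitian adjoint of $d\tau$. With this choice $\tau(K)$ acts by unitaries (so the operator norm is $K$-bi-invariant), while $\tau(\exp(v))$ for $v\in\mathfrak{a}$ is a positive self-adjoint operator simultaneously diagonalised in the $A$-weight-space decomposition $V_{\mathbb{C}}=\bigoplus_{\lambda\in\Lambda}V_\lambda$, with eigenvalues $e^{\lambda(v)}$. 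Therefore
\[
\log\|\tau(\exp(v))\|\;=\;f(v)\;:=\;\max_{\lambda\in\Lambda}\lambda(v).
\]

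The decisive step is to show that $f$ is an asymmetric norm on $\mathfrak{a}$. Positive homogeneity and subadditivity $f(u+v)\le f(u)+f(v)$ are immediate from the max-of-linear-functionals form. For nonnegativity I will use that the weight set $\Lambda$ (counted with multiplicity) is $W$-invariant and that $\mathfrak{a}^W=0$ for semisimple $G$, so $\sum_{\lambda\in\Lambda}\lambda=0$ in $\mathfrak{a}^*$, forcing $\max_\lambda\lambda(v)\ge 0$ for every $v\in\mathfrak{a}$. For definiteness, $f(v)=0$ together with $\sum_\lambda\lambda(v)=0$ forces $\lambda(v)=0$ for every weight; since $\ker\tau$ is finite, $d\tau|_\mathfrak{a}$ is injective, so $\Lambda$ spans $\mathfrak{a}^*$ and thus $v=0$. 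Because $f\ge 0$ we have in particular $\log^+\|\tau(\exp(v))\|=f(v)$, so the first function is in the norm class and hence in the word class. For the second function, the pullback to $\mathfrak{a}$ is $v\mapsto \max\{f(v),f(-v)\}$, the symmetrisation of $f$, which is again an asymmetric norm; alternatively, one applies the first claim to the finite-kernel real representation $\tau\oplus\tau^*$ on $V\oplus V^*$ (with the sup direct-sum norm), whose operator norm at $g$ equals $\max\{\|\tau(g)\|,\|\tau(g^{-1})\|\}\ge 1$. The only technical delicacy is the simultaneous diagonalisation of $\tau(\exp(v))$ via the Cartan involution; the $W$-invariance and finite-kernel inputs used to make $f$ a norm are standard.
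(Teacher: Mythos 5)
Your proof is correct, and it follows the same overall strategy as the paper: reduce via Lemma~\ref{l:word} to the function $v\mapsto \log\|\tau(\exp v)\|$ on $\mathfrak{a}$, diagonalize $\tau(\exp v)$ via the weight-space decomposition under a well-chosen norm, and identify the result with an asymmetric norm. The main difference is that you fill in several points the paper treats loosely. The paper says ``we assume as we may that $\tau$ is irreducible'' and then works with the highest weight $\chi$; but an irreducible summand of a finite-kernel representation need not itself have finite kernel, so the reduction is not innocuous. You instead keep the full representation and work directly with $f(v)=\max_{\lambda\in\Lambda}\lambda(v)$, which is the same quantity as $\chi(\bar v)$ but sidesteps the reduction entirely. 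You also explicitly verify the two properties of $f$ that the paper leaves implicit: nonnegativity (from $W$-invariance of the weight multiset and $\mathfrak{a}^W=0$, hence $\sum_\lambda\lambda=0$), which is what makes $\log^+=\log$; and definiteness (finite kernel $\Rightarrow d\tau|_{\mathfrak{a}}$ injective $\Rightarrow$ $\Lambda$ spans $\mathfrak{a}^*$). The paper appeals to Lemma~\ref{l:chipos} for this last point, but that lemma's proof establishes only subadditivity and never uses the finite-kernel hypothesis, so your argument is the one that actually closes the gap. Your two alternative treatments of the symmetric function (symmetrizing $f$, or applying the first case to $\tau\oplus\tau^*$) both work and are cleaner than simply invoking domination as the paper does, though that reasoning is also valid by maximality of the word class. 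One cosmetic remark: with the $K$-averaged Hermitian product the relation should be $d\tau(\theta X)=-d\tau(X)^*$ rather than $d\tau\circ\theta=(d\tau)^*$, but this sign does not affect the conclusion that $\tau(K)$ is unitary and $\tau(\exp v)$ is positive self-adjoint for $v\in\mathfrak{a}$.
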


\begin{proof}
It is clear that these are continuous length functions on $G$
and we have for $g\in G$, $\log^+(\|\tau(g)\|) \leq \log(\max\{\|\tau(g)\|,\|\tau(g^{-1})\|\})$,
thus it is enough to show that $|\cdot|=\log^+(\|\tau(\cdot)\|)$
is in the word class.
We assume as we may that $\tau$ is irreducible.
We observe that the class of $|\cdot|$ is independent of the choice made for the norm on $V$
and assume that $V$ is endowed with an Hilbertian norm which has an orthonormal basis consisting of 
common eigenvectors of $A$.
We let $\chi$ be the highest weight associated with $\tau$
and observe that the corresponding length function on $\mathfrak{a}$ is the norm
given by $v\mapsto \chi(\bar{v})$.
It follows that $|\cdot|$ is indeed in the word class.
\end{proof}

\begin{remark}
Given any length function $|\cdot|$ on $G$, we get further length functions by 
considering $f(|\cdot|)$, where $f:[0,\infty)\to [0,\infty)$ is a non-decreasing subadditive function.
We thus get examples of length functions which are not in the word class,
We note, however, that every unbounded length function is proper,
by \cite{YCor}.
\end{remark}

The next result is surely known to the experts, however, we couldn't locate a reference for it.

\begin{prop}\label{P:Cartan-subadd}
The Cartan projection $\kappa:G \to \mathfrak{a}^+$ is 
an $\mathfrak{a}^+$-valued length function on $G$ in the sense
that it is bounded on compact sets and 
\[ 
    \kappa(gg')\leq \kappa(g)+\kappa(g')
    \qquad(g,g'\in G)
\]
with respect to the order on $\mathfrak{a}$.
It is symmetric iff the longest element of the Weyl group acts as $-1$ on $\mathfrak{a}$.
\end{prop}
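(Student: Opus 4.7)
The plan is to treat the three ingredients---boundedness on compact sets, subadditivity with respect to the order on $\mathfrak{a}$, and the symmetry characterization---separately. Continuity of $\kappa$ follows from the Cartan decomposition $K\times A^+\times K\to G$, so the image $\kappa(C)$ of a compact $C\subset G$ is compact and hence bounded in $\mathfrak{a}^+$; this handles boundedness on compact sets.

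The key step is subadditivity. I will invoke the characterization of the order on $\mathfrak{a}$ recorded earlier in the section: $v\geq 0$ iff $\chi(v)\geq 0$ for every dominant weight $\chi\in\mathfrak{a}^*$. It therefore suffices to prove, for every dominant weight $\chi$, the scalar inequality
\[
    \chi(\kappa(gg'))\leq \chi(\kappa(g))+\chi(\kappa(g')).
\]
For this I use the asymmetric norm $v\mapsto \chi(\bar v)$ on $\mathfrak{a}$ provided by Lemma~\ref{l:chipos}; it is $W$-invariant since $\overline{wv}=\bar v$ for every $w\in W$, and it agrees with $\chi$ on $\mathfrak{a}^+$. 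Applying Proposition~\ref{p:Wnorm} (or equivalently Lemma~\ref{l:sigmabound}) to this $W$-invariant asymmetric norm yields that $g\mapsto \chi(\kappa(g))$ is a continuous length function on $G$, which is precisely the desired subadditivity.

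For the symmetry assertion, I would first establish the identity $\kappa(g^{-1})=-w_0\kappa(g)$ for all $g\in G$, where $w_0$ denotes the longest element of $W$. Starting from a Cartan decomposition $g=k_1\exp(\kappa(g))k_2$, inversion gives $g^{-1}=k_2^{-1}\exp(-\kappa(g))k_1^{-1}$. Since $w_0$ is the unique element of the Weyl group sending $\mathfrak{a}^+$ to $-\mathfrak{a}^+$, the vector $-w_0\kappa(g)$ lies in $\mathfrak{a}^+$, and conjugating $\exp(-\kappa(g))$ by a lift $\tilde w_0\in N_K(A)$ produces $\exp(-w_0\kappa(g))$; absorbing the resulting $K$-factors on both sides yields a Cartan decomposition of $g^{-1}$ with Cartan projection $-w_0\kappa(g)$. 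Because $\kappa(G)=\mathfrak{a}^+$ spans $\mathfrak{a}$, the identity $\kappa(g^{-1})=\kappa(g)$ for all $g$ is then equivalent to $w_0$ acting as $-1$ on $\mathfrak{a}$.

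The main obstacle I foresee is just the bookkeeping in the formula $\kappa(g^{-1})=-w_0\kappa(g)$---realizing $w_0$ by an element of $N_K(A)$ and tracking the induced $\Ad$-action on $A^o=\exp(\mathfrak{a})$---rather than anything conceptually deep. The subadditivity itself reduces cleanly to the length function property of each scalar contraction $\chi\circ\kappa$ already packaged in Lemma~\ref{l:sigmabound}, which is really doing the work.
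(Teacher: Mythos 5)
Your proof is correct and follows the same route as the paper: subadditivity is reduced, via the dominant-weight characterization of the order on $\mathfrak{a}$, to the scalar subadditivity of $\chi\circ\kappa$, which is exactly Lemma~\ref{l:chipos}. You additionally spell out the symmetry characterization via the identity $\kappa(g^{-1})=-w_0\kappa(g)$, which the paper asserts but leaves unproved; that argument is standard and correct.
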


\begin{proof}
By Lemma~\ref{l:chipos}, for every dominant weight $\chi$,
$\chi(\kappa(\cdot))$ is a length function on $G$.
Given $g,g'\in G$, we have 
$\kappa(gg')\leq \kappa(g)+\kappa(g')$
as for every dominant weight $\chi$,
$\chi(\kappa(gg'))\leq \chi(\kappa(g))+\chi(\kappa(g'))=\chi(\kappa(g)+\kappa(g'))$.
\end{proof}

\begin{remark}
The cone $\mathfrak{a}^+$ has an order preserving automorphism given by $v\mapsto -w_0(v)$,
where $w_0$ is the longest element of the Weyl group $W$.
Accordingly, we get the $\mathfrak{a}^+$-valued length function $g\mapsto -w_0(\kappa(g))$ on $G$
and the symmetric $\mathfrak{a}^+$-valued length function $g\mapsto 1/2(\kappa(g)-w_0(\kappa(g)))$.

Each of these give rise to a $G$-invariant  asymmetric $\mathfrak{a}^+$-valued metric on $G/K$,
by $(gK,hK)\mapsto |h^{-1}g|$. In particular, 
\[ 
    d(gK,hK)=\half(\kappa(h^{-1}g)-w_0(\kappa(h^{-1}g))) 
\]
is a symmetric $\mathfrak{a}^+$-valued metric on $G/K$.
\end{remark}

\subsection{Subadditive Ergodic Theorems}\hfill{}\\
A translation invariant partial order on a real vector space $V$ is determined by
the positive cone $V_+=\{v\in V\mid v\geq 0\}$.
A partial order on $V$ is said to be {\bf compatible} if it is translation invariant, 
its positive cone $V_+$ is closed
and it satisfies $V_+\cap (-V_+)=\{0\}$.
Kingman's Subadditive Ergodic Theorem stated below is usually stated for real valued functions.
However, the version we give here follows at once, upon applying positive functionals.

\begin{theorem}[Kingman]\label{T:Kingman}\hfill{}\\
Let $(X,\calX,m,T)$ be an ergodic p.m.p system
and let $V$ be a finite dimensional real vector space endowed with a compatible partial order.
Let $f_n\in L^1(X,V)$ be a sequence satisfying
for every $n,k\in\mathbb{N}$, for a.e $x\in X$,
\[ f_{n+k}(x)\leq f_n(x)+f_k(T^nx). \]
Then there exists a vector $v\in V$ so that there is an $m$-a.e and $L^1$-convergence
\[ 
   v= \lim_{n\to\infty} \frac{1}{n}\cdot f_n. 
\]
\end{theorem}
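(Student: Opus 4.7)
The plan is to reduce the vector-valued statement to the classical scalar Kingman subadditive ergodic theorem by testing against a spanning family of positive linear functionals. Consider the dual cone
\[
    V_+^* := \{ \lambda \in V^* \mid \lambda|_{V_+} \ge 0 \}.
\]
Since $V_+$ is a closed convex cone with $V_+ \cap (-V_+) = \{0\}$, a standard Hahn--Banach separation argument shows that $V_+^*$ separates points of $V$: for every nonzero $v \in V$ there is some $\lambda \in V_+^*$ with $\lambda(v) \ne 0$. Consequently the linear span of $V_+^*$ in $V^*$ must equal all of $V^*$, and one may choose a basis $\lambda_1,\ldots,\lambda_d$ of $V^*$ consisting of elements of $V_+^*$.

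First I would fix such a basis and, for each $i$, set $g_n^{(i)}(x) := \lambda_i(f_n(x))$. Each $g_n^{(i)}$ lies in $L^1(X,\mathbb{R})$, and applying $\lambda_i$ to the hypothesis $f_{n+k}(x) \le f_n(x) + f_k(T^n x)$ (positivity of $\lambda_i$ on $V_+$ turns the order inequality into a numerical one) yields the scalar subadditivity
\[
    g_{n+k}^{(i)}(x) \le g_n^{(i)}(x) + g_k^{(i)}(T^n x) \quad \text{a.e.}
\]
The classical scalar Kingman theorem, applied on the ergodic system $(X,\calX,m,T)$, then produces a constant $c_i \in \mathbb{R}$ such that $g_n^{(i)}/n \to c_i$ both $m$-a.e.\ and in $L^1$. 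Letting $e_1,\ldots,e_d \in V$ denote the basis dual to $\lambda_1,\ldots,\lambda_d$ and setting $v := \sum_{i=1}^d c_i e_i$, one has $\lambda_i(v) = c_i$ for every $i$. Since the $\lambda_i$ form a basis of $V^*$, componentwise convergence $\lambda_i(f_n/n) \to \lambda_i(v)$ is equivalent to convergence of $f_n/n \to v$ in any norm on $V$, and the a.e.\ and $L^1$ statements in $L^1(X,V)$ then follow at once from their scalar counterparts together with the equivalence of norms on the finite-dimensional space $V$.

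The only subtle point is the finiteness of the constants $c_i$. With only $f_n \in L^1$ at hand, scalar Kingman a priori produces $c_i = \inf_n (1/n)\int g_n^{(i)}\,dm \in [-\infty,\infty)$, with a genuine $L^1$ convergence precisely in the finite regime; the existence assertion of a vector $v \in V$ in the theorem is the statement that $\inf_n (1/n)\int \lambda(f_n)\,dm > -\infty$ for every $\lambda \in V_+^*$. This is the hard part, in the sense that it must be checked in each application (for instance, in the Cartan-projection application of Proposition~\ref{P:Cartan-subadd} one uses integrability of $|F|$ and the fact that $\kappa$ takes values in $\mathfrak{a}^+$ to bound $\lambda_i(\kappa(F_n))$ below by a Birkhoff average and so rule out the $-\infty$ alternative). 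Granting this, the structural reduction via the spanning property of $V_+^*$ is completely formal, as the paper's remark ``follows at once, upon applying positive functionals'' indicates.
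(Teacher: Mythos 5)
Your argument is the paper's intended proof in full: the paper's ``proof'' consists only of the remark that the result ``follows at once, upon applying positive functionals,'' and you have correctly supplied the two ingredients it suppresses, namely that the dual cone $V_+^*$ spans $V^*$ (via the bipolar theorem for the closed cone $V_+$ together with $V_+\cap(-V_+)=\{0\}$), and that scalar Kingman applied along a basis of $V_+^*$ reassembles into the vector conclusion by equivalence of norms on $V$. Your caveat about finiteness is also a fair criticism of the statement as written: even in dimension one a subadditive $L^1$ sequence may have $n^{-1}\int f_n\dd m\to -\infty$ (e.g.\ the deterministic $f_n\equiv -n^2$), so the assertion that $v\in V$ exists implicitly carries the hypothesis $\inf_n n^{-1}\int\lambda(f_n)\dd m>-\infty$ for every $\lambda\in V_+^*$; in the paper's application (Corollary~\ref{C:Oseledets}) this holds automatically, exactly as you indicate, since $\kappa(F_n)\in\mathfrak{a}^+$ and every dominant weight is non-negative on $\mathfrak{a}^+$, so all the relevant scalar limits are bounded below by $0$.
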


\begin{cor}\label{C:Oseledets}\hfill{}\\
    Let $(X,\calX,m,T)$ be an ergodic p.m.p system, 
    let $G$ be a connected semisimple real Lie group with a finite center
    and $F:X\to G$ be a measurable map so that $\log\|\Ad\circ F\|\in L^1(X,m)$.
    Then there exists $\Lambda_F\in\mathfrak{a}$ so that there is an $m$-a.e and $L^1$-convergence
    \[
		\Lambda_F=\lim_{n\to \infty} \frac{1}{n}\cdot \kappa\circ F_n.
    \] 
\end{cor}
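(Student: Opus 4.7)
The plan is to realize $\kappa\circ F_n$ as a subadditive cocycle taking values in the partially ordered vector space $\mathfrak{a}$, and then to invoke the vector-valued Kingman theorem (Theorem~\ref{T:Kingman}). The key input is Proposition~\ref{P:Cartan-subadd}, which already packages the subadditivity of $\kappa$ with respect to the order coming from the cone $\mathfrak{a}_+$.

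First I would verify that the partial order on $\mathfrak{a}$ defined by $\mathfrak{a}_+$ is compatible in the sense required by Theorem~\ref{T:Kingman}: translation invariance is built into the definition $u\leq v \iff v-u\in\mathfrak{a}_+$; closedness of $\mathfrak{a}_+$ follows from its description as the intersection of the closed half-spaces $\{v : \langle u,v\rangle\geq 0\}$ for $u\in\mathfrak{a}^+$; and $\mathfrak{a}_+\cap(-\mathfrak{a}_+)=\{0\}$ because $\mathfrak{a}^+$ has non-empty interior, so $\mathfrak{a}_+$ is a pointed proper cone.

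Next I would set $f_n=\kappa\circ F_n:X\to\mathfrak{a}$. Using the cocycle identity $F_{n+k}(x)=F_k(T^n x)\,F_n(x)$ together with Proposition~\ref{P:Cartan-subadd}, we get
\[
    f_{n+k}(x)=\kappa(F_k(T^n x)\,F_n(x))\leq \kappa(F_k(T^n x))+\kappa(F_n(x))=f_n(x)+f_k(T^n x),
\]
which is the subadditivity hypothesis of Theorem~\ref{T:Kingman}. It remains to check the integrability $f_1=\kappa\circ F\in L^1(X,\mathfrak{a})$. By Lemma~\ref{l:repnorm}, $g\mapsto \log^+\|\Ad(g)\|$ is a continuous word-class length function on $G$; by Proposition~\ref{P:Cartan-subadd} combined with Lemma~\ref{l:sigmabound} (or directly with Lemma~\ref{l:word}), any fixed norm $\|\cdot\|$ on $\mathfrak{a}$ applied to $\kappa$ also yields a word-class length function on $G$. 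Equivalence within the word class therefore gives constants $\alpha\geq 1,\beta\geq 0$ with
\[
    \|\kappa(F(x))\|\leq \alpha\log^+\|\Ad(F(x))\|+\beta,
\]
so the hypothesis $\log\|\Ad\circ F\|\in L^1(X,m)$ implies $\kappa\circ F\in L^1(X,\mathfrak{a})$.

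With compatibility, subadditivity and integrability in hand, Theorem~\ref{T:Kingman} produces a vector $\Lambda_F\in\mathfrak{a}$ such that $\frac{1}{n}f_n\to \Lambda_F$ both $m$-a.e.\ and in $L^1$. Since each $f_n$ takes values in the closed cone $\mathfrak{a}^+$, the limit $\Lambda_F$ lies in $\mathfrak{a}^+$ as well. The only subtle point is the equivalence of length functions used for integrability; everything else is a mechanical assembly of the results already established in \S\ref{sec:semisimple}.
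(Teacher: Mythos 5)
Your proposal is correct and is exactly the derivation the paper intends: Corollary~\ref{C:Oseledets} is placed immediately after Theorem~\ref{T:Kingman} and Proposition~\ref{P:Cartan-subadd} precisely so that one applies the $\mathfrak{a}$-valued Kingman theorem to $f_n=\kappa\circ F_n$, with integrability of $\kappa\circ F$ furnished by the equivalence of word-class length functions (Lemma~\ref{l:repnorm} and \S 5.4). The paper leaves this assembly implicit, citing the Oseledets theorem; you have simply written it out.
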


This corollary is part of the Multiplicative Ergodic Theorem of Oseledets,
that also produces a measurable $F$-equivariant map $\xi_+:X_+\overto{}G/Q_+$, 
and $\xi_-:X_-\to G/Q_-$, 
where $P<Q_-,Q_+<G$ are some parabolic subgroups determined by $\Lambda_F$.
In the case of simple Lyapunov spectrum, i.e. when $\Lambda_F\in\mathfrak{a}^{++}$,
one has $Q_-=Q_+=P$ is the minimal parabolic. 
Oseledets theorem asserts $m$-a.e. and in $L^1$-convergence of the Iwasawa cocycle
\[
    \lim_{n\to \infty} \frac{1}{n}\cdot \sigma(F_n(x),\xi)=\Lambda_F
\]
for any $\xi$ that is in general position with respect to $\xi_+(x)$.
Similarly, denoting by $\iota:\mathfrak{a}^+\to \mathfrak{a}^+$ Cartan involution,
we have
\[
    \lim_{n\to -\infty} \frac{1}{n}\cdot \sigma(F_{n}(x),\xi)=\iota(\Lambda_F)
\]
for any $\xi\in G/P$ in general position with respect to $\xi_-(x)$.
This is a paraphrasing of Oseledets theorem; we refer to 
Kaimanovich \cite{Kaimanovich:DE}, Karlsson--Margulis \cite{KM}, 
Filip \cite{Filip:MET} for the full treatment. 

Here let us point out the following consequence of Oseledets theorem,
and the dynamics of \textit{regular} elements $g$ on $G/P$.
Such an element has $|W|$-many fixed points, where $W$ is the Weyl group of $G$.
\begin{remark}\label{R:XxGPdynamics}
    Let $(X,m,T)$ be an invertible ergodic dynamical system, 
    let $G$ be a semi-simple real Lie group, $F:X\to G$ be
    an integrable measurable map, and assume that the associated Lyapunov spectrum $\Lambda$ is simple. Consider the skew-product $T_F$ on $X\times G/P$,
    given by $T_F(x,\xi)=(Tx,F(x).\xi)$. 
    Then there are precisely $|W|$-many $T_F$-invariant and ergodic 
    probability measures on $X\times G/P$ that project onto $m$ on $X$;
    among these measures there is exactly one that is measurable
    with respect to $X_+$, it is given by the graph of $\xi_+$;
    and exactly one which is measurable with respect to $X_-$,
    it is given by the graph of $\xi_-(x)$.
\end{remark}
Our definition of past-oriented stationary measures 
(see \S\ref{subs:past-oriented})
was motivated by the above observation.

\subsection{The Representation Variety}\hfill{}\\
In this subsection we fix a locally compact group $\Gamma$
and consider the space of continuous homomorphisms $\Hom(\Gamma,G)$.
Note that the group $G$ could be identified with the real points of a real algebraic group, and accordingly, 
in case $\Gamma$ is finitely generated, the
space $\Hom(\Gamma,G)$ is endowed with the structure of an algebraic variety
called the \textbf{representation variety} of $\Gamma$ in $G$.
We refer to $\Hom(\Gamma,G)$ as the representation variety also in the general case.

We endow the space $\Hom(\Gamma,G)$ with the compact-open topology.
A representation $\rho\in \Hom(\Gamma,G)$ is called \textbf{Zariski dense} 
if $\rho(\Gamma)$ is Zariski dense in $G$.

\begin{prop}%[{cf \cite[Proposition~8.2]{BA}}] 
\label{p:Zopen}
The subset of $\Hom(\Gamma,\mathbf{G}(\mathbb{C}))$ consisting of Zariski 
dense representations is open
in the compact-open topology.
Accordingly, the subset of $\Hom(\Gamma,G)$ consisting of Zariski 
dense representations is open
in the compact-open topology.
\end{prop}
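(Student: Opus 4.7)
The plan is to reduce openness of the Zariski dense locus to an openness statement for $n$-tuples in $\mathbf{G}(\mathbb{C})^n$, using the Noetherian nature of the algebraic group $\mathbf{G}$. Fix a Zariski dense $\rho \in \Hom(\Gamma, \mathbf{G}(\mathbb{C}))$. By the descending chain condition for Zariski closed subgroups of $\mathbf{G}$, if $(\gamma_k)_{k\in\bbN}$ enumerates a countable dense subset of $\Gamma$, the ascending chain $\overline{\langle \rho(\gamma_1), \ldots, \rho(\gamma_k)\rangle}^{\mathrm{Zar}}$ must stabilize, and Zariski density of $\rho$ forces the stable value to be all of $\mathbf{G}(\mathbb{C})$. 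Hence there exist finitely many $\gamma_1, \ldots, \gamma_n \in \Gamma$ so that $\langle \rho(\gamma_1), \ldots, \rho(\gamma_n)\rangle$ is already Zariski dense.

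Since evaluation at finitely many points is continuous in the compact-open topology, the map $\ev \colon \Hom(\Gamma, \mathbf{G}(\mathbb{C})) \to \mathbf{G}(\mathbb{C})^n$, $\rho' \mapsto (\rho'(\gamma_1), \ldots, \rho'(\gamma_n))$, is continuous. Thus it suffices to show that the set
\[
U_n = \bigl\{(g_1, \ldots, g_n) \in \mathbf{G}(\mathbb{C})^n : \overline{\langle g_1, \ldots, g_n\rangle}^{\mathrm{Zar}} = \mathbf{G}(\mathbb{C})\bigr\}
\]
is open in the Hausdorff topology; then $\ev^{-1}(U_n)$ is the desired open neighborhood of $\rho$ inside the Zariski dense locus.

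The main step is to prove that $U_n$ is in fact Zariski open. Since $\mathbf{G}$ is connected (and irreducible), a tuple lies in $U_n$ iff the Zariski closure of the generated subgroup has dimension equal to $\dim \mathbf{G}$, so it suffices to establish lower semi-continuity of the dimension function $h \mapsto \dim \overline{\langle h_1, \ldots, h_n\rangle}^{\mathrm{Zar}}$. For each finite word family $W = \{w_1, \ldots, w_N\} \subset F_n$ in the free group, consider the morphism $\Psi_W \colon \mathbf{G}(\mathbb{C})^n \to \mathbf{G}(\mathbb{C})^N$, $h \mapsto (w_j(h))_j$; the dimension of the Zariski closure of $\{w_j(h)\}_j$ in $\mathbf{G}$ bounds $\dim \overline{\langle h_1, \ldots, h_n\rangle}^{\mathrm{Zar}}$ from below, and depends lower semi-continuously on $h$ by Chevalley's upper semi-continuity of fiber dimension applied to $\Psi_W$. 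At our chosen Zariski dense tuple, some finite $W$ already witnesses the full dimension of $\mathbf{G}$, yielding a Zariski neighborhood contained in $U_n$. The real case follows by pullback: the inclusion $G \hookrightarrow \mathbf{G}(\mathbb{C})$ induces a continuous map $\Hom(\Gamma, G) \to \Hom(\Gamma, \mathbf{G}(\mathbb{C}))$, along which Zariski density of the image is preserved.

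The main obstacle is the semi-continuity step: while the Chevalley-type argument is standard, one must verify that the finite word family $W$ produced at $\rho$ continues to witness the full dimension of $\mathbf{G}$ on an entire Hausdorff neighborhood of the tuple. This comes down to a second invocation of Noetherianity (via the Hilbert basis theorem) to cut out the right Zariski open subset of $\mathbf{G}(\mathbb{C})^n$ on which $\dim\overline{\Psi_W(\mathbf{G}(\mathbb{C})^n)}^{\mathrm{Zar}}$ at the image tuple attains its maximum.
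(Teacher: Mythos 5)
Your reduction to finitely many generators and the observation that evaluation $\Hom(\Gamma,\mathbf{G}(\mathbb{C}))\to\mathbf{G}(\mathbb{C})^n$ is continuous are fine. The problem is the core semicontinuity step, which does not work as stated. You write that ``the dimension of the Zariski closure of $\{w_j(h)\}_j$ in $\mathbf{G}$'' bounds $\dim\overline{\langle h_1,\ldots,h_n\rangle}^{\mathrm{Zar}}$ from below; but $\{w_j(h)\}_j$ is a finite set of points, so its Zariski closure is that finite set, of dimension zero. The stated lower bound is vacuous. What you actually need is semicontinuity of $h\mapsto\dim\overline{\langle h_1,\ldots,h_n\rangle}^{\mathrm{Zar}}$ itself, and Chevalley's theorem on fiber dimension for $\Psi_W$ does not deliver it: the fiber $\Psi_W^{-1}(\Psi_W(h))$ is the locus of tuples satisfying certain word identities, and its dimension bears no transparent relation to the dimension of the algebraic subgroup generated by $h$. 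Your closing paragraph, invoking a ``second Noetherianity'' to control $\dim\overline{\Psi_W(\mathbf{G}(\mathbb{C})^n)}^{\mathrm{Zar}}$ ``at the image tuple,'' does not resolve this: that dimension is a single number independent of $h$. So the claim that $U_n$ is Zariski open is not justified by this argument, and I do not see how to repair it along word-map lines without effectively re-proving the thing you want.

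The paper's proof takes a genuinely different and more robust route, and never needs to pass to finitely many generators or to Zariski openness of $U_n$. It first proves (Lemma~\ref{l:mas}) that $\mathbf{G}$ has only finitely many conjugacy classes of maximal proper algebraic subgroups $\mathbf{H}_1,\ldots,\mathbf{H}_n$. By Chevalley's theorem, for each $i$ one chooses an irreducible projective representation $\mathbf{G}\to\PGL(V_i)$ with $\dim V_i>1$ such that $\mathbf{H}_i$ fixes a point of $\mathbb{P}(V_i)$. Setting $U=\bigcup_i\mathbb{P}(V_i)$, a representation $\rho$ is Zariski dense iff $\rho(\Gamma)$ has no fixed point in $U$; since $U$ is compact, ``no common fixed point in $U$'' is an open condition in the compact-open topology on $\Hom(\Gamma,\mathbf{G}(\mathbb{C}))$. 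This gives Hausdorff openness directly. If you want to salvage the finitely-generated reduction and the passage to $U_n$, you would still need an argument of this kind (using properness of projective varieties) to conclude that $U_n$ is open, rather than the Chevalley-on-word-maps step.
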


\begin{remark}
Note that $\Hom(\mathbb{Z},S^1)\simeq S^1$ has a dense subset of representations with finite image,
thus the semisimplicity assumption on $G$ in Proposition~\ref{p:Zopen} is crucial.
\end{remark}

\begin{lemma} \label{l:mas}
Every proper algebraic subgroup of $\mathbf{G}$ is contained in a maximal proper algebraic subgroup and there is a finite collection of conjugacy classes of maximal proper algebraic subgroups.
\end{lemma}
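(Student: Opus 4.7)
My plan is to split the statement into the existence part and the finiteness-of-conjugacy-classes part, which require rather different techniques.

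For existence, given a proper algebraic subgroup $H_0<\mathbf{G}$, I would apply Zorn's lemma to the poset $\calS$ of proper algebraic subgroups containing $H_0$, partially ordered by inclusion. To produce an upper bound for a chain $(H_\alpha)_\alpha$ in $\calS$, take $H_\infty:=\overline{\bigcup_\alpha H_\alpha}$, which is automatically a closed subgroup (Zariski closures of subgroups are subgroups). The nontrivial claim is that $H_\infty$ remains proper. Using Hilbert's basis theorem applied to the coordinate ring of $\mathbf{G}$, the defining ideal of $H_\infty$ is finitely generated, which allows me to replace $(H_\alpha)$ by a countable cofinal subchain without changing $H_\infty$. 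Since $\bbC$ (or $\bbR$) is uncountable and $\mathbf{G}(\bbC)$ is an irreducible variety, a countable union of proper closed subvarieties cannot be Zariski dense; hence $H_\infty\subsetneq \mathbf{G}$. Zorn's lemma then produces a maximal element of $\calS$.

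For the finiteness of conjugacy classes, I would invoke the classification of maximal closed subgroups of semisimple algebraic groups. Every maximal proper algebraic subgroup $M<\mathbf{G}$ has an identity component $M^\circ$ of one of the following types: a maximal parabolic subgroup, a maximal reductive subgroup, or the trivial subgroup (in which case $M$ is finite). Maximal parabolics are $G$-conjugate to standard parabolics indexed by vertices of the (relative) Dynkin diagram, giving finitely many conjugacy classes. Maximal reductive subgroups are classified, up to conjugacy, by the theorems of Dynkin (for simple complex algebraic groups, then descended to real forms by standard Galois-cohomological methods) together with the Borel--de Siebenthal description of maximal-rank reductive subgroups. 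Finite maximal subgroups fall into finitely many conjugacy classes by structural results on finite subgroups of Lie groups (Jordan's theorem combined with the finite list of Lie-primitive finite subgroups). Once the conjugacy class of $M^\circ$ is fixed, $M\subset N_{\mathbf{G}}(M^\circ)$ and the finite quotient $M/M^\circ$ embeds in $N_{\mathbf{G}}(M^\circ)/M^\circ$; maximality of $M$ forces finitely many possibilities for this extension.

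The main obstacle lies in the classification of maximal reductive subgroups: this is a nontrivial classical theorem, and carrying the finiteness over from $\mathbf{G}(\bbC)$ to the real form $\mathbf{G}(\bbR)$ requires a separate (but standard) argument about the finiteness of real orbits inside a complex conjugacy class. The existence portion via Zorn is procedurally straightforward, but it does rely on the slightly subtle input that a countable union of proper closed subvarieties of an irreducible variety over an uncountable base field cannot exhaust the variety.
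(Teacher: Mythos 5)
Your approach is correct in outline but follows a genuinely different route from the paper, and one step of your existence argument is not justified as stated.

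The paper does not treat existence and finiteness separately and does not invoke Zorn's lemma at all. Instead, it exhibits an explicit finite list of conjugacy classes of proper algebraic subgroups (normalizers of non-normal connected semisimple subgroups, normalizers of maximal tori, proper parabolics, subgroups containing a nontrivial normal subgroup, and finite subgroups of order bounded by a Jordan constant) and shows by a case analysis on $\mathbf{H}^\circ$ (unipotent radical nontrivial $\Rightarrow$ parabolic; reductive and normalizes a non-normal semisimple subgroup $\Rightarrow$ normalizer of such; otherwise $\mathbf{H}^\circ$ a torus, etc.) that \emph{every} proper algebraic subgroup is contained in a member of this list. Since conjugate closed subgroups cannot be properly nested, this finite list simultaneously yields existence of maximal overgroups and finiteness of their conjugacy classes. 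Crucially, the paper avoids Dynkin's classification of maximal reductive subgroups (which you invoke); it substitutes Richardson's theorem that conjugation orbits on subalgebras are Zariski open, bounding conjugacy classes of connected semisimple subgroups by the number of irreducible components of a Grassmannian of subalgebras. For finite subgroups, the paper uses a local-rigidity argument ($H^1(L,\mathrm{Lie}(K))=0$, so $\Hom(L,K)$ has finitely many $K$-orbits) rather than a classification of Lie-primitive finite subgroups. The paper's route is thus more self-contained and sidesteps the real-vs-complex descent you flag as a remaining obstacle.

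On the Zorn step: the claim that finite generation of $I(H_\infty)$ lets you ``replace $(H_\alpha)$ by a countable cofinal subchain'' is not correct as stated. Finite generation of the ideal of the limit says nothing about the cofinality of the chain (the chain $I(H_\alpha)$ is \emph{descending}, so the Noetherian property does not apply). The countability you need comes from a different source: in any chain of proper closed subgroups of the connected group $\mathbf{G}$ the dimension is eventually constant, and then the identity component $K=H_\alpha^\circ$ is eventually constant, so for large $\alpha$ the groups $H_\alpha/K$ form a chain of finite subgroups of $N_{\mathbf{G}}(K)/K$; since a strictly increasing chain of finite groups has order type at most $\omega$, the union $\bigcup_\alpha H_\alpha$ is covered by countably many $K$-cosets, each a proper Zariski-closed subvariety, and your uncountability argument then closes the loop. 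With that repair, the Zorn existence argument is sound.
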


\begin{proof}
Denote by $\mathcal{M}$
the collection of maximal proper subgroups of $\mathbf{G}$ which contain a non-trivial normal subgroup of $\mathbf{G}$.
If $\mathbf{G}$ is simple and adjoint than this collection is empty.
Since $\mathbf{G}$ has a finite number of normal subgroups, it follows by an induction argument that $\mathcal{M}$ has a finite number of conjugacy classes.

Denote by $\mathcal{S}$ the collection of all connected semisimple proper subgroups which are not normal in $\mathbf{G}$ 
and by $\mathcal{R}$ the collection of normalizers of elements in $\mathcal{S}$.
The collection $\mathcal{S}$ has a finite number of conjugacy classes.
Indeed, in every dimension $d$ the number of conjugacy classes of $d$-dimensional elements of $\mathcal{S}$ is bounded by the number of 
irreducible components of the variety of $d$-dimensional subalgebras of the Lie algebra of $\mathbf{G}$, as every conjugation orbit is Zariski open by
\cite[Proposition 12.1]{Richardson}.
Thus also the collection $\mathcal{R}$ has a finite number of conjugacy classes.

Denote by $\mathcal{A}$ the collection of all maximal tori in $\mathbf{G}$ 
and by $\mathcal{N}$ the collection of normalizers of elements in $\mathcal{A}$.
It is a standard fact that the collection $\mathcal{A}$ consists of a single conjugacy class and thus also the collection $\mathcal{N}$ consists of a single conjugacy class.

Denote by $\mathcal{P}$ the collection of all proper parabolic subgroups in $\mathbf{G}$. It is a standard fact that the collection $\mathcal{P}$ has a finite number of conjugacy classes.

By Jordan Theorem, there exists $n$ such that every finite subgroup of $\mathbf{G}$ of order greater than $n$
has a non-central normal abelian subgroup. 
Denote by $\mathcal{F}$ the collection of all finite subgroups of $\mathbf{G}$ of order bounded by $n$.
The collection $\mathcal{F}$ has a finite number of conjugacy classes.
Indeed, there is a finite number of isomorphism classes of groups of order bounded by $n$ and for every such group $L$, every homomorphism into $\mathbf{G}(\mathbb{C})$ could be conjugated into $K$, a maximal compact subgroup, and the compact set $\Hom(L,K)$ has finitely many $K$-conjugation orbits by local rigidity, as $H^1(L,\textrm{Lie}(K))=0$.

The proof now follows as every
proper algebraic subgroup $\mathbf{H}<\mathbf{G}$ is contained in an element of either 
$\mathcal{M}$, $\mathcal{R}$, $\mathcal{N}$, $\mathcal{P}$ or $\mathcal{F}$
and all of these have finitely many conjugacy classes, as explained above.
We will explain this.
Fix $\mathbf{H}<\mathbf{G}$ and 
denote by $\mathbf{H}^o$ its identity component. 
Assume $\mathbf{H}$ is not contained in an element of $\mathcal{M}$.
If $\mathbf{H}$
has a non-trivial unipotent radical then $\mathbf{H}$ is contained in a parabolic subgroup of $\mathbf{G}$.
Assume this is not the case, thus $\mathbf{H}$ is reductive.
If $\mathbf{H}$ normalizes an element of $\mathcal{S}$ then it is contained in an element of $\mathcal{R}$.
Assume this is not the case, thus $\mathbf{H}^o$ is a torus.
If $\mathbf{H}^o$ is contained in a unique maximal torus then $\mathbf{H}$ is contain in an element of $\mathcal{N}$.
Assume this is not the case.
If $\dim(\mathbf{H})>0$ then $\mathbf{H}$ is contained in the normalizer of the derived subgroup of the identity component of the centralizer of $\mathbf{H}^0$, which is an element $\mathcal{S}$, thus $\mathbf{H}$ is contained in an element of $\mathcal{R}$.
Thus we assume as we may that $\mathbf{H}$ is finite.
If the order of $\mathbf{H}$ is greater than $n$ then $\mathbf{H}$ contains a normal non-central abelian subgroup and it is contained in its normalizer which is of positive dimension, and we are reduced to the above cases.
Assuming this is not the case, we get that $\mathbf{H}$ is an element of $\mathcal{F}$.
\end{proof}

\begin{proof}[Proof of Proposition~\ref{p:Zopen}]
Clearly, the second statement follows from the first, as $\Hom(\Gamma,G)$ could be naturally identified with a closed subset 
of $\Hom(\Gamma,\mathbf{G}(\mathbb{C}))$,
so we focus on proving the first statement.

By Lemma~\ref{l:mas} there exists a finite list $\mathbf{H}_1,\mathbf{H}_2,\ldots,\mathbf{H}_n$
of maximal proper algebraic subgroups of $\mathbf{G}$ representing every conjugacy class of such.
Using Chevalley's Theorem, we fix for every $i=1,\ldots, n$ an irreducible projective representation $\mathbf{G} \to \PGL(V_i)$ with $\dim(V_i)>1$
such that $\mathbf{H}_i$ fixes a point in $\mathbb{P}(V_i)$.
We let $U=\cup_i \mathbb{P}(V_i)$ and we note that 
$U$ is a compact $\mathbf{G}(\mathbb{C})$-space, thus the subset of $\rho\in \Hom(\Gamma,\mathbf{G}(\mathbb{C}))$
such that $\rho(\Gamma)$ has no fixed point in $U$ is open.
This subset contains the Zaiski dense representations, by the irreducibility of the representations $V_i$
and the condition $\dim(V_i)>1$,
while every non-Zaiski dense representation fixes a point in $U$, by the choice of the groups $\mathbf{H}_i$.
It follows that this subset coinsides with the subset of Zaiski dense representations, which in turn must be open.
\end{proof}

\subsection{AREA -- Algebraic Representations of Ergodic Actions} \label{sub:area}\hfill{}\\ 
In this subsection we define the notion of AREA, relate it to the relative metric ergodicity, and will
use these results in the proof of Theorem~\ref{T:boundary} in the following subsection.
The idea of AREA was developed in \cite{BF:AREA}, and used in \cite{BF:products} and \cite{BF:Margulis} for
the proofs of super-rigidity for lattice representations. 
Here we will focus on the case of algebraic groups and varieties defined over $\bbR$ rather than more general fields. 
Let $G$ be a real algebraic Lie group. We shall use the following facts that are mostly due to Zimmer:
\begin{enumerate}
	\item \label{i:locally-closed}
	Let $G\acts V$ be an algebraic action of an algebraic group on an algebraic variety.
	Then the orbits of the corresponding $G$-action on $\Prob(V)$ are locally closed,
	and as a consequence the space $\Prob(V)/G$ of $G$-orbits on $\Prob(V)$ is standard Borel
	(see  \cite{zimmer-book}*{Corollary 3.2.17}). 
	\item \label{i:stabilizers}
	For any $\mu\in\Prob(V)$ the stabilizer $\Stab_G(\mu)=\setdef{g\in G}{g_*\mu=\mu}$
	is a real algebraic subgroup of $G$ (see \cite{zimmer-book}*{Corollary 3.2.18}).
	\item \label{i:amen-stab}
	For the $G$-action on $V=G/P$ for any $\mu\in\Prob(G/P)$ the stabilizer $\Stab(\mu)$
	is a compact extension of solvable subgroup (see \cite{zimmer-book}*{Theorem 3.2.22}).
\end{enumerate}
We note that working over $\bbR$ is essential for (\ref{i:stabilizers}).
However, in other respects the scope can be significantly broadened to Polish fields --
see \cite{BDL}.

\medskip

Let us fix a countable group $\Gamma$ and a homomorphism 
\[
	\rho:\Gamma \overto{}  G
\] 
with Zariski dense image, taking values in a real semisimple Lie group $G$. 
We shall also consider measure class preserving ergodic actions $\Gamma\acts (A,\alpha)$.
In the proof of Theorem~\ref{T:boundary} the role of the Lebesgue $\Gamma$-space $A$ 
will be played by $B_-$, $B_+$, and $B$.

A $G$-\textbf{variety} is an algebraic variety over $\bbR$ with an algebraic action 
of $G$. Given a $G$-variety $V$, and a measure class preserving $\Gamma$-action on $(A,\alpha)$
a $\Gamma$-\textbf{map} is a measurable map $\phi:A\to  V$ satisfying
\[
	\phi(\gamma.x)=\rho(\gamma).\phi(x)\qquad (\gamma\in\Gamma)
\]
for $\alpha$-a.e $x\in A$.
We denote by $\Map_\Gamma(A,V)$ the set of all equivalence classes of such 
$\Gamma$-maps, where $\phi\sim\psi$ if $\phi(a)=\psi(a)$ for 
$\alpha$-a.e $a\in A$.

Varying the source and the target of $\Map_\Gamma(A,V)$ 
we obtain two types of maps:
\begin{itemize}
	\item 
	A $G$-equivariant algebraic map $\theta:W\to  V$ between $G$-varieties gives
	a map
	\[
		\Map_\Gamma(A,W)\overto{} \Map_\Gamma(A,V)
	\]
	by postcomposition $\phi\mapsto \theta\circ \phi$.
	\item 
	A $\Gamma$-equivariant map $p:(A',\alpha')\overto{}  (A,\alpha)$ between 
	measure class preserving $\Gamma$-actions defines a map
	\[
		\Map_\Gamma(A,V)\overto{} \Map_\Gamma(A',V)
	\]
	by precomposition $\psi\mapsto \psi\circ p$.
\end{itemize}

\begin{prop}[AREA - the initial object]\label{P:ininitial}\hfill{}\\
	Let $(A,\alpha)$ be an ergodic measure-class preserving $\Gamma$-space.
	Then there exists a homogeneous real algebraic $G$-variety $V_0=G/H_0$ and 
	$\phi_0\in \Map_\Gamma(A,V_0)$ so that
	for any $G$-variety $V$ and $\phi\in \Map_\Gamma(A,V)$
	there exists a unique $G$-equivariant algebraic map $\theta_{V,\phi}:V_0\to  V$
	so that $\theta_{V,\phi}\circ \phi_0=\phi$.
	The $G$-variety $V_0=G/H_0$ is unique up to $G$-automorphisms.
\end{prop}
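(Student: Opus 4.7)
The plan is to construct $(V_0,\phi_0)$ as an initial object in the category whose objects are pairs $(V,\phi)$ with $V$ a real algebraic $G$-variety and $\phi\in\Map_\Gamma(A,V)$, via a minimality argument driven by the Noetherian property of real algebraic subgroups of $G$.

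\textbf{Step 1 (Reduction to homogeneous targets).} For any pair $(V,\phi)$, fact~(\ref{i:locally-closed}) says that $G$-orbits on $V$ are locally closed, so $V/G$ is standard Borel. The composition $A\overto{\phi}V\to V/G$ is $\Gamma$-invariant since $\rho(\Gamma)\subseteq G$ preserves $G$-orbits, hence essentially constant by ergodicity of $(A,\alpha)$. Therefore $\phi$ factors essentially through a single orbit $G/H\subseteq V$ with $H<G$ a real algebraic subgroup by fact~(\ref{i:stabilizers}).

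\textbf{Step 2 (Existence of a minimal pair).} Let $\mathcal{H}$ denote the collection of conjugacy classes $[H]$ of real algebraic subgroups of $G$ for which $\Map_\Gamma(A,G/H)\neq\emptyset$. It is nonempty since $[G]\in\mathcal{H}$. Order $\mathcal{H}$ by $[H_1]\preceq[H_2]$ iff $H_1$ is conjugate to a subgroup of $H_2$. Since the Zariski topology on $G$ is Noetherian, real algebraic subgroups satisfy the descending chain condition, so $\mathcal{H}$ contains a minimal element $[H_0]$; set $V_0:=G/H_0$ and choose any $\phi_0\in\Map_\Gamma(A,V_0)$.

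\textbf{Step 3 (Universal property).} Given $(V,\phi)$, Step~1 reduces to the case $V=G/H$. Consider the product $\phi_0\times\phi:A\to V_0\times G/H$ for the diagonal $G$-action. Applying Step~1 to this product, its essential image lies in a single $G$-orbit $\mathcal{O}\subseteq V_0\times G/H$; write $\mathcal{O}\cong G/H_0'$. The first projection $\mathcal{O}\to V_0$ is a $G$-equivariant algebraic map between transitive $G$-spaces, whence $[H_0']\preceq[H_0]$. Minimality of $[H_0]$ forces $[H_0']=[H_0]$, so this projection is a $G$-isomorphism. Inverting it and composing with the second projection $\mathcal{O}\to G/H$ yields the required $G$-equivariant algebraic map $\theta_{V,\phi}:V_0\to V$, and $\theta_{V,\phi}\circ\phi_0=\phi$ holds essentially by construction, since $\phi_0\times\phi$ is by definition the diagonal of $\phi_0$ and $\phi$.

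\textbf{Step 4 (Uniqueness).} If $\theta_1,\theta_2:V_0\to V$ both factor $\phi$ through $\phi_0$, their equalizer $E=\{x\in V_0\mid\theta_1(x)=\theta_2(x)\}$ is Zariski-closed in $V_0$ and essentially contains $\phi_0(A)$. The Zariski closure of $\phi_0(A)$ is $\rho(\Gamma)$-invariant, hence $G$-invariant by Zariski density of $\rho(\Gamma)$; transitivity of the $G$-action on $V_0$ then forces this closure to equal $V_0$. Therefore $E=V_0$ and $\theta_1=\theta_2$. Uniqueness of $V_0$ up to $G$-automorphism is the formal consequence of the universal property: two initial objects receive inverse $G$-equivariant algebraic maps from each other. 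The main obstacle is Step~1: knowing that an equivariant map into an algebraic $G$-variety essentially lands in a single $G$-orbit, which rests on the nontrivial fact that $G$-orbits on real algebraic varieties are locally closed. After that, the argument is a formal application of the descending chain condition on algebraic subgroups together with ergodicity, with Zariski density of $\rho(\Gamma)$ entering only at the uniqueness of $\theta_{V,\phi}$.
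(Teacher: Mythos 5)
Your argument is correct. Note, though, that the paper itself gives no proof of this proposition: it simply cites \cite{BF:products}*{Theorem~4.3}. Your reconstruction is the expected standard argument — descend by the Noetherian property of the Zariski topology (or equivalently by dimension and component count) to a conjugacy-minimal real algebraic subgroup $H_0$ with $\Map_\Gamma(A,G/H_0)\neq\emptyset$, then verify initiality by pushing the diagonal product $\phi_0\times\phi$ into a single orbit and using minimality to see that the first projection is an isomorphism, and derive uniqueness of $\theta_{V,\phi}$ from Zariski density of $\rho(\Gamma)$ exactly as in Lemma~\ref{lem:iden}. One cosmetic point: the enumerated facts~(\ref{i:locally-closed}) and~(\ref{i:stabilizers}) as stated in the paper concern orbits and stabilizers in $\Prob(V)$, whereas your Step~1 needs the (more elementary, also classical) versions for $V$ itself: that orbits of a real algebraic $G$-action on a real variety are locally closed, so $V/G$ is standard Borel, and that point-stabilizers are algebraic. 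This substitution is harmless but worth flagging so the reader knows exactly which result is being invoked.
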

\begin{proof}
  See \cite{BF:products}*{Theorem~4.3}.  
\end{proof}

The following proposition is special for the real field.

\begin{prop} \label{P:ininitial-amenable}
    Let $(A,\alpha)$ be an ergodic measure-class preserving $\Gamma$-space
    which is Zimmer amenable.
    Then the real algebraic subgroup $H_0<G$ given in Proposition~\ref{P:ininitial}
    is amenable.
\end{prop}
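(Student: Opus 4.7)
The plan is to deduce amenability of $H_0$ by constructing a $\Gamma$-map from $A$ into a homogeneous $G$-space $G/H$ with $H$ amenable and algebraic, and then invoking the universal property of Proposition~\ref{P:ininitial} to force $H_0$ into a conjugate of $H$.

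First, I would apply the Zimmer amenability hypothesis on $(A,\alpha)$ to the compact convex $\Gamma$-space $\Prob(G/P)$ (with the $\Gamma$-action induced via $\rho$). This produces a $\Gamma$-equivariant map $\psi \in \Map_\Gamma(A, \Prob(G/P))$. Next, I would use fact (\ref{i:locally-closed}) from the AREA setup: the $G$-orbits on $\Prob(G/P)$ are locally closed, so the orbit-space $\Prob(G/P)/G$ is standard Borel. The composition of $\psi$ with the projection $\Prob(G/P) \to \Prob(G/P)/G$ is $\Gamma$-invariant (since $\Gamma$ acts through $\rho(\Gamma) \subset G$). By ergodicity of the $\Gamma$-action on $(A,\alpha)$ and the standard Borel structure of the target, this composition is essentially constant. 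Hence $\psi$ takes values in a single $G$-orbit $G.\mu_0$ for some $\mu_0 \in \Prob(G/P)$.

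Setting $H = \Stab_G(\mu_0)$, fact (\ref{i:stabilizers}) gives that $H$ is a real algebraic subgroup of $G$, and fact (\ref{i:amen-stab}) gives that $H$ is a compact extension of a solvable subgroup, hence amenable. Since the orbit $G.\mu_0$ is a locally closed $G$-homogeneous Borel subset identified with $G/H$, I can view $\psi$ as a $\Gamma$-map into the real algebraic $G$-variety $G/H$, i.e.\ as an element of $\Map_\Gamma(A, G/H)$.

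Now I invoke the universal property of $(V_0, \phi_0) = (G/H_0, \phi_0)$ from Proposition~\ref{P:ininitial}: there exists a $G$-equivariant algebraic map $\theta : G/H_0 \to G/H$ with $\theta \circ \phi_0 = \psi$. Any such $G$-equivariant algebraic map sends the base point $eH_0$ to some coset $gH$, which forces $H_0 \subset gHg^{-1}$. Since $gHg^{-1}$ is amenable (amenability is preserved by conjugation), and closed subgroups of amenable groups are amenable, I conclude that $H_0$ is amenable. The main conceptual step — and the only one requiring more than routine verification — is the passage from the $\Prob(G/P)$-valued map to a map into a \emph{single} $G$-orbit; this is where both the local-closedness result (\ref{i:locally-closed}) and the ergodicity of $A$ play essential roles. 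Everything else is a direct application of the universal property and of the standard amenability facts for stabilizers on $G/P$.
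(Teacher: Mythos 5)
Your proposal is correct and follows essentially the same route as the paper: apply Zimmer amenability to get a $\Gamma$-map $A\to\Prob(G/P)$, use local closedness of $G$-orbits together with ergodicity to land in a single orbit $G/G_\mu$ with $G_\mu$ algebraic and amenable, and then invoke the universal property of $(G/H_0,\phi_0)$ to conclude $H_0$ is contained in a conjugate of $G_\mu$.
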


The following proof is a slight variation on the now classical argument of Zimmer (see \cite{zimmer-book}*{Section 3.2}).

\begin{proof}
We let $P<G$ be a minimal parabolic, thus $G/P$ is compact and $\Prob(G/P)$ is a compact convex $G$-space.
By the amenability of the $\Gamma$-action on $A$ there exists an element $\psi\in \Map_\Gamma(A,\Prob(G/P))$.
We now use the key property that the $G$-orbits on $\Prob(G/P)$ are locally closed.
	This fact (due to Zimmer \cite{zimmer-book}*{Corollary 3.2.17}) implies 
        that the space of $G$-orbits in $\Prob(G/P)$ 
	is a standard Borel space, and therefore the image of the ergodic space $A$ (equivalently, the measure $\psi_*(\alpha)$)
	is supported on a single $G$-orbit $G.\mu$ which is $G$-equivariantly homeomorphic to the coset space $G/G_\mu$, where $G_\mu=\setdef{g\in G}{g_*\mu=\mu}$. 
 Thus we can view $\psi$ as a measurable equivariant map into a $G$-homogeneous space $G/G_\mu$, 
	\[ \psi:A\overto{}  G/G_\mu.
	\]
By \cite{zimmer-book}*{Theorem 3.2.22}, $G_\mu$ 
is a real algebraic subgroup of $G$ that is a solvable by compact extension,
and in particular it is amenable.
By the defining property of the initial object for $A$, $\phi_0:A \to  G/H_0$,
there exists a $G$-map $G/H_0\to G/G_\mu$. 
Therefore, $H_0$ is contained in a conjugate of $G_\mu$.
It follows that $H_0$ is indeed amenable.
\end{proof}

\begin{lemma} \label{lem:iden}
    Let $(A,\alpha)$ be an ergodic measure-class preserving $\Gamma$-space and fix $\phi\in \Map_\Gamma(A,G/H)$ for some real algebraic subgroup $H<G$.
    Then the support of $\phi_*(\alpha)$ is Zariski dense in $G/H$. 
    If $\theta:G/H\to G/H$ is a $G$-equivariant map such that $\phi\circ \theta=\phi$ then $\theta$ is the identity map.
\end{lemma}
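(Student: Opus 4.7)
The plan is to handle the two assertions separately, both reducing to the Zariski density of $\rho(\Gamma)$ in $G$ combined with the transitivity of the $G$-action on $G/H$. (Note that for the second assertion to make sense, one must read the composition as $\theta\circ\phi=\phi$ rather than $\phi\circ\theta=\phi$, since $\phi$ takes values in $G/H$ and $\theta$ acts on $G/H$.)

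For the Zariski density of $\mathrm{supp}(\phi_*\alpha)$, the idea is to first observe that this support is a closed subset of $G/H$ which is invariant under $\rho(\Gamma)$. Indeed, for each $\gamma\in\Gamma$, the measure $\gamma_*\alpha$ is equivalent to $\alpha$ (the action being measure-class preserving), so $\rho(\gamma)_*\phi_*\alpha=\phi_*(\gamma_*\alpha)$ is equivalent to $\phi_*\alpha$ and has the same support. Let $S\subseteq G/H$ be the Zariski closure of $\mathrm{supp}(\phi_*\alpha)$. Then $S$ is a Zariski closed subset of $G/H$ preserved by $\rho(\Gamma)$. Since $S$ is Zariski closed and $\rho(\Gamma)$ is Zariski dense in $G$, the stabilizer $\{g\in G\mid gS=S\}$ is a Zariski closed subgroup containing $\rho(\Gamma)$, hence equal to $G$. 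So $S$ is $G$-invariant. Since $G/H$ is a single $G$-orbit, the only non-empty Zariski closed $G$-invariant subvariety of $G/H$ is $G/H$ itself, and therefore $S=G/H$.

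For the uniqueness statement about $\theta$, the plan is much shorter. From $\theta\circ\phi=\phi$ holding $\alpha$-a.e. one obtains at least one point $a_0\in A$ with $\theta(\phi(a_0))=\phi(a_0)$; in particular $\theta$ has a fixed point $x_0=\phi(a_0)\in G/H$. By $G$-equivariance, $\theta(g\cdot x_0)=g\cdot\theta(x_0)=g\cdot x_0$ for every $g\in G$. Since $G$ acts transitively on $G/H$, every point of $G/H$ has the form $g\cdot x_0$, so $\theta=\mathrm{id}_{G/H}$.

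I expect no serious obstacle: the first assertion is a direct orbit-closure argument in the Zariski topology, and the second collapses to transitivity once one fixed point of $\theta$ is exhibited. The only point requiring any care is the standard verification that the support of the pushforward of a quasi-invariant measure is preserved by the acting group, which is immediate from measure-class equivalence.
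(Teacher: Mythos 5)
Your proof is correct. Your first part (Zariski density of the support) is essentially the paper's argument, spelled out in slightly more detail via the stabilizer-of-$S$ step.

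Your second part, however, takes a genuinely cleaner route than the paper. The paper argues: the equality $\theta\circ\phi=\phi$ holding $\alpha$-a.e.\ shows $\theta$ fixes $\phi_*\alpha$-a.e.\ point; the fixed-point set of the algebraic map $\theta$ is Zariski closed; and since (by the first part) $\mathrm{supp}(\phi_*\alpha)$ is Zariski dense, the fixed-point set is all of $G/H$. You instead extract a \emph{single} fixed point $x_0=\phi(a_0)$ and then observe that a $G$-equivariant self-map of a transitive $G$-space with one fixed point is the identity: $\theta(g\cdot x_0)=g\cdot\theta(x_0)=g\cdot x_0$ for all $g\in G$. This bypasses both the Zariski-density assertion and the Zariski-closedness of the fixed locus; in fact it makes the second assertion completely independent of the first, and applies verbatim to non-algebraic homogeneous spaces. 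The paper's route has the mild advantage of being robust to weakenings where $\theta$ is only defined or only equivariant on a Zariski-dense subset, but for the lemma as stated your argument is the more economical one. (You are also right that the displayed hypothesis should read $\theta\circ\phi=\phi$; the composition $\phi\circ\theta$ does not typecheck, and the usage inside Proposition~\ref{prop:L=H_0} confirms the intended reading.)
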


\begin{proof}
The Zariski closure of the support of $\phi_*(\alpha)$ is $\rho(\Gamma)$-invariant, hence $G$-invariant, as $\rho(\Gamma)$ is Zariski dense.
By the transitivity of the $G$-action on $G/H$ it must be everything. 
The map $\theta$ is the identity $\phi_*(\alpha)$-a.e, and its set of fixed points is Zariski closed, so it must be the identity map.
\end{proof}

\begin{prop} \label{prop:L=H_0}
    Let $(A,\alpha)$ be an ergodic measure-class preserving $\Gamma$-space.
    Let $H_0<G$ be the real algebraic subgroup and $\phi_0\in \Map_\Gamma(A,G/H_0)$ be the map given in Proposition~\ref{P:ininitial}.
    Assume there exists a real algebraic subgroup $L<H_0$ such that the coset space $H_0/L$ admits an $H$-invariant separable Borel metric $d$ and let $\theta:G/L\to G/H_0$ be the associated map.
    If there exist a relatively metrically ergodic map $p:(A',\alpha')\to  (A,\alpha)$ 
	and a map $F\in\Map_\Gamma(A', G/L)$ so that $\phi_0\circ p=\theta\circ F$
 then $L=H_0$.	
 \end{prop}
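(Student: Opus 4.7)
The plan is to combine the relative metric ergodicity hypothesis with the universal property of the initial object $\phi_0$ to produce mutually inverse $G$-equivariant algebraic maps between $G/L$ and $G/H_0$, forcing $L=H_0$.

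First I would apply Lemma~\ref{L:rme-toGH} to the relatively metrically ergodic map $p:(A',\alpha')\to (A,\alpha)$, with the maps $F\in \Map_\Gamma(A',G/L)$ and $\phi_0\in \Map_\Gamma(A,G/H_0)$ playing the roles of $F$ and $f$. The compatibility $\theta\circ F = \phi_0\circ p$ is exactly the hypothesis. This produces a $\Gamma$-map $\phi\in \Map_\Gamma(A,G/L)$ satisfying
\[
    \phi\circ p = F, \qquad \theta\circ \phi = \phi_0.
\]

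Next, since $L<G$ is a real algebraic subgroup, $G/L$ is a real algebraic $G$-variety, and $\phi$ lives in $\Map_\Gamma(A,G/L)$. I would then invoke the universal property of the initial object from Proposition~\ref{P:ininitial}: there exists a unique $G$-equivariant algebraic map $\theta':G/H_0\to G/L$ such that $\theta'\circ \phi_0 = \phi$. Composing, the $G$-equivariant algebraic self-map $\theta\circ\theta':G/H_0\to G/H_0$ satisfies
\[
    (\theta\circ\theta')\circ\phi_0 = \theta\circ\phi = \phi_0.
\]
By Lemma~\ref{lem:iden}, this forces $\theta\circ\theta' = \id_{G/H_0}$.

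The final step is purely algebraic. Because $\theta\circ\theta'=\id$, the map $\theta'$ is a $G$-equivariant section of $\theta:G/L\to G/H_0$. Writing $\theta'(eH_0)=gL$ for some $g\in G$, the identity $\theta\circ\theta'=\id$ gives $gH_0=eH_0$, so $g\in H_0$. Equivariance of $\theta'$ under $H_0$ then yields $hgL = gL$ for every $h\in H_0$, i.e. $g^{-1}H_0 g\subset L$. Since $g\in H_0$ we have $g^{-1}H_0g=H_0$, hence $H_0\subset L$, and combined with $L<H_0$ this gives $L=H_0$. The main (only) subtlety here is verifying that the hypothesis of Lemma~\ref{L:rme-toGH} is correctly matched — in particular that $H_0/L$ admitting an $H_0$-invariant separable Borel metric is what gives $\theta:G/L\to G/H_0$ the structure of a separable metric extension (via Example~\ref{ex:invmet}); everything else is formal.
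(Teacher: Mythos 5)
Your proof is correct and follows essentially the same route as the paper's: invoke Lemma~\ref{L:rme-toGH} to lift $\phi_0$ through $\theta$, apply the universal property of $\phi_0$ from Proposition~\ref{P:ininitial} to get $\theta'$, and use Lemma~\ref{lem:iden} to show the composition is the identity. The only cosmetic difference is that you argue via $\theta\circ\theta'=\id_{G/H_0}$ whereas the paper uses $\theta'\circ\theta=\id_{G/L}$; both yield $H_0\subset L$ by the same stabilizer comparison.
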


 \begin{proof}
     By Lemma~\ref{L:rme-toGH} there exists $\phi\in \Map_\Gamma(A,G/H_0)$ such that $\phi_0=\theta\circ \phi$.
     Then, by Proposition~\ref{P:ininitial}, there exists a $G$-equivariant map $\theta':G/H_0\to G/L$ such that $\phi=\theta'\circ \phi_0$.
     These are, correspondingly, the dashed and dotted maps appearing in 
     the following commutative diagram. 
     	\[
		\begin{tikzcd}
			A' \ar[rr, "F"] \ar[d, "p"] & & G/L \ar[d, "\theta"] \\
			A \ar[rr, "\phi_0"] \ar[urr, dashed, "\phi"] & & G/H_0 \ar[u, bend right=60, dotted, swap, "\theta'"]
		\end{tikzcd}
	\]
We use Lemma~\ref{lem:iden} to deduce that $\theta'\circ \theta$ is the identity map of $G/L$.
By the definition of $\theta$, $\theta(eL)=eH_0$, thus $\theta'(eH_0)=eL$ and we conclude the inclusion of the corresponding stabilizers, $H_0<L$.
Since $L<H_0$ we get that indeed $L=H_0$.
 \end{proof}

\medskip

\subsection{Proof of Theorem~\ref{T:boundary}} % (fold)
\label{sub:proof_of_theorem_ref_t_boundary}\hfill{}

This section is devoted to the proof of Theorem~\ref{T:boundary}.
We first make some preparation.
The following lemma concerns algebraic groups and it is valid over an arbitrary field, which could be assumed to be algebraically closed. 

\begin{lemma} \label{lem:alggrplem}
Let $G$ be a connected reductive algebraic group.
Fix algebraic subgroups $H<H_+,H_-<G$ and denote by $U_+$ and $U_-$ the unipotent radicals of $H_+$ and $H_-$ respectively.
Assume that $H_+=H\cdot U_+$, $H_-=H\cdot U_-$ and $U_+\cdot H\cdot U_-$ is Zariski dense in $G$.
Then $H_+$ and $H_-$ are parabolic subgroups of $G$.
\end{lemma}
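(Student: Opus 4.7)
The strategy is to recognize $H_+$ and $H_-$ as opposite parabolic subgroups of $G$ attached to a single cocharacter. Concretely, the plan is to produce a cocharacter $\lambda:\mathbb{G}_m\to G$ such that $H_+=P(\lambda)$ and $H_-=P(\lambda^{-1})$, where
$$P(\lambda):=\bigl\{g\in G\,\big|\,\lim_{t\to 0}\lambda(t)g\lambda(t)^{-1}\text{ exists in }G\bigr\}$$
is the Kempf--Rousseau attractor subgroup. Every such $P(\lambda)$ is automatically parabolic, with unipotent radical $U(\lambda)=\{g\mid\lim_{t\to 0}\lambda(t)g\lambda(t)^{-1}=e\}$ and Levi $Z_G(\lambda)$, so this would suffice.

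First I would pass to identity components and to the algebraic closure of the base field; both reductions preserve parabolicity. Differentiating the density $U_+\cdot H\cdot U_-\subset G$ at the identity yields the Lie-algebra identity
$$\mathfrak{u}_++\mathfrak{h}+\mathfrak{u}_-=\mathfrak{g},$$
where fraktur letters denote Lie algebras of the corresponding groups. Fix a Levi decomposition $H=M\ltimes R_u(H)$. A key preliminary observation is that $R_u(H)\subset U_+\cap U_-$: indeed $R_u(H)$ is a normal unipotent subgroup of $H\le H_\pm$, so its image in the reductive quotient $L_\pm:=H_\pm/U_\pm$ is a normal unipotent subgroup of $L_\pm$, hence trivial. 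Consequently $M\cap U_\pm=\{e\}$ and $H_\pm=M\ltimes U_\pm$, so that $M$ serves simultaneously as a Levi complement of $H_+$ and of $H_-$. Fix also a maximal torus $T$ of $M$.

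Consider the weight decomposition of $\mathfrak{g}$ under $T$. Since $M$ normalizes $U_\pm$, the subspaces $\mathfrak{u}_\pm$ are $T$-invariant and, being nilpotent, are sums of weight spaces $\mathfrak{g}_\alpha$ with $\alpha\ne 0$; write $\mathfrak{u}_\pm=\bigoplus_{\alpha\in\Phi_\pm}\mathfrak{g}_\alpha$. Combining $\mathfrak{u}_++\mathfrak{h}+\mathfrak{u}_-=\mathfrak{g}$ with the $T$-invariance and nondegeneracy of the Killing form on $\mathfrak{g}$ (available because $G$ is reductive) yields: (i) the $T$-fixed subspace $\mathfrak{g}^T$ is contained in $\mathfrak{m}$, so $T$ is in fact a maximal torus of $G$; and (ii) $\Phi_+\cap\Phi_-=\emptyset$, $\Phi_-=-\Phi_+$, and $\Psi:=\Phi\setminus(\Phi_+\sqcup\Phi_-)$ is the root system of $\mathfrak{m}$. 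The Killing-form input enforces these symmetries through the standard fact $\kappa(\mathfrak{g}_\alpha,\mathfrak{g}_\beta)=0$ unless $\alpha+\beta=0$: any stray $T$-fixed vector in $\mathfrak{u}_\pm$, or any asymmetry $\alpha\in\Phi_+\setminus(-\Phi_-)$, would leave some weight space without a dual partner in the decomposition and contradict nondegeneracy. With these in place, $\Phi_+$ is a closed subset of $\Phi$ disjoint from its negation, hence a system of positive roots compatible with the sub-root-system $\Psi$; any cocharacter $\lambda\in X_*(T)\otimes\mathbb{Q}$ that is strictly positive on $\Phi_+$ and vanishes on $\Psi$ then satisfies $H_+=P(\lambda)$ and $H_-=P(\lambda^{-1})$, producing the desired parabolics.

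The main obstacle is step~(i)--(ii): extracting a maximal torus of $G$ from $M$ and forcing $\Phi_-=-\Phi_+$ via the Killing form. Both are where the reductivity of $G$ is essentially used---without a nondegenerate invariant bilinear form on $\mathfrak{g}$ the symmetry can genuinely fail, and a stray weight-zero vector in $\mathfrak{u}_\pm$ would break the rest of the argument. Once these symmetries are established, the remainder of the proof is a routine translation between Levi decompositions and attractor parabolics.
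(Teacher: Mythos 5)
Your approach is genuinely different from the paper's. The paper proves $H_+$ is parabolic by an elementary two-step dimension argument: first, when $H_+$ is solvable, embed it into a Borel $B=TV_+$, take an opposite unipotent $V_-$ with $V_-TV_+$ dense, and compare $\dim G \le \dim U_- + \dim H_+ \le \dim V_- + \dim B = \dim G$ to force $H_+=B$; then, in the general case, replace $H$ by a dense product $V'_-T'V'_+$ inside it to reduce to the solvable case, obtaining a Borel contained in $H_+$. Your route instead passes to a common Levi $M$, decomposes $\mathfrak{g}$ under a maximal torus $T\subset M$, and identifies $H_\pm$ with the attractor parabolics $P(\lambda^{\pm1})$ of a cocharacter. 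If it works, it gives more (it identifies $H_\pm$ as opposite parabolics with Levi $M$), but it is more structure-theoretic and less elementary than the paper's argument.

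There is, however, a genuine gap precisely at the step you yourself flag as ``the main obstacle.'' You assert that $\mathfrak{u}_\pm$, ``being nilpotent, are sums of weight spaces $\mathfrak{g}_\alpha$ with $\alpha\ne 0$,'' and you subsequently derive that $T$ is a maximal torus of $G$ by saying that a $T$-fixed vector in $\mathfrak{u}_\pm$ ``would leave some weight space without a dual partner'' under the Killing form. Neither justification is valid as stated. Nilpotency of $\mathfrak{u}_\pm$ does not exclude weight-zero components; for instance a one-dimensional subtorus of $SL_3$ has unipotent elements in its centralizer, so a $T$-invariant nilpotent subalgebra can very well meet $\mathfrak{g}^T$. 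And the Killing-form pairing argument only rules out nonzero weights $\alpha$ whose negative is missing from the decomposition; the zero weight space $\mathfrak{g}_0=\mathfrak{g}^T$ is self-paired, so nondegeneracy alone gives no contradiction from a $T$-fixed vector in $\mathfrak{u}_\pm$. In other words, items (i) and (ii) are exactly the heart of your proof, and the reasons you give for them do not go through. The gap can be filled, but it takes real work: e.g., first establish that $\mathfrak{u}_\pm$ are isotropic (they lie in the nilradical of some Borel), deduce $\kappa(\mathfrak{u}_\pm,\mathfrak{m})=0$, force the sum $\mathfrak{u}_+\oplus\mathfrak{m}\oplus\mathfrak{u}_-=\mathfrak{g}$ to be direct by a dimension count against $\mathfrak{u}_\pm^\perp$, then pass to $\mathfrak{g}^T=(\mathfrak{u}_+)^T\oplus\mathfrak{t}\oplus(\mathfrak{u}_-)^T$ and observe that its semisimple part would have to be a sum of two nilpotent subalgebras, which is impossible unless it is trivial. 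As written, your proposal assumes the crux rather than proving it; the paper's proof sidesteps the issue entirely by never needing $T$ to be maximal in $G$.
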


\begin{remark}
    This is \cite[Lemma 3.5]{BF:icm}, which was given without a proof, modulo a slight formulation correction.
\end{remark}

\begin{proof}
By symmetricity, we argue to show that $H_+$ is parabolic.
We will show that it contains a Borel subgroup.
Upon replacing them by their identity components, we assume as we may that the groups $H$, $H_+$ and $H_-$ are connected.

Assume first that $H_+$ is solvable.
Let $B$ be a Borel subgroup containing $H_+$.
Denote $B=TV_+$, where $V_+$ is its unipotent radical and $T$ a maximal torus of $G$.
Let $V_-$ be an opposite unipotent subgroup, so that $V_-TV_+$ is Zariski dense in $G$.
Note that $V_+$ and $V_-$ are unipotent subgroups of $G$ of maximal dimension, thus
\[ \dim(G)\leq \dim(U_-)+\dim(H_+) \leq \dim(V_-)+\dim(B)=\dim(G) \]
and we deduce that $\dim(H_+)=\dim(B)$, thus $H_+=B$.
This finishes the proof for $H_+$ solvable.

Consider now the general case.
Consider a Zariski dense subset of $H$ of the form  $V'_-T'V'_+$, where $T'<H$ is a maximal torus and $V'_-,V'_+$ are unipotent subgroups of $H$ normalized by $T'$. To be clear, both $V'_-$ and $V'_+$ contain the unipotent radical of $H$ and they could be thought of as the preimages of unipotent groups associated with a similar decomposition in the quotient reductive group, with respect to the image of $T'$ in it.
We denote $H'=T'$, $H'_+=T'V'_+U_+$ and $H'_-=T'V'_-U_-$ and note that this is a triple of solvable subgroups of $G$ which satisfy the conditions of the theorem.
We conclude that $H'_+$ is a Borel subgroup.
This finishes the proof, as $H'_+<H_+$.
\end{proof}

\begin{lemma} \label{lem:bi-inv}
    If $H$ is a separable Lie group
    such that the identity connected component $H^0$ is compact modulo its center then $H$ admits a separable bi-invariant metric,
    and for every closed subgroup $L<H$, the coset space $H/L$ admits a separable $H$-invariant metric.
    
    If $H$ is a separable amenable Lie group and $L<H$ is a closed subgroup which contains the nilpotent radical of $H^0$ then $H/L$ admits a separable $H$-invariant metric.
\end{lemma}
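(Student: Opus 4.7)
The plan is to prove Part 1 directly and then to reduce Part 2 to it by passing to a quotient. For Part 1, the hypothesis that $H^0$ is compact modulo its center implies that the adjoint representation $\operatorname{Ad}\colon H^0\to GL(\mathfrak{h})$ factors through the compact group $H^0/Z(H^0)$ and hence has compact image. Averaging any inner product on $\mathfrak{h}$ over this compact image produces an $\operatorname{Ad}(H^0)$-invariant inner product, which in turn induces a bi-invariant Riemannian metric $d_0$ on $H^0$. To extend $d_0$ to a bi-invariant separable metric $d$ on $H$, one uses that separability of $H$ forces $H/H^0$ to be countable; in the intended applications (real points of algebraic groups, stabilizers of measures, etc.) the image of the conjugation action of $H$ on $H^0$ in $\operatorname{Out}(H^0)$ is finite, so one may average $d_0$ over this finite action and then combine the resulting metric on $H^0$ with a bi-invariant metric on the component group $H/H^0$. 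Given the bi-invariant metric $d$ on $H$ and a closed subgroup $L<H$, one sets
\[
    d_{H/L}(xL,yL)=\inf_{l,l'\in L} d(xl,yl'),
\]
which is $H$-invariant by the two-sided invariance of $d$, is a genuine metric since $L$ is closed, and is separable because $H$ is.

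For Part 2, the strategy is to reduce to Part 1 by passing to the quotient group $H/N$, where $N$ is the nilpotent radical of $H^0$. Since $N$ is characteristic in $H^0$ and $H^0$ is normal in $H$, $N$ is normal in $H$ itself, so $H/N$ is again a separable Lie group. The hypothesis $L\supseteq N$ gives $H/L=(H/N)/(L/N)$, with $L/N$ closed in $H/N$, so it suffices to check that $H/N$ satisfies the hypothesis of Part 1 and then to apply Part 1 to the pair $(H/N, L/N)$. To verify the hypothesis, one uses the structure theory of amenable connected Lie groups: writing $H^0=RK$ with $R$ the solvable radical and $K$ a maximal compact subgroup, one invokes the classical fact that $[H^0,R]\subseteq N$ (equivalently, $\operatorname{ad}(\mathfrak{h}^0)$ acts nilpotently on $\mathfrak{r}$ modulo $\mathfrak{n}$). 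This says that the abelian group $R/N$ is central in $H^0/N$; combined with the compactness of $H^0/R$, it follows that the identity component $H^0/N$ of $H/N$ is compact modulo its center, so Part 1 applies.

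The principal obstacles are two. In Part 1, the extension of the bi-invariant metric from $H^0$ to all of $H$ is delicate when $H/H^0$ is not finite and the outer conjugation action of $H$ on $H^0$ is not by isometries of $d_0$; the plan relies on this finiteness, which is available in the contexts where the lemma is actually used. In Part 2, the linchpin of the reduction is the structural identity $[H^0,R]\subseteq N$, and one should be careful to invoke the corresponding Lie-algebraic version $[\mathfrak{h}^0,\mathfrak{r}]\subseteq\mathfrak{n}$ rather than attempting a direct group-theoretic derivation.
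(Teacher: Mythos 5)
Your proof follows the same two-step route as the paper's: Part~1 builds an $\Ad(H^0)$-invariant inner product on $\mathfrak{h}$, extends the resulting bi-invariant Riemannian metric on $H^0$ to all of $H$ by putting distinct $H^0$-cosets at a fixed distance, and passes to $H/L$ by an infimum formula; Part~2 reduces to Part~1 through the quotient $H/N$. One point worth flagging: for the extended metric on $H$ to be \emph{right}-invariant, the Riemannian metric on $H^0$ must be invariant under conjugation by all of $H$, not merely by $H^0$, and $\Ad(H^0)$-invariance of the inner product does not give this for free, since $\Ad(H)$ may act unboundedly on the center of $\mathfrak{h}$. You recognize this and propose averaging over a finite image in $\Out(H^0)$; that suffices in the paper's actual use (where $H$ is real algebraic, so $H/H^0$ is finite), but, as you acknowledge, it does not settle the statement for an arbitrary separable Lie group --- nor does the paper's proof, which applies its $\min$-with-$1$ formula without addressing this invariance point. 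Your quotient-metric formula $\inf_{l,l'\in L} d(xl,yl')$ agrees, by left-invariance of $d$, with the paper's single-variable infimum. For Part~2, your verification that $(H/N)^0$ is compact modulo its center, via the inclusion $[\mathfrak{h}^0,\mathfrak{r}]\subseteq\mathfrak{n}$ and compactness of $H^0/R$, is a spelled-out version of the paper's one-line ``amenable and reductive, thus compact modulo its center''; both are the same piece of structure theory.
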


\begin{proof}
Let $H$ be a separable Lie group
    such that the identity connected component $H^0$ is compact modulo its center.
    Observe that $H^0$ admits a bi-invariant Riemannian metric $d^0$, as its Lie algebra admits an Ad-invariant inner product, due to the compactness of the image of $H^0$ under the adjoint representation.
Then
\[ d(h_1,h_2)= \left\{
\begin{matrix}
\min\{h_1^{-1}h_2,1\} & h_1^{-1}h_2\in H^0 \\
1 & \mbox{otherwise}
\end{matrix}
\right.
\]
is an $H$-bi-invariant separable metric on $H$.
For a closed subgroup $L<H$, an $H$-invariant separable metric on $H/L$ is given by
\[ (h_1L,h_2L) \mapsto \inf \{d(h_1,h'_2) \mid h_1^{-1}h'_2\in L\}. \]
The last part of the lemma follows as $H^0$ modulo its nil radical is amenable and reductive, thus compact modulo its center.
\end{proof}

\begin{lemma} \label{lem:P-inv}
    Let $G$ be a semisimple Lie group and $P<G$ a minimal parabolic.
    Then the only $P$-invariant probability measure on $G/P$ is the Dirac measure $\delta_{eP}$.
\end{lemma}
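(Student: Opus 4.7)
The plan is to show that any $P$-invariant probability measure $\mu$ on $G/P$ concentrates on the unique $U$-fixed point $eP$, where $U$ denotes the unipotent radical of $P$. Writing $P = Z \cdot U$, such a $\mu$ is in particular $U$-invariant, so the argument splits into two self-contained steps: concentration on the $U$-fixed set, followed by the identification of that set.

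First I would show that every $U$-invariant probability measure on $G/P$ is supported on the set $(G/P)^U$ of $U$-fixed points. Using Chevalley's theorem, I would fix a $G$-equivariant embedding $G/P \hookrightarrow \mathbb{P}(V)$ into the projectivization of a real $G$-representation. On $V$, each $u \in U$ acts as $\id + N$ with $N$ nilpotent, and a direct computation $u^n v = \sum_{i=0}^{k-1} \binom{n}{i} N^i v$ (with $k$ the nilpotence index of $N$ on $v$) shows that $u^n [v] \to [N^{k-1} v]$ in $\mathbb{P}(V)$, which is a $u$-fixed point. Applying bounded convergence to the identity $\int f \, d\mu = \int f(u^n x)\, d\mu(x)$ for continuous $f$ would then give that $\mu$ equals its own push-forward under $x \mapsto \lim_n u^n x$, hence is supported on the $u$-fixed set. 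A standard argument using a countable dense subset of $U$ together with continuity of the action upgrades this to support in $(G/P)^U$.

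Next I would verify that $(G/P)^U = \{eP\}$. A coset $gP$ is $U$-fixed iff $g^{-1}Ug \subseteq P$. Via a Bruhat representative $g = p_1 w p_2$ with $w \in W$, and the normality of $U$ in $P$, this reduces to $w^{-1}Uw \subseteq P$. Since $Z = M A^o$ is compact-by-split-torus and hence contains no nontrivial unipotent elements, every unipotent subgroup of $P$ lies in $U$; therefore $w^{-1}Uw \subseteq U$, and a dimension count forces equality. This places $w$ in $N_G(U) \cap N_G(A)$, and the standard facts $N_G(U) = P$ and $P \cap N_G(A) \subseteq Z$ force the class of $w$ in $W = N_G(A)/Z$ to be trivial, so $gP = eP$.

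Putting the two steps together yields $\mu = \delta_{eP}$. The main technical point is the unipotent-dynamics step: while the statement that every invariant probability measure for a unipotent algebraic action on a projective variety is supported on the fixed-point set is a classical fact (going back to Furstenberg), it is worth recording the explicit convergence $u^n[v] \to [N^{k-1} v]$, which makes the bounded-convergence reduction transparent and keeps the proof self-contained.
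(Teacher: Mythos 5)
Your proof is correct and follows the same strategy as the paper: reduce to $U$-invariance for $U$ the unipotent radical of $P$, show that a $U$-invariant probability measure on $G/P$ is supported on $(G/P)^U$, and then identify $(G/P)^U=\{eP\}$. The only difference is one of exposition rather than route: where the paper invokes \cite[Corollary~1.10]{BDL} for the first step and asserts the uniqueness of the $U$-fixed point for the second, you unpack both — the first via the classical Furstenberg-type projective contraction $u^n[v]\to[N^{k-1}v]$ together with bounded convergence and a countable dense subset of $U$, the second via the Bruhat decomposition together with $N_G(U)=P$ and $N_P(A)=Z$ — which makes the argument self-contained at the cost of a bit more length.
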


\begin{proof}
    Let $\mu\in\Prob(G/P)$ be a $P$-invariant measure and let $U<P$ be the unipotent radical.
    Then $\mu$ is $U$-invariant. By \cite[Corollary 1.10]{BDL}, $\mu$ is supported on the $U$-fixed points in $G/P$. Since $eP$ is the unique $U$-fixed point, we deduce that indeed, $\mu=\delta_{eP}$.
\end{proof}

\begin{proof}[Proof of Theorem~\ref{T:boundary}]
We apply Proposition~\ref{P:ininitial} taking $A$ to be $B,B_-$ and $B_+$ and get correspondingly homogeneous real algebraic $G$-varieties $V,V_-$ and $V_+$ of $G$ and maps $\phi\in \Map_\Gamma(B,V)$, $\phi_-\in \Map_\Gamma(B_-,V_-)$ and $\phi_+\in \Map_\Gamma(B_+,V_+)$, forming initial objects in the corresponding categories of AREAs.
We let $H<G$ be a real algebraic group such that $V\simeq G/H$.
The defining property of $\phi$ gives rise to the dashed $G$-maps in the following diagrams: 
\[
	\begin{tikzcd}
		B \ar[r,"\phi"] \ar[d, "\pi_-"] & G/H \ar[d, dashed] & & B \ar[r,"\phi"] \ar[d, "\pi_+"] & G/H \ar[d, dashed] \\
		B_- \ar[r,"\phi_-"] & V_- & &  B_+ \ar[r,"\phi_+"] & V_+ 
	\end{tikzcd}
\]
and we let $H_-,H_+$ be the stabilizer in $G$ of the corresponding images of the base coset $eH$, thus $H<H_-,H_+$ %\af{Corrected the opposite inclusion in previous version} and $V_-\simeq G/H_-$, $V_+\simeq G/H_+$.
In what follows we keep the above identifications and consider the codomains of the maps $\phi,\phi_-$ and $\phi_+$ to be $G/H,G/H_-$ and $G/H_+$ correspondingly.

By the definition of a boundary pair, the map $\pi_-\times \pi_+:B\to B_-\times B_+$ is measure class preserving.
We consider the diagram
\begin{equation} \label{eq:bpm}
	\begin{tikzcd}
		B \ar[rr,"\phi"] \ar[d, "\pi_-\times \pi_+"] & & G/H \ar[d, dashed, "\theta=\theta_-\times \theta_+"]  \\
		B_-\times B_+ \ar[rr,"\phi_-\times \phi_+"] & & G/H_-\times G/H_+
	\end{tikzcd}
\end{equation}
where the dashed map $\theta=\theta_-\times \theta_+$ is the one given by Proposition~\ref{P:ininitial}.
We note that the map $\phi_-\times \phi_+$ is in fact $\Gamma\times \Gamma$-equivariant, with respect to the homomorphism $\rho\times \rho:\Gamma\times \Gamma \to G\times G$, which has a Zariski dense image, as $\rho$ does by assumption.
Applying Lemma~\ref{lem:iden} to this map, we conclude that the essential image of $B_-\times B_+$ in $G/H_-\times G/H_+$ is Zariski dense.
Chasing the diagram, we get that the image of $\theta$ is Zariski dense in $G/H_-\times G/H_+$.
Taking its preimage under the map $G\times G\to G/H_-\times G/H_+$, we conclude that the subset
\[ \{(gh_-,gh_+) \mid g\in G,~h_-\in H_-,~h_+\in H_+\} \subset G\times G\]
is Zariski dense in $G\times G$. 
Taking its image under the map
\[ G\times G \to G,\quad (g,h) \mapsto h^{-1}g, \]
we conclude that the subset 
\[ H_+\cdot H_- = \{(h_+^{-1}h_-) \mid h_-\in H_-,~h_+\in H_+\} \subset G \]
is Zariski dense in $G$.

We denote by $U_-$ and $U_+$ the unipotent radicals of (the identity connected components of) $H_-$ and $H_+$ correspondingly and consider the subgroup $H\cdot U_-<H_-$ and $H\cdot U_+<H_+$.
These are closed subgroup.
To see this, we use the standard fact that the image of a real algebraic group under a real algebraic morphism is closed, and apply it to the images of $H$ in $H_\pm/U_\pm$.
We claim that $H\cdot U_-=H_-$ and $H\cdot U_+=H_+$.
Since the spaces $B_\pm$ are Zimmer amenable, we get by Proposition~\ref{P:ininitial-amenable} that the groups
$H_\pm$ are amenable and we conclude by Lemma~\ref{lem:bi-inv} that the coset spaces $H_\pm/H\cdot U_\pm$ admit separable $H_\pm$-invariant metrics.
by Proposition~\ref{prop:L=H_0}, taken with $A'=B$, $A=B_\pm$ and $F$ being the composition $B\to G/H\to G/H\cdot U_\pm$, we conclude that $H\cdot U_\pm=H_\pm$, proving the claim.

It follows that $H_+\cdot H_-=U_+\cdot H\cdot U_-$
and we conclude by Lemma~\ref{lem:alggrplem} that $H_-$ and $H_+$ are parabolic subgroups of $G$.
By the amenability of $H_\pm$, these parabolic subgroups must be minimal in $G$, thus conjugated to $P$ and we get $G$-isomorphisms $G/H_\pm \to G/P$.
Composing the later maps with $\phi_\pm$, we obtain the required maps $f_\pm:B_\pm\to G/P$.
It follows also that these maps satisfy the condition of Proposition~\ref{P:ininitial}.

We now show that 
\[ \Map_\Gamma(B_\pm,\Prob(G/P))=\{\delta\circ f_\pm\}. \]
We fix $\psi\in\Map_\Gamma(B_\pm,\Prob(G/P))$ and argue as in the proof of Proposition~\ref{P:ininitial-amenable} to show that $\psi$ takes values in a single $G$-orbit $G/G_\mu$, where $G_\mu$ is the stabilizer of a measure $\mu$, which is a real algebraic subgroup of $G$.
By Proposition~\ref{P:ininitial}, we get a $G$-map $G/P\to G/G_\mu$.
Up to conjugation, we assume as we may that $eG_\mu$ is the image of $eP$ under this map, thus $\mu$ is $P$-fixed.
By Lemma~\ref{lem:P-inv} we get that $\mu=\delta_{eP}$, and we conclude that indeed, $\psi=\delta\circ f_\pm$.

Since $H_+\cdot H_-$ is Zariski dense in $G$, we conclude that these are opposite parabolic, thus $H_+\cap H_-$ is conjugated to $Z$.
Fix $g\in G$ such that $H_+\cap H_-=Z^g$.
In particular, we get $H<Z^g$.
Since 
    
\[ 
    \begin{split}
        \dim(G)&\leq \dim(U_-)+\dim(H)+ \dim(U_+)\\
        &\leq \dim(U_-)+\dim(Z)+ \dim(U_+) =\dim(G),
    \end{split}
\]
we conclude that $\dim(H)=\dim(Z)=\dim(Z^g)$.
Thus $H=Z^g$, as $Z^g$ is connected.
Therefore the diagram \eqref{eq:bpm} extends as follow:
\[
    \begin{tikzcd}
		B \ar[rr,"\phi"] \ar[rrr, dashed, bend left=20, swap, "h'"] 
        \ar[rrrr, dotted, bend left=20, "h"] \ar[d, "\pi_-\times \pi_+"] & & G/H \ar[d, "\theta_-\times \theta_+"]  \ar[r, "\sim"] & G/Z \ar[d, dashed, "\theta'_-\times \theta'_+"] & G/Z \ar[l, dotted, swap, "w"] \ar[ld, bend left, "\pr_-\times\pr_+"]\\
		B_-\times B_+ \ar[rr,"\phi_-\times \phi_+"] \ar[rrr, bend right=20,"f_-\times f_+"] & & G/H_-\times G/H_+ \ar[r, "\sim"] & G/P\times G/P &
    \end{tikzcd}
\]
where the dashed maps $h'$ and $\theta'_-\times \theta'_+$ are defined by the diagram's commutativity and the dotted arrows are yet to be determined.
We recall the fact that the Weyl group $W$ of $G$, which acts on $G/Z$ by $G$-automorphisms, acts on the set of $G$-maps $G/Z\to G/P$ transitively by precomposition.
We use it to find $w\in W$ such that $\theta'_+\circ w=\pr_+$.
We define $h=w^{-1}\circ h'$
and get that
\[ 
    f_+\circ \pi_+=\theta'_+\circ h' =(\theta'_+\circ w) \circ (w^{-1}\circ h')
    =  \pr_+\circ h. 
\]
Since the image of $\theta=\theta_1\times \theta_2$ 
is Zariski dense in $G/H_-\times G/H_+$,
we get that the image of $\theta'_-\times \theta'_+$ 
is Zariski dense in $G/P\times G/P$,
thus also the image of $(\theta'_-\times \theta'_+)\circ w$ 
is Zariski dense in $G/P\times G/P$.
Since $\theta'_+\circ w=\pr_+$, we conclude that $\theta'_- \circ w =\pr_-$,
thus
\[ f_-\circ \pi_-=\theta'_-\circ h' =(\theta'_-\circ w) \circ (w^{-1}\circ h')= \pr_- \circ h. \]
This completes the proof.
\end{proof}

In the course of the proof of Theorem~\ref{T:boundary} above, we in fact have shown the following.

\begin{theorem}
In the setting of Theorem~\ref{T:boundary},
the maps $f_+:B_+\to G/P$, $f_-:B_-\to G/P$ and $h:B\to G/Z$ are the initial objects, as described in Proposition~\ref{P:ininitial},
when the $\Gamma$-space $A$ is taken to be $B_+$, $B_-$
and $B$ correspondingly.    
\end{theorem}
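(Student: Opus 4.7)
The plan is to observe that the maps $f_+$, $f_-$, $h$ constructed in the proof of Theorem~\ref{T:boundary} differ from the initial objects provided by Proposition~\ref{P:ininitial} (applied to $B_+$, $B_-$, $B$ respectively) only by post-composition with $G$-equivariant isomorphisms of the target variety. Since the property of being an initial object in the category of AREAs is stable under such post-composition, the statement will follow immediately from work already carried out inside Theorem~\ref{T:boundary}.

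First I would recall the construction. Proposition~\ref{P:ininitial} applied to $B$, $B_-$, $B_+$ yields initial objects $\phi\in\Map_\Gamma(B,G/H)$ and $\phi_\pm\in\Map_\Gamma(B_\pm,G/H_\pm)$. The course of the proof of Theorem~\ref{T:boundary} established that $H_\pm$ is conjugate in $G$ to the minimal parabolic $P$ and that $H=Z^g$ for some $g\in G$, in particular conjugate to $Z$. This furnishes $G$-equivariant isomorphisms $G/H_\pm\cong G/P$ and $G/H\cong G/Z$. By construction, $f_\pm$ is the composition of $\phi_\pm$ with the isomorphism $G/H_\pm\to G/P$, and $h$ is $w^{-1}\circ h'$ where $h':B\to G/Z$ is the composition of $\phi$ with the isomorphism $G/H\to G/Z$ and $w\in W$ is the Weyl-group element chosen to correct the two projections. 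The element $w^{-1}$ acts on $G/Z$ by a $G$-equivariant automorphism, since $W=N_G(A)/Z$ acts on $G/Z$ by right translation by lifts from $N_G(A)$.

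Next I would verify the general categorical principle: if $\phi_0\in\Map_\Gamma(A,V_0)$ is initial in the sense of Proposition~\ref{P:ininitial} and $\theta:V_0\to V_0'$ is a $G$-equivariant isomorphism of $G$-varieties, then $\theta\circ\phi_0\in\Map_\Gamma(A,V_0')$ is also initial. Indeed, for any $G$-variety $V$ and $\psi\in\Map_\Gamma(A,V)$, the unique factorization $\psi=\theta_{V,\psi}\circ\phi_0$ provided by the initiality of $\phi_0$ transports to $\psi=(\theta_{V,\psi}\circ\theta^{-1})\circ(\theta\circ\phi_0)$, and uniqueness of this factorization through $\theta\circ\phi_0$ follows from the uniqueness through $\phi_0$ together with the bijectivity of post-composition by $\theta$.

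Combining these two observations, $f_+$, $f_-$, $h$ are the initial objects of the respective AREA categories for $B_+$, $B_-$, $B$. I expect no serious obstacle here; the theorem is essentially a bookkeeping consequence of the identifications $H=Z^g$ and $H_\pm=P^{g_\pm}$ already obtained inside the proof of Theorem~\ref{T:boundary}. The only item worth double-checking is that the additional Weyl-element correction $w^{-1}$ entering the definition of $h$ indeed lies in the $G$-automorphism group of $G/Z$, which is standard.
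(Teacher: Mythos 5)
Your argument is correct and matches the paper's intent exactly. The paper gives no explicit proof for this theorem, merely stating that it was established ``in the course of the proof'' of Theorem~\ref{T:boundary}; indeed, within that proof the statement for $f_\pm$ is asserted verbatim (``It follows also that these maps satisfy the condition of Proposition~\ref{P:ininitial}''), and for $h$ the conclusion $H=Z^g$ together with the Weyl-element correction $h=w^{-1}\circ h'$ is precisely what the paper establishes. Your contribution is to spell out the bookkeeping the paper leaves implicit: that the universal (initial-object) property of Proposition~\ref{P:ininitial} is stable under post-composition by a $G$-equivariant isomorphism of the target, and that the Weyl group $W=N_G(A)/Z$ indeed acts on $G/Z$ by $G$-automorphisms (which holds because $Z=C_G(A)$ is normal in $N_G(A)$). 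This is the same route as the paper, just made explicit.
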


\bigskip

\subsection{Applications of Boundary Theory to Apafic Gregs}\hfill{}

In this subsection we fix an Apafic Greg $(X,\calX,m,T,w,\Gamma)$.
The following is an immediate corollary of Theorem~\ref{T:boundary},
in view of Theorem~\ref{T:BSfromGreg} and Lemma~\ref{L:equi-inv}.

\begin{cor}[{Theorem~\ref{T:boundary} for Apafic Gregs}]\label{C:boundary} \hfill{}\\
Let $G$ be a connected semisimple real Lie group with finite center,
let $\rho:\Gamma\to G$ be a continuous homomorphism with a Zariski dense image
and set $F=\rho\circ w:X\to G$.
Let $P<G$ be a minimal parabolic
and let $Z<P$ be the centralizer of a maximal split torus in $P$.
Then there exist 
measurable $F$-equivariant maps
\[
    \theta:X\ \overto{} \ G/Z\quad \mbox{and}
    \qquad  \phi_\pm:\ X\overto{} X_\pm\overto{} \ G/P
\]
such that $\phi_\pm\circ p_\pm=\pr_\pm \circ\theta$, where $\pr_\pm:G/Z\to  G/P$ are the maps described in Theorem~\ref{T:boundary}.
Here equivariance means a.e. equalities: $\theta(Tx)=F(x).\theta(x)$,
$\phi_\pm(Tx)=F(x).\phi_\pm(x)$.
\end{cor}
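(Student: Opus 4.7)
The plan is to simply chase the statement through the structural results already accumulated. First I would invoke Theorem~\ref{T:BSfromGreg}, which guarantees that the Apafi hypothesis on $(X,\calX,m,T,w,\Gamma)$ promotes the triple $(E,E_+,E_-)$ together with the quotient maps $\pi_\pm:E\to E_\pm$ to a boundary system for $\Gamma$ in the sense of Definition~\ref{def:BS}. With this boundary system in hand and with the Zariski dense representation $\rho:\Gamma\to G$ given, Theorem~\ref{T:boundary} directly supplies $\Gamma$-equivariant measurable maps
\[
    f_\pm:E_\pm\overto{} G/P,\qquad h:E\overto{} G/Z,
\]
satisfying the compatibility $f_-\circ\pi_-=\pr_-\circ h$ and $f_+\circ\pi_+=\pr_+\circ h$.

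The remaining work is purely translational: pass from the $\Gamma$-equivariant category on the ideal past/future/total spaces to the $w$-equivariant category on $X_-,X_+,X$. This is exactly what Lemma~\ref{L:equi-inv} accomplishes, yielding bijections $\Map_\Gamma(E_\pm,G/P)\cong\Map_w(X_\pm,G/P)$ and $\Map_\Gamma(E,G/Z)\cong\Map_w(X,G/Z)$, where the $G$-action on $G/P$ and $G/Z$ is restricted to $\Gamma$ via $\rho$. Under these bijections $f_\pm$ correspond to $w$-equivariant maps $\phi_\pm:X_\pm\to G/P$ and $h$ to a $w$-equivariant map $\theta:X\to G/Z$; unpacking the equivariance in terms of $F=\rho\circ w$ gives the a.e.\ identities $\theta(Tx)=F(x).\theta(x)$ and $\phi_\pm(T_\pm x)=F(x).\phi_\pm(x)$.

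Finally, I would invoke the naturality statement of Lemma~\ref{L:equi-inv} (spelled out also in Theorem~\ref{T:equi-inv}) applied to the $G$-equivariant algebraic maps $\pr_\pm:G/Z\to G/P$: naturality sends the commutative squares $f_\pm\circ\pi_\pm=\pr_\pm\circ h$ on the ``ideal'' side to commutative squares $\phi_\pm\circ p_\pm=\pr_\pm\circ\theta$ on the ``dynamical'' side, which is exactly the asserted relation. Once $\phi_\pm$ is regarded (via precomposition with $p_\pm$) as a map $X\to X_\pm\to G/P$ as stated, the corollary follows.

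The only point requiring any care, and which I would not expect to be a genuine obstacle, is verifying that the bijections of Lemma~\ref{L:equi-inv} commute with postcomposition by the $G$-equivariant morphisms $\pr_\pm$; but this is part of the naturality clause of that lemma and of Theorem~\ref{T:equi-inv}, so the corollary is indeed immediate given the machinery already in place.
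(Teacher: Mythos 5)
Your proposal is correct and matches the paper's own (one-line) proof exactly: the paper derives Corollary~\ref{C:boundary} by combining Theorem~\ref{T:BSfromGreg}, Theorem~\ref{T:boundary}, and Lemma~\ref{L:equi-inv} (with its naturality), which is precisely the chain of reductions you spell out.
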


By the Zariski density of $\rho$, the space $G/P$ admits no $\Gamma$-invariant finite subset,
thus stationary measures associated with $\phi_-$ will have no atomic parts, 
by Proposition~\ref{p:non-atomic}.
In fact, much more is true.

\begin{defn} \label{def:generic}
Let $V$ be a real algebraic variety. 
A measure on $V$ is said to be {\bf generic} if every proper subvariety of $V$ is a null subset.
For a Lebesgue space $\Omega$, a map $f:\Omega\to \Prob(V)$ is said to be generic if for a.e $z\in \Omega$,
$f(z)$ is a generic measure.
\end{defn}

\begin{theorem} \label{t:generic}
In the setting of Corollary~\ref{C:boundary}, the past oriented stationary map $\psi\in \Stat_-(X_+,\Prob(G/P))$,
associated with $\phi_-$ as in Example~\ref{ex:sm}, is generic.
\end{theorem}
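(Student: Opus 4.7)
The plan is to adapt the classical Goldsheid--Margulis argument to the boundary-theoretic framework at hand. Supposing that $\psi$ charges a proper subvariety with positive probability, I will extract a measurable, $F$-equivariant auxiliary map $W^*$ with values in a component of the Chow scheme parametrizing proper subvarieties of $V := G/P$, factor it through $f_+$ by the AREA universality, and reach a contradiction using the incidence structure $\theta\colon X\to G/Z$ of Corollary~\ref{C:boundary}.

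For each $d$ with $0\le d<\dim V$, consider
\[
h_d\colon \Prob(V)\to [0,1],\qquad h_d(\mu)=\sup\{\mu(W): W\subsetneq V\text{ closed subvariety},~\dim W\le d\}.
\]
This is a bounded, convex, $G$-invariant, Borel function (parametrizing subvarieties via Chow components, each a projective $G$-variety). By Lemma~\ref{lem:convex}, $h_d\circ\psi$ is essentially constant, say $c_d$; if $\psi$ were not generic there would be a minimal $d^*$ with $c_{d^*}>0$. For $m_+$-a.e.\ $y$ I then extract a unique $d^*$-dimensional subvariety $W^*(y)\subset V$ realizing the sup (after truncating to a sufficiently large bounded-degree Chow stratum so that the sup is attained): two distinct maximizers $W_1\neq W_2$ would satisfy $\dim(W_1\cap W_2)<d^*$, hence $\psi(y)(W_1\cap W_2)=0$ by minimality of $d^*$, forcing $\psi(y)(W_1\cup W_2)=2c_{d^*}$ in contradiction with $h_{d^*}(\psi(y))=c_{d^*}$. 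By ergodicity of $(X_+,m_+,T_+)$, the map $W^*$ takes values in a single $G$-stable connected Chow component $C^*$. Applying the same uniqueness argument fiberwise inside the stationarity equation $\psi(y)=\int w(T^{-1}x).\psi(p_+(T^{-1}x))\dd\mu_y(x)$ yields the $w$-equivariance $W^*(T_+y)=w(y).W^*(y)$, so that $W^*\in\Map_w(X_+,C^*)$.

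Via Lemma~\ref{L:equi-inv}, $W^*$ corresponds to a $\Gamma$-equivariant map $\tilde W^*\colon E_+\to C^*$, and by the AREA initial-object property of $f_+\colon E_+\to G/P$ together with Proposition~\ref{P:ininitial}, $\tilde W^*=\bar W\circ f_+$ for a $G$-equivariant algebraic map $\bar W\colon G/P\to C^*$. Any such $\bar W$ has the form $gP\mapsto g\cdot W_0$ for some proper $P$-stable subvariety $W_0\subset V$ of dimension $d^*$, whence $W^*(y)=\phi_+(y)\cdot W_0$. Now Corollary~\ref{C:boundary} lets us write $\theta(x)=g(x)Z$ with $\tilde\phi_+(x)=g(x)P$ and $\tilde\phi_-(x)=g(x)\tilde{w}_{\mathrm{long}}P$, so that the event $\{\tilde\phi_-(x)\in \tilde\phi_+(x)\cdot W_0\}$ reduces to the $x$-independent condition $\{\tilde{w}_{\mathrm{long}}P\in W_0\}$. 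Since $\psi(y)(W^*(y))=c_{d^*}>0$ on a positive-measure set of $y$, this condition must hold, so $W_0$ contains the opposite flag $w_{\mathrm{long}}eP$; but $W_0$ is $P$-stable, hence contains the entire orbit $P\cdot w_{\mathrm{long}}eP$, the big Bruhat cell, whose Zariski closure is $V$. This forces $W_0=V$, contradicting properness, so $\psi$ must be generic. The main obstacle is Steps 2--3, namely the Chow-scheme and attainability bookkeeping required to extract $W^*$ as a measurable $w$-equivariant map from the stationarity equation; once those are carried out, the concluding algebraic statement---that no proper $P$-stable subvariety of $G/P$ can contain the opposite flag---is immediate.
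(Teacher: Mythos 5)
Your overall strategy---extract a $w$-equivariant Borel map $W^*$ into a Chow component parametrizing proper subvarieties of $G/P$, then force a contradiction---is the same as the paper's, which likewise assigns to $\psi$ a deterministic map $\psi'\colon X_+\to H(d,kn)$ and derives a contradiction from it. Your endgame, however, is genuinely different and is a valid alternative: you factor $\tilde{W}^*$ through $f_+$ via the AREA initial-object property of Proposition~\ref{P:ininitial}, deduce that $W^*(y)=\phi_+(y)\cdot W_0$ for a $P$-stable $W_0\subsetneq G/P$, observe via the map $\theta$ of Corollary~\ref{C:boundary} that the positivity $\psi(y)(W^*(y))>0$ forces the $x$-independent condition $\wlong P\in W_0$, and conclude that $W_0$ contains the Zariski-dense orbit $P\cdot\wlong P$. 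The paper instead returns to the Apafi product structure, fixes a generic $e_+$ to obtain a single Chow-cycle $V=f'_+(e_+)$, and argues that the $\Gamma$-invariant Zariski closure of $f_-(A)\subset V$ must equal all of $G/P$, contradicting $\dim V<\dim G/P$. Both endgames are sound, and yours is the more compact.

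That said, there is a genuine gap in your Steps 2--3, which you flag yourself. The convex $G$-invariant function $h_{d^*}$ is a supremum over subvarieties of \emph{unbounded} degree, and this supremum need not be attained: the collection $\mathcal{C}(y)$ of $d^*$-dimensional irreducible subvarieties carrying positive $\psi(y)$-mass may be infinite, with masses summable but never realizing the sup on any finite union---in which case no bounded-degree Chow stratum recovers $h_{d^*}(\psi(y))$ as a max, and no maximizer $W^*(y)$ exists at all. Your uniqueness argument is correct \emph{when} the sup is a max, but that is precisely what is at issue. The paper avoids this by fixing both a minimal dimension $d$ and a minimal degree $k$ and working with $\maxev_{d,k}$, a usc function on the compact component $H(d,k)$ whose max is automatically attained; it then invokes the extremality of $\psi$ in $\Stat(X_+,\Prob(G/P))$---obtained by transporting the Dirac-valuedness of $f_-$ through the correspondences of Theorem~\ref{T:equi-inv}---together with a $\maxpart$-type decomposition to conclude that $\psi(y)$ assigns each element of $H(d,k)$ mass exactly $0$ or $t$, hence is supported on a finite $t^{-1}$-element union of bounded degree, from which measurability and $w$-equivariance of the resulting Chow-valued map follow. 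Without the restriction to bounded degree and the extremality input, your extraction of $W^*$ as a Borel, $w$-equivariant map is not justified, and this is exactly the missing ingredient.
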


\begin{remark} \label{rem:generic}
Only the following weak form of Theorem~\ref{t:generic} will be used later in this paper:
for a.e $y\in X_+$, for every $g\in G$, 
the variety $gL\subset G/P$ is $\psi(y)$-null.
Here $L\subset G/P$ is the subvariety defined in Equation~\eqref{eq:defL}
before Lemma~\ref{L:UV}.
In fact, this statement follows already from Proposition~\ref{p:non-atomic},
applied to the stationary maps $\pi_\alpha \circ \psi:X_+\to \Prob(G/Q_\alpha)$, $\alpha\in \Pi$.
See Remark~\ref{rem:miror} in the proof of Theorem~\ref{T:main}.
\end{remark}

In view of Remark~\ref{rem:generic}, we will allow ourselves to be a bit sketchy 
in the following proof of Theorem~\ref{t:generic}.

\begin{proof}
The proof is a variation of the proofs of Proposition~\ref{p:non-atomic}
and Lemma~\ref{l:atomic}.
We assume $\psi$ is not generic and argue to show a contradiction.
As in the proof of Proposition~\ref{p:non-atomic},
we find maps corresponding to $\psi$,
\[ 
    \begin{split}
        \phi&\in \ext(\Map_w(X_-,\Prob(G/P))),\\
        f&\in \ext(\Map_\Gamma(E_-,\Prob(G/P))).
    \end{split} 
\]
By Theorem~\ref{T:boundary}, $f$ takes values in Dirac measures.
We deduce that this is also the case for $\phi$ and for 
$f\circ\pi_+$.
In particular, it follows that $f\circ\pi_+$ is extremal in 
the convex compact set $\Map_\Gamma(E,\Prob(G/P))$,
thus $\psi$ is extremal in $\Stat(X_-,\Prob(G/P))$.

We can $G$-equivariantly embed $G/P$ in a projective space, 
and get a well defined $G$-invariant notion of degree on subvarieties of $G/P$.
We find minimal $d$ and $k$ such that 
the subset of $y\in X_+$ for which there exists a subvariety of $G/P$
of dimension $d$ and degree $k$
which is $\psi(y)$-non-null is $m_+$-non-null.
We denote by $H(d,k)$ the Hilbert scheme parametrizing such sub-schemes of $G/P$.
This is a $G$-projective variety.
We regard its elements as actual subsets of $G/P$, considering their real points.
We emphasise that the assumption that $\psi$ is not generic gives $d<\dim(G/P)$.

As in \S\ref{subsection:measures}, we let $Q$ be the compact convex space 
of positive measures of total mass in $[0,1]$ on $G/P$
and we define the map
\[ 
    \ev_{d,k}:Q\times H(d,k)\overto{} [0,1], \quad (\nu,V)\mapsto \nu(V). 
\]
By an obvious variant of the proof of Lemma~\ref{l:ev},
this function is upper semi-continuous
and as in the proof of Lemma~\ref{l:maxev},
we get that the map
\[ \maxev_{d,k}:Q\to [0,1], \quad \maxev_{d,k}(\nu)=\max\setdef{\nu(V)}{V\in H(d,k)} \]
is convex and upper semi-continuous.
We conclude by Lemma~\ref{lem:convex} that $\maxev_{d,k}\circ \psi$ is essentially constant
and we denote its essential value by $t$.

We consider the space $Q_t$ consisting of measures on $G/P$ for which every 
subvariety of dimension less then $d$ 
or degree less then $k$ is null and 
the measures of subvarieties of dimension $d$ and degree $k$ is bounded by $t$.
By a variant of the proof of Lemma~\ref{l:Qalpha}, $Q_t$ is a convex $G_\delta$-subset of $Q$.
An obvious variant of Lemma~\ref{l:maxpart} enable us to
argue as in the proof of Lemma~\ref{l:atomic}.
We decompose every measure $\nu\in Q_t$ as a sum $\nu=\nu_0+\nu_1$
such that $\nu_0$ is supported on subvarieties in $H(d,k)$ 
and the measure of such subvarieties is either 0 or $t$
while the $\nu_1$-measure of such subvarieties is less than $t$.
Accordingly,
we can write $\psi=s\psi_0+(1-s)\psi_1$
where $\psi_i$ are stationary, $s\in (0,1]$ and 
for a.e $y\in X_+$, 
$\psi_0(y)$ is supported on subvarieties in $H(d,k)$ 
and the measure of such subvarieties is either $0$ or $t$.
By the extremality of $\psi$, we get $\psi=\psi_0$,
thus for a.e $y\in X_+$, 
$\psi(y)$ is supported on subvarieties in $H(d,k)$ 
and the measure of such subvarieties is either $0$ or $t$.
However, we note that, unlike in the proof of Proposition~\ref{p:non-atomic},
we cannot conclude that the map $\psi$ is deterministic.

Note that for a.e $y\in X_+$, 
$\psi(y)$ is supported on exactly $n=t^{-1}$ elements of $H(d,k)$.
Assigning their union to $y$ determines a map $\psi':X_+\to H(d,kn)$
which is easily seen to be deterministic, that is $w$-invariant.
As before, 
we regard elements of $H(d,kn)$ as actual subsets of $G/P$, considering their real points.
Recalling that the values of the map $\phi$ are Dirac measures, we regard them as points of $G/P$.
Then the stationary equation
	\[
		\psi(y)=\int_X \phi\circ p_-(x)\dd \mu_{y}(x)\qquad (y\in X_+).
	\]
guarantees that for a.e $x\in X$ we have $\phi\circ p_-(x)\in \psi'\circ p_+(x)$.
As $\psi'$ is $w$-invariant, it corresponds by Lemma~\ref{L:equi-inv} to a
$\Gamma$-map $f'_+:E_+\to H_n(d,kn)$
and, denoting $f_-=f\circ \pi_-$, we deduce that 
for a.e $e\in E$, $f_-(e)\in f'_+(e)$.
We now use the Apafi condition and get by a Fubini argument that 
for a.e $e_+\in E_+$, for a.e $e_-\in E_-$, 
$f_-(e_-)\in f'_+(e_+)$.

We now fix a generic $e_+\in E_+$ and set $V=f'_+(e_+)$.
We choose a countable dense subgroup $\Gamma_0<\Gamma$
and find a $\Gamma_0$-invariant full measure subset of $A\subset E_-$ such that $f_-(A)\subset V$. 
We let $V_0$ be the Zariski closure of $f_-(A)$ and observe that it is a $G$-invariant subvariety of $V$,
as $\rho(\Gamma_0)$ is Zariski dense in $G$.
By $G$-invariance we get $V_0=G/P$,
while we have $\dim(V_0)\leq \dim(V)=d<\dim(G/P)$.
We thus get the desired contradiction
and conclude that $\psi$ is indeed generic.
\end{proof}

\begin{remark}
the argument given in the proof of Theorem~\ref{t:generic}
could be also used to give an alternative proof of Proposition~\ref{p:non-atomic}.
\end{remark}

\section{Proofs of the main results}\label{sec:proofs}\hfill{}\\
The goal of this section is to prove the main Theorem~\ref{T:main}.
The key statement is the simplicity of the Lyapunov spectrum.
The logic of the argument is as follows:
\begin{itemize}
	\item 
	We use the past oriented stationary measure $\psi\in \Stat_-(X_+,\Prob(G/P))$ 
	and its weak* convergence to $\delta_{\phi_-(x)}\in \Prob(G/P)$ 
	under appropriate ergodic products of $F_{-n}^{-1}$ 
        as in Theorem~\ref{T:structure-of-stat}. 
	\item
	We use the fact that $\delta_{\phi_-(x)}$ is a \emph{Dirac measure}
        (Theorem~\ref{T:boundary}) 
	while $\psi_{x}$ are \emph{generic measures} (Theorem~\ref{t:generic}). 
	This suggests that the ergodic products $F_{-n}^{-1}(x)$ are \emph{contracting}.
	\item
	To make this contraction property explicit, we use the  map 
        \[\theta:X\overto{}  G/Z\] 
	to conjugate $F$ to an essentially diagonal cocycle $D:X\overto{}  Z$ 
	while keeping the property of contraction of certain \emph{proper measures} 
        (not stationary any longer) towards \emph{Dirac measures}.
	\item 
	Finally, this qualitative, but persistent, contraction property is translated 
        into a quantitative one using the well known Lemma~\ref{L:Kesten}. 
	To establish the required integrability condition we need to pass to an 
        induced system $X'\subset X$ and relate the Lyapunov spectrum there to the original one. 
\end{itemize}
%
% \begin{itemize}
% 	\item
% 	We use the stationary measure $\nu\in \Stat_-(\hat{X},\Prob(G/P))$
% 	and the weak-* convergence to $\delta_{\phi(\hat{x})}\in \Prob(G/P)$
% 	under appropriate ergodic products of $F_n$ as in Theorem~\ref{T:structure-of-stat}.
% 	\item
% 	We use the fact that $\phi(x,z)$ is a Dirac measure (Theorem~\ref{T:phi-theta})
% 	while $\nu_{y,z}$ are proper measures (Theorem~\ref{T:nu-proper}).
% 	This suggests that the ergodic products $F_n$ are contracting.
% 	\item
% 	To make this contraction property explicit, we use the  map $\theta:\hat{X}\to  G/Z$
% 	to conjugate $F$ to an essentially diagonal cocycle $D:\hat{X}\to  Z$
% 	while keeping the property of contraction of certain proper measures towards Dirac measures.
% 	\item
% 	Finally, the qualitative, but persistent, contraction property is translated to a quantitative one using the
% 	well known Lemma~\ref{L:Kesten}.
% 	To establish the required integrability condition we pass to an induced system $\hat{X}'\subset \hat{X}$
% 	and relate the Lyapunov spectrum there to the original one.
% \end{itemize}

\subsection{Some preliminary Lemmas} 
\label{sub:some_preliminary_lemmas}
\begin{lemma}\label{L:Kesten}\hfill{}\\
	Let $(X,m,T)$ be an ergodic p.m.p transformation, $h\in L^1(X,m)$ and $h_n:X\to\bbR$, $n\in\bbZ$ 
	the corresponding $\bbZ$-cocycle:
	\[
		h_n(x)=h(x)+h(Tx)+\dots+h(T^{n-1}x)\qquad (n\in\bbN)
	\]
	and $h_0(x)=0$, $h_{n}(x)=-h_{-n}(T^nx)$ for $n<0$. Then the following conditions are equivalent:
	\begin{enumerate}
		\item For $m$-a.e $x\in X$ one has $\ h_n(x)\to+\infty\ $ as $n\to\infty$,
		\item For $m$-a.e $x\in X$ one has $\ -h_{-n}(x)\to+\infty\ $ as $n\to\infty$,
		\item $\lambda:=\int h\dd m$ is positive: $\lambda>0$ and therefore for $m$-a.e $x\in X$
		\[
			h_n(x)=\lambda\cdot n+o_x(n).
		\]
	\end{enumerate}
\end{lemma}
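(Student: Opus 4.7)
The plan is to combine Birkhoff's pointwise ergodic theorem with Atkinson's recurrence theorem for mean-zero cocycles. The structure is: (3) $\Rightarrow$ (1), (2) is immediate from Birkhoff, while (1) $\Rightarrow$ (3) and (2) $\Rightarrow$ (3) each reduce, via Birkhoff, to excluding the degenerate case $\lambda = 0$, which is exactly what Atkinson's recurrence theorem provides.

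First I would dispose of (3) $\Rightarrow$ (1) and (3) $\Rightarrow$ (2). Applying Birkhoff's theorem to $(X, m, T)$ gives $n^{-1} h_n(x) \to \lambda$ for $m$-a.e. $x$; this is precisely the quantitative statement in (3) and, when $\lambda > 0$, forces $h_n(x) \to +\infty$, giving (1). For (2), the cocycle identity yields
\[
-h_{-n}(x) \;=\; h(T^{-1}x) + h(T^{-2}x) + \cdots + h(T^{-n}x),
\]
which is the $n$-th Birkhoff sum of $h$ along the (also ergodic, measure-preserving) transformation $T^{-1}$, evaluated at $T^{-1}x$. Birkhoff's theorem applied to $T^{-1}$ then gives $n^{-1}(-h_{-n}(x)) \to \lambda$ a.s., so $-h_{-n}(x) \to +\infty$.

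For the converse direction (1) $\Rightarrow$ (3), Birkhoff gives $h_n/n \to \lambda$ a.e., which forces $\lambda \ge 0$. The remaining task, and the main obstacle, is to exclude $\lambda = 0$. For this I would invoke Atkinson's recurrence theorem: if $T$ is ergodic and $h \in L^1(X, m)$ has $\int h \, dm = 0$, then for $m$-a.e. $x$
\[
\liminf_{n \to \infty} |h_n(x)| \;=\; 0.
\]
This contradicts $h_n(x) \to +\infty$, so $\lambda = 0$ is impossible and (3) follows. The implication (2) $\Rightarrow$ (3) is the symmetric argument applied to the Birkhoff sums of $h$ along $T^{-1}$, whose mean is again $\lambda$.

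The only non-routine ingredient is the recurrence statement of Atkinson; I would give a precise citation rather than reproduce its proof. Everything else is a direct consequence of the pointwise ergodic theorem and the cocycle identity $h_{-n}(x) = -h_n(T^{-n}x)$.
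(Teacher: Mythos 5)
The paper does not reproduce a proof of this lemma; it simply cites \cite{BQ:book}*{Lemma~3.18}, so there is no in-text argument to compare against. Your proof is correct as written: Birkhoff's theorem gives the a.e.\ convergence $h_n(x)/n\to\lambda$, the cocycle identity $-h_{-n}(x)=h_n(T^{-n}x)$ identifies $-h_{-n}$ with Birkhoff sums along $T^{-1}$, and Atkinson's recurrence theorem (that an integrable mean-zero ergodic cocycle has $\liminf_n|h_n(x)|=0$ a.e.) cleanly rules out $\lambda=0$ in both converse directions. This is a legitimate self-contained route; whether it matches the argument in the cited reference, which I do not have before me, is a separate matter, but for the purposes of this paper a precise citation of Atkinson (1976) at the point you indicate would make the argument complete.
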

The proof of this lemma can be found for example in \cite{BQ:book}*{Lemma 3.18}. 

\bigskip

%\ub{Need updates due to changes earlier on}

Recall the notion of an \textbf{induced system} in the sense of Kakutani.
Let $\textbf{X}=(X,m,T)$ be an ergodic p.m.p system and 
$X'\subset X$ a subset with $m(X')>0$.
Denote by $m'$ the normalized restriction: 
$m'(E)=m(E\cap X')/m(X')$, and by $n:X\to  \bbN$
the \textbf{first return time map}
\[
	n(x):=\min \setdef{ n \in\bbN }{ T^nx\in X'}.
\]
Define a transformation $T'$ of $(X',m')$ by $T'x=T^{n(x)}x$. 
It preserves the measure $m'$
and is ergodic; the system $\textbf{X}'=(X',m',T')$ is called the \textbf{induced} 
from $\textbf{X}=(X,m,T)$.

Let $n_k:X'\to \bbN$, $k\in\bbN$, be the cocycle generated by 
the return time function $n:X'\to \bbN$
\begin{equation} \label{eq:return}
    n_k(x)=n(x)+n(T'x)+\dots+n(T'^{k-1}x)\qquad (k\in\bbN,\ x\in X').
\end{equation}
	
Let $G$ be a simple real Lie group and $F:X\to G$ a measurable map and $F':X'\to G$ the map defined by
\[
	F'(x):=F_{n(x)}(x)=F(T^{n(x)-1}x)\cdots F(x).
\]
% Let us a finite-dimensional representation with finite kernel $\tau:G\to \GL(V)$, e.g. the adjoint representation,
% and $N_\tau:G\to [0,\infty)$ be $N_\tau(g)=\log\|\tau(g)\|$ the associated norm.

\begin{lemma}\label{L:induced-Lya}\hfill{}\\
	Let $X'\subset X$ be positive measure subset.
	If $F:X\to G$ is integrable, then the induced map $F':X'\to G$ 
	is also integrable, and the Lyapunov spectra $\Lambda_F,\Lambda_{F'}\in\mathfrak{a}^+$ 
	are positively proportional, more precisely:
	\[
		 \Lambda_{F'}=\frac{1}{m(X')}\cdot\Lambda_{F}.
	\]
\end{lemma}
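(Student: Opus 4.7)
The plan is to verify integrability of $F'$ via a Kac-type rearrangement, then to express $F'_k$ as a subsequence of $F_n$ along the return-time cocycle and apply Oseledets' a.e.\ convergence together with Birkhoff's theorem/Kac's lemma.

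First, I would address integrability. Since the length function $|\cdot|$ on $G$ is subadditive, for $x\in X'$ one has
\[
    |F'(x)| = |F_{n(x)}(x)| \le \sum_{i=0}^{n(x)-1} |F(T^i x)|.
\]
The collection of orbit segments $\{T^i x : 0 \le i < n(x)\}$ indexed by $x \in X'$ partitions $X$ (modulo null sets), so by the Kac rearrangement argument
\[
    \int_{X'} \sum_{i=0}^{n(x)-1} |F(T^i x)|\dd m(x) = \int_X |F(y)|\dd m(y) < +\infty.
\]
Dividing by $m(X')$ shows $|F'| \in L^1(X', m')$, and in addition yields $\int_{X'} n\dd m' = 1/m(X')$ (Kac's formula).

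Second, I would observe the basic identity $F'_k(x) = F_{n_k(x)}(x)$, obtained by induction on $k$ from the cocycle equation for $F_n$ and the definition \eqref{eq:return} of $n_k$. Applying Birkhoff's ergodic theorem to the integrable function $n$ on the ergodic system $(X', m', T')$ gives
\[
    \lim_{k\to\infty} \frac{n_k(x)}{k} = \int_{X'} n\dd m' = \frac{1}{m(X')} \qquad \text{for $m'$-a.e.\ $x \in X'$.}
\]
In particular $n_k(x) \to \infty$ for a.e.\ $x \in X'$.

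Third, by Corollary~\ref{C:Oseledets} applied to $F$, for $m$-a.e.\ (hence $m'$-a.e.)\ $x$ one has $\frac{1}{n}\kappa(F_n(x)) \to \Lambda_F$. Combining with the previous step,
\[
    \frac{1}{k}\kappa(F'_k(x)) = \frac{n_k(x)}{k}\cdot \frac{1}{n_k(x)}\kappa\bigl(F_{n_k(x)}(x)\bigr) \overto{} \frac{1}{m(X')}\cdot\Lambda_F,
\]
and by Corollary~\ref{C:Oseledets} applied to $F'$ this limit equals $\Lambda_{F'}$. For the $L^1$ statement one can either invoke the $L^1$ part of Corollary~\ref{C:Oseledets} directly, or use dominated convergence combined with the uniform bound $|F'_k(x)| \le \sum_{i=0}^{n_k(x)-1} |F(T^i x)|$ together with the $L^1$ Birkhoff theorem applied to $|F|$. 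The only step that needs any care is the passage from convergence along the full sequence $n \to \infty$ to convergence along the random subsequence $n_k(x)$; this is immediate from $n_k(x) \to \infty$ a.e., so I do not expect a real obstacle.
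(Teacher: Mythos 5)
Your proof is correct and follows essentially the same route as the paper's: bound $|F'|$ by the sum of $|F|$ along orbit segments and rearrange à la Kac to get integrability, then combine Birkhoff (giving $n_k(x)/k\to 1/m(X')$) with the a.e.\ convergence from Corollary~\ref{C:Oseledets} applied along the random subsequence $n_k(x)\to\infty$. You merely spell out more carefully the subsequence step that the paper leaves implicit.
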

\begin{proof}
	Denote $f(x)=|F(x)|$. It is a non-negative function in $L^1(X,m)$.
	By subadditivity, $|F_k(x))|\le |f_k(x)|$, 
        where $f_k(x)=\sum_{j=0}^{k-1}f(T^j x)$.  
	Denote $X'_k:=\setdef{x\in X'}{n(x)=k}$. Then 
	$X=\bigsqcup_{k=1}^\infty\bigsqcup_{j=0}^{k-1} T^j X'_k$ and therefore
	\[
		\int_{X'}|F'|\dd m'=\sum_{k=1}^\infty\int_{X'_k} |F_k|\dd m'
		\le \sum_{k=1}^\infty\int_{X'_k} f_k\dd m'=\int_X f\dd m<\infty.
	\]
	Thus $F'$ is integrable, and $\Lambda_{F'}$ is well defined. 
	Since 
	\[
		\int_{X'} n(x)\dd m'(x)=\frac{1}{m(X')},
	\] 
	by Birkhoff's ergodic theorem for a.e $x\in X'$ 
	\[
		F'_k(x)=F_{n_k(x)}\qquad\textrm{where}\qquad
		n_k(x)=m(X')^{-1}\cdot k+o_x(k).
	\]
	It follows that $\Lambda_{F'}=m(X')^{-1}\cdot\Lambda_{F}$.
\end{proof}
\medskip

\begin{lemma}\label{L:conjugation}\hfill{}\\
	Let $(X,m,T)$ be an ergodic system, $F:X\to G$ an integrable map into a simple real Lie group $G$,
	and $s:X\to G$ another integrable map. Define the conjugate map $D:X\to G$ by 
	\[
		D(x)=s(Tx) F(x) s(x)^{-1}.
	\] 
	Then $D$ is also integrable, and has the same Lyapunov spectrum: $\Lambda_{D}=\Lambda_F$.
\end{lemma}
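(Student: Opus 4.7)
The plan is to telescope the cocycle associated with $D$, verify integrability directly, and then compare $\kappa(D_n)$ with $\kappa(F_n)$ projection-by-projection against dominant weights using the subadditivity of $\chi\circ\kappa$ guaranteed by Lemma~\ref{l:chipos}. A direct telescoping yields
\[
    D_n(x)=s(T^n x)\,F_n(x)\,s(x)^{-1}\qquad(n\ge 1,\ x\in X),
\]
and integrability of $D$ is immediate from $|D(x)|\le |s(Tx)|+|F(x)|+|s(x)|$ together with $T$-invariance of $m$, which give $\int_X|D|\dd m\le 2\int_X|s|\dd m+\int_X|F|\dd m<\infty$. Thus Corollary~\ref{C:Oseledets} applies and $\Lambda_D\in\mathfrak{a}^+$ is well defined; it remains to identify $\Lambda_D$ with $\Lambda_F$.

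For any dominant weight $\chi\in\mathfrak{a}^*$, Lemma~\ref{l:chipos} tells us that $g\mapsto\chi(\kappa(g))$ is a non-negative, subadditive, word-class length function on $G$. Applying this to the telescoped identity and to its inverse $F_n(x)=s(T^n x)^{-1}D_n(x)s(x)$ yields the a.e.\ bound
\[
    \absfix{\chi(\kappa(D_n(x)))-\chi(\kappa(F_n(x)))}
    \le \chi(\kappa(s(T^n x)))+\chi(\kappa(s(T^n x)^{-1}))+\chi(\kappa(s(x)))+\chi(\kappa(s(x)^{-1})).
\]
Since $\chi\circ\kappa$ is in the word class it is dominated by a constant multiple of $|\cdot|$, so the functions $\chi\circ\kappa\circ s$ and $\chi\circ\kappa\circ s^{-1}$ both lie in $L^1(X,m)$. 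A Borel--Cantelli argument then gives $\tfrac{1}{n}\chi(\kappa(s(T^n x)))\to 0$ almost surely, and similarly for the inverse. Dividing both sides of the displayed inequality by $n$ and passing to the limit, with Corollary~\ref{C:Oseledets} controlling the middle terms, yields $\chi(\Lambda_D)=\chi(\Lambda_F)$.

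Because the dominant weights span $\mathfrak{a}^*$ they separate points of $\mathfrak{a}$, so this forces $\Lambda_D=\Lambda_F$. The argument is essentially routine; the only mildly technical input is the a.s.\ decay $f(T^n x)/n\to 0$ for $f\in L^1(X,m)$, which I would handle by observing that for each rational $\epsilon>0$ one has $\sum_n m(\{|f|>n\epsilon\})\le \epsilon^{-1}\int_X|f|\dd m<\infty$, and applying Borel--Cantelli to the $T$-invariant sequence of events $\{|f(T^n x)|>n\epsilon\}$.
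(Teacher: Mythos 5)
Your telescoping identity $D_n(x)=s(T^nx)F_n(x)s(x)^{-1}$ and integrability bound are the same as in the paper; the only difference there is cosmetic, since the paper's length function $|g|=\log\max\{\|\tau(g)\|,\|\tau(g^{-1})\|\}$ is symmetric, so your $|s(x)|$ and the paper's $|s(x)^{-1}|\le C|s(x)|$ are interchangeable. Where you genuinely diverge is in how you neutralize the boundary term $s(T^nx)$. The paper's (very terse) argument is a Poincar\'e recurrence one: pick any $M$ with $m(\{|s|<M\})>0$; by ergodicity a.e.\ orbit returns to this set along a subsequence $n_i$, and since the Oseledets limit $\tfrac{1}{n}\kappa(F_n(x))\to\Lambda_F$ holds along the full sequence it suffices to observe that $\kappa(D_{n_i}(x))$ and $\kappa(F_{n_i}(x))$ stay at distance $O(M+|s(x)|)$ from each other. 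You instead invoke the standard fact $\tfrac{1}{n}f(T^nx)\to 0$ a.s.\ for $f\in L^1$ (via Borel--Cantelli) to get sublinearity of $|s(T^nx)|$ along the \emph{whole} sequence, and then project against dominant weights $\chi$ and use the subadditivity of $\chi\circ\kappa$ from Lemma~\ref{l:chipos} to convert the conjugation into a scalar triangle inequality, concluding $\chi(\Lambda_D)=\chi(\Lambda_F)$ for all dominant $\chi$ and hence $\Lambda_D=\Lambda_F$ since those weights span $\mathfrak{a}^*$. Both routes are correct and roughly equally long once made precise; your version avoids the passage to a subsequence and spells out more explicitly (via Lemma~\ref{l:chipos}) why conjugation by sublinearly growing elements does not affect $\kappa$, which makes the ``bounded distance'' step that the paper leaves implicit completely transparent.
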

\begin{proof}
	Integrability is straightforward since
	\[
		\int_X |D|\dd m\le \int_X |s(Tx)| + |F|+|s(x)^{-1}|\dd m <+\infty
	\]
	and $|s(x)^{-1}|\le C\cdot |s(x)|$ for some fixed constant $C=C(G)$. 
	The Lyapunov spectra $\Lambda_{F}$ and $\Lambda_D$ describe the asymptotic behavior 
	of $F_n(x)$ and 
	\[
		D_n(x)=s(T^nx) F_n(x) s(x)^{-1}
	\] 
	which are at bounded distance from each other,
	at least along a subsequence $n_i(x)$ where $T^{n_i(x)}x$ visits a set where $|s(x)|<M$ for some $M$.
	Thus $\Lambda_{D}=\Lambda_F$ follows.
\end{proof}
\medskip

% Let $\exp:\mathfrak{a}\to A$, $\exp:\mathfrak{g}\to G$, be the exponential maps from the Lie algebras.
% Recall that the $P$-orbits on $G/P$ are parametrized by the elements of the Weyl group $\operatorname{W}_G=\calN_G(A)/\calZ_G(A)$
% \[
% 	G/P=\bigcupsq_{w\in \operatorname{W}_G} PwP
% \]
% where the only open orbit $O(G/P)$ corresponds to the long element $\operatorname{w}_{\operatorname{long}}\in \operatorname{W}_G$.

%

\subsection{Proof of Theorem \ref{T:main}.(1) -- simplicity of the spectrum.} 
\label{sub:proof_of_theorem_ref_t_main}\hfill{}\\
Let $(X,\calX,m,T,w,\Gamma)$ be an Apafic Greg,
let $G$ be a connected semisimple real Lie group with finite center
and let $\rho:\Gamma\to G$ be an integrable Zariski dense representation.
We argue to show that the Lyapunov spectrum $\Lambda_F\in\mathfrak{a}^+$
of $F=\rho\circ w:X\to G$ is simple, i.e. $\Lambda_F\in\mathfrak{a}^{++}$.

Let $P<G$ be a minimal parabolic
and let $Z<P$ be the centralizer of a maximal split torus $A<P$.
By Corollary~\ref{C:boundary} there exist
measurable $F$-equivariant maps
\[
    \theta:X\ \overto{} \ G/Z\quad \mbox{and}\quad  
    \phi_-:\ X\overto{} X_-\overto{} \ G/P
\]
such that $\pi_-\circ \phi_-=\pr\circ\theta$, where $\pr:G/Z\to  G/P$ is the standard quotient map.
Composing with the map  $\delta:G/P\to \Prob(G/P)$,
we consider $\phi_-$ as a map in $\Map_w(X_-,\Prob(G/P))$
and we let $\psi\in \Stat_-(X_+,\Prob(G/P))$ be the corresponding 
past oriented stationary map given by Theorem~\ref{T:equi-inv}.
Then, by Theorem~\ref{t:generic}, 
$\psi$ is generic as defined in Definition~\ref{def:generic}.
In particular, it satisfies the following property regarding the variety 
$L\subset G/P$ defined in Equation~\eqref{eq:defL}
before Lemma~\ref{L:UV}.
\begin{equation} \label{eq:proper}
\mbox{For a.e $y\in X_+$, for every $g\in G$, 
the variety $gL\subset G/P$ is $\psi(y)$-null.}
\end{equation}

\begin{remark} \label{rem:miror}
As already mentioned in Remark~\ref{rem:generic},
\eqref{eq:proper} follows already from Proposition~\ref{p:non-atomic},
applied to the stationary maps 
\[\pi_\alpha \circ \psi:X_+\overto{} \Prob(G/Q_\alpha),\qquad(\alpha\in \Pi).\]
\end{remark}

Denote $\phi=\phi_-\circ p_-:X\overto{}X_-\overto{} G/P$ 
and let $F_n:X\to  G$, $n\in\mathbb{Z}$, be the cocycle generated by
$F=\rho\circ w$ as in \eqref{e:f-cocycle}.
Note that by Theorem~\ref{T:structure-of-stat}, 
for $m$-a.e $x\in X$ there is weak* convergence to the Dirac measure at $\phi(x)\in G/P$:
\begin{equation}\label{e:Fn-to-phi}
    \delta_{\phi(x)}=\lim_{n\to\infty}\ F_{-n}(x)^{-1}_*\psi\circ p_+(T^{-n}x)
\end{equation}
Using the $F$-equivariant map $\theta$,
we get by the Cocycle Reduction Lemma~\ref{l:reduction} 
that there exists a measurable map $s:X\to G$ such that the image of the cocycle 
\[
    D_n(x):=s(T^nx)\cdot F_n(x)\cdot s(x)^{-1}
\]
is essentially in $Z$.
In particular, we set 
\[ 
    D:X\overto{} Z,\qquad D(x)=D_1(x)=s(Tx)\cdot F(x)\cdot s(x)^{-1}. 
\]
Composing with the continuous homomorphism $q:Z\to \mathfrak{a}$ defined in \eqref{eq:q},
we also set 
\[ 
    h=q\circ D:X \overto{} \mathfrak{a}, \qquad h_n=q\circ D_n:X\overto{} \mathfrak{a}. 
\]
Consider the map
\[ 
    \eta:X\overto{} \Prob(G/P), \qquad \eta(x)=s(x)_*\psi\circ p_+(x). 
\]
Note that this map, unlike $\psi$, need not be stationary and need not 
factor via $X_+$. However, it is still generic; 
in particular, it still satisfies the analogue of \eqref{eq:proper}
and has the following property
\begin{equation} \label{eq:pproper}
\mbox{For a.e $x\in X$, 
the variety $L\subset G/P$ is $\eta(x)$-null.}
\end{equation}
Moreover, a version of \eqref{e:Fn-to-phi} still holds:
for $m$-a.e $x\in X$ we have the weak* convergence to a Dirac measure
\begin{equation}\label{e:Dcontracts}
    \begin{split}
			D_{-n}(x)^{-1}_*\eta(T^{-n}x)&=s(x)F_{-n}(x)^{-1}s(T^{-n}x)^{-1}_*\eta(T^{-n}x)\\
			&=s(x)_*\left(F_{-n}(x)^{-1}_*\psi\circ p_+(T^{-n}x)\right)\\
                &\overto{} \ \delta_{s(x).\phi(x)}.
    \end{split}
\end{equation}
This can be viewed as a \textit{contraction of the generic measure} $\eta(T^{-n}x)$
towards the Dirac measure $\delta_{s(x).\phi(x)}$ by the maps $D_{-n}(x)^{-1}$ that lie in $Z$.
We shall translate this  qualitative phenomenon into a quantitative one. 
However, We need some uniform control on the measures $\eta(x)\in \Prob(G/P)$ 
and the elements $s(x)\in G$.
We will achieve this by passing to an induced system as described below. 

Representing the open subset $G/P\setminus L\subset G/P$ as a countable union 
of ascending compact subsets $C_i$ and $G$ as the ascending union of $\{|g|\le i\}$, 
we get from \eqref{eq:pproper} the existence of a compact subset 
$C\subset G/P\setminus L$ and $i_0$ such that the subset
\begin{equation} \label{eq:X'}
    X':=\setdef{x\in X}{ \eta(x)(C)>\half,\ |s(x)|\le i_0 } 
\end{equation}
has positive $m$-measure.
Let $n:X'\overto{} \bbN$ be the return time function
and $(X',m',T')$ be the induced system, 
as in \S\ref{sub:some_preliminary_lemmas} preceding Lemma~\ref{L:induced-Lya}.
We consider the maps 
\[
    \begin{split}
			&F':X'\overto{}  G,\qquad F'(x):=F_{n(x)}(x),\\
			&D':X'\overto{}  Z,\qquad D'(x):=D_{n(x)}(x),\\
			&h':X'\overto{}  \mathfrak{a},\qquad\ \ h'(x):=h_{n(x)}(x)=q\circ D'(x).
    \end{split}
\]
Note that $D'$ is conjugate to $F'$ by $s:X'\to G$, indeed for $x\in X'$
\[
    D'(x)=D_{n(x)}(x)=s(T^{n(x)}x) F_{n(x)}(x)s(x)^{-1}=s(T'x)F'(x)s(x)^{-1}.
\]
Since $|s(-)|$ is bounded on $X'$ by construction, it is integrable on $(X',m')$.
Hence $D'$ is integrable on $(X',m')$ and $\Lambda_{D'}=\Lambda_{F'}$ by
Lemma~\ref{L:conjugation}.
Since Lemma~\ref{L:induced-Lya} gives $\Lambda_{F'}=m(X')^{-1}\cdot \Lambda_F$, 
we deduce positive proportionality
\[
    \Lambda_{F}=m(X')\cdot\Lambda_{D'}.
\]
Thus it suffices to show that $\Lambda_{D'}\in\mathfrak{a}^{++}$.
Note that Equation (\ref{e:Dcontracts}) implies that for $m'$-a.e $x\in X'$ we have the following 
weak* convergence along the subsequence defined by the return times to $X'$:
\begin{equation} \label{eq:D'conv}
    \lim_{n\to\infty} D'_{-n}(x)^{-1}_*\eta(T'^{-n}x)
			\ \overto{} \ \delta_{eP}.
\end{equation}
The integrability of $D'$ implies integrability of $h':X'\to\mathfrak{a}$ and  
denote by $\Lambda'\in\mathfrak{a}$ the barycenter of $h'$.
That is 
\begin{equation} \label{eq:a++}
		\alpha(\Lambda')=\int_{X'} \alpha(h'(x))\dd m'(x)
  \qquad(\alpha\in \Pi).
\end{equation}
\begin{claim}
    $\Lambda'\in \mathfrak{a}^{++}$.
\end{claim}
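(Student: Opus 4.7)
The plan is to fix $\alpha\in\Pi$ and invoke Lemma~\ref{L:Kesten} for the real-valued cocycle generated by $\alpha\circ h':X'\to\bbR$: it will suffice to show that for $m'$-a.e. $x\in X'$,
\[
    -\alpha(h'_{-n}(x))\ \overto{n\to\infty}\ +\infty,
\]
since then Lemma~\ref{L:Kesten} yields $\alpha(\Lambda')=\int_{X'}\alpha(h'(x))\dd m'(x)>0$, and running over the finitely many $\alpha\in\Pi$ gives $\Lambda'\in\mathfrak{a}^{++}$ by the description of $\mathfrak{a}^{++}$ as $\{v\in\mathfrak{a}\mid \alpha(v)>0\ \forall \alpha\in\Pi\}$.

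First I would translate the contraction statement \eqref{eq:D'conv} into a statement about specific open neighborhoods of $eP$. Recall from \eqref{eq:X'} that on $X'$ we have a fixed compact $C\subset G/P\setminus L$ with $\eta(x)(C)>\half$. Apply Lemma~\ref{L:UV} to $C$ to obtain a sequence of open neighborhoods $eP\in V_k\subset G/P$ such that for every $z\in Z$,
\[
    (z.C)\cap V_k\neq\emptyset\qquad\Longrightarrow\qquad \min_{\alpha\in\Pi}\alpha\circ q(z)>k.
\]
Next I would exploit the pair of lower bounds on the pushed measure. Since $T'^{-n}x\in X'$ for all $n$, the defining inequality gives $\eta(T'^{-n}x)(C)>\half$, hence
\[
    D'_{-n}(x)^{-1}_*\eta(T'^{-n}x)\bigl(D'_{-n}(x)^{-1}.C\bigr)\ =\ \eta(T'^{-n}x)(C)\ >\ \half.
\]
On the other hand, the weak* convergence \eqref{eq:D'conv} and the portmanteau theorem for open sets yield
\[
    \liminf_{n\to\infty}\ D'_{-n}(x)^{-1}_*\eta(T'^{-n}x)(V_k)\ \geq\ \delta_{eP}(V_k)\ =\ 1.
\]
In particular, for each $k$ there is an $n_0(x,k)$ so that for all $n\geq n_0(x,k)$ the open set $V_k$ carries more than $\half$ of the measure $D'_{-n}(x)^{-1}_*\eta(T'^{-n}x)$. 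Two subsets of total mass exceeding $\half$ in a probability measure must have nonempty intersection, so
\[
    \bigl(D'_{-n}(x)^{-1}.C\bigr)\cap V_k\ \neq\ \emptyset.
\]

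Finally I would combine these pieces. Setting $z=D'_{-n}(x)^{-1}\in Z$ and applying Lemma~\ref{L:UV} gives $\alpha\circ q\bigl(D'_{-n}(x)^{-1}\bigr)>k$ for every $\alpha\in\Pi$ and every $n\geq n_0(x,k)$. Since $q:Z\to\mathfrak{a}$ is a homomorphism and $h'_{-n}(x)=q(D'_{-n}(x))$, we have $q(D'_{-n}(x)^{-1})=-h'_{-n}(x)$, so
\[
    -\alpha(h'_{-n}(x))\ >\ k\qquad(n\geq n_0(x,k),\ \alpha\in\Pi).
\]
As $k$ is arbitrary, we conclude $-\alpha(h'_{-n}(x))\to+\infty$ $m'$-a.s., which is condition (2) of Lemma~\ref{L:Kesten} and therefore gives $\alpha(\Lambda')>0$. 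The main (and essentially only) obstacle was organizing the qualitative contraction \eqref{eq:D'conv} into a uniform quantitative statement using the compact set $C$ of \eqref{eq:X'}; the role of passing to the induced system $X'$ was precisely to obtain this uniformity, and Lemma~\ref{L:UV} is the geometric engine that converts ``mass of $\eta$ stays off $L$ while the conjugated cocycle drives it into $\delta_{eP}$'' into the simple-root positivity required.
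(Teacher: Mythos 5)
Your proposal is correct and takes essentially the same route as the paper: apply Lemma~\ref{L:UV} to the compact set $C$ from \eqref{eq:X'}, show that for large $n$ both $D'_{-n}(x)^{-1}.C$ and $V_k$ carry mass $>\half$ of $D'_{-n}(x)^{-1}_*\eta(T'^{-n}x)$ and hence intersect, deduce $-\alpha\circ h'_{-n}(x)>k$, and close with Lemma~\ref{L:Kesten}. The only deviation is presentational: you invoke the portmanteau inequality for the open sets $V_k$ directly, whereas the paper tests the weak* convergence \eqref{eq:D'conv} against continuous bump functions $f_k$ with $f_k(eP)=1$ and $\supp f_k\subset V_k$; the two are interchangeable.
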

The claim is equivalent to the statement that $\alpha(\Lambda')>0$ for every $\alpha\in \Pi$.
We apply Lemma~\ref{L:UV}
to the subset $C\subset G/P$ chosen in the definition of $X'$ in Equation~\eqref{eq:X'}
and find a sequence of open neighborhoods $eP \in V_k \subset G/P$ 
such that 
%$\cap_k V_k=\{eP\}$ and 
for every $z\in Z$,
	\begin{equation} \label{eq:3.4}
		(z.C)\cap V_k\ne \emptyset
		\qquad\Longrightarrow\qquad
		\alpha\circ q(z)>k.
	\end{equation}
For every $k$, we let $f_k:G/P\to [0,1]$
be a continuous function with 
\[
    f_k(eP)=1,\qquad \supp(f_k)\subset V_k. 
\]
Fix $k\in\bbN$.
Applying \eqref{eq:D'conv} to $f_k$ we get, for $m'$-a.e $x\in X'$,
\[
    \lim_{n\to\infty} \int_{G/P} f_k(D'_{-n}(x)^{-1}.\xi)\dd\eta(T'^{-n}x)(\xi)=f_k(eP)=1.
\] 
Thus, for $m'$-a.e $x\in X'$ there exists $n(k,x)$ such that 
for every $n\ge n(k,x)$ we have
\[
	\int_{G/P} f_k(D'_{-n}(x)^{-1}.\xi)\dd\eta(T'^{-n}x)(\xi)>\frac{1}{2}.
\]
However, by the definition of $X'$ in Equation~\eqref{eq:X'}, for a.e $x\in X'$, 
the $\eta(x)$-measure of $C$ is greater than $1/2$, and
therefore, since $f_k\leq 1$, we get for every $n$,
\[
    \int_{G/P\setminus C} f_k(D'_{-n}(x)^{-1}.\xi)\dd\eta(T'^{-n}x)(\xi) 
    \leq \eta(x)(G/P\setminus C) <\frac{1}{2}.
\]
We conclude for $n\ge n(k,x)$ we must have
\[
    \int_{C} f_k(D'_{-n}(x)^{-1}.\xi)\dd\eta(T'^{-n}x)(\xi)  > 0.
\]
As $f_k$ is supported in $V_k$ it follows that $D'_{-n}(x)^{-1}C\cap V_k\ne \emptyset$
for all $n\ge n(k,x)$.
Thus, by Equation~\eqref{eq:3.4}, we get
\[ 
    -\alpha\circ h'_{-n}(x)=\alpha\circ q(D'_{-n}(x)^{-1})>k. 
\]
As $k\in\bbN$ was arbitrary, we conclude that for $m'$-a.e $x\in X'$, 
\[
    \lim_{n\to\infty} -\alpha\circ h'_{-n}(x)=\infty.
\]
Thus, by Lemma \ref{L:Kesten} we get that the expression in \eqref{eq:a++} is strictly 
positive for every $\alpha\in\Pi$.
Therefore  $\Lambda'\in\mathfrak{a}^{++}$, as claimed.

\begin{claim}
    $\Lambda'=\Lambda_{D'}$.
\end{claim}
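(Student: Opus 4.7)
The plan is to exploit the fact that $D'$ takes values in the centralizer $Z$ of the maximal split torus rather than in the full group $G$, which makes the Cartan projection essentially linear along the cocycle trajectory, once we know a priori that the linearization lands in the positive chamber.

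First, I would use that $D':X'\to Z$, combined with Remark~\ref{rem:kappa=q}, which asserts $\kappa(z)=\kappa(q(z))$ for every $z\in Z$, and moreover $\kappa(z)=q(z)$ whenever $q(z)\in \mathfrak{a}^+$. Since $q:Z\to\mathfrak{a}$ is a continuous homomorphism from \eqref{eq:q} into the abelian group $\mathfrak{a}$, the composition $h'=q\circ D'$ generates an \emph{additive} $\bbZ$-cocycle
\[
    h'_n(x)=h'(x)+h'(T'x)+\dots+h'(T'^{n-1}x)=q(D'_n(x)).
\]
This is the key structural simplification: on the orbit of a single point, composition in $Z$ becomes addition in $\mathfrak{a}$ modulo the compact factor $M$, which is invisible to $\kappa$.

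Next, since $h'\in L^1(X',m';\mathfrak{a})$ and $(X',m',T')$ is ergodic, Birkhoff's ergodic theorem gives $m'$-a.e.\ and $L^1$-convergence
\[
    \frac{1}{n}\,h'_n(x)\ \overto{}\ \Lambda'\in\mathfrak{a}.
\]
By the previous claim already established, $\Lambda'\in\mathfrak{a}^{++}$. Since $\mathfrak{a}^{++}$ is open, for $m'$-a.e.\ $x\in X'$ there exists $N(x)$ such that $n^{-1}h'_n(x)\in\mathfrak{a}^{++}$, hence $h'_n(x)\in\mathfrak{a}^+$, for all $n\ge N(x)$. Applying Remark~\ref{rem:kappa=q} we then get the pointwise identification
\[
    \kappa(D'_n(x))=q(D'_n(x))=h'_n(x)\qquad (n\ge N(x)).
\]

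Finally, dividing by $n$ and passing to the limit, we obtain $m'$-a.e.
\[
    \lim_{n\to\infty}\frac{1}{n}\kappa(D'_n(x))=\lim_{n\to\infty}\frac{1}{n}h'_n(x)=\Lambda'.
\]
On the other hand, the integrability of $D'$ (inherited from $F'$ through the bounded conjugation by $s$ on $X'$, as used already in Lemma~\ref{L:conjugation}) allows us to invoke Corollary~\ref{C:Oseledets}, which asserts that the left-hand side converges a.e.\ to $\Lambda_{D'}$. Comparing the two limits yields $\Lambda_{D'}=\Lambda'\in\mathfrak{a}^{++}$, as claimed. There is no serious obstacle here once the positivity $\Lambda'\in\mathfrak{a}^{++}$ is in hand; the only mildly subtle point is ensuring that the Weyl-chamber ambiguity in the Cartan projection does not intervene, and this is precisely what the interiority of $\Lambda'$ rules out asymptotically.
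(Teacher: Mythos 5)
Your proof is correct and follows essentially the same route as the paper's: Birkhoff applied to the additive cocycle $h'_n = q\circ D'_n$, openness of $\mathfrak{a}^{++}$ to get $h'_n(x)$ eventually in the positive chamber, and Remark~\ref{rem:kappa=q} to identify $q(D'_n(x))$ with $\kappa(D'_n(x))$ for $n$ large. The only cosmetic differences are that you invoke Corollary~\ref{C:Oseledets} explicitly for the $L^1$/a.e.\ convergence defining $\Lambda_{D'}$ and spell out the additivity of $h'_n$, neither of which changes the argument.
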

By Birkhoff's Ergodic Theorem, for $m'$-a.e $x\in X'$ ,
\[
    \lim_{n\to\infty} \frac{1}{n}q(D'_n(x))=	\lim_{n\to\infty}\frac{1}{n}h'_n(x)	=
\int_{X'} h'(x)\dd m'(x)= \Lambda' \in \mathfrak{a}^{++}.
\]
Since $\mathfrak{a}^{++}$ is open in $\mathfrak{a}$, we get that for $n$ large enough 
(depending on $x$), $q(D'_n(x))\in\mathfrak{a}^{++}$.
Thus by Remark~\ref{rem:kappa=q}, $q(D'_n(x))=\kappa(D'_n(x))$. 
We conclude that  
\[
    \Lambda_{D'}=\lim_{n\to\infty} \frac{1}{n}\kappa(D'_n(x))
    =\lim_{n\to\infty} \frac{1}{n}q(D'_n(x))=\Lambda'.
\]
Finally, combining our previous remarks we deduce
\[
    \Lambda_F=m(X')\cdot \Lambda_{F'}=m(X')\cdot \Lambda_{D'}=m(X')\cdot\Lambda' \in \mathfrak{a}^{++}.
\]
This completes the proof of the simplicity of the Lyapunov spectrum $\Lambda_F$.

\bigskip

\subsection{Proof of Theorem~\ref{T:main}.(2) -- continuity.}
\label{sub:proof_of_theorem_main_continuity}\hfill{}\\
Let us first establish a formula for $\Lambda_F$.
\begin{prop} \label{P:Iwasawa}
Let $(X,\calX,m,T,\Gamma,w)$ be an Apafic Greg.
Let $G$ be a connected semisimple real Lie group with finite center
and $\sigma:G\times G/P \to \mathfrak{a}$ be the corresponding Iwasawa cocycle, 
as in \S\ref{sub:cartan}, Equation~\eqref{e:Iwasawa}.
Let $\rho:\Gamma\to G$ be an integrable Zariski dense representation,
set $F=\rho\circ w:X\to G$, let $\phi_+:X_+ \to G/P$ be the $F$-map given in Corollary~\ref{C:boundary}
and set $\phi=\phi_+\circ p_+:X\to G/P$.
Then we have
\[ \Lambda_F=\int_{X} \sigma(F(x),\phi(x))\dd m(x). \]
\end{prop}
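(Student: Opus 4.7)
The plan is to realize $\sigma(F_n(x),\phi(x))$ as a Birkhoff sum via the cocycle identity of the Iwasawa cocycle, apply Birkhoff's ergodic theorem to obtain its asymptotic, and then identify the limit with $\Lambda_F$ using the simple spectrum from part~(1) together with the role of $\phi_+$ as the forward attractor.

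First, iterating $\sigma(gh,\xi) = \sigma(g,h\xi) + \sigma(h,\xi)$ and using the $F$-equivariance $\phi(T^k x) = F_k(x)\phi(x)$ (iterated from $\phi \circ T = F \cdot \phi$), one gets
\[
    \sigma(F_n(x),\phi(x)) = \sum_{k=0}^{n-1} \sigma(F(T^k x), \phi(T^k x)).
\]
By Lemma~\ref{l:sigmabound}, $\|\sigma(g,\xi)\|$ is dominated by a word-class length function on $G$, so integrability of $F$ yields $\sigma(F,\phi) \in L^1(X, m; \mathfrak{a})$. Since $(X,m,T)$ is ergodic under the Apafi assumption (Lemma~\ref{L:EIPFME}), Birkhoff's ergodic theorem gives $n^{-1}\sigma(F_n,\phi) \to \int_X \sigma(F,\phi)\,dm$ $m$-a.e.\ and in $L^1$.

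Second, by part~(1) the Lyapunov spectrum is simple, so Kingman's theorem (Theorem~\ref{T:Kingman}, Corollary~\ref{C:Oseledets}) applied to the Cartan projection gives $n^{-1}\kappa(F_n(x)) \to \Lambda_F$ $m$-a.e., with $\Lambda_F \in \mathfrak{a}^{++}$. It remains to show $\kappa(F_n) - \sigma(F_n,\phi) \to 0$ a.e. By Kostant convexity (Theorem~\ref{t:kostant}), $\sigma(g,\xi)\in\mathrm{Conv}(W\kappa(g))$ for every $g,\xi$, so $\kappa(g)-\sigma(g,\xi)\geq 0$ in the order on $\mathfrak{a}$, with equality at the maximizing direction $\xi^*(g)\in G/P$. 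Under simple spectrum, $\xi^*(F_n(x))$ depends only on $F_n(x)$, is $X_+$-measurable, and by the uniqueness of the $X_+$-measurable $F$-equivariant map to $G/P$ (Remark~\ref{R:XxGPdynamics}), $\xi^*(F_n(x)) \to \phi_+(x)$ in $G/P$. The exponential separation of singular values of $F_n(x)$ forces the difference $\kappa(F_n)-\sigma(F_n,\phi_+)$ to decay, giving $n^{-1}\sigma(F_n,\phi)\to\Lambda_F$ a.e. Combining with the Birkhoff limit yields $\int_X\sigma(F,\phi)\,dm=\Lambda_F$.

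The main technical obstacle is the quantitative convergence $\xi^*(F_n(x))\to\phi_+(x)$ and the consequent decay of $\kappa(F_n)-\sigma(F_n,\phi_+)$. A cleaner alternative, which also serves as a concrete implementation, uses the conjugation from the proof of part~(1): with $s:X\to G$ a measurable reduction of $\theta:X\to G/Z$ to $eZ$, one has $\phi_+(x) = s(x)^{-1}eP$, and unfolding the cocycle identity for $\sigma$ together with $\sigma(z,eP)=q(z)$ for $z\in Z$ (since $Z\subset P$ and Iwasawa of $Z=MA^o$) gives
\[
    \sigma(F(x),\phi_+(x)) = q(D(x)) + \alpha(Tx) - \alpha(x),
\]
where $D(x)=s(Tx)F(x)s(x)^{-1}\in Z$, $q$ is the projection~\eqref{eq:q}, and $\alpha(x)=\sigma(s(x)^{-1},eP)$. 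Telescoping yields $\sigma(F_n,\phi)=h_n+\alpha\circ T^n-\alpha$ with $h=q\circ D$; applying Birkhoff on the induced subsystem from part~(1) (where $|s|$ is bounded, so $h\in L^1$) yields $n^{-1}h_n\to\Lambda_F$ there, and Lemma~\ref{L:induced-Lya} transfers the Lyapunov relation to $X$, while the vanishing of $n^{-1}(\alpha\circ T^n-\alpha)$ a.e.\ follows a posteriori by subtracting the two independently established convergences.
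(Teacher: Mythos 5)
Your Route~2 is essentially the paper's own proof. They fix $s$ via the Cocycle Reduction Lemma so that $s(x)\theta(x)=eZ$, observe that $s(x)\phi(x)=eP$ for $\phi=\phi_+\circ p_+$, and then expand $h'(x)=q(D'(x))=\sigma(D'(x),eP)$ via the Iwasawa cocycle identity to get
\[
h'(x)=\sigma(s(T'x),\phi(T'x))+\sigma(F'(x),\phi(x))-\sigma(s(x),\phi(x))
\]
on the induced system $X'$ where $|s|$ is bounded. Integrating over $m'$ cancels the $T'$-coboundary at once, giving $\Lambda'=\int_{X'}\sigma(F',\phi)\,dm'$, and then $\int_X\sigma(F,\phi)\,dm=m(X')\Lambda'=\Lambda_F$ by Lemma~\ref{L:induced-Lya} and the equality $\Lambda'=\Lambda_{D'}$ established in the proof of part~(1). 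Your phrasing "$n^{-1}h_n\to\Lambda_F$ there" is off by a normalization (on $X'$ the correct limit is $\Lambda'=m(X')^{-1}\Lambda_F$), and your "a posteriori subtraction'' of two a.e.\ convergences is more roundabout than the paper's direct integration that kills the coboundary, but the substance is the same argument.

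Route~1 has a genuine gap. The deduction that $\xi^*(F_n(x))\to\phi_+(x)$ "by uniqueness of the $X_+$-measurable $F$-equivariant map'' does not work: each $x\mapsto\xi^*(F_n(x))$ is indeed $X_+$-measurable, but none of them is $F$-equivariant (it is built from the Cartan decomposition of $F_n(x)$, which is not intertwined by the skew product), and neither is the limit. In fact the Cartan-maximizing flag is the flag of singular directions, whose limit involves orthogonal complements that general linear maps do not preserve, so it is not of the form addressed by Remark~\ref{R:XxGPdynamics}. Moreover, even granting pointwise convergence of these flags to $\phi_+$, passing from that to the vanishing of $\kappa(F_n)-\sigma(F_n,\phi)$ requires a quantitative convergence rate (faster than exponential separation of exponents) that you do not supply. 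Note also that the paper's own statement of Oseledets' theorem asserts $n^{-1}\sigma(F_n(x),\xi)\to\Lambda_F$ only for $\xi$ \emph{in general position} with $\xi_+(x)$; applying it at $\xi=\phi$ is exactly the delicate point, and this is precisely what the conjugation to $Z$ in Route~2 resolves cleanly.
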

\begin{proof}
We use freely the setting and discussion of the previous section, \S\ref{sub:proof_of_theorem_ref_t_main}.
Note that by the Cocycle Reduction Lemma~\ref{l:reduction}
we have 
for a.e $x\in X$, $s(x)\theta(x)=eZ$.
By Corollary~\ref{C:boundary}, $\phi=\phi_+\circ p_+=\pr_+\circ \theta$, thus we get for a.e $x\in X$, $s(x)\phi(x)=eP$.
Applying the cocycle property of $\sigma$ to the equation $D'(x)=s(T'x)\cdot F'(x)\cdot s(x)^{-1}$ 
we get
\[ 
    \begin{split}
        h'(x)&=q(D'(x))=\sigma(D'(x),eP)= \sigma(D'(x),s(x)\phi(x))\\
        &=\sigma(s(T'x),F'(x)\phi(x))+\sigma(F'(x),\phi(x))+\sigma(s(x)^{-1},s(x)\phi(x))\\
        &=\sigma(s(T'x),\phi(T'x))+\sigma(F'(x),\phi(x))-\sigma(s(x),\phi(x)).
    \end{split}
\]
Therefore, as $T'$ is measure preserving, we get
\[
		\Lambda'=\int_{X'} h'(x)\dd m'(x)=\int_{X'} \sigma(F'(x),\phi(x))\dd m'(x)
\]
and using $m'=m(X')^{-1}\cdot m|_{X'}$ and applying the cocycle equation to $F'(x)=F_{n(x)}(x)$ we easily deduce
\[
		\Lambda'= \frac{1}{m(X')} \cdot \int_{X} \sigma(F(x),\phi(x))\dd m(x).
\]
Thus, by $\Lambda_F=m(X')\cdot \Lambda'$, we indeed get
\[ \Lambda_F=\int_{X} \sigma(F(x),\phi(x))\dd m(x). \]
\end{proof}

\medskip

Let us now prove part (2) of Theorem~\ref{T:main}.
Let $\Sigma\subset \Hom(\Gamma,G)$ be a uniformly integrable collection of representations
and $\rho\in \Sigma$ be a Zariski dense one.  
As $\Hom(\Gamma,G)$ is metrizable we argue to show that for a sequence of representations $\rho_i\in \Sigma$
which converges to $\rho$, 
we have $\|\Lambda_F-\Lambda_{F_i}\|_\mathfrak{a}\to 0$,
where $F_i=\rho_i\circ w$, $F=\rho\circ w$ and $\|-\|_\mathfrak{a}$ is a fixed norm on $\mathfrak{a}$.
Using Proposition~\ref{p:Zopen} 
we assume as we may that all the representations $\rho_i$ are Zariski dense.

We let 
$(\phi_+)_i,\phi_+:X_+ \overto{} G/P$ be, coresspondingly, the $F_i$-map and $F$-map given by Corollary~\ref{C:boundary}
and view these maps as elements of $Q=\Map(X_+,\Prob(G/P))$ using the Dirac map $\delta:G/P\to \Prob(G/P)$.
We endow $Q$ with the topology of converges in measure which is the same as the weak*-topolgy associated with its identification 
with a subset of $L^1(X_+,C(G/P))^*$.
We claim that the sequence $(\phi_+)_i$ converges to $\phi_+$ in the compact convex space $Q$.
By compactness and a sub-sub-sequence argument, it is enough to assume that the sequence $(\phi_+)_i$ converges
to some map $\phi'_+$ and show that $\phi'_+=\phi_+$.
But since $\phi'_+$ is $F$-equivariant, it must equal $\phi_+$ by 
the uniqueness statement in Corollary~\ref{C:boundary}.

We set $\phi_i=(\phi_+)_i\circ p_+$ and $\phi=\phi_+\circ p_+$ and conclude that the sequence $\phi_i$
converges to $\phi$ in the weak*-topology of $L^1(X,C(G/P))^*$.
Note that the map $x \mapsto \sigma(F(x),\cdot)$ is in $L^1(X,C(G/P,\mathfrak{a}))$ by 
Lemma~\ref{l:sigmabound} and the integrability of $F$.
We thus have, for every simple root $\alpha\in \Pi$, that
$x \mapsto \alpha(\sigma(F(x),\cdot))$ is in $L^1(X,C(G/P))$, 
and we deduce the convergence
\[ 
    \int_{X} \alpha(\sigma(F(x),\phi_i(x)))\dd m(x) 
    \to \int_{X} \alpha(\sigma(F(x),\phi(x)))\dd m(x). 
\]
We conclude that
\begin{equation} \label{eq:half1}
\left\|\int_{X} \sigma(F(x),\phi_i(x))\dd m(x) - \int_{X} \sigma(F(x),\phi(x))\dd m(x)\right\|_\mathfrak{a}\ \overto{}\ 0.
\end{equation}

Using Lemma~\ref{l:sigmabound} again, we get that the maps
$x \mapsto \sigma(F_i(x),\cdot)$ are uniformly integrable in $L^1(X,C(G/P,\mathfrak{a}))$.
By the converges $\rho_i\to \rho$ we have the convergence in measure of this sequence of maps 
to the function $x \mapsto \sigma(F_i(x),\cdot)$,
and we deduce the corresponding convergence in $L^1(X,C(G/P,\mathfrak{a}))$,
\[ 
    \int_{X} \sup_{\xi\in G/P}  \left\|\sigma(F_i(x),\xi) - \sigma(F(x),\xi)\right\|_\mathfrak{a} 
    \dd m(x)\ \overto{}\ 0, 
\]
and in particular
\[ 
    \int_{X}  \left\|\sigma(F_i(x),\phi_i(x)) - \sigma(F(x),\phi_i(x))\right\|_\mathfrak{a}
    \dd m(x)\ \overto{}\ 0.
\]
Therefore
\begin{equation} \label{eq:half2}
\left\|\int_{X}  \sigma(F_i(x),\phi_i(x)) - \sigma(F(x),\phi_i(x)) \dd m(x)\right\|_\mathfrak{a}\ \overto{}\ 0.
\end{equation}
Using Proposition~\ref{P:Iwasawa} we get by \eqref{eq:half1} and \eqref{eq:half2},
\[ 
    \begin{split}
        \lim_{i\to\infty} &\|\Lambda_{F_i}-\Lambda_{F}\|_\mathfrak{a} \\
        &=\lim _{i\to\infty}\left\| \int_{X} \left(\sigma(F_i(x),\phi_i(x)) -
        \sigma(F(x),\phi(x))\right)\dd m(x)\right\|_\mathfrak{a}=0.
    \end{split}
\]
This completes the proof.

\subsection{Proof of Theorem~\ref{T:main}.(3) -- positivity of the drift} 
	\label{sub:proof_of_theorem_positivity_of_the_drift}\hfill{}\\
        Note that for $G=\SL_d(\bbR)$ for 
        $g\in G$ one has $\log\|g\|=\|\kappa(g)\|_{\mathfrak{sl}_d}$.
        For a general semi-simple $G$ and a choice of length function $|-|$ on it,
        there are constants $0<c_1\le c_2<\infty$ so that
        $c_1 |g|\le \|\kappa(g)\|_{\mathfrak{a}}\le c_2|g|$.
        Given an integrable $F:X\to G$ over an ergodic system $(X,m,T)$,
        Kingman's theorem ensures that there is an a.e. (and $L^1$) 
        convergence to a constant
        \[
            \lim_{n\to\infty}\frac{1}{n}|F_n(x)|=\lambda.
        \]
        Positivity of the drift, i.e. $\Lambda_F\ne 0$, is equivalent to $\lambda>0$.
             
	Our goal is to show that if $\rho:\Gamma\to  G$ is a homomorphism
	whose image $\rho(\Gamma)$ is not contained in a solvable by compact extension
	in $G$, then the map $F=\rho\circ w:X\to  G$ 
	has positive drift.
	
	Let $H$ denote the Zariski closure of $\rho(\Gamma)$ in $G$. 
        Then $H$ contains a connected normal subgroup of finite index $H_0\normal H$ 
        that is a semi-direct product
	$H_0=L\ltimes S$ of a semisimple Lie group $L$ and a solvable group $S$.
	Note that $L$ cannot be compact, for then $\rho(\Gamma)$ 
        would be contained in a compact extension
	$H$ of a solvable group $S$, contradicting our assumption. 
	The non-compact connected semisimple Lie group $L$ admits an epimorphism
        onto a non-compact simple Lie group $G_0$ with trivial center, so we have
        an epimorphism
	\[
		\pi:H_0\overto{} L\overto{}  G_0.
	\]
        We denote by $|-|$ a length function on $G$, and by $|-|_0$ 
        a choice of a length function on $G_0$.
        Then there exists some $c_0>0$ so that for $h\in H_0$ we have 
        \begin{equation}\label{e:subquo}
            |h|\ge c_0\cdot|\pi(h)|_0 
        \end{equation}
        In the situation where $H=H_0$, the representation
        $\pi\circ \rho:\Gamma\overto{} H_0\overto{} G_0$
        is Zariski dense, and simplicity of the spectrum 
        (see \S\ref{sub:proof_of_theorem_ref_t_main} above) 
        for the map $F_0:X\overto{w} \Gamma\overto{\rho} H_0\overto{\pi} G_0$ 
        implies positivity of its drift and therefore 
        implies positivity of the drift for 
        $F=\rho\circ w$ using the inequality (\ref{e:subquo}).

        To deal with the case where $H$ is not connected, and therefore 
        $H_0$ is a proper subgroup of finite index in $H$,
        we will use a cocycle trick.
        Let $\Lambda=\rho^{-1}(H_0)$. This is a normal subgroup of finite 
        index in $\Gamma$. 
        Consider the cocycle 
        \[
            c:\Gamma\times \Gamma/\Lambda\overto{} \Lambda,
            \qquad 
            c(\gamma', \gamma\Lambda)
            =s(\gamma'\gamma\Lambda)^{-1} \gamma' s(\gamma\Lambda),
        \]
        where $s:\Gamma/\Lambda\to\Gamma$ is a cross-section,
        i.e. a choice of representatives in each coset.
        Then the cocycle 
        \[
		\rho_0:\Gamma\times \Gamma/\Lambda\overto{c} \Lambda\overto{\rho} H_0\to  L\overto{\pi} G_0
	\]
	is Zariski dense. Thus [to be justified in the updated version with cocycles]
        implies that the $G_0$-valued sequence :
        \[
            \rho_0(w_n(x),\gamma\Lambda)=\pi\circ \rho\left(s(w_n(x)\gamma\Lambda)^{-1}w_n(x)s(\gamma\Lambda)\right)
        \]
        has simple spectrum. In particular, it has positive drift:
        \[
            \lim_{n\to\infty} \frac{1}{n} \left|\rho_0(w_n(x),\gamma\Lambda)\right|_0=\lambda_0>0.
        \]
        For $m$-a.e. $x\in X$ and $\gamma\Lambda\in\Gamma/\Lambda$ we have the same limit
        \[
            \lambda=\lim_{n\to\infty} \frac{1}{n} \left|\rho\left(s(w_n(x)\gamma\Lambda)^{-1}w_n(x)s(\gamma\Lambda)\right)\right|
        \]
        It follows from (\ref{e:subquo}) that $\lambda\ge c_0 \lambda_0>0$.
        In view of the "triangle inequality" $|abc|\ge |b|-|a|-|c|$,
        and denoting $C=\max_{\gamma\Lambda} |s(\gamma\Lambda)|$ (recall $\Gamma/\Lambda$ 
        is finite), we have
        \[
            |F_n(x)|=|\rho(w_n(x)|\ge \left|\rho\left(s(w_n(x)\gamma\Lambda)^{-1}w_n(x)s(\gamma\Lambda)\right)\right|-2C.
        \]
        Therefore 
        \[
            \lim_{n\to\infty} \frac{1}{n} \left|F_n(x)\right|=\lambda\ge c_0\lambda_0>0.
        \]
        This proves positivity of the drift for the original system.
	\bigskip

        The proof of Theorem~\ref{T:main} is completed.
	
\bigskip

\section{Real Gregs}
\label{sec:realgregs}

In this paper we study thoroughly the notion of Greg, which is built up over a $\mathbb{Z}$-action. 
In this section we discuss the analogous notion of a \textit{real Greg}, which is built up over an $\mathbb{R}$-action, and we develop a machinery of passing between these two notions.
At the moment, we hold the temptation to develop the theory of real Gregs further in an independent manner.
Rather, we will associate with any given real Greg some standard (integer) Gregs.
We will do it in two ways. One is simply by restricting the $\mathbb{R}$-action to 
a $\mathbb{Z}$-action.
For the other way we will use the theory of \textit{cross sections}.

A real Greg is a tuple 
$(Y,\calY,\mu,\phi,c,\Gamma)$,
where $(Y,\calY,\mu)$ is a probability measure space, 
\[ \phi:\mathbb{R}\times Y \to Y, \quad (t,y) \mapsto \phi^t(y) \]
is a measure preserving $\mathbb{R}$-action, $\Gamma$ is a locally compact second countable group and 
\[ c:\mathbb{R}\times Y \to \Gamma, \quad (t,y) \mapsto c_t(y) \]
is a measurable cocycle,
that is 
$c_{t+s}(y)=c_s(\phi^t(y))c_t(y)$ for all $t,s\in\bbR$ 
and $\mu$-a.e. $y\in Y$.

Next, we consider a representation $\rho:\Gamma \to G$ into a connected semi-simple real Lie group G with finite center.
We form the measurable $G$-valued cocycle 
\[ \Phi=\rho\circ c:\mathbb{R}\times Y \to G, \quad (t,y)\mapsto \Phi_t(y). \]
Assuming ergodicity of the system $(Y,\calY,\mu,\phi)$ and the integrability condition
\begin{equation}\label{e:L1-for-flow}
    \int_Y\sup_{t\in[0,1]}|\Phi_t(y)|\dd\mu(y)<\infty,
\end{equation}
the flow version of Oseledets theorem implies that the limit
\begin{equation} \label{eq:realLyap}
    \Lambda_Y=\lim_{t\to\infty}\frac{1}{t}\cdot \kappa(\Phi_t(y))
\end{equation}
exists and is $\mu$-a.e. constant element of $\mathfrak{a}^+$.

Assume given a real Greg $(Y,\calY,\mu,\phi,c,\Gamma)$.
For any fixed $t\in \mathbb{R}^*$, we consider the transformation $\phi^t:Y\to Y$
and the map $c_t:Y\to \Gamma$, thus forming an integer Greg $(Y,\calY,\mu,\phi^t,c_t,\Gamma)$.
We regard it as the \textit{$t$-discretization Greg}.
If $(Y,\calY,\mu,\phi)$ is weakly mixing then the $t$-discretization is ergodic.
Note that the condition \eqref{e:L1-for-flow} implies the integrability of $|\Phi_t|$ for every $t\in \mathbb{R}$.
We thus may consider the associated Lyapunov spectrum of $(Y,\calY,\mu,\phi^t,c_t,\Gamma)$, which we will denote by $\Lambda_t$.

\begin{lemma} \label{lem:tdiscretization}
    Assume the real Greg $(Y,\calY,\mu,\phi,c,\Gamma)$ is weakly mixing and 
    let $\rho:\Gamma \to G$ be a representation into a connected semi-simple real Lie group G with finite center which satisfies the integrability condition \eqref{e:L1-for-flow}.
    Then $\Lambda_t=t\cdot \Lambda$.
\end{lemma}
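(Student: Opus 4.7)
The plan is to reduce everything to an identification between the cocycle of the $t$-discretization and the flow cocycle sampled at times $nt$, then invoke the flow version of Oseledets' theorem recorded in \eqref{eq:realLyap}.

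First I would unwind the definitions. Iterating the cocycle relation $c_{s+t}(y) = c_s(\phi^t y)\, c_t(y)$ yields by induction on $n \ge 0$ that $(c_t)_n(y) = c_{nt}(y)$ for $\mu$-a.e.\ $y \in Y$. Postcomposing with $\rho$, the cocycle generated by $\rho \circ c_t$ over the integer Greg $(Y,\calY,\mu,\phi^t,c_t,\Gamma)$ is therefore $(\rho \circ c_t)_n(y) = \Phi_{nt}(y)$.

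Next I would verify the hypotheses needed for $\Lambda_t$ to be well-defined. Ergodicity of $(Y,\calY,\mu,\phi^t)$ follows from the weak mixing assumption, since a weakly mixing $\mathbb{R}$-action has ergodic time-$t$ map for every $t \neq 0$. Integrability of $|\rho \circ c_t| = |\Phi_t|$ follows from \eqref{e:L1-for-flow}: writing $t$ as a sum of length-one increments and using subadditivity of $|\cdot|$ together with $\phi$-invariance of $\mu$ bounds $\int_Y |\Phi_t|\,d\mu$ in terms of $\lceil t \rceil$ copies of $\int_Y \sup_{s \in [0,1]} |\Phi_s|\,d\mu$, which is finite by \eqref{e:L1-for-flow}.

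With these preliminaries in place, the conclusion is essentially a rewriting of \eqref{eq:realLyap}. Writing
\[
    \frac{1}{n}\,\kappa(\Phi_{nt}(y)) \;=\; t \cdot \frac{1}{nt}\,\kappa(\Phi_{nt}(y))
\]
and passing to the limit as $n \to \infty$, so that $nt \to \infty$ (using $t > 0$), the right-hand side converges $\mu$-a.e.\ to $t\Lambda$ by \eqref{eq:realLyap}. The left-hand side is, by the identification above, exactly the defining sequence for $\Lambda_t$. Hence $\Lambda_t = t \cdot \Lambda$. There is no substantive obstacle here; the only thing to keep honest is the integrability bookkeeping so that the integer Oseledets/Kingman theorem applies to the discretization. (The statement as phrased is most natural for $t > 0$; for $t < 0$ one has $t\Lambda \in -\mathfrak{a}^+$, and the analogous identity would involve the Cartan-style relation $\kappa(g^{-1}) = -w_{\mathrm{long}}\kappa(g)$ recalled in \S\ref{sub:cartan}.)
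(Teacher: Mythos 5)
Your proposal is correct and follows the same route as the paper: identify the discretized cocycle with $\Phi_{nt}$ and rescale the limit in \eqref{eq:realLyap}; the paper states just this rescaling, and you supply the routine verifications (ergodicity from weak mixing, integrability) that the paper leaves implicit. The remark about $t<0$ is a sensible clarification but not needed for the paper's use of the lemma.
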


\begin{proof}
$\Lambda_t=\lim_{n\to\infty}\frac{1}{n}\cdot \kappa(\Phi_{nt}(y))
    =t\cdot \lim_{n\to\infty}\frac{1}{nt}\cdot \kappa(\Phi_{nt}(y))=t\cdot \Lambda$.
\end{proof}

We recall that given a pmp $\mathbb{Z}$-system $(X,\calX,m,T)$ and an $L^1(X)$ function $r:X \to (0,\infty)$ satisfying $\sum_{n=1}^\infty r(T^n(x))=\infty$ and $\sum_{n=1}^\infty r(T^{-n}(x))=\infty$ one constructs the \textit{flow under the graph} of $r$ as follows.
We associate with $r$ an $\mathbb{R}$-valued cocycle on $X$ by setting 
\begin{equation} \label{eq:r_n}
	r_n(x):=\left\{\begin{array}{lll}
	r(T^{n-1}z)+\cdots +r(Tx)+r(x) & \textrm{if} & n>0,\\
	0 & \textrm{if} & n=0,\\
	-r(T^{-n}x)-\cdots -r(T^{-1}x) & \textrm{if} & n<0,
	\end{array}\right.
\end{equation}
which is an additive version of \eqref{e:f-cocycle}.
We define 
\[
    Y=X\times\bbR/(x,s)\sim (T^nx,s+r_n(x))
\]
and equip $Y$ with the normalized probability measure $\mu$ coming from $m\times m_\bbR$
on $X\times \bbR$, i.e. $\dd\mu(x,s)=(\int_X r\dd m)^{-1}\cdot \dd m(x)\dd s$.
Let $\phi^\bbR$ be the flow $\phi^t$ coming from $(x,s)\mapsto (x,s+t)$ on $X\times \bbR$.
One easily verifies that $(X,T)$ is ergodic iff $(Y,\phi^\bbR)$ is ergodic.

By the Ambrose-Kakutani theorem, \cite{Ambrose-Kakutani, Ambrose},
every pmp ergodic $\mathbb{R}$-action $(Y,\calY,\mu,\phi)$ 
is isomorphic to a flow under the graph for some pmp $\mathbb{Z}$-system $(X,\calX,m,T)$ and some $L^1(X)$ function $r:X \to (0,\infty)$ satisfying $\sum_{n=1}^\infty r(T^n(x))=\infty$ and $\sum_{n=1}^\infty r(T^{-n}(x))=\infty$.
Identifying $X$ with $X\times \{0\}$ we regard $X$ as \textit{Borel cross section} of $Y$ and the function $r$ is called \textit{the first return time} to $X$.
Given a locally compact second countable group $\Gamma$ and a measurable cocycle $c:\mathbb{R}\times Y \to \Gamma$ we define 
\begin{equation} \label{eq:wc}
w:X\to \Gamma, \quad w(x)=c_{r(x)}(x)    
\end{equation}
(for this to be well defined we assume as we may that $c$ is a strict cocycle, see \cite[Theorem B.9]{zimmer-book}). 
We thus obtain a Greg, $(X,\calX,m,T,w,\Gamma)$.
We regard it as the (integer) Greg associated with the real Greg for the given cross section $X\subset Y$.

 \begin{lemma}\label{L:Lya4flow}
    Assume the real Greg $(Y,\calY,\mu,\phi,c,\Gamma)$ is weakly mixing and 
    let $\rho:\Gamma \to G$ be a representation into a connected semi-simple real Lie group G with finite center which satisfies the integrability condition \eqref{e:L1-for-flow}.
Let $X\subset Y$ be a Borel cross section of $Y$, let $r$ be the associated first return time function and let $(X,\calX,m,T,w,\Gamma)$ be the associated (integer) Greg as discussed above.
Then $F=\rho\circ w$ is integrable, so the associated Lyapunov spectrum $\Lambda_F$ is defined, and we have 
    \[
        \Lambda_F=\left(\int_X r\dd m\right)\cdot \Lambda_{Y}.
    \]
\end{lemma}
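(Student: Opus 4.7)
The plan is to identify the discrete cocycle $F_n$ on $X$ with the continuous cocycle $\Phi$ sampled at the return times $r_n$, and then combine Birkhoff's theorem for $r$ with the continuous-time Oseledets convergence \eqref{eq:realLyap}. To start, observe that $Tx = \phi^{r(x)}(x)$ iterates to $T^n x = \phi^{r_n(x)}(x)$, so the cocycle identity $c_{s+t}(y) = c_s(\phi^t y)c_t(y)$ combined with $w(x) = c_{r(x)}(x)$ yields, by induction on $n$,
\[ w_n(x) = c_{r_n(x)}(x), \qquad \text{hence} \qquad F_n(x) = \Phi_{r_n(x)}(x) \]
for all $n \in \mathbb{Z}$ and $m$-a.e.\ $x \in X$.

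Next, to establish integrability of $F$, set $H(y) = \sup_{t\in[0,1]}|\Phi_t(y)|$ and $\tilde H(y) = \sup_{t\in[-1,1]}|\Phi_t(y)|$; both are in $L^1(Y,\mu)$, the latter because $\Phi_{-t}(y) = \Phi_t(\phi^{-t}y)^{-1}$ and $|g^{-1}|$ is comparable to $|g|$ up to a multiplicative constant. Decomposing $\Phi_{r(x)}(x)$ into $\lfloor r(x) \rfloor$ one-step factors times a remainder of length less than one, subadditivity of $|\cdot|$ gives $|F(x)| \le \sum_{k=0}^{\lfloor r(x) \rfloor} H(\phi^k x)$. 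For $s \in [k, k+1]$, the identity $\Phi_u(\phi^k x) = \Phi_{u+k-s}(\phi^s x)$ with $u + k - s \in [-1,1]$ yields $H(\phi^k x) \le \tilde H(\phi^s x)$, and therefore $H(\phi^k x) \le \int_k^{k+1} \tilde H(\phi^s x)\,ds$, so
\[ |F(x)| \le \int_0^{r(x)+1} \tilde H(\phi^s x)\, ds. \]
Integrating over $X$ and applying the Kakutani tower formula $\int_X \int_0^{r(x)} g(\phi^s x)\, ds\, dm = \bar r \int_Y g\, d\mu$ with $g = \tilde H$, together with $T$-invariance of $m$ to handle the overflow interval $[r(x), r(x)+1]$ via the identity $\int_{r(x)}^{r(x)+1} \tilde H(\phi^s x)\, ds = \int_0^1 \tilde H(\phi^s(Tx))\, ds$, yields $\int_X |F|\, dm < \infty$.

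Finally, by Birkhoff's theorem applied to $r \in L^1(X,m)$, for $m$-a.e.\ $x \in X$ one has $r_n(x)/n \to \bar r := \int_X r\, dm$, in particular $r_n(x) \to +\infty$. The continuous-time Oseledets statement \eqref{eq:realLyap} gives $\frac{1}{t}\kappa(\Phi_t(y)) \to \Lambda_Y$ on a $\mu$-conull set $G \subset Y$, which is $\phi^{\mathbb{R}}$-invariant: from $\Phi_t(\phi^s y) = \Phi_{t+s}(y)\Phi_s(y)^{-1}$ and $|\Phi_s(y)|<\infty$, the Cartan projections $\kappa(\Phi_t(\phi^s y))$ and $\kappa(\Phi_{t+s}(y))$ differ by $O(1)$ and have the same normalized limit. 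Using the tower decomposition $\mu = \bar r^{-1} m \otimes ds$, Fubini shows that for $m$-a.e.\ $x$ the fiber $\{s \in [0,r(x)) : (x,s) \in G\}$ has full Lebesgue measure, hence is non-empty, and $\phi^{\mathbb{R}}$-invariance of $G$ then forces $(x,0) \in G$. Combining with Step~1,
\[ \frac{1}{n}\kappa(F_n(x)) = \frac{r_n(x)}{n}\cdot \frac{\kappa(\Phi_{r_n(x)}(x))}{r_n(x)} \ \longrightarrow\ \bar r \cdot \Lambda_Y, \]
which gives $\Lambda_F = \bar r \cdot \Lambda_Y$ as claimed.

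The main obstacle is the integrability step: the flow hypothesis \eqref{e:L1-for-flow} controls a supremum over a fixed time window, while $F(x) = \Phi_{r(x)}(x)$ is sampled at unbounded return times. The tower formula absorbs the bulk contribution, but the overflow interval $[r(x), r(x)+1]$ straddles two consecutive towers and requires the $T$-invariance maneuver above. A secondary subtlety is transferring Oseledets convergence from a $\mu$-conull set on $Y$ to an $m$-conull set on the typically $\mu$-null cross section $X$, which is handled by the $\phi^{\mathbb{R}}$-invariance of the convergence set.
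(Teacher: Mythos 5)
Your proof takes a genuinely different and more direct route than the paper's: you exploit the identity $F_n(x)=\Phi_{r_n(x)}(x)$ together with Birkhoff and the flow Oseledets theorem, whereas the paper passes through an $\epsilon$-discretization, a thickened cross section $Y'=X_{[0,\epsilon)}$, and two applications of Lemma~\ref{L:induced-Lya}. Your steps for the convergence are correct and cleaner: the cocycle identity $F_n=\Phi_{r_n}$ is right, the $\phi^{\bbR}$-invariance of the flow-Oseledets conull set is right (via $\Phi_t(\phi^s y)=\Phi_{t+s}(y)\Phi_s(y)^{-1}$ and subadditivity of $\kappa$), the Fubini/tower argument for transferring from a $\mu$-conull set to an $m$-conull set on the cross section is right, and the final limit $\tfrac{1}{n}\kappa(F_n(x))=\tfrac{r_n(x)}{n}\cdot\tfrac{\kappa(\Phi_{r_n(x)}(x))}{r_n(x)}\to \bar r\,\Lambda_Y$ is right, modulo integrability.

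The integrability step, however, has a genuine gap, and it is exactly the obstacle you flag at the end. First, a local issue: the stated identity $\Phi_u(\phi^k x)=\Phi_{u+k-s}(\phi^s x)$ is false. The cocycle relation gives $\Phi_u(\phi^k x)=\Phi_{u+k-s}(\phi^s x)\,\Phi_{k-s}(\phi^s x)^{-1}$, so $H(\phi^k x)\le \tilde H(\phi^s x)$ does not hold; one only gets $H(\phi^k x)\le (1+C)\tilde H(\phi^s x)$ after using $|g^{-1}|\le C|g|$ and $k-s\in[-1,0]$, which is harmless. The serious gap is the overflow term. You reduce it via $T$-invariance to $\int_X\int_0^1 \tilde H(\phi^s x)\,\dd s\,\dd m(x)$, and then implicitly treat this as finite. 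But this quantity equals $\bar r\int_Y N(y)\tilde H(y)\,\dd\mu(y)$, where $N(y)$ counts the number of points of $X$ on the backward orbit segment $\{\phi^{-s}y:s\in[0,1]\}$. When $r$ is not bounded away from zero, $N$ is unbounded, and $N\cdot\tilde H$ need not lie in $L^1(\mu)$ even though $\tilde H$ does; the restriction of $\tilde H$ to a neighborhood of the $\mu$-null cross section $X$ is not controlled by the hypothesis \eqref{e:L1-for-flow}. So the Kakutani tower formula, which only controls $\int_X\int_0^{r(x)}\tilde H(\phi^s x)\,\dd s\,\dd m(x)$, does not by itself yield $\int_X|F|\,\dd m<\infty$. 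This is precisely why the paper first proves the lemma under the simplifying assumption that $r$ is bounded below by some $\epsilon_0>0$ and $\sup_{t\in[0,\epsilon_0]}|\Phi_t|$ is bounded on $X$ (in which case the overlap multiplicity is bounded), and then handles the general case by inducing on a positive-measure subset $X'\subset X$ where these bounds hold and invoking Lemma~\ref{L:induced-Lya}. Your argument needs a similar reduction, or some independent control of $\tilde H$ near $X$, to close the integrability step.
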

\begin{proof}
    Let us first make a simplifying assumption that $r(x)$ is bounded away from $0$
    and that $\Phi_t(x)$ is bounded for small $t$.
    More precisely, assume there is $\epsilon_0>0$ and $C$ so that
    $r(x)\ge \epsilon_0$ and $|\Phi_t(x)|\le C$ for $t\in[0,\epsilon_0]$ and $x\in X$.
Fix some $\epsilon\in (0,\epsilon_0)$ and consider the $\epsilon$-discretization integer Greg $(Y,\calY,\mu,\phi^\epsilon,c_\epsilon,\Gamma)$.
By Lemma~\ref{lem:tdiscretization}, we have 
    Then $\Lambda_\epsilon=\epsilon\cdot \Lambda_Y$.
In the sequel we denote $S=\phi^{\epsilon}$.

    As above, we identify $Y$ with the flow under the graph of $r$ associated with $X$.
    Consider the thickening 
    \[
        Y'=X_{[0,\epsilon)}=\setdef{\phi^t(x)}{x\in X,\ t\in[0,\epsilon)} \simeq X\times [0,\epsilon)
    \]
    of $X$. Setting $R=\int_X r\dd m$, we have $\mu(Y')=\epsilon/R$.
    Let 
    \[ n:Y\to \mathbb{N}, \quad n(y)=\min\{n\in \mathbb{N}\mid S^n(y)\in Y'\}\]
    be the first return time to $Y'$ and let $S'$ be the transformation on $Y'$ induced by $S$, that is $S'(y)=S^{n(y)}(y)$.
    Denoting 
    \[ F':Y' \to G, \quad F'(y)=\Phi_{n(y)\epsilon}(y) \]
    and setting
    \[ F'_n(y)=F'((S')^{n-1}(y))\cdots F'(S'(y))F'(y), \]
    we get by Lemma~\ref{L:induced-Lya} that $F'$ is integrable and 
    \[ \Lambda'= \frac{R}{\epsilon} \cdot \Lambda_\epsilon = \frac{R}{\epsilon} \cdot \epsilon \Lambda_Y = \left(\int_X r\dd m\right) \cdot \Lambda_Y, \]
    where $\Lambda'$ is the Lyapunov spectrum of $F'$.

    Fix $y\in Y'$. Then $y=\phi^{s_0}(x)$ for some $x\in X$ and $s_0 \in [0,\epsilon)$.
    Note that $r(x)-s_0=\min\{t\in (0,\infty) \mid \phi^t(y)\in Y'\}$,
    thus $n(y)\epsilon=\lceil r(x)-s_0 \rceil$ and we have $n(y)\epsilon-(r(x)-s_0)<\epsilon$.
    We denote $s_1=n(y)\epsilon-(r(x)-s_0)$, thus $n(y)\epsilon+s_0=s_1+r(x)$ and $s_1\in [0,\epsilon)$.
    We get
    \[
        \begin{split}
        S'(y)& = S^{n(y)}(y)=\phi^{n(y)\epsilon}(y)=\phi^{n(y)\epsilon+s_0}(x) \\ & =\phi^{s_1+r(x)}(x)=\phi^{s_1}\phi^{r(x)}(x)=\phi^{s_1}(T(x)).
        \end{split}
    \]
    Inductively, we find for every $m\in\mathbb{N}$, $s_m\in [0,\epsilon)$
    such that $n(y)\epsilon+s_{m}=s_{m+1}+r(T^m(x))$ and
    conclude that $(S')^m(y)=\phi^{s_m}(T^m(x))$, thus 
    \[ 
    \begin{split}
    & F'((S')^{m}(y))\Phi_{s_m}(T^m(x))=F'(\phi^{s_m}(T^m(x)))\Phi_{s_m}(T^m(x)) \\ 
    & =\Phi_{n(y)\epsilon}(\phi^{s_m}(T^m(x)))\Phi_{s_m}(T^m(x))
    =\Phi_{n(y)\epsilon+s_m}(T^m(x)) \\
    & =\Phi_{s_{m+1}+r(T^m(x))}(T^m(x))=
    \Phi_{s_{m+1}}(\phi^{r(T^m(x))}T^m(x))
    \Phi_{r(T^m(x))}(T^m(x)) \\
    & =\Phi_{s_{m+1}}(T^{m+1}(x))
    F(T^m(x)).
    \end{split}
    \] 
    For the last equation we used the equation $F(x)=\rho\circ w(x)=\rho(c_{r(x)}(x))=\Phi_{r(x)}(x)$.
We get 
\[ F(T^m(x))=\Phi_{s_{m+1}}(T^{m+1}(x))^{-1}F'((S')^{m}(y))\Phi_{s_m}(T^m(x)),
\]
hence
\[ 
\begin{split}
F_n(x) & =F(T^{n-1}(x))\cdots F(T(x))F(x) \\
& = \Phi_{s_{n}}(T^{n}(x))^{-1}F'((S')^{n-1}(y))\cdots F'(S'(y))F'(y) \Phi_{s_0}(x) \\
& = \Phi_{s_{n}}(T^{n}(x))^{-1}F'_n(y)\Phi_{s_0}(x).
\end{split}
\]
By \eqref{e:L1-for-flow} and Proposition~\ref{P:Cartan-subadd}, we conclude that there is a constant $C$ such that 
    \[ \|\kappa(F_n(x))-\kappa(F'_n(y))\|_{\mathfrak{a}}<C.\]
    Specializing to $n=0$, we get that $F$ is integrable,
    thus 
    \[ \Lambda_F=\lim_{n\to \infty} \frac{1}{n}\kappa(F_n(x)) \]
    is defined and it is an a.e constant,
    and by picking $y$ generic, we conclude that
\[ \Lambda_F=\lim_{n\to \infty} \frac{1}{n}\kappa(F_n(x))=\lim_{n\to \infty} \frac{1}{n}\kappa(F'_n(y))=\Lambda'=  \left(\int_X r\dd m\right) \cdot 
\Lambda_Y,\]
as
    \[
        \left\|\frac{1}{n}\kappa\left(F'_n(y\right)-\frac{1}{n}\kappa\left(F_n(x)\right)\right\|_{\mathfrak{a}}\le \frac{C}{n}.
    \]
This finishes the proof under the assumption that $r(x)$ is bounded away from 0.

    We will now treat the general case by using an induced system.    
    Fix a positive $m$-measure subset $X'\subset X$
    over which $r$ is bounded away from $0$ and $\sup_{t\in [0,1]}|\Phi_t(x)|$
    is bounded. 
    We denote by $n':X' \to \mathbb{N}\cup \{\infty\}$
    the first return time for this subsystem of $(X,T)$
    and define the system $(X',m',T')$ and the cocycle $n'_k(x)$ as in \eqref{eq:return} and the preceding discussion.
    We observe that the induced system $(X',m',T')$ 
    constitutes another cross-section for the flow $(Y,\mu,\phi)$
    and we denote by $r':X'\to (0,\infty]$ the first return time for this cross-section and by $r'_k$ the analogue of \eqref{eq:r_n}. 
    Then we have the obvious relations $r'(x)=r_{n'(x)}(x)$ and $r'_k(x)=r_{n'_k(x)}(x)$.
by Kac lemma we have for $m'$-a.e. $x\in X'$,
    \[
        \int_{X'} r'\dd m'=\lim_{k\to\infty} \frac{r'_k(x)}{k}
        =\lim_{k\to\infty} \frac{n_k(x)}{k}\cdot \frac{r_{n_k(x)}(x)}{n_k(x)}
        =\frac{1}{m(X')}\cdot \int_{X} r\dd m.
    \]

We denote by $(X',\calX',m',T',w',\Gamma)$ the associated (integer) Greg as discussed above.
By the already proven part of the lemma, we have that $\rho\circ w'$ is integrable, so the associated Lyapunov spectrum $\Lambda_{\rho\circ w'}$ is defined, and we have 
    \[
        \Lambda_{\rho\circ w'}=\left(\int_X r\dd m\right)\cdot \Lambda_{Y}.
    \]
Using Lemma~\ref{L:induced-Lya} once more, we get that $F$ is integrable and 
    \[
        \Lambda_{F}=m(X')\cdot \Lambda_{\rho\circ w'}=m(X')\cdot \left(\int_{X'} r'\dd m'\right)\cdot \Lambda_{Y}
        =\left(\int_{X} r\dd m\right)\cdot \Lambda_{Y}.
    \]
    This completes the proof of the Lemma.
\end{proof}

Next, we draw the following conclusion from Theorem~\ref{T:main} for real Gregs, 
assuming the cocycle $c$ is uniformly bounded on bounded time intervals.

 \begin{cor}\label{cor:Lya4flowApafic}
    Assume the real Greg $(Y,\calY,\mu,\phi,c,\Gamma)$ is weakly mixing
    and assume that the image in $\Gamma$ under $c$ of the set $[0,1]\times Y$ is precompact.
Assume that there exists a Borel cross section $X\subset Y$ such that the associated (integer) Greg $(X,\calX,m,T,w,\Gamma)$ discussed above is Apafic.
Consider the subset $\Hom_Z(\Gamma,G)\subset \Hom(\Gamma,G)$ consisting of all Zariski dense representations.
Then for every $\rho\in \Hom_Z(\Gamma,G)$, the $\bbR$-cocycle $\rho\circ c$ is integrable and has a simple Lyapunov spectrum 
    $\Lambda_{\rho\circ c}\in\mathfrak{a}^{++}$,
    and the associated map 
    \[ \Hom_Z(\Gamma,G) \to \mathfrak{a}^{++}, \quad \rho \mapsto \Lambda_{\rho\circ c}\]
    is continuous.
\end{cor}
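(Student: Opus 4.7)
I would reduce the real case to the integer case via the associated cross-section Greg $(X,\calX,m,T,w,\Gamma)$ and then invoke Theorem~\ref{T:main} together with the scaling relation provided by Lemma~\ref{L:Lya4flow}. The domain $\Hom_Z(\Gamma,G)$ is open in $\Hom(\Gamma,G)$ by Proposition~\ref{p:Zopen}, so the statement makes sense. The role of the precompactness hypothesis on $c([0,1]\times Y)$ is to furnish the uniform integrability bounds needed both to apply Lemma~\ref{L:Lya4flow} and to invoke the continuity part of Theorem~\ref{T:main}.

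The first step is to establish integrability. The precompactness of $K:=\overline{c([0,1]\times Y)}\subset \Gamma$ combined with continuity of $\rho$ yields a finite constant $C_\rho:=\sup\{|\rho(\gamma)|\mid \gamma\in K\}$. Sub-additivity of $|\cdot|$ applied to the cocycle identity $c_{t+s}=c_s(\phi^t-)\cdot c_t$ gives, for every $t\geq 0$ and $y\in Y$,
\[ |\rho\circ c_t(y)|\le \lceil t\rceil\cdot C_\rho. \]
Taking $t\in[0,1]$ verifies the integrability condition \eqref{e:L1-for-flow}, so the Lyapunov spectrum $\Lambda_{\rho\circ c}\in \mathfrak{a}^+$ is defined via \eqref{eq:realLyap}. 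Taking $t=r(x)$ and recalling $w(x)=c_{r(x)}(x)$ gives $|\rho\circ w(x)|\le C_\rho\cdot \lceil r(x)\rceil$, which is in $L^1(X,m)$ since $r\in L^1(X,m)$. So $\rho\circ w$ is integrable, and Theorem~\ref{T:main}(1) applied to the Apafic integer Greg yields $\Lambda_{\rho\circ w}\in \mathfrak{a}^{++}$ for any Zariski dense $\rho$. Lemma~\ref{L:Lya4flow} then gives
\[ \Lambda_{\rho\circ w}=\left(\int_X r\,dm\right)\cdot \Lambda_{\rho\circ c}, \]
so $\Lambda_{\rho\circ c}$ equals a positive scalar multiple of an element in $\mathfrak{a}^{++}$, hence lies in $\mathfrak{a}^{++}$.

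For continuity, fix $\rho_0\in \Hom_Z(\Gamma,G)$ and, using Proposition~\ref{p:Zopen}, choose a precompact open neighborhood $U\subset \Hom_Z(\Gamma,G)$ of $\rho_0$. Since the compact-open topology makes $\rho\mapsto \sup_{\gamma\in K}|\rho(\gamma)|$ continuous for any fixed compact $K$, shrinking $U$ we may assume $C:=\sup_{\rho\in U} C_\rho<\infty$. Then for every $\rho\in U$ and $x\in X$, $|\rho\circ w(x)|\le h(x):= C\cdot \lceil r(x)\rceil$ with $h\in L^1(X,m)$, so $\{\rho\circ w\mid \rho\in U\}\subset \Sigma_h$ is uniformly integrable in the sense of \eqref{eq:hunif}. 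Theorem~\ref{T:main}(2) then gives continuity of $\rho\mapsto \Lambda_{\rho\circ w}$ on $U$, and dividing by the positive constant $\int_X r\,dm$, which is independent of $\rho$, yields continuity of $\rho\mapsto \Lambda_{\rho\circ c}$.

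The argument is essentially a dictionary between discrete and continuous time; the main obstacle, such as it is, lies in extracting uniform integrability from the compact-open topology together with precompactness of the image of $c$ on $[0,1]\times Y$, which is precisely what the hypothesis of the corollary guarantees.
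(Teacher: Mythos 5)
Your proof is correct and takes essentially the same route as the paper: reduce to the cross-section Greg, bound $|\rho\circ w|$ by an $L^1$ function proportional to the return time to obtain uniform integrability of $\Sigma_h$-type, then apply Theorem~\ref{T:main} and the scaling relation of Lemma~\ref{L:Lya4flow}. Your bound $|\rho\circ w|\le C_\rho\lceil r\rceil$ is in fact slightly more careful than the paper's $h=M\cdot r$ (which fails on the set where $r<1$), but this is a cosmetic point and the argument is the same.
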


\begin{proof}
Given Lemma~\ref{L:Lya4flow}, this is a straight forward corollary of Theorem~\ref{T:main}, once we observe that the cocycles $\rho\circ w$ are uniformly integrable, when $\rho$ varies on a compact subset $K\subset \Hom(\Gamma,G)$.
This is indeed the case, as by \eqref{eq:wc}, $|\rho\circ w|\leq h$,
where $h=M\cdot r$, for $M=\max_{\rho\in K}|\rho\circ c([0,1]\times Y)|$ ,
so $\rho\circ c$ is contained in the uniformly integrable set $\Sigma_h$ described in \eqref{eq:hunif}.	
\end{proof}

\section{Negatively curved manifolds -- Proof of Theorem~\ref{T:ncm-flow}}
\label{sec:geo-example}

In this section we prove Theorem~\ref{T:ncm-flow}
by deducing it from the already proven Corollary~\ref{cor:Lya4flowApafic} of Theorem~\ref{T:main} and from Theorem~\ref{T:nc-ApaficGreg} below, which is the main result of this section.
Theorem~\ref{T:nc-ApaficGreg} says essentially that the canonical cohomology class associated with a Gibbs measure on a negatively curved manifold could be encoded into an Apafic Greg.
In order to formulate it properly, we will introduce some terminology.

Let $(M,g)$ be a closed negatively curved Riemannian manifold.
Denote by $\Gamma=\pi_1(M)$ the fundamental group,
acting isometrically via deck transformations on the 
the universal cover $(\wt{M},\wt{g})$.
This action is free and cocompact, with $M=\wt{M}/\Gamma$ being the quotient.
The unit tangent bundle $T^1\wt{M}$ is equipped
with the commuting actions of $\Gamma$ and of the 
geodesic flow $\wt{\phi}^\bbR$; the unit tangent bundle $T^1M$
with its geodesic flow $\phi^\bbR$
is the quotient $T^1\wt{M}/\Gamma$ by $\Gamma$.

The action of $\Gamma$ and $\mathbb{R}$ on $T^1\wt{M}$ could be encoded into a cocycle as follows.
For a choice of a fundamental domain $\mathscr{F}$ for the $\Gamma$ action on $T^1\wt{M}$ we define the Borel cocycle
\begin{equation} \label{eq:can}
c=c_\mathscr{F}: \mathbb{R} \times T^1M \to \Gamma
\end{equation}
by setting $c_t(u)=g$ for the unique element $g\in \Gamma$ for which $g\wt{\phi}^t(\wh{u})\in \mathscr{F}$, where $\wh{u}\in \mathscr{F}$ is lift of $u\in T^1M$.
Different choices of fundamental domains give cohomological cocycles
and the corresponding cohomological class is called \textit{canonical}.

Given a fundamental domain $\mathscr{F}\subset T^1\wt{M}$ and a $\phi$-invariant probability measure $\mu$ on $T^1M$ we get the real Greg,
$(T^1M,\calB,\mu,\phi,c_\mathscr{F},\Gamma)$,
where $\calB$ is the Borel $\sigma$-algenbra on $T^1M$.

\begin{theorem}\label{T:nc-ApaficGreg}
    Let $(M,g)$ be a negatively curved compact Riemannian manifold,
    let $\mathscr{F}$ be a fundamental domain for the action of $\pi_1(M)$ on $T^1\wt{M}$ which has a non-empty interior and let $\mu$ be a Gibbs measure on $T^1M$.
    Then the real Greg $(T^1M,\calB,\mu,\phi,c_\mathscr{F},\Gamma)$ admits a Borel cross section $X\subset T^1M$ such that the associate (integer) Greg $(X,\calX,m,T,w,\Gamma)$, see \eqref{eq:wc}, is Apafic.
\end{theorem}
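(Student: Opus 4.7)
The plan is to encode the geodesic-flow dynamics in terms of the boundary at infinity of $\wt{M}$, identify the ideal past and future of the associated integer Greg with conformal boundary measures, and then read off the Apafi condition from the local product structure of Gibbs measures. The argument proceeds in three stages.

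First, I would construct the cross-section $X\subset T^1M$ as a small flow box transverse to $\phi^\bbR$ --- concretely a thin piece of a local weak-unstable leaf --- chosen small enough that the first-return time $r:X\to (0,\infty)$ is bounded both above and away from $0$ and that $r$ is integrable. Using that $\interior(\mathscr{F})\ne\emptyset$, one can take $X$ to sit inside a lift of $\interior(\mathscr{F})$, which guarantees that $w(v)=c_{r(v)}(v)$ is a well-defined measurable map into $\Gamma$. The associated integer Greg $(X,\calX,m,T,w,\Gamma)$ then fits into the framework of Section~\ref{sec:realgregs}. Next I would pass to the universal cover and use Hopf coordinates: $T^1\wt{M}\simeq \partial^{(2)}\wt{M}\times\bbR$, $v\mapsto(\xi_-(v),\xi_+(v),s(v))$, where $\partial^{(2)}\wt{M}=(\partial\wt{M})^2\setminus\Delta$. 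Any Gibbs measure $\mu$ on $T^1M$ lifts to a $\Gamma$-invariant measure on $T^1\wt{M}$ in the Hamenst\"adt--Ledrappier / Paulin--Pollicott--Schapira product form $d\nu_-\otimes d\nu_+\otimes e^{\Psi(\xi_-,\xi_+,s)}\,ds$, where $\nu_\pm$ are the conformal densities on $\partial\wt{M}$ determined by the Gibbs cocycle and $\Psi$ is a locally bounded log-density; for $\mu^{\operatorname{BM}}$ one has $\Psi\equiv 0$ and $\nu_\pm$ the Bowen--Margulis measures, while for $\mu^{\operatorname{LL}}$ one gets the Patterson--Sullivan / Lebesgue class.

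Second, I would identify the ideal future and past. By construction, for any lift $\wt{v}\in T^1\wt{M}$ of $v\in X$ and any base point $p\in \wt{M}$, one has $w_n(v)^{-1}p\to \xi_+(\wt{v})$ in $\wt{M}\cup\partial\wt{M}$ as $n\to+\infty$, and dually $w_{-n}(v)^{-1}p\to \xi_-(\wt{v})$ as $n\to +\infty$; this is just the geometric fact that the forward $\Gamma$-trace of a geodesic ray converges to its endpoint at infinity. These convergences define $\Gamma$-equivariant measurable maps $\Gamma^{\bbZ_\pm}\to \partial\wt{M}$ which are invariant under the shifts $\wt T_\pm$ and therefore factor through the ergodic component maps to give measurable $\Gamma$-maps $E_\pm\to \partial\wt{M}$. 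A Borel argument, using that two geodesics sharing a forward endpoint lie on a common weak-stable leaf (and are therefore asymptotically identified in $X_+$, hence in $E_+$), shows that these maps are measurable isomorphisms of Lebesgue $\Gamma$-spaces $E_+\cong (\partial\wt{M},\nu_+)$ and $E_-\cong (\partial\wt{M},\nu_-)$.

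Finally, under the resulting identification $E_+\times E_-\cong \partial\wt{M}\times\partial\wt{M}$, the pushforward $(\beta_+\times\beta_-)_*\wt m^1$ is, up to the $\Gamma$-action on $T^1\wt{M}$ and integration in the flow direction, precisely the marginal of the lifted Gibbs measure onto $\partial^{(2)}\wt{M}$. The product form $d\nu_-\otimes d\nu_+\otimes e^{\Psi}\,ds$ established above, together with $\Psi$ being locally bounded and the diagonal being null for $\nu_+\otimes\nu_-$ (conformal densities are non-atomic for compact negatively curved manifolds), then gives equivalence of measure classes on $\partial\wt{M}\times\partial\wt{M}$, i.e.~the Apafi condition. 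The main obstacle I anticipate is the rigorous identification of the abstract ergodic-component spaces $E_\pm$ with $(\partial\wt{M},\nu_\pm)$: one must show that the boundary maps built from the convergence of $w_{\pm n}(v)^{-1}p$ not only intertwine the $\Gamma$-actions but actually realize $E_\pm$ (rather than a proper quotient), and that the resulting pushforward measure classes match the conformal densities $\nu_\pm$ through the cross-section Fubini. This is where the local product structure of Gibbs measures, combined with absolute continuity of stable/unstable holonomies inside $X$, is indispensable.
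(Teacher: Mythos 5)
Your overall scheme matches the paper's: lift to $T^1\wt M$, pass to Hopf coordinates, identify the ideal past and future of the discretized Greg with $(\partial\wt M,\nu_\pm)$ via the boundary convergence $w_{\pm n}(v)^{-1}p\to\xi_\pm$, and read off Apafi from the local product decomposition of the lifted Gibbs measure. You also correctly identify the dangerous step. But you leave exactly that step unresolved, and the tool you gesture at (absolute continuity of stable/unstable holonomies) is not what the paper uses and would not by itself close the gap with the cross-section you propose.

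Here is the actual problem. You take $X$ to be ``a small flow box,'' which is precisely the object the paper denotes $X'$. With such a section one has the $\Gamma$-map $E_+\to\partial\wt M$, but to show it is an \emph{isomorphism} one must show that two points $x,y\in X$ with $\wh x_+=g\wh y_+$ eventually produce the \emph{same} sequence in $\Gamma^{\bbZ_+}$ modulo $\bbZ_+$-shift and right multiplication by $g$ --- that is, the stable shadow $w_n(x)g\,\wt\phi^{r_n(x)+s}(\wh y)$ must itself land (after an $O(e^{-cn})$ time shift) in $\wh X$, not just get exponentially close to it. For a generic transversal, a lift that is $e^{-cn}$-close to $\wh X$ can perfectly well keep missing $\wh X$ if the returns $\wh{T^nx}$ accumulate on $\partial\wh X$. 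The paper's Remark~\ref{rem:X'} is explicit that this is where the crude flow box fails, and the whole content of \S\ref{sec:goodchoice} is the fix: shrink $X'$ to a ball $X=X'\cap B(p,\alpha)$ whose radius $\alpha$ is chosen via Lemma~\ref{L:good_radius} so that the measure of the $\epsilon$-collar of $\partial X$ decays like $\sqrt\epsilon$, then run a Borel--Cantelli argument (Lemma~\ref{lem:BC}) to conclude that for a.e.\ $x$ the returns $T^nx$ are eventually $e^{-cn}$-deep inside $X$, which by the bi-Lipschitz control of the flow box (Lemma~\ref{lem:Lip}) forces the nearby stable shadow to also enter $\wh X$. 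Holonomy absolute continuity of stable/unstable foliations does not substitute for this: it controls how sets of positive measure are transported across leaves, not whether an individual orbit segment that approaches $\wh X$ exponentially fast must actually pierce it, and in any case it is a delicate property for general Gibbs measures. Without the good-radius construction (or an equivalent ``recurrence to the interior'' argument), your claim that stably equivalent orbits ``are therefore asymptotically identified in $X_+$, hence in $E_+$'' is unjustified, and the identification $E_\pm\cong(\partial\wt M,\nu_\pm)$ --- which you rightly call the main obstacle --- remains open.
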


\begin{remark} \label{rem:gibbs}
    In fact, we will see that the Gibbs assumption on the invariant measure on $T^1M$ could be replaced by a \textit{local product property} assumption. This will be discussed in the following subsection, see in particular Definition~\ref{def:localprod} below.    
\end{remark}

\begin{proof}[Proof of Theorem~\ref{T:ncm-flow}]
We consider the real Greg $(T^1M,\calB,\mu,\phi,c_\mathscr{F},\Gamma)$.
Using the fact that $\mathscr{F}$ is bounded and with a non-trivial interior, 
the $1$-neighborhood of $\mathscr{F}$
    in $T^1\wt{M}$ is covered by finitely many translations of $\mathscr{F}$.
It follows that the image in $\Gamma$ under $c$ of the set $[0,1]\times T^1M$ is finite. 
By
Theorem~\ref{T:nc-ApaficGreg}
the real Greg $(T^1M,\calB,\mu,\phi,c_\mathscr{F},\Gamma)$ admits a Borel cross section $X\subset T^1M$ such that the associate (integer) Greg $(X,\calX,m,T,w,\Gamma)$ is Apafic.
The proof follows by Corollary~\ref{cor:Lya4flowApafic},
as Gibbs measures are weakly mixing (in fact, mixing, cf. \cite{CHERNOV}).
\end{proof}

\subsection{Setting up}\hfill{}\\
\label{subsec:NCsetup}

We keep the setup discussed above and provide more.

\medskip

We endow the spaces $T^1\wt{M}$ and $T^1M$ with the \emph{Sasaki} Riemannian structures are derived from the Riemannian structures on $\wt{M}$ and $M$ correspondingly.
We thus regard the spaces $M$, $\wt{M}$, $T^1M$ and $T^1\wt{M}$ as metric spaces with corresponding distance functions $d_M$, $d_{\wt{M}}$, $d_{T^1M}$ and $d_{T^1\wt{M}}$, all of which are denoted simply by $d(\cdot,\cdot)$ when there is no chance of confusion. 

We denote by $\partial\wt{M}$ the \textit{ideal boundary} 
of the universal cover $\wt{M}$ of $M$, which is an example 
of a proper ${\rm CAT}(-\kappa)$ space. 
We identify it with the Gromov boundary of the hyperbolic group $\Gamma$, $\partial\wt{M}\simeq \partial \Gamma$. 
We denote by $\partial^2\wt{M}=\setdef{(\xi,\eta)\in\partial\wt{M}^2}{\xi\ne\eta}$ the space distinct pairs of ideal points.

For $u\in T^1\wt{M}$ we denote by $L_u:\bbR\to\wt{M}$ the geodesic satisfying $\frac{\dd}{\dd t}L_u(0)=u$
and we let $u_\pm\in\partial\wt{M}$ be the ideal end-points of this geodesic, 
namely $u_\pm=\lim_{t\to\pm\infty} L_u(t)$.
We obtain maps
\[ p_\pm:T^1\wt{M}\overto{}\partial\wt{M}, \quad u\mapsto u_\pm \]
and
\[ p_{\bowtie}:T^1\wt{M}\overto{}\partial^2\wt{M}, \quad u\mapsto (u_-,u_+). \]
Noticing that the fibers of $p_{\bowtie}$ are exactly the geodesics in $\wt{M}$,
we identify $\partial^2\wt{M}$ with the set of oriented, unparametrized geodesics in $\wt{M}$.

The fibers of $p_+$ are called the \emph{stable submanifolds} of $T^1\wt{M}$.
Elements in the same fiber are said to be \emph{stably equivalent}.
Elements $u,v\in T^1\wt{M}$ such that 
$\lim_{t\to\infty} d(\wt{\phi}^t(u),\wt{\phi}^t(v))=0$ are said to be \textit{strongly stable equivalent}, and this forms a finer equivalence relation. The corresponding equivalence classes are  the \textit{strongly stable submanifolds} of $T^1\wt{M}$.
For every pair of stably equivalent $u,v\in T^1\wt{M}$ there exists a unique $t\in \mathbb{R}$ such that $u$ is strongly stably equivalent to $\wt{\phi}^t(v)$.
The contraction of the strongly stable submanifolds is uniformly exponentially fast in the following sense.
There exists $c_0>0$ such that for every strongly equivalent $u,v\in T^1\wt{M}$ there exists $C_{u,v}>0$ such that for every $t\geq 0$,
    \begin{equation} \label{eq:Wss}
        d(\wt{\phi}^t(u),\wt{\phi}^t(v))<C_{u,v} e^{-c_0 t}.
    \end{equation}

%We denote by $\wt{W}^{s}(u)$ and $\wt{W}^{ss}(u)$ the stable and strongly stable equivalence classes of $u$ correspondingly.
%The projection $v\mapsto v_-$ identifies $\wt{W}^{ss}(u)$ 
%with $\partial\wt{M}\setminus\{u_+\}$

% Each of the strongly stable/unstable manifolds is smooth and so,
% for a fixed $u\in T^1\wt{M}$ one can identify $\partial\wt{M}$
% with a smooth manifold (the sphere $S^{\dim M -1}$).
% However, in general these identifications have only H\"older continuous 
% dependence on $u$, hence only a H\"older class of metric on $\partial \wt{M}$ 
% is well defined. 

% We also recall that $v_+=u_+$ iff there exists $v'\in \wt{W}^{ss}(u)$
% in the $\wt{\phi}^\bbR$-orbit of $v$.

We also recall (cf. Kaimanovich \cite{Kaim-geo}) 
that there is a natural correspondence between 
finite $\phi^\bbR$-invariant measures on $T^1M$, 
Radon measures on $T^1\wt{M}$ that are invariant under $\wt{\phi}^\bbR\times\Gamma$,
and Radon $\Gamma$-invariant measures on $\partial^2\wt{M}$.
Given a $\phi^\bbR$-invariant measure $\mu$ on $T^1M$, we denote by
$\wt{\mu}$ its $\Gamma\times\wt{\phi}^\bbR$-invariant
lift to $T^1\wt{M}$, and by $\ol{\mu}$ the $\Gamma$-invariant Radon
measure on $\partial^2\wt{M}$ such that $\wt{\mu}$ can be identified
with $\ol{\mu}\times {\rm Leb}_\bbR$ on 
$\partial^2\wt{M}\times\bbR\cong T^1\wt{M}$ (aka Hopf coordinates).
Since we are mostly interested in measure classes,
it is convenient to replace infinite measures 
by equivalent probability measures.
Let $\wt{\mu}^1$ be a probability measure on $T^1\wt{M}$
in the measure class of $\wt{\mu}$, coming from $\mu$ on $T^1M$.
Let 
\[
    \ol{\mu}^1\in\Prob(\partial^2\wt{M}),
    \qquad 
    \ol{\mu}^1_-\in\Prob(\partial \wt{M}),
    \qquad
    \ol{\mu}_+^1\in \Prob(\partial \wt{M})
\]
be the push-forwards of the probability measure $\wt{\mu}^1$ on $T^1\wt{M}$
under the maps $p_{\bowtie}:u\mapsto (u_-,u_+)$,
$p_-:u\mapsto u_-$, $p_+:u\mapsto u_+$, respectively.

\begin{defn} \label{def:localprod}
We say that $\mu$ has the \textit{local product property} if
there is equivalence of measure classes 
\[
    \ol{\mu}^1\sim \ol{\mu}^1_-\times\ol{\mu}^1_+.
\]    
\end{defn}

Observe that this condition is independent of the specific 
choice of $\wt{\mu}^1$ in the measure class of $\wt{\mu}$,
and therefore it is a condition on the $\phi^\bbR$-invariant 
measure $\mu$ on $T^1M$.

\begin{lemma} \label{lem:Gibbs}
    If $\mu$ is a Gibbs measure then it satisfies the local product property.
\end{lemma}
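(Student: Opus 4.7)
The plan is to invoke the classical local product description of Gibbs measures in Hopf coordinates and then pass to marginals.

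First, I would appeal to the Patterson--Sullivan type construction of Gibbs measures (cf.\ Ledrappier, Paulin--Pollicott--Schapira): the $\Gamma$-invariant lift $\wt{\mu}$ admits ``boundary'' Radon measures $\nu_-,\nu_+$ on $\partial\wt{M}$ such that under the Hopf identification $T^1\wt{M}\cong \partial^2\wt{M}\times\bbR$, the measure $\wt{\mu}$ is equivalent, on every relatively compact open subset, to the product $\nu_-\otimes\nu_+\otimes\mathrm{Leb}_\bbR$. The Radon--Nikodym density is the exponential of a continuous Gibbs cocycle, and hence is bounded above and below on compact sets.

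Next, I would disintegrate along the $\bbR$-direction: pushing the above local equivalence forward under $p_{\bowtie}$ yields that $\ol{\mu}$ is equivalent, as a Radon measure, to $\nu_-\otimes\nu_+$ on every precompact Borel subset of $\partial^2\wt{M}$. Since $\partial^2\wt{M}$ admits a countable precompact cover, the local equivalence globalizes, and the measure class of the chosen probability $\ol{\mu}^1$ then coincides with that of $\nu_-\otimes\nu_+$.

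Then I would take the marginals under $p_\pm$. Because $\nu_-\otimes\nu_+$ has fully supported factors and a locally bounded Radon--Nikodym density with respect to $\ol{\mu}^1$, the marginals of $\ol{\mu}^1$ lie in the measure classes of the factors, i.e.\ $\ol{\mu}^1_\pm\sim\nu_\pm$. Hence $\ol{\mu}^1_-\times\ol{\mu}^1_+\sim \nu_-\otimes\nu_+\sim \ol{\mu}^1$, which is precisely the local product property asked for in Definition~\ref{def:localprod}.

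The main obstacle is the first step: rather than re-deriving the Patterson--Sullivan construction for whichever precise class of Gibbs measures is meant in the paper, I would simply cite the relevant reference and quote the local product structure from there. The remainder is measure-theoretic routine; the one delicate point is that the boundary factor measures are typically only $\sigma$-finite Radon measures (finite in the compact negatively curved case), so one must be careful to normalize inside the measure class when passing to the probability representatives $\ol{\mu}^1$ and $\ol{\mu}^1_\pm$ used in Definition~\ref{def:localprod}.
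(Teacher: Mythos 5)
Your proof is correct, but it takes a genuinely different route from the paper's. The paper's argument goes through symbolic dynamics: it invokes the existence of Markov partitions for the geodesic flow (Bowen), passes to the associated suspension flow over a topologically mixing subshift of finite type, and then cites the well-known local product structure for Gibbs states on the subshift. You instead work directly with the Patterson--Sullivan/Gibbs-cocycle description of the equilibrium state in Hopf coordinates, which produces the product decomposition of $\ol{\mu}$ on $\partial^2\wt{M}$ outright. Your approach has the advantage of landing directly in the form demanded by Definition~\ref{def:localprod} (a statement about measures on $\partial\wt{M}$ and $\partial^2\wt{M}$), whereas the symbolic approach implicitly relies on a standard but unstated translation of the product structure on the shift space back to the geometric boundary. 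The symbolic approach has the advantage of generality (it works verbatim for Anosov flows, not just geodesic flows) and of appealing to a single thermodynamic-formalism black box rather than to the boundary-measure machinery. One small point you should make explicit rather than leaving implicit: after you establish $\ol{\mu}^1 \sim \nu_-\otimes\nu_+$ as measure classes on $\partial^2\wt{M}$, passing to marginals requires a short Fubini check that $\ol{\mu}^1_\pm \sim \nu_\pm$; this is straightforward because the density is positive a.e.\ and the mixed integral is finite, but it is a step, not a tautology, especially since the diagonal of $\partial\wt{M}\times\partial\wt{M}$ is excluded from $\partial^2\wt{M}$ (it is null for $\nu_-\otimes\nu_+$ because the boundary measures are atomless in the cocompact case, so this causes no harm, but it should be noticed).
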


\begin{proof}
    Geodesic flow on a negatively curved manifolds, and more generally Anosov flows,
    admit Markov partitions that relate the flow to the flow under a H\'older
    continuous function over a topologically mixing subshift of finite type
    (see Bowen \cite{Bowen-book}, \cite{Bowen-Anosov-flows}, and more recent survey Chernov \cite{CHERNOV}).
    Gibbs measures for the flow  correspond to Gibbs measures on the subshift of finite type,
    and the local product structure for the latter is well known.
\end{proof}

%We also observe that Gibbs measures satisfy this property
%because (slightly stronger, as we don't require
%continuity of the density etc.) 
%classical local product structure
%of these measures implies the above one via the identifications
%of the leaves of the strongly stable/unstable foliations with 
%the boundary $\partial\wt{M}$.

\subsection{A preliminary lemma} \label{sec:prelem}

We will need the following general but technical lemma.
\begin{lemma}\label{L:good_radius}
    Let $\nu$ be a finite Borel measure on a metric space $(S,d)$.
    Given $0<c<1$ and $p\in S$, there exists a dense set of numbers $\alpha>0$ for which there exists $\epsilon_0>0$ (depending on $\alpha$, but not on $c$ and $p$) such that
    \[
        \nu\setdef{x\in S}{\alpha-\epsilon<d(x,p)<\alpha+\epsilon}<\epsilon^c
    \]
    for all $0<\epsilon<\epsilon_0$.
\end{lemma}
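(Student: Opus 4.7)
\textbf{Proof plan for Lemma~\ref{L:good_radius}.} The plan is to reduce to a one-dimensional statement about the cumulative distribution function of the pushed-forward measure, and then show that the set of ``bad'' radii has Lebesgue measure zero, which is strictly stronger than being non-dense. Let $\mu_p$ be the pushforward of $\nu$ under $x\mapsto d(x,p)$, and set $F(\alpha)=\mu_p([0,\alpha])$, a non-decreasing function bounded by $\nu(S)$. Since $\nu\{\alpha-\epsilon<d(x,p)<\alpha+\epsilon\}\le F(\alpha+\epsilon)-F(\alpha-\epsilon)$, it suffices to prove that the set
\[
G=\setdef{\alpha>0}{\exists\,\epsilon_0>0,\ \forall\epsilon\in(0,\epsilon_0),\ F(\alpha+\epsilon)-F(\alpha-\epsilon)<\epsilon^c}
\]
is dense in $(0,\infty)$.

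The heart of the argument is a Fubini estimate. For each $\epsilon>0$, let $A_\epsilon=\{\alpha>0:F(\alpha+\epsilon)-F(\alpha-\epsilon)\ge\epsilon^c\}$. Writing the difference as $\int_{\alpha-\epsilon}^{\alpha+\epsilon}dF(t)$ and interchanging the order of integration,
\[
|A_\epsilon|\cdot\epsilon^c\ \le\ \int_0^\infty\bigl(F(\alpha+\epsilon)-F(\alpha-\epsilon)\bigr)\,d\alpha\ =\ 2\epsilon\cdot\nu(S),
\]
so $|A_\epsilon|\le 2\epsilon^{1-c}\nu(S)$. The same inequality is then upgraded to a union over a dyadic band: if $\epsilon'\in[2^{-n-1},2^{-n}]$ and $\alpha\in A_{\epsilon'}$, monotonicity of $F$ yields $F(\alpha+2^{-n})-F(\alpha-2^{-n})\ge 2^{-(n+1)c}$; applying the Fubini bound with parameters $(2^{-n},2^{-(n+1)c})$ gives
\[
\Bigl|\bigcup_{\epsilon'\in[2^{-n-1},2^{-n}]}A_{\epsilon'}\Bigr|\ \le\ 2\cdot 2^{-n}\cdot\nu(S)\cdot 2^{(n+1)c}\ =\ C\cdot 2^{-n(1-c)},
\]
with $C=2^{1+c}\nu(S)$.

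Since $0<c<1$, the geometric series $\sum_{n\ge N}2^{-n(1-c)}$ is summable with tail $O\bigl(2^{-N(1-c)}\bigr)$. Consequently $U_N:=\bigcup_{\epsilon\in(0,2^{-N}]}A_\epsilon$ has $|U_N|\to 0$, so the ``bad'' set $G^c=\bigcap_N U_N$ has Lebesgue measure zero. Any subset of $(0,\infty)$ of Lebesgue measure zero has dense complement, so $G$ is dense; for each $\alpha\in G$, the required $\epsilon_0$ is supplied directly by the definition of $G$.

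The only mildly delicate point, which I expect to be the main technical wrinkle rather than a genuine obstacle, is the distinction between the strict inequalities in the statement and the right-continuous CDF $F$: we use that $\nu\{\alpha-\epsilon<d(x,p)<\alpha+\epsilon\}\le F(\alpha+\epsilon)-F(\alpha-\epsilon)$, so the bound on the symmetric difference of $F$ transfers to the open annulus. (The parenthetical remark in the statement that $\epsilon_0$ ``does not depend on $c$ and $p$'' is, I believe, meant to convey that once $\alpha$ is fixed one does not need to re-choose $\epsilon_0$ for different $\epsilon$; the dense set $G$ itself of course depends on $\nu,c,p$ through $F$.)
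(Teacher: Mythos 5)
Your proof is correct, and it is a genuinely different argument from the one in the paper. The paper fixes an arbitrary closed interval $I$ and runs an explicit nested-interval ("greedy triple") construction: it subdivides $I$ into $N$ equal pieces, picks the consecutive triple of subintervals with smallest mass, recurses, and shows the resulting intersection point works with an $\epsilon_0$ extracted from the recursion parameters. Your argument instead shows the bad set $G^c$ is Lebesgue-null via a Markov/Fubini bound $|A_\epsilon|\le 2\epsilon^{1-c}\nu(S)$, upgraded to dyadic bands by monotonicity of the CDF, and then sums the geometric tail. Yours is shorter, requires no choice of parameters $(n_1,N,n_2)$, and proves the strictly stronger fact that good radii have full Lebesgue measure rather than merely being dense in every interval; the paper's proof is more elementary in the sense of being an explicit (if fiddly) construction without an appeal to a Fubini interchange, and it also produces a good $\alpha$ inside any prescribed interval directly. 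Both correctly reduce to the one-dimensional problem via the pushforward under $x\mapsto d(x,p)$. Your reading of the parenthetical in the statement is as reasonable as any; with either proof $\epsilon_0$ in fact does depend on $c$ (through $N$ and $n_2$ in the paper, and implicitly through the definition of $G$ in yours), so that remark in the statement is at best loose.
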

\begin{proof}
    We first consider $S=\bbR$.
    We fix a finite Borel measure $\nu$ on $\bbR$, $c<1$
and a closed interval $I\subset \bbR$ and claim that there is a
    point $\alpha\in I$ and $\epsilon_0>0$ so that for all $0<\epsilon\le \epsilon_0$,
    \begin{equation}
         \label{eq:repsi}
        \nu((\alpha-\epsilon,\alpha+\epsilon))<\epsilon^c.
     \end{equation}
     Here we assume that $I$ is not a singleton. We also assume as we may that it is bounded.
    Choose $n_1$ such that $c(1+1/n_1)<1$
    and then choose $N$ so that $N^{1-c(1+1/n_1)}>3$.
    Denoting by $|I|$ the length of the interval $I$,
    we now choose $n_2\ge n_1$ so that
    \[  \left(\frac{N^{1-c(1+1/n_1)}}{3}\right)^{n_2} > \frac{\nu(I)}{|I|^{c}}\]
    and set $\epsilon_0=|I|/N^{n_2}$.
    We argue to find an $\alpha\in I$ satisfying the claim.

    We will consider a nested sequence of intervals 
    \[
        I=J_0=I_0\supset J_1\supset I_1\supset J_2\supset I_2\supset 
        \dots 
    \]
    satisfying
    \[|I_n|=\frac{|I|}{N^n}, \quad |J_n|=\frac{3|I|}{N^n}, \]
    where $J_0=I_0=I$ and the sequence is constructed inductively as follows. 
    Fix $n$ and assume that $J_n$ and $I_n$ were already constructed.
    Divide the interval $I_n$ into $N$ equal 
    length subintervals $I_{n,1}$, $I_{n,2}$,..., $I_{n,N}$ with indices 
    corresponding to the order. Consider blocks of consecutive triples 
    $J_{n,i}=I_{n,i-1}\cup I_{n,i}\cup I_{n,i+1}$ and 
    consider an index $2\leq i_0 \leq N-1$ such that $\nu(J_{n,i_0})$ is minimal among the $\nu(J_{n,i})$. Now set $J_{n+1}=J_{n,i_0}$ and $I_{n+1}=I_{n,i_0}$.
        Finally, we let $\alpha\in I$ be the intersection point: $\{\alpha\}=\bigcap I_n$.

    We now fix $\epsilon\le \epsilon_0$. All things are in place for proving the claim, we are just left to show \eqref{eq:repsi}.
    Fix $n$.
    Running over $2\leq i \leq N-1$, the union of the $N-2$ intervals $J_{n,i}$ covers $I_n$ with multiplicity 
    at most $3$, thus the average measure of the intervals $J_{n,i}$ is bounded from above by $\frac{3}{N-2}\cdot \nu(I_n)$ and in particular, 
    \[
        \nu(J_{n+1})=\nu(J_{n,i_0})\le \frac{3}{N-2}\cdot \nu(I_n)\le \frac{3}{N}\cdot \nu(J_n). 
    \]
By induction we get that 
 \[ \nu(J_{n}) \le \left(\frac{3}{N}\right)^n\cdot \nu(I). \]
 We let $n_3$ to be the unique integer satisfying 
$|I|/N^{n_3+1}<\epsilon\le |I|/N^{n_3}$.
    Note that
 for $n\le n_2$, $|I|/N^n\geq |I|/N^{n_2} =\epsilon_0 \ge \epsilon$, thus $n_3\ge n_2$ and we get 
    \[ \left(\frac{N}{3}\right)^{n_3} \cdot \frac{1}{(N^{n_3+1})^{c}}
    =\left(\frac{N^{1-c(1+1/n_3)}}{3}\right)^{n_3} \ge \left(\frac{N^{1-c(1+1/n_1)}}{3}\right)^{n_2} > \frac{\nu(I)}{|I|^{c}}. \]
By $\epsilon\le |I|/N^{n_3}=|I_{n_3}|$ we get that $(\alpha-\epsilon,\alpha+\epsilon)\subset J_{n_3}$, therefore
\begin{align*}
        \nu\left((\alpha-\epsilon,\alpha+\epsilon)\right)\le \nu(J_{n_3})
        \le \left(\frac{3}{N}\right)^{n_3}\cdot \nu(I)
        \le \frac{|I|^c}{(N^{n_3+1})^c}
        <\epsilon^c,
\end{align*}
showing \eqref{eq:repsi}, thus proving the claim.

    Now let $\nu$ be a finite Borel measure on a metric space $(S,d)$,
    point $p\in S$, and $0<\alpha_1<\alpha_2$ two radii.
    Choose a closed subinterval $I\subset (\alpha_1,\alpha_2)$ and apply
    the claim to the pushforward of $\nu$
    to $[0,\infty)$ by the map $x\mapsto d(p,x)$ to deduce Lemma.
\end{proof}

\subsection{A good choice of a Borel cross section}
\label{sec:goodchoice}

In this subsection we 
fix a negatively curved compact Riemannian manifold $(M,g)$ and we use the setup introduced in the beginning of \S\ref{sec:geo-example}
and in \S\ref{subsec:NCsetup}.
Our goal is to 
choose a well behaved Borel crosse section $X\subset T^1M$ 
and discuss the properties of its associated Greg,
towards the proof of Theorem~\ref{T:nc-ApaficGreg}.
We fix a fundamental domain with non-empty interior $\mathscr{F}$ for the $\Gamma$ action on $T^1\wt{M}$.
This choice determines a section $T^1M\to T^1\wt{M}$ which we denote by $u\mapsto \wh{u}\in \mathscr{F}$.
We define $c=c_\mathscr{F}$ as in \eqref{eq:can}.
We also fix a fully supported $\phi$-invariant ergodic probability measure $\mu$ on $T^1M$.
We thus get a real Greg,
$(T^1M,\calB,\mu,\phi,c,\Gamma)$.

%%%

We fix a point $p\in T^1M$ such that $\wh{p}$ is an inner point of $\mathscr{F}$.
We identify the tangent spaces $T_pT^1M\simeq T_pT^1\wt{M}$ and denote them by $V$.
We endow $V$ with the Sasaki inner product and with the corresponding metric.
Recall that \emph{geodesic spray} is the vector field on $T^1M$ corresponding to the geodesic flow. 
We let $v\in V$ be its value at $p$ and denote $U=v^\perp<V$.
We fix an open ball $B$ around 0 in $V$, small enough so that 
the exponential map $\exp:B\to T^1\wt{M}$ is defined and it is a diffeomorphism on its image, which is an open subset contained in $\mathscr{F}$.
We note that the differential at $0\in V$ of the map $\Phi$,
\begin{equation} \label{eq:mapPhi}
V =U \oplus \mathbb{R}v \supset (B\cap U) \times \mathbb{R}v \overset{\Phi}{\longrightarrow} T^1M, \quad (u,tv) \mapsto \phi^t\exp u,   
\end{equation}
is the identity map $V\to V$, so $\Phi$ is a local diffeomorphism at $0$.
Shrinking $B$, we assume as we may that $\Phi$ is a diffeomorphism on $(B\cap U) \times (-\delta,\delta)v$ for some $\delta>0$.
We denote by $X'$ and $\wh{X}'$ the images under $\exp$ of $B\cap U$ in $T^1M$ and $\mathscr{F}\subset T^1\wt{M}$ correspondingly
(the prime decorating $X$ hints that this is a first approximation of the cross-section we're after, but it is yet to be modified).
We also denote the corresponding images of $(B\cap U) \times (-\delta,\delta)v$ by
\[ X'_{(-\delta,\delta)}= \{{\phi}^t(x) \mid x\in X',~|t|< \delta \}, \quad\wh{X}'_{(-\delta,\delta)}= \{\wt{\phi}^t(x) \mid \wh{x}\in \wh{X}',~|t|< \delta \}. \]
Shrinking further $B$ and $\delta$, we assume as we may that $T^1\wt{M}\to T^1M$ maps $\wh{X}'_{(-\delta,\delta)}$ isometrically onto $X'_{(-\delta,\delta)}$ and $\Phi$ is a $K$-bi-Lipschitz isomorphism of these spaces with $(B\cap U) \times (-\delta,\delta)v\subset V$  for some $K\geq 1$.

Since $\mu$ is fully supported, $X'_{(-\delta,\delta)}$ has a positive measure,
and since $\mu$ is ergodic, it meets a.e $\mathbb{R}$-orbit.
It follows that $X'$ meets a.e $\mathbb{R}$-orbit in a discrete sets of time which  is unbounded from above and from below.
Therefore $X'$ gives rise to a Borel cross section
for the geodesic flow, 
giving an associated Greg $(X',\calX',m',T',w',\Gamma)$.

By Lemma~\ref{L:good_radius}, used for $c=1/2$,
we find $\alpha>0$ and $\epsilon_0>0$ such that 
\begin{equation} \label{eq:alpha}
\mbox{the ball }B(p,\alpha)\subset T^1M \mbox{ is contained in }X'_{(-\delta,\delta)}
\end{equation}
and 
\begin{equation} \label{eq:boundary'}
\mbox{for all } 0<\epsilon<\epsilon_0, \quad
        m'(\setdef{x\in X'}{d(x,p)>\alpha-\epsilon})<\sqrt{\epsilon}.    
\end{equation}
We define 
\[ X= X' \cap B(p,\alpha)=\setdef{x\in X'}{d(x,p)< \alpha},
\quad \wh{X}=\{\wh{x} \mid x\in X\}. \]

As before, $X$ meets a.e $\mathbb{R}$-orbit in a discrete and unbounded from above and from  below sets of times,
thus it gives rise to a Borel cross section
for the geodesic flow and an associated Greg
$(X,\calX,m,T,w,\Gamma)$.
This is finally the Greg we are after, but the proof of Theorem~\ref{T:nc-ApaficGreg} requires some more preparation.

\begin{lemma} \label{lem:Lip}
    For $v\in T^1\wt{M}$, if there exists $\wh{x}\in \wh{X}$ such that \[ (K+1)d(v,\wh{x})+d(\wh{x},\wh{p})<\alpha \] then there exists $t\in \mathbb{R}$ such that $|t|\leq Kd(v,\wh{x})$ and $\wt{\phi}^t(v)\in \wh{X}$.
\end{lemma}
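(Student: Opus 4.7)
The plan is to pass to the local product chart $\Phi$ and explicitly produce the required $t$. Writing $\Phi^{-1}(v) = (u', t'\mathbf{v})$, where $\mathbf{v}$ denotes the geodesic spray direction at $p$, the desired time will be $t := -t'$: indeed, in the chart one has
\[
\wt{\phi}^{t}(v) = \Phi(u', 0) = \exp(u') \in \wh{X}',
\]
so the only remaining work is to check (i) that $v$ lies in the domain $\wh{X}'_{(-\delta, \delta)}$ where $\Phi^{-1}$ is defined, (ii) that $|t| \leq K \, d(v, \wh{x})$, and (iii) that $d(\exp(u'), \wh{p}) < \alpha$, which places $\exp(u')$ inside $\wh{X}$.

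Point (i) follows from the triangle inequality $d(v, \wh{p}) \leq d(v, \wh{x}) + d(\wh{x}, \wh{p}) < \alpha$, combined with the fact that $B(\wh{p}, \alpha) \subset \wh{X}'_{(-\delta, \delta)}$ (which itself follows from \eqref{eq:alpha} via the isometric identification $\wh{X}'_{(-\delta, \delta)} \cong X'_{(-\delta, \delta)}$, by lifting a path from $p$ to the image of $v$ and invoking uniqueness of covering lifts). For (ii), writing $\wh{x} = \Phi(u, 0)$ and using that $\Phi$ is $K$-bi-Lipschitz, one has
\[
|t| = |t'| \leq \|(u' - u,\, t' \mathbf{v})\|_V = d_V(\Phi^{-1}(v), \Phi^{-1}(\wh{x})) \leq K \, d(v, \wh{x}).
\]

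The subtle step is (iii), and it is where I expect the main obstacle to lie: applying the $K$-bi-Lipschitz estimate of $\Phi$ a second time would only give $d(\exp(u'), \wh{x}) \leq K^2 d(v, \wh{x})$, producing a $K^2$ factor that does not match the $(K+1)$ appearing in the hypothesis. The right move is to avoid the chart on this second estimate and instead use that the geodesic spray is a unit vector field in the Sasaki metric, so $d(\wt{\phi}^t(v), v) \leq |t|$. Combined with the triangle inequality and (ii), this yields
\[
d(\exp(u'), \wh{p}) \leq d(\wt{\phi}^t(v), v) + d(v, \wh{x}) + d(\wh{x}, \wh{p}) \leq |t| + d(v, \wh{x}) + d(\wh{x}, \wh{p}) \leq (K+1) \, d(v, \wh{x}) + d(\wh{x}, \wh{p}) < \alpha,
\]
exactly matching the hypothesis and completing the proof.
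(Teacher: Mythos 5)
Your proof is correct and follows essentially the same route as the paper's: use the chart $\wt{\Phi}$ to solve for the time offset $t$, bound $|t|$ by $K\,d(v,\wh{x})$ using the bi-Lipschitz property, and then control $d(\wt{\phi}^t(v),\wh{p})$ via a triangle inequality through $v$ and $\wh{x}$, using that the geodesic flow moves points a Sasaki-distance at most $|t|$. Your step (iii) identifies precisely the point where a naive second application of the bi-Lipschitz bound would give $K^2$ rather than $K+1$, and your fix (the flow has unit speed, so $d(\wt{\phi}^t(v),v)\le|t|$) is the same estimate the paper uses; you are in fact slightly more careful, writing $\le|t|$ where the paper has the sloppier $=t$.
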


\begin{proof}
Clearly, 
$d(v,\wh{p}) \leq d(v,\wh{x})+d(\wh{x},\wh{p})\leq (K+1)d(v,\wh{x})+d(\wh{x},\wh{p})<\alpha$, thus $v\in B(\wh{p},\alpha)\subset \wh{X}'_{(-\delta,\delta)}$ and we get $v=\wt{\phi}^{-t}(\exp(u))=\wt{\Phi}(u,-t)$ for some $u\in U\cap B$ and $|t|<\delta$.
Here $\wt{\Phi}$ is the lift of the map $\Phi$ defined in \eqref{eq:mapPhi}.
We also have $\wh{x}=\exp(w)=\wt{\Phi}(w,0)$ for some $w\in B\cap U$.
    Recall that $\wt{\Phi}$ is $K$-bi-Lipschitz isomorphic to $(B\cap U) \times (-\delta,\delta)v$ and $\mathbb{R}v\perp U$, so
    \[ |t|\leq \sqrt{t^2+\|u-w\|^2}=d_V((u,t)(w,0))=d_V(\wt{\Phi}^{-1}(v),\wt{\Phi}^{-1}(\wh{x}))\leq Kd(v,\wh{x}).\]
    Finally, 
    \[ d(\wt{\phi}^t(v),\wh{p})\leq d(\wt{\phi}^t(v),v)+d(v,\wh{x})+d(\wh{x},\wh{p})= t+d(v,\wh{x})+d(\wh{x},\wh{p}) \]
    \[\leq (K+1)d(v,\wh{x})+d(\wh{x},\wh{p})<\alpha, \]
    thus $\wt{\phi}^t(v)\in B(\wh{p},\alpha)\cap \wh{X}'=\wh{X}$.
\end{proof}

Since the measure $m$ on $X$ coincides with the normalized restriction of $m'$ to $X$, we set $C=m'(X)^{-1}$ and rewrite \eqref{eq:boundary'} as
\begin{equation} \label{eq:boundary'2}
\mbox{for all } 0<\epsilon<\epsilon_0, \quad
        m(\setdef{x\in X}{d(x,p)}>\alpha-\epsilon)<C\sqrt{\epsilon}.
\end{equation}

\begin{lemma} \label{lem:BC}
    For $m$-a.e $x\in X$ and for every $c>0$, there exists $N=N_c(x)$ such that for every $n\geq N$, $d(T^n(x),p)\leq \alpha-e^{-cn}$.
\end{lemma}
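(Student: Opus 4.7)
The plan is a direct application of the first Borel--Cantelli lemma, combined with the tail bound \eqref{eq:boundary'2} and the fact that $T$ preserves $m$ (as the first return map of the flow to the Borel cross section $X$).

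Fix $c>0$. For $n$ large enough that $e^{-cn}<\epsilon_0$, define
\[
A_n=\setdef{x\in X}{d(x,p)>\alpha-e^{-cn}}.
\]
By \eqref{eq:boundary'2} applied with $\epsilon=e^{-cn}$, we obtain $m(A_n)<Ce^{-cn/2}$. Since $T$ is $m$-preserving, $m(T^{-n}A_n)=m(A_n)$, and therefore
\[
\sum_{n\ge 1} m(T^{-n}A_n)\ \le\ \sum_{n\ge 1}Ce^{-cn/2}\ <\ +\infty.
\]
The first Borel--Cantelli lemma yields $m\left(\limsup_{n\to\infty}T^{-n}A_n\right)=0$. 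Equivalently, for $m$-a.e.\ $x\in X$ there exists $N_c(x)$ such that $T^n(x)\notin A_n$ for all $n\ge N_c(x)$, which is precisely $d(T^n(x),p)\le \alpha-e^{-cn}$ for all such $n$.

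To upgrade the ``for a.e.\ $x$'' (depending on $c$) to a single full-measure set good for every $c>0$, apply the above to the countable sequence $c_k=1/k$, $k\in\bbN$, obtaining full-measure sets $X_k\subset X$ on which the conclusion holds for $c=c_k$. Let $X_\infty=\bigcap_k X_k$, which is of full measure. Given any $c>0$ and $x\in X_\infty$, pick $k$ with $c_k\le c$; then for $n\ge N_{c_k}(x)$ we have
\[
d(T^n(x),p)\ \le\ \alpha-e^{-c_k n}\ \le\ \alpha-e^{-cn},
\]
since $c_k\le c$ implies $e^{-c_k n}\ge e^{-cn}$. Thus setting $N_c(x)=N_{c_k}(x)$ works.

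There is no real obstacle: the key estimate \eqref{eq:boundary'2} is the square-root bound engineered via Lemma~\ref{L:good_radius} precisely to make the series $\sum e^{-cn/2}$ summable, and the measure preservation of $T$ is built into the cross-section construction. The only small subtlety is the upgrade from ``for each $c$'' to ``for every $c$'', which is handled by the monotonicity of the bound in $c$ and a countable intersection.
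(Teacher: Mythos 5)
Your proof is correct and follows essentially the same route as the paper's: apply the first Borel--Cantelli lemma to the sets $A_n$ (the paper's $E_n$) using the tail bound \eqref{eq:boundary'2} and the $T$-invariance of $m$, then reduce the quantifier over all $c>0$ to a countable set of $c$'s via the monotonicity of the bound. The only difference is cosmetic: the paper compresses the countability reduction into one line (``it is enough to consider $c$ rational''), while you spell out the monotonicity argument explicitly.
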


\begin{proof}
    It is enough to consider $c$ rational, hence we can fix $c>0$ and prove the existence of $N_c(x)$ for this fixed $c$ and for a.e $x$. Set
    \[
        E_n=\setdef{x\in X}{d(x,p) > \alpha-e^{-cn}}.
    \]
    For every $n$ large enough so that $e^{-cn}<\epsilon_0$,
    by \eqref{eq:boundary'2}, we have 
    $m(T^{-n} E_n)=m(E_n)<Ce^{-cn/2}$,
    which is a summable series.
    Hence by the Borel--Cantelli Lemma
    \[
        \setdef{x\in X}{T^n x\in E_n\ \textrm{infinitely\ often}}=
        \bigcap_{k=1}^\infty\bigcup_{n=k}^\infty T^{-n}E_n
    \]
    has $m$-measure zero.
    Therefore, $m$-a.e $x$ is in its complement, which proves the lemma. 
    \end{proof}

We let $r:X\to (0,\infty)$ be the first return time to $X$, as discussed in \S\ref{sec:realgregs},
and we let $r_n$ be defined as in \eqref{eq:r_n}.
These are defined for a.e $x\in X$.
By construction, we have that for a.e $x\in X$, $r(x)\geq 2\delta$.
Accordingly, for a.e $x\in X$, for all $n$,
\begin{equation} \label{eq:r>eps}
  r_{n+1}(x)-r_n(x)\geq 2\delta.
\end{equation}
In particular, $r_n(x)\to \pm \infty$ as $n\to \pm\infty$.

By \eqref{eq:wc} we get $c_{r_n(x)}(x)=w_n(x)$, thus by the defining property of the cocycle $c$, \eqref{eq:can}, we get that for $m$-a.e $x\in X$ and for all $n\in\bbZ$ we have
\begin{equation}\label{e:wn-phirn}
        \wh{T^n(x)}=w_n(x)\wt{\phi}^{r_n(x)}(\wh{x}).
\end{equation}

%%%%

\begin{lemma} \label{lem:katlasst}
    For $m$-a.e $x,y\in X$, if there exists $g$ such that $\wh{x}_+=g\wh{y}_+$ then there exists $k\in \mathbb{Z}$ such that for every $n$ large enough, $w_{n+k}(y)=w_n(x)g$.
\end{lemma}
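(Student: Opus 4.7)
The hypothesis $\wh{x}_+=g\wh{y}_+=(g\wh{y})_+$ places $\wh{x}$ and $g\wh{y}$ on the same stable leaf in $T^1\wt{M}$. Accordingly there is a unique $s=s(x,y,g)\in\bbR$ making $\wh{x}$ and $\wt{\phi}^{s}(g\wh{y})$ strongly stably equivalent, and the uniform exponential contraction of the strong stable foliation in negative curvature (Equation~\eqref{eq:Wss}) yields a constant $C>0$ with
\[
d\bigl(\wt{\phi}^{t}(\wh{x}),\wt{\phi}^{t+s}(g\wh{y})\bigr)\le C e^{-c_0 t}\qquad(t\ge 0).
\]
The plan is to leverage this exponential alignment, via Lemma~\ref{lem:Lip} and Lemma~\ref{lem:BC}, to show that for all large $n$ the $\Gamma$-translates of $\wh{X}$ visited by the orbits of $\wh{x}$ and $g\wh{y}$ match, with a uniform index offset.

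\textbf{Matching the returns.} For generic $(x,y)$, Birkhoff applied to the first-return function $r$ gives $r_n(x)/n\to R:=\int_X r\dd m>0$; fix $c\in(0,c_0 R/2)$ so that Lemma~\ref{lem:BC} applies. For each large $n$ set
\[
v_n\ :=\ w_n(x)\,\wt{\phi}^{r_n(x)+s}(g\wh{y})\ =\ w_n(x)g\,\wt{\phi}^{r_n(x)+s}(\wh{y}).
\]
By $\Gamma$-equivariance of $\wt{\phi}$ one has $d(v_n,\widehat{T^n(x)})=d(\wt{\phi}^{r_n(x)+s}(g\wh{y}),\wt{\phi}^{r_n(x)}(\wh{x}))\le Ce^{-c_0 r_n(x)}=o(e^{-cn})$, while Lemma~\ref{lem:BC} gives $d(\widehat{T^n(x)},\wh{p})\le\alpha-e^{-cn}$. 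Hence the hypothesis of Lemma~\ref{lem:Lip} is satisfied (with $v=v_n$ and $\wh{x}=\widehat{T^n(x)}$), and that lemma produces $\tau_n\in\bbR$ with $|\tau_n|\le Kd(v_n,\widehat{T^n(x)})\to 0$ such that $\wt{\phi}^{\tau_n}(v_n)\in\wh{X}$. Rewriting, $\wt{\phi}^{r_n(x)+s+\tau_n}(\wh{y})\in g^{-1}w_n(x)^{-1}\wh{X}$; projecting to $T^1M$ shows $r_n(x)+s+\tau_n$ is a return time of the $y$-orbit to $X$, so there is a unique $m(n)\in\bbZ$ with $r_{m(n)}(y)=r_n(x)+s+\tau_n$. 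Comparing \eqref{e:wn-phirn} applied to $y$ gives $w_{m(n)}(y)^{-1}\wh{X}=g^{-1}w_n(x)^{-1}\wh{X}$; since $\wh{X}$ lies in the interior of the fundamental domain $\mathscr{F}$ its $\Gamma$-translates are pairwise disjoint, forcing
\[
w_{m(n)}(y)\ =\ w_n(x)\,g.
\]

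\textbf{Uniform offset: the main obstacle.} The delicate remaining step is to show $m(n)-n$ is eventually equal to a single integer $k$, rather than merely asymptotic to $s/R$; an a priori integer sequence with non-integer limit could oscillate infinitely often. The minimum-gap estimate \eqref{eq:r>eps}, $r_{n+1}-r_n\ge 2\delta$ for both $x$ and $y$, combined with $\tau_n\to 0$, makes $n\mapsto m(n)$ strictly increasing on a tail of $\bbN$. Running the symmetric construction with the roles of $x$ and $y$ swapped (and $g$ replaced by $g^{-1}$) produces an analogous strictly increasing matching $m\mapsto n'(m)$, and the uniqueness of a return lying within distance $\delta$ of a prescribed time (again from the $2\delta$ gap together with $\tau_n,\sigma_m\to 0$) makes $n\mapsto m(n)$ and $m\mapsto n'(m)$ mutual inverses on cofinal subsets of $\bbN$. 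Hence $n\mapsto m(n)$ is an order-preserving bijection between cofinal tails of $\bbN$, so it must be a translation $m(n)=n+k$ for a fixed $k\in\bbZ$ and all sufficiently large $n$, completing the proof.
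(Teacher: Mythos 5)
Your proof is correct and follows the same strategy as the paper's: quantify the exponential alignment of $\wh{x}$ and $\wt{\phi}^s(g\wh{y})$ along the geodesic orbit, feed the estimate through Lemma~\ref{lem:BC} and Lemma~\ref{lem:Lip} to produce the matching index $m(n)$ with $w_{m(n)}(y)=w_n(x)g$, and symmetrize in $x\leftrightarrow y$ together with the $2\delta$-gap from \eqref{eq:r>eps} to show $m(n)$ is a strictly increasing bijection between tails, hence a shift $n\mapsto n+k$. The only cosmetic difference is that you invoke Birkhoff ($r_n(x)\sim Rn$) to pick $c<c_0R$, whereas the paper uses the deterministic lower bound $r_n(x)\ge 2n\delta$ to pick $c<2c_0\delta$; both give the needed domination $Ce^{-c_0 r_n(x)}=o(e^{-cn})$.
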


\begin{proof}
    We assume both $x$ and $y$ satisfy Lemma~\ref{lem:BC} and $\wh{x}_+=g\wh{y}_+$.
    Since $\wh{x}$ and $g\wh{y}$ are stably equivalent, there is $s\in \mathbb{R}$ such that 
    \begin{equation} \label{eq:sstab}
        \mbox{$\wh{x}$ and  $\wt{\phi}^s(g\wh{y})$ are strongly stably equivalent.}
    \end{equation}
We denote
    \[ x_n=\wh{T^n x}=w_n(x)\wt{\phi}^{r_n(x)}(\wh{x}), \quad  u_n=w_n(x)\wt{\phi}^{r_n(x)}(u)=w_n(x)g\wt{\phi}^{r_n(x)+s}(\wh{y}). \]
    
    By \eqref{eq:Wss} there exist constants $c_0$ and $C$ such that for every $t\geq 0$, $d(\wt{\phi}^t(\wh{x}),\wt{\phi}^t(u))<Ce^{-c_0 t}$.
By \eqref{eq:r>eps} we have $2c_0n\delta\leq c_0r_n(x)$.
Fixing $0<c<2c_0\delta$, we thus have for every $n$ large enough, 
\[ (K+1)C < e^{(2c_0\delta-c)n} \leq e^{c_0r_n(x)-cn}, \]
hence 
    \[
        d(x_n,u_n)=d(\wt{\phi}^{r_n(x)}(\wh{x}),\wt{\phi}^{r_n(x)}(u))<Ce^{-c_0r_n(x)}<(K+1)^{-1}e^{-cn}.
    \]
    By Lemma~\ref{lem:BC}, for every $n$ large enough we also have $d(x_n,\wh{p})\leq \alpha-e^{-cn}$, so
    \[ (K+1)d(u_n,x_n)+d(x_n,\wh{p})<e^{-cn}+(\alpha-e^{-cn})=\alpha. \]
We deduce by Lemma~\ref{lem:Lip}
    that there exist $t_n\in \mathbb{R}$ such that $|t_n|\leq Kd(u_n,x_n)$ and $\wt{\phi}^{t_n}(u_n)\in \wh{X}$.
    Note that 
    \[ |t_n|\leq Kd(u_n,x_n) \leq K(K+1)^{-1}e^{-cn}<e^{-cn}, \]
Thus, for every $n$ large enough there exists $t_n\in \mathbb{R}$ such that  $|t_n|<\delta$ and 
\[ w_n(x)g\wt{\phi}^{r_n(x)+t_n+s}(\wh{y})=\wt{\phi}^{t_n}(w_n(x)g\wt{\phi}^{r_n(x)+s}(\wh{y}))=\wt{\phi}^{t_n}(u_n)\in \wh{X}. \]
It follows that there exists a $\mathbb{Z}$-valued function $f$, defined for every $n$ large enough, such that 
\begin{equation} \label{eq:funf}
r_{f(n)}(y)=r_n(x)+t_n+s\quad \mbox{and} \quad w_{f(n)}(y)=w_n(x)g.
\end{equation}
By \eqref{eq:r_n} we get
\[ r_{f(n+1)}(y)-r_{f(n)}(y)=(r_{n+1}(x)+t_{n+1}+s)-(r_n(x)+t_n+s) \]
\[ =(r_{n+1}(x)-r_{n}(x))+(t_{n+1}-t_{n})> 2\delta-2\delta=0.\]
Since $r_n(y)$ is monotonically increasing in $n$, we conclude that also the function $f$ is.

We write $\wh{y}_+=g^{-1}\wh{x}_+$
and invert in all of the above the roles of $x$ and $y$, where $g^{-1}$ takes the role of $g$.
Since the strongly stable equivalence relation is invariant under the actions of $\Gamma$ and $\wt{\phi}$, applying $g^{-1}$ and $\wt{\phi}^{-s}$ to \eqref{eq:sstab}, we find that $\wt{\phi}^{-s}(g^{-1}\wh{x)}$ and  $\wh{y}$ are strongly stably equivalent, thus $-s$ takes the role of $s$.
We get for every $n$ large enough $t'_n\in \mathbb{R}$ satisfying $|t'_n|<\delta$ and a $\mathbb{Z}$-valued strictly increasing function $f'$
so that the analogue of \eqref{eq:funf} holds, that is
\begin{equation} \label{eq:funf'}
r_{f'(n)}(x)=r_n(y)+t'_n-s \quad \mbox{and} \quad w_{f'(n)}(x)=w_n(y)g^{-1}.
\end{equation}
Substituting \eqref{eq:funf} in \eqref{eq:funf'}, we get for every $n$ large enough 
\[ r_{f'\circ f(n)}(x)=r_{f(n)}(y)+t'_{f(n)}-s=r_n(x)+t_n+t'_{f(n)}\]
Since $|t_n+t'_{f(n)}|<2\delta$, we conclude by \eqref{eq:r>eps} that $f'\circ f(n)=n$.
Similarly, we get that $f\circ f'(n)=n$.
Since the functions $f$ and $f'$ are strictly increasing, we deduce that there exists $k\in \bbZ$ such that $f(n)=n+k$ and $f'(n)=n-k$.
We obtain from \eqref{eq:funf} that $w_{n+k}(y)=w_n(x)g$.
\end{proof}

As a compact $\Gamma$-space the geometric boundary $\partial\wt{M}$  
can be identified with the Gromov boundary $\partial\Gamma$ of $\Gamma$.
Under this identification, a sequence $(g_n)_{n\in\bbN}$ in $\Gamma$ convergences to a point $\xi\in\partial\wt{M}$
if $g_nu\to\xi$ for $u\in\wt{M}$. This convergence is uniform on compact subsets of $\wt{M}$.

\begin{lemma}\label{L:wn-lim}
For $m$-a.e. $x\in X$ one has
    \[
        \lim_{n\to \infty} w_n(x)^{-1} =\wh{x}_+,
        \qquad 
        \lim_{n\to-\infty} w_n(x)^{-1} =\wh{x}_-,
    \]
\end{lemma}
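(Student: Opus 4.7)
My plan is to read off the convergence directly from the cocycle identity \eqref{e:wn-phirn}, using that the cross-section $X$ was built to lie in a bounded set and that $r_n(x)$ escapes to $\pm\infty$ monotonically.

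First I would rewrite \eqref{e:wn-phirn} in the form
\[
    w_n(x)^{-1}\cdot \wh{T^n x} \ =\ \wt{\phi}^{r_n(x)}(\wh{x}),
\]
and then project to the base $\wt{M}$ via the footpoint map $\pi:T^1\wt{M}\to\wt{M}$; because $\Gamma$ acts on $T^1\wt{M}$ by the derivative of its isometric action on $\wt{M}$, this yields
\[
    w_n(x)^{-1}\cdot \pi(\wh{T^n x}) \ =\ \pi\bigl(\wt{\phi}^{r_n(x)}(\wh{x})\bigr)\ =\ L_{\wh{x}}(r_n(x)).
\]
Set $u_n:=\pi(\wh{T^n x})$. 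Since $\wh{T^n x}\in\wh{X}$, which is a bounded subset of the fundamental domain $\mathscr{F}$, the points $u_n$ lie in a bounded region $K\subset\wt{M}$ independent of $n$.

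Next I would check that $r_n(x)\to+\infty$ as $n\to+\infty$ (and $r_n(x)\to-\infty$ as $n\to-\infty$). This follows from \eqref{eq:r>eps}, which gives $r_{n+1}(x)-r_n(x)\ge 2\delta$, so $r_n(x)\ge 2\delta n+r_0(x)$ for $n\ge 0$ and symmetrically for $n\le 0$. Consequently, along the geodesic $L_{\wh{x}}$, we have $L_{\wh{x}}(r_n(x))\to \wh{x}_+$ in $\wt{M}\cup\partial\wt{M}$ as $n\to+\infty$, and $L_{\wh{x}}(r_n(x))\to \wh{x}_-$ as $n\to-\infty$.

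Combining these two points, for $m$-a.e.\ $x\in X$ we have $w_n(x)^{-1}u_n\to \wh{x}_+$ in $\wt{M}\cup\partial\wt{M}$ as $n\to+\infty$. Finally, to replace $u_n$ by a fixed basepoint $u\in\wt{M}$ (which is the convention explained just before the lemma for convergence of group elements to the boundary), I would use that $\Gamma$ acts by isometries, so
\[
    d\bigl(w_n(x)^{-1}u,\, w_n(x)^{-1}u_n\bigr)\ =\ d(u,u_n)\ \le\ \sup_{v\in K}d(u,v)\ <\ \infty.
\]
A standard fact about Gromov hyperbolic (in fact CAT$(-\kappa)$) spaces is that a sequence of points in $\wt{M}$ at uniformly bounded distance from a sequence converging to a boundary point $\xi\in\partial\wt{M}$ also converges to $\xi$. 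This gives $w_n(x)^{-1}u\to \wh{x}_+$, which by the convention recalled before the lemma is exactly the statement $w_n(x)^{-1}\to\wh{x}_+$ in $\Gamma\cup\partial\Gamma\cong\Gamma\cup\partial\wt{M}$. The argument for $n\to-\infty$ is identical, replacing $\wh{x}_+$ by $\wh{x}_-$. I do not anticipate any real obstacle here; the only subtlety is verifying that the bounded set $K$ is well-defined, which is built into the construction of $X\subset B(p,\alpha)$ and the fact that $\wh{X}$ is the image of $X$ in the fundamental domain.
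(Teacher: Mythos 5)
Your proposal is correct and takes essentially the same approach as the paper's proof: both rewrite \eqref{e:wn-phirn} to compare $w_n(x)^{-1}$ applied to a bounded reference against the geodesic $L_{\wh{x}}$ at time $r_n(x)\to\pm\infty$, project to $\wt{M}$ by the footpoint map, and use equivariance of the metric together with boundedness of $\wh{X}$ to conclude. The paper is slightly more streamlined --- it directly bounds $d_{T^1\wt{M}}\bigl(w_n(x)^{-1}\wh{p},\wt{\phi}^{r_n(x)}(\wh{x})\bigr)<\alpha$ by the definition $\wh{X}\subset B(\wh{p},\alpha)$ and then applies the $1$-Lipschitz projection $\pi$, whereas you introduce the intermediate sequence $u_n=\pi(\wh{T^nx})$ and appeal to a general bounded-distance argument; these are cosmetic variations of the same step.
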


\begin{proof}
We will compute the first limit, the second one is done similarly.
By \eqref{e:wn-phirn}, we have $w_n(x)\wt{\phi}^{r_n(x)}(\wh{x})\in \wh{X}\subset B(\wh{p},\alpha)$,
thus 
\[ d_{T^1\wt{M}}(w_n(x)^{-1}\wh{p},\wt{\phi}^{r_n(x)}(\wh{x}))=d_{T^1\wt{M}}(\wh{p},w_n(x)\wt{\phi}^{r_n(x)}(\wh{x})) < \alpha\]
Recall that the natural map $\pi:T^1\wt{M}\to \wt{M}$ is contracting with respect to the Sasaki metric, therefore
\[ d_{\wt{M}}(w_n(x)^{-1}\pi(\wh{p}),\pi(\wt{\phi}^{r_n(x)}(\wh{x}))) < \alpha.\]
Since $r_n(x)\to \infty$ as $n\to \infty$, we have 
\[ \lim_{n\to \infty} \pi(\wt{\phi}^{r_n(x)}(\wh{x}))=\lim_{t\to \infty} \pi(\wt{\phi}^{t}(\wh{x}))= \wh{x}_+\]
and we conclude that $w_n(x)^{-1}\pi(\wh{p})\to \wh{x}_+$, thus indeed, $w_n(x)^{-1}\to \wh{x}_+$.
\end{proof}

\subsection{Proof of Theorem~\ref{T:nc-ApaficGreg}}

    In this subsection we let $(M,g)$ be a negatively curved compact Riemannian manifold,
    we let $\mathscr{F}$ be a fundamental domain for the action of $\pi_1(M)$ on $T^1\wt{M}$ which has a non-empty interior and we let $\mu$ be a Gibbs measure on $T^1M$.
We will let $(X,\calX,m,T,w,\Gamma)$ be the Greg constructed in \S\ref{sec:goodchoice} and we argue to show that it is Apafic, thus proving Theorem~\ref{T:nc-ApaficGreg}.

We note that by Lemma~\ref{lem:Gibbs}, the measure $\mu$ satisfies the local product property, see Definition~\ref{def:localprod}.
The proof below will only rely on this property, see Remark~\ref{rem:gibbs}.

Recall that in \S\ref{subse:RW} we fixed a fully supported probability measure $m^1_\Gamma$ on $\Gamma$ and constructed the Lebesgue isomorphism
$j:(X\times \Gamma,m\times m^1_\Gamma) \overto{\cong} (\Gamma^\bbZ,\wt{m}^1)$.
By the constructions of \S\ref{subs:idealpastfuture} we get the following diagram of quotients:
\begin{equation}
    \begin{tikzcd}
        &(\Gamma^\bbZ,\wt{m}^1) \ar[dd, "\beta"] 
        \ar[dl, bend right] 
        \ar[dr, bend left]& \\
        (\Gamma^{\bbZ_-},\wt{m}^1_-) \ar[d,"\beta_-"]& &  
        (\Gamma^{\bbZ_+},\wt{m}^1_+) \ar[d,"\beta_+"]\\
        (E_-,\eta_-) & (E,\eta) \ar[l, "\pr_-"'] \ar[r, "\pr_+"] & (E_+,\eta_+).
    \end{tikzcd}
\end{equation}

Somewhat similarly, we have the following geometrically defined, 
$\Gamma$-equivariant and $\phi^\bbR$-invariant, maps:
\begin{equation} \label{eq:geom}
      \begin{tikzcd}
        & T^1\wt{M} \ar[d, "\pr_{\bowtie}"] \ar[dl, bend right,"u\mapsto u_-"'] 
        \ar[dr, bend left, "u\mapsto u_+"]& \\
        \partial\wt{M} & \partial^2\wt{M} \ar[l, "\pr_1"'] \ar[r, "\pr_2"] 
        &\partial\wt{M}
    \end{tikzcd}  
\end{equation}
Here the maps are the projections:
\[
    \pr_{\bowtie}:u\mapsto (u_-,u_+),\qquad
    \pr_1:(u_-,u_+)\mapsto u_-,\qquad
    \pr_2:(u_-,u_+)\mapsto u_+.
\]
We consider the map
\[ X\times \Gamma \to T^1\wt{M}, \quad (x,g) \mapsto g\wh{x} \]
and denote by $\bar{m}_1$ the corresponding push forward of the measure $m\times m^1_\Gamma$ on $X\times \Gamma$. 
We denote by $\ol{m}^1_\pm$, $\ol{m}^1_{\bowtie}$ 
the probability measures on $\partial\wt{M}$, $\partial^2\wt{M}$,
obtained by the push forwards of $\bar{m}^1$ by the maps in the diagram \eqref{eq:geom}.

\begin{prop}
    There are $\Gamma$-equivariant measurable maps $\sigma$, $\tau$, $\tau_-$ and $\tau_+$ which are all measure space isomorphisms and such that the following diagram commutes
    \[
        \begin{tikzcd}
         (\Gamma^\bbZ,\wt{m}^1) \ar[d, dashed, "\sigma", "\simeq"'] \ar[rr, bend left=20, "\beta"]
         & (E_-,\eta_-)\ar[d, dashed, "\tau_-", "\simeq"']  & (E,\eta)
         \ar[d, dashed, "\tau", "\simeq"'] \ar[l, "\pr_-"']\ar[r, "\pr_+"]& (E_+,\eta_+)\ar[d, dashed, "\tau_+", "\simeq"']\\
         (T^1\wt{M},\bar{m}_1) \ar[rr, bend right=20, "\pr_{\bowtie}"] & (\partial\wt{M},\ol{m}^1_-) & 
         (\partial^2\wt{M},\ol{m}^1_{\bowtie}) 
         \ar[r, "\pr_2"] \ar[l, "\pr_1"'] &
         (\partial\wt{M},\ol{m}^1_+).
        \end{tikzcd}
    \]
\end{prop}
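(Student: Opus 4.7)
The plan is to construct $\sigma$ directly from the Lebesgue isomorphism $j$ and the geometric realization $(x,g)\mapsto g\wh{x}$, and then to obtain $\tau$ and $\tau_\pm$ by descending to the respective ergodic-component quotients; all four maps are variants of this single construction.

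Set $\sigma=\pi\circ j^{-1}$ with $\pi:X\times\Gamma\to T^1\wt{M}$, $\pi(x,g)=g\wh{x}$. By the definitions of $\wt{m}^1$ and $\bar{m}^1$, $\sigma_*\wt{m}^1=\bar{m}^1$. The $\Gamma$-action on $\Gamma^\bbZ$ corresponds under $j$ to the left-translation action on the $\Gamma$-coordinate of $X\times\Gamma$, which $\pi$ intertwines with the $\Gamma$-action on $T^1\wt{M}$, so $\sigma$ is $\Gamma$-equivariant. Injectivity on a conull set is clear since $\wh{x}\in\mathscr{F}$ is in a fundamental domain on which $\Gamma$ acts freely. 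Using the cocycle identity $w_n(Tx)=w_{n+1}(x)w(x)^{-1}$ together with $\wh{Tx}=w(x)\wt{\phi}^{r(x)}(\wh{x})$ from \eqref{e:wn-phirn}, a short computation gives $\sigma(\wt{T}\cdot j(x,g))=\wt{\phi}^{r(x)}(\sigma(j(x,g)))$. Thus each $\wt{T}^\bbZ$-orbit on $\Gamma^\bbZ$ maps into a single $\wt{\phi}^\bbR$-orbit on $T^1\wt{M}$, with jumps of size $r_n(x)$. Since $\pr_\bowtie$ is constant on $\wt{\phi}^\bbR$-orbits, $\pr_\bowtie\circ\sigma$ is $\wt{T}^\bbZ$-invariant and descends through $\beta$ to a $\Gamma$-equivariant measurable map $\tau:E\to\partial^2\wt{M}$ with $\tau_*\eta=\bar{m}^1_\bowtie$.

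For $\tau_+$, and symmetrically $\tau_-$, Lemma~\ref{L:wn-lim} gives $gw_n(x)^{-1}\to g\wh{x}_+$ as $n\to\infty$. Hence $(\gamma_n)_{n\ge 0}\mapsto\lim_n\gamma_n^{-1}$ defines a $\Gamma$-equivariant measurable map $\Gamma^{\bbZ_+}\to\partial\wt{M}$ on a $\wt{m}^1_+$-conull set, which is $\wt{T}^{\bbZ_+}$-invariant because tail limits are unaffected by finite shifts and which pushes $\wt{m}^1_+$ forward to $\bar{m}^1_+$ by construction. It therefore descends through $\beta_+$ to $\tau_+:E_+\to\partial\wt{M}$ with $\tau_{+*}\eta_+=\bar{m}^1_+$. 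Injectivity modulo null sets is exactly the content of Lemma~\ref{lem:katlasst}: if $g_1\wh{x}_1^+=g_2\wh{x}_2^+$, then with $h=g_1^{-1}g_2$ the lemma furnishes $k\in\bbZ$ such that $w_{n+k}(x_2)g_2^{-1}=w_n(x_1)g_1^{-1}$ for all $n$ large, so $\wt{T}^k j^+(x_2,g_2)$ and $j^+(x_1,g_1)$ agree on a cofinal tail and represent the same element of $E_+$. Commutativity of the full diagram is then a routine chase at the level of $X\times\Gamma$: the map $(x,g)\mapsto(g\wh{x}_-,g\wh{x}_+)$ factors through both $\pr_\bowtie\circ\sigma$ and $\tau\circ\beta$, and post-composition with $\pr_i$ recovers $\tau_\pm\circ\pr_\pm$.

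The main obstacle will be the measure-theoretic content of the construction of $\tau$: to conclude that $\tau$ is an isomorphism of Lebesgue spaces (not merely a quotient), one must promote the orbit inclusion ``$\wt{T}^\bbZ$-orbit $\subset$ $\wt{\phi}^\bbR$-orbit'' into an equality of invariant $\sigma$-algebras modulo $\bar{m}^1$-null sets. This is exactly the classical Ambrose--Kakutani correspondence applied to the cross-section $\wh{X}$ with its first-return map $T$ and first-return time $r$, which by \eqref{eq:r>eps} is bounded below by $2\delta$. The subtlety here is that we work on the $\Gamma$-cover, where $\bar{m}^1$ is a (non-flow-invariant) cross-section measure rather than a lift of $\mu$; what saves us is that $\wh{X}$ is a genuine smooth transversal with non-empty interior and the return times $r_n$ almost surely exhaust $\bbR$, so the standard cross-section dictionary applies verbatim.
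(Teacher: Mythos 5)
Your construction of $\sigma$, $\tau$, $\tau_\pm$ from the Lebesgue isomorphism $j$ and the geometric realization $(x,g)\mapsto g\wh{x}$, your descent through the ergodic-component quotients via shift-invariance of the tail limits, and your use of Lemma~\ref{lem:katlasst} for the injectivity of $\tau_\pm$ all coincide with the paper's proof. You also correctly identify, as the paper does in Remark~\ref{rem:X'}, that the injectivity of $\tau_\pm$ is the delicate point. The diagram-chase for commutativity is also the paper's route.

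Where your proposal has a genuine gap is in the injectivity of $\tau$. You defer this to the Ambrose--Kakutani correspondence, explicitly acknowledge that $\bar{m}^1$ is not flow-invariant so that the theorem does not literally apply, and then assert without argument that ``the standard cross-section dictionary applies verbatim.'' That assertion is not justified, and it is also not the most efficient route. The paper's own argument for $\tau$ is a short direct verification that does not mention Ambrose--Kakutani: if $\tau$ sends $(w_n(x)g^{-1})_n$ and $(w_n(y)h^{-1})_n$ to the same point of $\partial^2\wt{M}$, then $g\wh{x}_\pm=h\wh{y}_\pm$, so $h^{-1}g$ carries the oriented geodesic through $\wh{x}$ to that through $\wh{y}$; hence $\wh{y}=(h^{-1}g)\wt{\phi}^s(\wh{x})$ for some $s\in\bbR$. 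Since $\wh{y}\in\wh{X}\subset\mathscr{F}$ and, by \eqref{e:wn-phirn}, the flow orbit of $\wh{x}$ meets $\Gamma\wh{X}$ exactly at the times $r_k(x)$ with $\wt{\phi}^{r_k(x)}(\wh{x})=w_k(x)^{-1}\wh{T^kx}$, the fundamental-domain structure forces $s=r_k(x)$, $h^{-1}g=w_k(x)$ and $y=T^kx$ for some $k\in\bbZ$. This says precisely that the two sequences lie on the same $\wt{T}^\bbZ$-orbit, so $\tau$ is injective. You should replace your Ambrose--Kakutani appeal with this elementary argument, both because it closes the gap and because it shows that, contrary to what you flagged as the ``main obstacle,'' the injectivity of $\tau$ is in fact the easy part, while the real work (requiring the modified cross-section $X$ rather than $X'$) sits in Lemma~\ref{lem:katlasst} for $\tau_\pm$.
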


\begin{remark} \label{rem:X'}
    In the proof below, the most subtle part is showing that the maps $\tau_+$ and $\tau_-$, which are easily constructed (Lemma~\ref{L:wn-lim}), are actually isomorphisms.
    This is where using the Greg $(X',\calX',m',T',w',\Gamma)$ from \S\ref{sec:goodchoice} is insufficient and the reason for us to use Lemma~\ref{L:good_radius} from \S\ref{sec:prelem} and construct $(X,\calX,m,T,w,\Gamma)$.
    The technicalities are hidden in the proof of Lemma~\ref{lem:katlasst} and the lemmas preceding it.
\end{remark}

\begin{proof}
    We identify both $(\Gamma^\bbZ,\wt{m}^1)$ and $(T^1\wt{M},\bar{m}_1)$ with $(X\times \Gamma,m\times m^1_\Gamma)$.
    This provides the isomorphism $\sigma$.
    The map
    \[ X\times \Gamma\overto{} \partial^2\wt{M}, \quad 
    (x,g) \mapsto (g\wh{x}_-,g\wh{x}_+)\] 
    coincides with the maps $\pr_{\bowtie}$ and $\pr_{\bowtie}\circ \sigma$ under these identifications of.
    Considered as a map from $\Gamma^\bbZ$, by Lemma~\ref{L:wn-lim}, it is given explicitly by 
    \[ (w_n(x)g^{-1})_{n\in \mathbb{Z}} \mapsto g.\left(\lim_{n\to-\infty}w_n(x)^{-1},\lim_{n\to+\infty}w_n(x)^{-1}\right). \]
 Since the limits are shift invariant, the map $\pr_{\bowtie}\circ \sigma$ factors through 
    the space $(E,\eta)$ of $\bbZ$-ergodic components of $(\Gamma^\bbZ,\wt{m}^1)$, giving rise to the map $\tau$.

    We claim that $\tau$ is a measure space isomorphism.
    Indeed, assume 
    \[
        \tau\left((w_n(x)g^{-1})_{n\in\bbZ}\right)
        =\tau\left((w_n(y)h^{-1})_{n\in\bbZ}\right).
    \]
    Then $g\wh{x}_\pm=h\wh{y}_\pm$, and therefore $\wh{y}_\pm=(h^{-1}g)\wh{x}_\pm$.
    Thus $h^{-1}g$ maps the unparametrized oriented geodesic line $(\wh{x}_-,\wh{x}_+)$ to the unparametrized oriented geodesic line $(\wh{y}_-,\wh{y}_+)$.
    It follows that there exists a time shift $s\in\bbR$ such that for every $t$ $(h^{-1}g) \wt{\phi}^{t+s}(\wh{x})=\wt{\phi}^{t}(\wh{y})$.
    In particular, $\wh{y}=(h^{-1}g)\wt{\phi}^s(\wh{x})$.
    This implies that there exists $k\in\bbZ$ so that 
    \[
        s=r_k(x), \qquad h^{-1}g=w_k(x),\qquad y=T^kx.
    \]
    Therefore 
    \[
        w_{n+k}(x)g^{-1}=w_n(T^kx)w_k(x)g^{-1}=w_{n}(y)h^{-1}
        \qquad(n\in\bbZ)
    \]
    which means that such two points are related by a
    $\bbZ$-shift. 
    Hence the measure space map $\tau:(E,\eta)\to(\partial\wt{M},\ol{m}^1)$
    is an isomorphism as claimed.

    In a similar fashion to the construction of $\tau$, Lemma~\ref{L:wn-lim} is used to construct $\tau_+$ and $\tau_-$.
    Indeed, the maps
        $(\Gamma^{\bbZ_\pm},\wt{m}_\pm)\overto{}\partial\wt{M}$:
    \[
        \left(w_n(x)^{-1}g^{-1}\right)_{n\in\bbZ_\pm}\mapsto g.\lim_{n\to\pm\infty} w_n(x)^{-1}
        =g\wh{x}_\pm
    \]
    are shift invariant, thus factor via $(E_\pm,\eta_\pm)$.
    We are left to show that $\tau_\pm$ are measure space isomorphisms.
    We will do this for $\tau_+$, the other case being similar.

    Suppose 
    \[
        \tau_+\left((w_n(x)g^{-1})_{n\in\bbZ_+}\right)
        =\tau_+\left((w_n(y)h^{-1})_{n\in\bbZ_+}\right).
    \]
    That is, assume $g\wh{x}_+=h\wh{y}_+$ for $x,y\in X$, $g,h\in\Gamma$.
Then $\wh{x}_+=g^{-1}h\wh{y}_+$, thus by Lemma~\ref{lem:katlasst} there exists $k\in \mathbb{Z}$ such that for every $n$ big enough, $w_{n+k}(y)=w_n(x)g^{-1}h$.
It follows that the two sequences $(w_n(x)g^{-1})_{n\in\bbZ_+}$
    and $(w_n(y)h^{-1})_{n\in\bbZ_+}$ are equivalent
    by the $\bbZ_+$-action on $\Gamma^{\bbZ_+}$, and so define 
    the same point in $E_+$. This proves that $\tau_+$ is an isomorphism
    and completes the proof.
\end{proof}

\begin{lemma}
    We have equivalence of measures 
    \[
        \ol{m}^1_{\bowtie}\sim \ol{\mu}^1_{\bowtie}
        \quad\textrm{on}\quad \partial^2\wt{M},
        \qquad 
        \ol{m}^1_{\pm}\sim \ol{\mu}^1_{\pm}
        \quad\textrm{on}\quad\partial\wt{M}.
    \]
\end{lemma}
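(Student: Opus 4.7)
The plan is to construct a $\sigma$-finite auxiliary measure $\tilde{\nu}$ on $T^1\wt{M}$ which simultaneously (i) lies in the measure class of $\wt{\mu}$, and (ii) pushes forward under $p_{\bowtie}$ to a measure equivalent to $\ol{m}^1_{\bowtie}$. The key point is that $\bar{m}_1$ itself is concentrated on the codimension-one set $\bigcup_{g\in\Gamma} g\wh{X}$ and cannot be compared to $\wt{\mu}$ directly, so one must thicken it in the geodesic flow direction before comparing. Concretely, consider the map
\[
    \Psi:\bigl\{(x,g,t)\in X\times\Gamma\times\bbR\ :\ 0\le t<r(x)\bigr\}\overto{} T^1\wt{M},\qquad (x,g,t)\mapsto \wt{\phi}^t(g\wh{x}),
\]
where $r:X\to(0,\infty)$ is the first return time, and set $\tilde{\nu}:=\Psi_*\bigl(m\times m^1_\Gamma\times \mathrm{Leb}\bigr)$.

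Next I would verify that $\tilde{\nu}\sim\wt{\mu}$. First, $\Psi$ is a measurable bijection modulo null sets: surjectivity is that $\wt{\mu}$-a.e.\ flow orbit hits $\Gamma\cdot\wh{X}$ (since $X$ is a Borel cross section), while injectivity reduces to the fact that the consecutive visit times of such an orbit to $\Gamma\cdot\wh{X}$ differ by first-return times, so no two distinct $(x,g,t)$ in the domain can produce the same point (using that $\wh{X}\subset\mathscr{F}$ forces $g\wh{x}=g'\wh{x'}$ to imply $g=g'$, $x=x'$). Under this bijection, the flow-under-a-function realization of $\mu$ from the cross section $X$ developed in \S\ref{sec:realgregs} pulls $\wt{\mu}$ back to $(\int_X r\,dm)^{-1}\cdot m\times (\text{counting on }\Gamma)\times\mathrm{Leb}$. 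Because $\Gamma$ is discrete and $m^1_\Gamma$ is fully supported, the Radon--Nikodym derivative $d\tilde{\nu}/d\wt{\mu}$ at $\Psi(x,g,t)$ equals $(\int_X r\,dm)\cdot m^1_\Gamma(\{g\})>0$, so $\tilde{\nu}\sim\wt{\mu}$.

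Finally, pushing forward by $p_{\bowtie}$ and using $p_{\bowtie}\circ\wt{\phi}^t=p_{\bowtie}$ to integrate out the $t$-variable yields
\[
    (p_{\bowtie})_*\tilde{\nu}(B)=\int_X\int_\Gamma r(x)\cdot 1_B\bigl(g(\wh{x}_-,\wh{x}_+)\bigr)\,dm^1_\Gamma(g)\,dm(x),
\]
which is equivalent to $\ol{m}^1_{\bowtie}(B)=\int_X\int_\Gamma 1_B(g(\wh{x}_-,\wh{x}_+))\,dm^1_\Gamma(g)\,dm(x)$ because $r>0$ a.e.\ on $X$. Since push-forward preserves measure-class equivalence and $(p_{\bowtie})_*\wt{\mu}\sim(p_{\bowtie})_*\wt{\mu}^1=\ol{\mu}^1_{\bowtie}$ by definition of $\ol{\mu}^1_{\bowtie}$, the chain $\ol{m}^1_{\bowtie}\sim(p_{\bowtie})_*\tilde{\nu}\sim(p_{\bowtie})_*\wt{\mu}\sim\ol{\mu}^1_{\bowtie}$ gives the first equivalence of the lemma. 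The two $\pm$ equivalences then follow immediately by applying the coordinate projections $\partial^2\wt{M}\to\partial\wt{M}$, under which $\ol{m}^1_{\bowtie}$ and $\ol{\mu}^1_{\bowtie}$ map to $\ol{m}^1_\pm$ and $\ol{\mu}^1_\pm$ respectively. I expect the main obstacle to be the combinatorial verification that $\Psi$ is essentially injective on the specified fundamental region---this is really the assertion that the $X$-visits and the $\Gamma$-cocycle track the flow correctly---after which the rest of the argument is a short manipulation of Radon--Nikodym derivatives and push-forwards.
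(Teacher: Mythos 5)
Your proof is correct and takes a genuinely different route from the paper's. The paper sandwiches $\ol{m}^1_{\bowtie}$ and $\ol{\mu}^1_{\bowtie}$ between each other by two one-sided convolution comparisons: convolving $\wt{m}^1$ with normalized Lebesgue on a short interval $[0,\epsilon_0]$ produces a measure that is absolutely continuous with respect to $\wt{\mu}^1$ and has the same $p_{\bowtie}$-pushforward, giving $\ol{m}^1_{\bowtie}\preceq \ol{\mu}^1_{\bowtie}$; convolving instead with an everywhere-positive density on $[0,\infty)$ gives a measure that dominates $\wt{\mu}^1$, yielding the reverse inequality. You instead invoke the full Ambrose--Kakutani flow-under-a-function realization via the first-return time $r$, building a single $\sigma$-finite measure $\tilde{\nu}$ equivalent to $\wt{\mu}$ outright and then computing the $p_{\bowtie}$-pushforward explicitly, from which equivalence to $\ol{m}^1_{\bowtie}$ drops out because $0<r<\infty$ a.e. What the paper's approach buys is softness: it does not require identifying $X$ as a cross section with an explicit return-time realization, only the local flow-box structure near $\wh{X}$, so it transfers more readily to situations where the Ambrose--Kakutani picture is less convenient. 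What your approach buys is transparency and a cleaner one-step equivalence, with the Radon--Nikodym derivative exhibited explicitly as $(\int_X r\,\dd m)\cdot m^1_\Gamma(\{g\})$. The heaviest part of your argument, as you correctly flag, is the verification that $\Psi$ is an essential bijection onto $T^1\wt{M}$; this follows from the cross-section property together with freeness of the $\Gamma$-action (so $g\wh{x}=g'\wh{x}'$ with $\wh{x},\wh{x}'\in\mathscr{F}$ forces $g=g'$, $x=x'$), and the first-return constraint $0\le t<r(x)$ rules out double-covering in the $t$-coordinate. This is exactly the flow-under-a-function structure the paper sets up in \S\ref{sec:realgregs}, so the reliance is already in the text and the argument goes through.
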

\begin{proof}
    Recall that $\ol{m}^1_{\bowtie}$ and $\ol{\mu}^1_{\bowtie}$
    are obtained from $\wt{m}^1$ and $\wt{\mu}^1$, respectively, 
    via the pushforward by the map $p_{\bowtie}:T^1\wt{M}\to \partial^2\wt{M}$.
    Consider the convolution $\wt{m}^1_{[0,\epsilon_0]}$ of 
    the measure $\wt{m}^1$ on $\wt{X}\subset T^1\wt{M}$
    with the normalized Lebesgue measure on $[0,\epsilon_0]$ via $\wt{\phi}^t$, defined by
    \[
        \int_{T^1\wt{M}} f\dd \wt{m}^1_{[0,\epsilon_0]}
        =\frac{1}{\epsilon_0}\int_0^{\epsilon_0}\int_{\wt{X}}f(\wt{\phi}^t(u))\dd \wt{m}^1(u)\dd t
        \qquad(f\in C_c(T^1\wt{M}).
    \]
    Then $\wt{m}^1_{[0,\epsilon_0]}$ is absolutely continuous with respect to 
    $\wt{\mu}^1$, yet its pushforward via $p_{\bowtie}$ coincides with that of
    $\wt{m}^1$. This shows 
    \[
        \ol{m}^1_{\bowtie}=(p_{\bowtie})_*\wt{m}^1=(p_{\bowtie})_*\wt{m}^1_{[0,\epsilon_0]}
        \preceq (p_{\bowtie})_*\wt{\mu}^1=\ol{\mu}^1_{\bowtie}.
    \]
    Now take some probability distribution $\theta$ on $[0,\infty)$
    with everywhere positive density and define the convolution
    $\wt{m}^1_{\theta}$ in a similar fashion.
    Then we get $\wt{\mu}^1\preceq \wt{m}^1_{\theta}$ and comparing the pushforward
    to $\partial^2\wt{M}$ using $p_{\bowtie}$, 
    we get $\ol{\mu}^1_{\bowtie}\preceq \ol{m}^1_{\bowtie}$.
    Therefore $\ol{m}^1_{\bowtie}\sim \ol{\mu}^1_{\bowtie}$.
    The other cases follow similarly.
\end{proof}
We have now established measure space isomorphisms and equality of measure classes
\[
    \begin{split}
        &(E,\eta)\cong 
    (\partial^2\wt{M},\ol{m}^1_{\bowtie})\sim(\partial^2\wt{M},\ol{\mu}^1_{\bowtie}),\\
        &(E_\pm, \eta_\pm) \cong (\partial\wt{M},\ol{m}^1_\pm)\sim
        (\partial\wt{M},\ol{\mu}^1_\pm).
    \end{split}
\]
Therefore the local product property of $\mu$, 
$\ol{\mu}^1_{\bowtie}\sim \ol{\mu}^1_{-}\times\ol{\mu}^1_{+}$,
becomes the Apafic condition 
$\eta\sim \eta_-\times \eta_+$ on the Greg $(X,\calX,m,T,w,\Gamma)$.
This completes the proof of Theorem~\ref{T:nc-ApaficGreg}.

%\subfile{Biblio}

\bibliographystyle{acm}
\bibliography{bibtexrefs} 

@article {Ambrose,
    AUTHOR = {Ambrose, Warren},
     TITLE = {Representation of ergodic flows},
   JOURNAL = {Ann. of Math. (2)},
  FJOURNAL = {Annals of Mathematics. Second Series},
    VOLUME = {42},
      YEAR = {1941},
     PAGES = {723--739},
      ISSN = {0003-486X},
   MRCLASS = {46.3X},
  MRNUMBER = {4730},
MRREVIEWER = {Gustav\ A.\ Hedlund},
       DOI = {10.2307/1969259},
       URL = {https://doi.org/10.2307/1969259},
}

@article {Ambrose-Kakutani,
    AUTHOR = {Ambrose, Warren and Kakutani, Shizuo},
     TITLE = {Structure and continuity of measurable flows},
   JOURNAL = {Duke Math. J.},
  FJOURNAL = {Duke Mathematical Journal},
    VOLUME = {9},
      YEAR = {1942},
     PAGES = {25--42},
      ISSN = {0012-7094,1547-7398},
   MRCLASS = {46.3X},
  MRNUMBER = {5800},
MRREVIEWER = {J.\ L.\ Doob},
       URL = {http://projecteuclid.org/euclid.dmj/1077493068},
}

@article{AV2,
  author={Artur Avila and Marcelo Viana},
  title={Simplicity of Lyapunov spectra: a sufficient criterion},
  journal={Port. Math. (N.S.)},
  volume={64},
  date={2007},
  number={3},
  pages={311--376},
}

@article{AV3,
  author={Artur Avila and Marcelo Viana},
  title={Simplicity of Lyapunov spectra: proof of the Zorich-Kontsevich
  conjecture},
  journal={Acta Math.},
  volume={198},
  date={2007},
  number={1},
  pages={1--56},
  }

@article{AV,
  author={Artur Avila and Marcelo Viana},
  title={Extremal Lyapunov exponents: an invariance principle and
  applications},
  journal={Invent. Math.},
  volume={181},
  date={2010},
  number={1},
  pages={115--189},
  }

@article{AEV,
  title={Continuity of the Lyapunov exponents of random matrix products},
  author={Artur Avila and Alex Eskin and Marcelo Viana},
  year={2023},
  eprint={2305.06009},
  archivePrefix={arXiv},
  primaryClass={math.DS},
  url={https://arxiv.org/abs/2305.06009},
  }

@article{BDL,
  author={Uri Bader and Bruno Duchesne and Jean L\'{e}cureux},
  title={Almost algebraic actions of algebraic groups and applications to
  algebraic representations},
  journal={Groups Geom. Dyn.},
  volume={11},
  date={2017},
  number={2},
  pages={705--738},
  issn={1661-7207},
  review={\MR{3668057}},
  doi={10.4171/GGD/413},
  }

@inproceedings {BF:icm,
    AUTHOR = {Bader, Uri and Furman, Alex},
     TITLE = {Boundaries, rigidity of representations, and {L}yapunov
              exponents},
 BOOKTITLE = {Proceedings of the {I}nternational {C}ongress of
              {M}athematicians---{S}eoul 2014. {V}ol. {III}},
     PAGES = {71--96},
 PUBLISHER = {Kyung Moon Sa, Seoul},
      YEAR = {2014},
      ISBN = {978-89-6105-806-3; 978-89-6105-803-2},
   MRCLASS = {37D25 (22E40)},
  MRNUMBER = {3729019},
MRREVIEWER = {Lifan\ Guan},
}

@article{BF:AREA,
  AUTHOR = {Bader, Uri and Furman, Alex},
  TITLE = {Algebraic Representations of Ergodic Actions and Super-Rigidity},
  YEAR = {2014},
  pages={1--25},
  eprint={https://arxiv.org/abs/1311.3696},
}

@article{BF:products,
  AUTHOR = {Bader, Uri and Furman, Alex},
  title={Super-rigidity and non-linearity for lattices in products},
  journal={Compos. Math.},
  volume={156},
  date={2020},
  number={1},
  pages={158--178},
  issn={0010-437X},
  review={\MR{4036451}},
  doi={10.1112/s0010437x19007607},
  }

@incollection {BF:Margulis,
    AUTHOR = {Bader, Uri and Furman, Alex},
     TITLE = {An extension of {M}argulis's superrigidity theorem},
 BOOKTITLE = {Dynamics, geometry, number theory---the impact of {M}argulis
              on modern mathematics},
     PAGES = {47--65},
 PUBLISHER = {Univ. Chicago Press, Chicago, IL},
      YEAR = {[2022] \copyright 2022},
      ISBN = {978-0-226-80402-6; 978-0-226-80416-3},
   MRCLASS = {22E40 (12J10 22D40)},
  MRNUMBER = {4422051},
MRREVIEWER = {Bruno\ Duchesne},
}

@article{BF:examples,
    AUTHOR = {Bader, Uri and Furman, Alex},
  TITLE = {Simplicity of the Lyapunov spectrum via boundary theory, II - applications},
  YEAR = {2021},
  eprint={https://arxiv.org/abs/???},
  }

@book {BQ:book,
    AUTHOR = {Benoist, Yves and Quint, Jean-Fran\c cois},
     TITLE = {Random walks on reductive groups},
    SERIES = {Ergebnisse der Mathematik und ihrer Grenzgebiete. 3. Folge. A
              Series of Modern Surveys in Mathematics [Results in
              Mathematics and Related Areas. 3rd Series. A Series of Modern
              Surveys in Mathematics]},
    VOLUME = {62},
 PUBLISHER = {Springer, Cham},
      YEAR = {2016},
     PAGES = {xi+323},
      ISBN = {978-3-319-47719-0; 978-3-319-47721-3},
   MRCLASS = {60B15 (22E30 22E46 37C30 37H15 60B20 60F05 60G50)},
  MRNUMBER = {3560700},
MRREVIEWER = {Radhakrishnan\ Nair},
}

@book {BL,
    AUTHOR = {Bougerol, Philippe and Lacroix, Jean},
     TITLE = {Products of random matrices with applications to
              {S}chr\"odinger operators},
    SERIES = {Progress in Probability and Statistics},
    VOLUME = {8},
 PUBLISHER = {Birkh\"auser Boston, Inc., Boston, MA},
      YEAR = {1985},
     PAGES = {xii+283},
      ISBN = {0-8176-3324-3},
   MRCLASS = {60B15 (47F05 58G40 60H25 82A42)},
  MRNUMBER = {886674},
MRREVIEWER = {Shinichi\ Kotani},
       DOI = {10.1007/978-1-4684-9172-2},
       URL = {https://doi.org/10.1007/978-1-4684-9172-2},
}

@book {Bowen-book,
    AUTHOR = {Bowen, Rufus},
     TITLE = {Equilibrium states and the ergodic theory of {A}nosov
              diffeomorphisms},
    SERIES = {Lecture Notes in Mathematics},
    VOLUME = {470},
    EDITOR = {Chazottes, Jean-Ren\'e},
   EDITION = {revised},
      NOTE = {With a preface by David Ruelle},
 PUBLISHER = {Springer-Verlag, Berlin},
      YEAR = {2008},
     PAGES = {viii+75},
      ISBN = {978-3-540-77605-5},
   MRCLASS = {37C40 (28D05 37A25 37D20)},
  MRNUMBER = {2423393},
}

@inproceedings {Bowen-Anosov-flows,
    AUTHOR = {Bowen, Rufus},
     TITLE = {Symbolic dynamics for hyperbolic flows},
 BOOKTITLE = {Proceedings of the {I}nternational {C}ongress of
              {M}athematicians ({V}ancouver, {B}.{C}., 1974), {V}ol. 2},
     PAGES = {299--302},
 PUBLISHER = {Canad. Math. Congr., Montreal, QC},
      YEAR = {1975},
   MRCLASS = {58F15},
  MRNUMBER = {426059},
MRREVIEWER = {Zbigniew\ Nitecki},
}

@incollection{CHERNOV,
title = {Chapter 4 Invariant measures for hyperbolic dynamical systems},
editor = {B. Hasselblatt and A. Katok},
series = {Handbook of Dynamical Systems},
publisher = {Elsevier Science},
volume = {1},
pages = {321-407},
year = {2002},
issn = {1874-575X},
doi = {https://doi.org/10.1016/S1874-575X(02)80006-6},
url = {https://www.sciencedirect.com/science/article/pii/S1874575X02800066},
author = {N. Chernov},
abstract = {Publisher Summary
This chapter discusses the theory of Gibbs measures for Anosov and Axiom A diffeomorphism. It gives the classical theory of Gibbs measures for one-dimensional lattices in statistical mechanics. Based on this, the modern theory of Gibbs measures for Anosov and Axiom A maps are developed. The chapter describes many properties of Gibbs measures‚Äìstatistical, topological, and others (some of them very recent). It focuses on one special Gibbs measure‚Äîthe so-called Sinai‚ÄìRuelle‚ÄìBowen (SRB) measure. The theory of Gibbs measures is presented in the chapter in detail and with complete proofs. The chapter discusses Anosov and Axiom A flows and their Gibbs measures, including SRB measures. It also discusses recent results on the decay of correlations. The chapter presents a discussion on nonuniformly hyperbolic diffeomorphism and recent studies of the leakage of mass near Axiom A basic sets and Anosov diffeomorphism with holes, including the escape-rate formula and conditionally invariant measures.}
}

@article{YCor,
  AUTHOR = {de Cornulier, Yves},
  TITLE = {On lengths on semisimple groups},
  JOURNAL = {J. Topol. Anal.},
  FJOURNAL = {Journal of Topology and Analysis},
  VOLUME = {1},
  YEAR = {2009},
  NUMBER = {2},
  PAGES = {113--121},
  ISSN = {1793-5253},
  MRCLASS = {22D05},
  MRNUMBER = {2541757},
  MRREVIEWER = {Markus Neuhauser},
  DOI = {10.1142/S1793525309000102},
  URL = {https://doi.org/10.1142/S1793525309000102},
}

@article{Filip:MET,
  author={Filip, Simion},
  title={Notes on the multiplicative ergodic theorem},
  journal={Ergodic Theory Dynam. Systems},
  volume={39},
  date={2019},
  number={5},
  pages={1153--1189},
  issn={0143-3857},
  review={\MR{3928611}},
  doi={10.1017/etds.2017.68},
}

@article{Furstenberg-Poisson,
  author={Furstenberg, Harry},
  title={A Poisson formula for semisimple Lie groups},
  journal={Ann. of Math. (2)},
  volume={77},
  date={1963},
  pages={335--386},
}

@incollection {F:RT,
    AUTHOR = {Furman, Alex},
     TITLE = {Random walks on groups and random transformations},
 BOOKTITLE = {Handbook of dynamical systems, {V}ol.\ 1{A}},
     PAGES = {931--1014},
 PUBLISHER = {North-Holland, Amsterdam},
      YEAR = {2002},
      ISBN = {0-444-82669-6},
   MRCLASS = {60G50 (37A50 37H15)},
  MRNUMBER = {1928529},
MRREVIEWER = {Gernot\ Greschonig},
       DOI = {10.1016/S1874-575X(02)80014-5},
       URL = {https://doi.org/10.1016/S1874-575X(02)80014-5},
}

@article{Fur-Kif,
  author={Furstenberg, Hillel and Kifer, Yuri},
  title={Random matrix products and measures on projective spaces},
  journal={Israel J. Math.},
  volume={46},
  date={1983},
  number={1-2},
  pages={12--32},
  issn={0021-2172},
  review={\MR{0727020}},
  doi={10.1007/BF02760620},
}

@article{GM,
  author={Gol{\cprime}dshe{\u\i}d, I. Ya. and Margulis, G. A.},
  title={Lyapunov exponents of a product of random matrices},
  language={Russian},
  journal={Uspekhi Mat. Nauk},
  volume={44},
  date={1989},
  number={5(269)},
  pages={13--60},
  journal={Russian Math. Surveys},
  volume={44},
  date={1989},
  number={5},
  pages={11--71},
  issn={0036-0279},
}

@article {GR,
    AUTHOR = {Guivarc'h, Y. and Raugi, A.},
     TITLE = {Fronti\`ere de {F}urstenberg, propri\'et\'es de contraction et
              th\'eor\`emes de convergence},
   JOURNAL = {Z. Wahrsch. Verw. Gebiete},
  FJOURNAL = {Zeitschrift f\"ur Wahrscheinlichkeitstheorie und Verwandte
              Gebiete},
    VOLUME = {69},
      YEAR = {1985},
    NUMBER = {2},
     PAGES = {187--242},
      ISSN = {0044-3719},
   MRCLASS = {60J15 (22E30 60B15)},
  MRNUMBER = {779457},
MRREVIEWER = {Ren\'e\ Schott},
       DOI = {10.1007/BF02450281},
       URL = {https://doi.org/10.1007/BF02450281},
}

@incollection {GR2,
    AUTHOR = {Guivarc'h, Y. and Raugi, A.},
     TITLE = {Products of random matrices: convergence theorems},
 BOOKTITLE = {Random matrices and their applications ({B}runswick, {M}aine,
              1984)},
    SERIES = {Contemp. Math.},
    VOLUME = {50},
     PAGES = {31--54},
 PUBLISHER = {Amer. Math. Soc., Providence, RI},
      YEAR = {1986},
      ISBN = {0-8218-5044-X},
   MRCLASS = {60B15 (60F05 60F10)},
  MRNUMBER = {841080},
MRREVIEWER = {Philippe\ Bougerol},
       DOI = {10.1090/conm/050/841080},
       URL = {https://doi.org/10.1090/conm/050/841080},
}

@incollection {Kaim-geo,
    AUTHOR = {Kaimanovich, Vadim A.},
     TITLE = {Invariant measures of the geodesic flow and measures at
              infinity on negatively curved manifolds},
      NOTE = {Hyperbolic behaviour of dynamical systems (Paris, 1990)},
   JOURNAL = {Ann. Inst. H. Poincar\'e{} Phys. Th\'eor.},
  FJOURNAL = {Annales de l'Institut Henri Poincar\'e. Physique Th\'eorique},
    VOLUME = {53},
      YEAR = {1990},
    NUMBER = {4},
     PAGES = {361--393},
      ISSN = {0246-0211},
   MRCLASS = {58F17 (58F11)},
  MRNUMBER = {1096098},
MRREVIEWER = {Midori\ Goto},
       URL = {http://www.numdam.org/item?id=AIHPA_1990__53_4_361_0},
}

@article{Kaimanovich:DE,
  author={Kaimanovich, V. A.},
  title={Double ergodicity of the Poisson boundary and applications to
  bounded cohomology},
  journal={Geom. Funct. Anal.},
  volume={13},
  date={2003},
  number={4},
  pages={852--861},
}

@article{KM,
  author={Karlsson, Anders and Margulis, Gregory A.},
  title={A multiplicative ergodic theorem and nonpositively curved spaces},
  journal={Comm. Math. Phys.},
  volume={208},
  date={1999},
  number={1},
  pages={107--123},
}

@article{Kostant,
  AUTHOR = {Kostant, Bertram},
  TITLE = {On convexity, the {W}eyl group and the {I}wasawa
  decomposition},
  JOURNAL = {Ann. Sci. \'{E}cole Norm. Sup. (4)},
  FJOURNAL = {Annales Scientifiques de l'\'{E}cole Normale Sup\'{e}rieure. Quatri\`eme
  S\'{e}rie},
  VOLUME = {6},
  YEAR = {1973},
  PAGES = {413--455 (1974)},
  ISSN = {0012-9593},
  MRCLASS = {22E45},
  MRNUMBER = {364552},
}

@incollection {Ledrappier,
    AUTHOR = {Ledrappier, F.},
     TITLE = {Positivity of the exponent for stationary sequences of
              matrices},
 BOOKTITLE = {Lyapunov exponents ({B}remen, 1984)},
    SERIES = {Lecture Notes in Math.},
    VOLUME = {1186},
     PAGES = {56--73},
 PUBLISHER = {Springer, Berlin},
      YEAR = {1986},
      ISBN = {3-540-16458-8},
   MRCLASS = {60J15 (28D20 58F11 60B15 60F99)},
  MRNUMBER = {850070},
MRREVIEWER = {Michael\ Keane},
       DOI = {10.1007/BFb0076833},
       URL = {https://doi.org/10.1007/BFb0076833},
}

@article{LePage,
  author={Le Page, \'{E}mile},
  title={R\'{e}gularit\'{e} du plus grand exposant caract\'{e}ristique des produits de
  matrices al\'{e}atoires ind\'{e}pendantes et applications},
  language={French, with English summary},
  journal={Ann. Inst. H. Poincar\'{e} Probab. Statist.},
  volume={25},
  date={1989},
  number={2},
  pages={109--142},
  issn={0246-0203},
  review={\MR{1001021}},
}

@article {Richardson,
    AUTHOR = {Richardson, Jr., R. W.},
     TITLE = {A rigidity theorem for subalgebras of {L}ie and associative
              algebras},
   JOURNAL = {Illinois J. Math.},
  FJOURNAL = {Illinois Journal of Mathematics},
    VOLUME = {11},
      YEAR = {1967},
     PAGES = {92--110},
      ISSN = {0019-2082},
   MRCLASS = {17.30 (16.90)},
  MRNUMBER = {206170},
MRREVIEWER = {S.\ Murakami},
       URL = {http://projecteuclid.org/euclid.ijm/1256054787},
}

@book{Viana,
  author={Viana, M.},
  title={Lectures on Lyapunov exponents},
  series={Cambridge Studies in Advanced Mathematics},
  volume={145},
  publisher={Cambridge University Press, Cambridge},
  date={2014},
  pages={xiv+202},
  isbn={978-1-107-08173-4},
  review={\MR{3289050}},
  doi={10.1017/CBO9781139976602},
}

@book{zimmer-book,
  author={Zimmer, R. J.},
  title={Ergodic theory and semisimple groups},
  series={Monographs in Mathematics},
  volume={81},
  publisher={Birkh\"auser Verlag},
  place={Basel},
  date={1984},
  pages={x+209},
}

\end{document}